\documentclass[11pt,a4paper,reqno,final]{article}
\usepackage{amsmath,amsfonts,amssymb}
\usepackage{amsthm}
\usepackage{cite}
\usepackage{comment}
\usepackage{graphicx}
\usepackage{stix}
\usepackage[margin=0.85in]{geometry}
\usepackage[english]{babel}
\usepackage{mathtools}
\usepackage{esint}
\usepackage{multirow}

\usepackage{authblk}
\usepackage[most]{tcolorbox}
\usepackage{amsmath,amssymb}
\usepackage[most]{tcolorbox}

\usepackage{float}
\usepackage{xcolor}
\usepackage{mathtools}

\newcommand{\mycomment}[1]{}

\newcommand{\dt}{\,\mathrm{d}t}
\usepackage{authblk}
\usepackage{tikz}
\newtheorem{thm}{Theorem}[section]
\newtheorem{lemma}[thm]{Lemma}
\newtheorem{rem}[thm]{Remark}

\newtheorem{example}{Example}[section]
\numberwithin{figure}{section}
\numberwithin{table}{section}
\numberwithin{equation}{section}
\usepackage{subcaption}
\newcommand\norm[1]{\lVert#1\rVert}

\usepackage[normalem]{ulem}
\newcounter{corr}
\definecolor{violet}{rgb}{0.580,0.,0.827}
\newcommand{\corr}[3]{\typeout{Warning : a correction remains in page \thepage}
  \stepcounter{corr}
	      {\color{blue}\ifmmode\text{\,\sout{\ensuremath{#1}}\,}\else\sout{#1}\fi}
              {\color{red}#2}
              {\color{violet} #3}
}

\title{\bf Hybrid-high order method in space and implicit schemes in time for  the biharmonic wave equation} 
\author{Raman Kumar\footnote{Corresponding author, Department of Mathematics, Indian Institute of Technology Bombay, Mumbai, Maharashtra
400076, India ({ramanthalor18@gmail.com}), currently working in IMAG, CNRS, University of Montpellier, Montpellier, France.}
\quad
Neela Nataraj\footnote{Department of Mathematics, Indian Institute of Technology Bombay, Mumbai, Maharashtra
400076, India ({neela@math.iitb.ac.in}).}\quad
Aamir Yousuf\footnote{IITB–Monash Research Academy, Indian Institute of Technology Bombay, Mumbai, Maharashtra 400076, India ({aamir72@iitb.ac.in}).}}
\begin{document}

\maketitle    
\begin{abstract}
This article presents the numerical analysis for the biharmonic wave equation with clamped boundary conditions employing two variants of the {hybrid high-order} method for the space discretization and two implicit time-stepping schemes for the time discretization. The Newmark scheme directly discretizes the second-order time derivative, while the Crank-Nicolson scheme discretizes a reformulated system where we introduce velocity as an independent variable to create coupled first-order equations. Optimal orders of convergence in space and time are achieved for both schemes. The numerical experiments validate the theoretical convergence rates and show the effectiveness of the proposed methods.  To the best of our knowledge, this is the {\it first} work in literature that addresses hybrid-high order method and implicit time schemes for the biharmonic wave equation. 
\end{abstract}
\noindent {\em Key words.} biharmonic wave, hybrid high-order, Newmark, Crank-Nicolson.
\vspace{.01in}

\noindent {\em AMS Subject Classifications (2020)}. 65N30, 65N15. 
    
    \section{Introduction}
\subsection{Model Problem}
This paper discusses two variants of  Hybrid-High Order~(HHO) methods, popularly known as HHO-A and HHO-B methods, for space discretization and implicit schemes for time discretization of the  biharmonic wave equation that seeks $u(x,t)$ such that 
\begin{equation}
u_{tt} + \Delta^2 u = f(x,t), \quad (x,t) \in \Omega \times \left(0,T \right] \label{P1 strong_form}
\end{equation}
with initial and clamped boundary conditions
\begin{equation}
u(x,0)=u_0(x) \;\; \mbox{and}\;\; u_t(x,0)=u_1(x) \text{ in }\Omega;\quad    u=\frac{\partial u}{\partial n} =0 \text{ on }\partial \Omega \times \left(0,T \right].\label{P1 strong_icbc}
\end{equation}
Here, $\Omega$ is a bounded polytopal Lipschitz domain in $\mathbb{R}^{2}$ (resp. $\mathbb{R}^{3}$) with  boundary $\partial \Omega$, $\Delta^2 u:= \frac{\partial^4 u}{\partial
				x^4} + 2 \:\frac{\partial^4 u}{\partial x^2 \partial y^2}
			+\frac{\partial^4 u}{\partial y^4}$ (resp. $\frac{\partial^4 u}{\partial
				x^4} +\frac{\partial^4 u}{\partial y^4}+\frac{\partial^4 u}{\partial z^4}+ 2 \:\frac{\partial^4 u}{\partial x^2 \partial y^2}+ 2 \:\frac{\partial^4 u}{\partial x^2 \partial z^2}+ 2 \:\frac{\partial^4 u}{\partial y^2 \partial z^2}
			$) is the biharmonic operator, $T$ is the final time, and  $u_{t} $, $u_{tt} $ denote the first and second order  derivatives with respect to time, respectively.
            
%\subsection{ Weak formulation-Wellposedness and Regularity}
\medskip \noindent
The biharmonic wave equation has applications in various physical phenomena such as elasticity of thin plates with the dynamic load constraints and bending of plates. Moreover, this can be explored as a fundamental model for advanced Kirchhoff-type equations \cite{MR4403922,MR1422248,MR0953313}, which investigate wave transmission, standing waves, and vibration modes within plates.
%Elastic plates are essential components in many mechanical structures, playing a vital role in practical applications, especially in industrial construction and microchip production. As core elements in diverse mechanical systems, these thin plates are susceptible to cracks, either from material fatigue or intentional design features. Consequently, a thorough understanding of their behavior and resilience is of great importance.
%Consequently, the mechanical properties of these specific structures have been extensively researched in numerous studies\cite{,}}. 

\subsection{Literature overview}
Numerical methods and analysis for the biharmonic problem have been investigated thoroughly in literature,  see for example, { \cite{MR345432,MR2684360,MR2142191,MR2520159,MR4376276,MR4485999,MR3767813,MR4597463,MR4480625,MR2298696}.}
%Designing $C^{1}$ finite element methods (FEMs) for fourth-order equations, which demand the continuity of both basis functions and their first-order derivatives across closure of the domain and conforming to $H^{2}(\Omega),$ poses several challenges. To address these issues, various non-standard finite element methods (FEMs) have been developed, including the discontinuous Galerkin (DG) FEM \cite{MR2755946,DG-BI}, the Morley FEM \cite{MR4230429,MR3934690,MR2207619}, and the $C^{0}$ interior penalty (IP) method \cite{MR4481121,MR2298696}. An abstract framework for analyzing lowest-order finite element methods (FEMs) for the clamped plate biharmonic problem has recently been established \cite{MR4376276}. 
Using the backward Euler method in the time direction and lowest-order finite element spaces a unified analysis in 
for fourth-order nonlinear parabolic problems has been studied recently \cite{MR4908953} in the literature.  Numerical approaches are designed for the second-order hyperbolic equations, for more details see \cite{MR3438428,MR2272600,MR0945124,MR4456271,MR2942377,MR1241479,MR0349045,MR2052507}. Despite their physical relevance, not much work is available in the literature for biharmonic wave equation; possibly due to the complexities arising from the higher-order nature of the equation in space and time. In
\cite{MR4444402}, classical $C^{1}$-conforming FEMs were developed using Bogner-Fox-Schmit elements and the collocation technique in space and time directions, respectively. In \cite{MR2114953}, a mixed velocity-moment formulation is analyzed for the fourth-order Kirchhoff-Love dynamic plate equations. The study employs Lagrange finite elements for spatial discretization and both explicit and implicit central difference schemes for temporal discretization. For fourth-order vibration problems, error estimates using a discrete Galerkin in space and a second-order accurate scheme in the time direction have been discussed in \cite{MR0317559}. In \cite{MR3003381,he23}, mixed FEMs for fourth-order wave equations under various boundary conditions have been investigated. A unified analysis of the biharmonic wave problem based on lowest-order nonstandard finite elements with explicit leapfrog and implicit Newmark schemes is studied in \cite{amir} recently, {however, the mixed (in time) scheme analyzed herein, which of importance, is {\it not} discussed in \cite{amir}.} {\it The emergence of HHO methods for spatial discretization motivates the investigation of their coupling with time discretization schemes for vibration problems.} 

\medskip \noindent
% The HHO methods are independent of dimensional constraints in their construction, support general meshes, and accommodate polytopal meshes as well as nonmatching interfaces. They also provide a local formulation that utilizes equilibrated fluxes and are computationally efficient because of the static condenzation of cell unknowns. The HHO methods employ discrete unknowns within both the cells and faces. 
% The two fundamental components of the HHO methods are a {\it reconstruction operator}, which locally reconstructs a displacement field or its gradient from the local cell and face unknowns, and a {\it stabilization operator}, which weakly enforces consistency between the local face unknowns and the trace of the cell unknowns on each mesh face.
The Hybrid High-Order~(HHO) methods provide a dimension-independent 
framework with geometric flexibility, as they support general 
polytopal meshes and non-matching interfaces. Their efficiency arises from a fully local formulation that relies on fluxes and static condensation of unknown cells.
In HHO methods, discrete unknowns are placed both on cells and on faces. {The formulation is built upon two key operators: a \textit{reconstruction operator} that locally recovers displacement field or gradient from cell and face unknowns, and a \text{stabilization operator} that weakly enforces consistency between face unknowns and cell traces across local mesh interfaces.}
This design ensures {\it optimal convergence} while preserving computational 
efficiency and robustness on complex {computational domains, for more details see \cite{di2020hybrid}}.

\medskip \noindent
{For time-dependent models, spatial discretization using HHO methods has been discussed for Sobolev equations \cite{xie2022hybrid_1,xie2022hybrid}, Cahn-Hillard equations \cite{chave2016hybrid}, Fisher-Kolmogorov equation \cite{NNRK}, and wave equations \cite{ MR4412337,MR4254716}. In \cite{ern2023convergence}, the second-order wave equation is studied  using hybrid methods in space, namely, {HHO}, {HDG}, and {WG} with the leapfrog scheme for time discretization. {The paper  establishes that the fully discrete problem satisfies an algebraic energy balance (see \cite[Lemma 3.3]{ern2023convergence}).} Analogous energy balance arguments also hold for the biharmonic wave equation discussed in this article. The disadvantage of the explicit scheme in \cite{ern2023convergence} is that the management of face unknowns necessitates the use of a sparse matrix that couples all mesh faces and this reduces the computational efficiency typically expected from explicit time-stepping methods. Additionally, the scheme is conditionally stable and requires the triangulation to be quasi-uniform. 
{The second-order wave equation is also addressed in \cite{MR4412337,MR4254716}, where the HHO method is employed for spatial discretization and the Newmark scheme is used for time discretization. These papers propose and implement fully discrete schemes, including the Newmark scheme for the wave equation in its primal form, as well as implicit and explicit Runge-Kutta schemes for the mixed first-order system.}
%and \cite{MR4254716} discusses error estimates for the spatial semidiscrete scheme.}

\subsection{Contributions}
\begin{itemize}
\item We prove that both the Newmark and Crank–Nicolson time‑integration schemes {with HHO-A and HHO-B spatial discretizations for the biharmonic wave equations \eqref{P1 strong_form}-\eqref{P1 strong_icbc} are unconditionally stable, with no Courant–Friedrichs–Lewy (CFL) type restriction linking spatial and temporal discretizations.}
\item The approximation properties of the projection operator defined in \eqref{proj_op_1} in \(L^2\)-norm for the HHO-A method represent a novel contribution to the literature. This result is crucial for proving the \(L^2\) error estimates presented in this paper. 
\item The HHO scheme for the space discretization and {\it Newmark scheme} for the time discretization  is well-suited for the reduction of unknowns, leading to a global linear system at each time step that only couples the face unknowns. %Alternatively, the leapfrog scheme can be used to discretize the time derivative, yielding an explicit, second-order accurate scheme that is conditionally stable. } 
{Optimal order convergence for displacement  $u$ of $\mathcal{O}(h^{k+1})$ in space direction and $\mathcal{O}((\Delta t)^2)$ in time direction with respect to energy-norm  are derived for both HHO-A and HHO-B schemes.  Moreover,  optimal order convergence for displacement  $u$  of  $\mathcal{O}(h^{k+2})$ for $k=0$ and $\mathcal{O}(h^{k+3})$ for $k\ge1$ in space direction and $\mathcal{O}((\Delta t)^2)$ in time direction in $L^2$-norm are obtained for HHO-A scheme.}
\item The {HHO scheme} for spatial discretization and the {\it Crank-Nicolson scheme} for temporal discretization have been analyzed. The HHO-Crank-Nicolson method is well-suited for time discretization in mixed form for the biharmonic wave equation as a first-order system in time, leading to a global linear system at each time step that only couples the face unknowns. 
%An alternative for time discretization is the implicit backward Euler scheme, though this scheme converges only at a linear rate in time.}
The analysis demonstrates {optimal order convergence for displacement $u$ of $\mathcal{O}(h^{k+1})$ in space direction and $\mathcal{O}((\Delta t)^2)$ in time direction with respect to energy-norm  for both HHO-A and HHO-B schemes. Optimal order convergence for displacement  $u$ and velocity $u_t$ of  $\mathcal{O}(h^{k+2})$ for $k=0$ and $\mathcal{O}(h^{k+3})$ for $k\ge1$ in space direction and $\mathcal{O}((\Delta t)^2)$ in time direction in $L^2$-norm are established for HHO-A scheme.}

%However, the proof for the same result has yet to be explored for the HHO-B scheme, and the reason for the hindrance in extending the result to the HHO-B scheme has been discussed in Subsection~\ref{l2 app}.

\item The results of the numerical experiments are in strong agreement with the theoretical error bounds for both schemes, confirming the predicted convergence rates and demonstrating the  accuracy of the proposed formulations. 
\end{itemize}

\subsection{Notation}
Standard notation for Lebesgue and Sobolev spaces applies throughout the paper. 
% For $i:=(i_{1},\cdots,i_{d})$ with $i_{1},\cdots,i_{d} \ge 0$ and  $|i|:=i_{1}+\cdots+i_{d},$ 
% $v:=(v_{1},\cdots,v_{d})$, let $D^{i}v:=\frac{\partial^{|i|}v}{\partial v_{1}^{i_{1}}\partial v_{2}^{i_{2}}\cdots \partial v_{d}^{i_{d}}}.$ 
 Let $\nabla$ and $\nabla^{2}$ denote the gradient and Hessian operators, respectively. For any $D\subset\mathbb{R}^{d}~(d=2,3)$ and any integer $r\geq 0,$ define the Sobolev space $H^{r}(D):=\{v\in L^{2}(D):\; \nabla^{i}v\in L^{2}(D)\; \text{for}\; 0\leq |i|\leq r\}.$ For $v \in H^{r}(D)$,  the associated norm and semi-norm are denoted by 
$\displaystyle \norm{v}_{H^{r}(D)}=\big(\sum\limits_{|i|\leq r}\norm{\nabla^{i}v}^{2}_{L^{2}(D)}\big)^{\frac{1}{2}}$ and $\displaystyle |v|_{H^{r}(D)}=\big( \sum_{|i|=r}\norm{\nabla^{i}v}^{2}_{L^{2}(D)}\big)^{\frac{1}{2}},$ respectively.
{The $L^{2}$ inner product and norm defined on $\Omega$ are denoted by $(\cdot,\cdot)$ and $\norm{\cdot},$ respectively. 
%Let $X$ denote a normed space with norm $\norm{\bullet}_{X}$ and seminorm $|\bullet|_{X}$, respectively, and 
Let $(X,\|\bullet\|_X)$ be a Hilbert space and for $0\le a < b$, the Bochner space $L^p(a,b;X)$ consists of all strongly measurable functions $g:(a,b) \to X$ such that \cite{MR2597943} 
$$\norm{g}_{L^{p}(a,b; X)}:=\big(\int_{a}^{b}\norm{g(t)}_{L^{p}(X)}^{p}dt\big)^{1/p},\;\; 1\leq p < \infty\;\; \mbox{and}\;\; \norm{g}_{L^{\infty}(a,b;X)}:= \mbox{ess}\sup_{a\leq t\leq b}\norm{g(t)}_{X}.$$
For $m \in \mathbb{N}$, the {Sobolev-Bochner space} $H^m(a,b;X)$ consists of all functions $u \in L^2(0,T;X)$ whose weak derivatives up to order $m$ exist and belong to $L^2(0,T;X)$,  that is
$$H^m(a,b;X) := \left\{u \in L^2(a,b;X) : \frac{d^k u}{dt^k} \in L^2(a,b;X) \text{ for } k = 0, 1, \ldots, m\right\}.$$
We adopt ${L^{r}(0,T; X)}={L^{r}(X)}$ and  $H^m(0,T;X)=H^m(X)$ for notational simplicity.} Let $C^k([0,T];X)$ denote all $C^k$ functions $g :[0,T] \rightarrow X$ with $\norm {g}_{C^k([0,T];X)}=\underset{0 \le i \le k}{\sum} \underset{0 \le t\le T}{\max} {\norm{g^i(t)}} < \infty, \text{ where }g^i(t)=\frac{\partial^i g}{\partial t^i}.$ 
 The notation $a \lesssim b$ means $a \leq C b,$ where the generic constant $C>0$ is independent of mesh-parameter and time-discretization parameter.  {This paper frequently employs a Young's inequality given by 
 \begin{equation}\label{Young's}
ab \leq \frac{a^{2}}{2\epsilon}+ \frac{\epsilon b^{2}}{2} \quad \text{ for}~ a,b\geq 0 \; \text{ and } \epsilon >0.
 \end{equation}}
 % For  real numbers $a > 0$, $b > 0$, and $\epsilon > 0$, the  weighted {Young's} inequality reads $ab \le \frac{\epsilon}{2}a^2 + \frac{1}{2\epsilon}b^2.$}
\subsection{Layout of the paper}
The rest of the article is divided into the following parts: Section \ref{section2} discusses the preliminaries of the HHO method and some approximation properties. The main results are presented in Section \ref{Sect_3}. 
Section~\ref{subsec_l2_est}
establishes $L^2$ approximation properties of the HHO-A method that will be used to prove $L^2$ error estimates for HHO-A  Newmark and Crank-Nicolson fully discrete schemes.
%Section \ref{Sect_4} demonstrates some useful approximation properties. 
%Proof of the Theorem \ref{m1_H^2_bound} is described in Section \ref{Sect_5}.
The stability and error analysis for the Newmark scheme and Crank-Nicolson scheme are discussed in Sections \ref{sect_6} and \ref{Sec_cr_nic}, respectively. The numerical test settings are described and the results of the numerical simulations are provided in Section~\ref{num_sec}.
\section{Preliminaries}\label{section2}
The first subsection discusses the weak form of \eqref{P1 strong_form}-\eqref{P1 strong_icbc}. This is followed by settings for the HHO method in  Subsection \ref{sect_2}, a description of local and global operators in Subsection~\ref{subsec-pro}, discrete functional framework in Subsection~\ref{subsec-stab}, discrete norms and approximation properties in Subsections~\ref{subs_dis_nor} and \ref{Sect_4}, respectively. This is followed by a subsection that states the estimates employed in the sequel. 
\subsection{Weak formulation}
The weak formulation that corresponds to \eqref{P1 strong_form}-\eqref{P1 strong_icbc} seeks $u \in H_0^2(\Omega)$ such that for a.e. $t \in (0,T]$
\begin{align}
   \begin{split}  (u_{tt},v)+(\nabla^2 u,\nabla^2v)&= \left(f,v\right)  \text{ for all }v \in H_0^2(\Omega), \label{P1 weak_form}\\  
    u(0)&=u_0 \text{ and } u_t(0)=u_1.
     \end{split}
\end{align}
For $f \in L^2( L^2(\Omega))$, $u_0 \in H^2_0(\Omega)$, and $u_1 \in L^2(\Omega)$, there exists a unique $u \in C^0([0,T];H^2_0(\Omega)) \cap  C^1([0,T]; L^2(\Omega)) $ satisfying \eqref{P1 weak_form} \cite[ p. 93]{MR0350178}.
\subsection{HHO setting for biharmomic operator}\label{sect_2}
This subsection discusses the discrete setting for the biharmonic operator for the HHO-A and  HHO-B schemes.
%, focusing on three key HHO components: {\it local and global reduction operators}, {\it local reconstruction operator}, and {\it stabilizer}.
%\subsection{Mesh Partition}
The notations in this subsection are adopted from \cite{MR4485999}. For every $h>0$, {let $\left\{{\cal T}_{h}\right\}_{h}$ be a mesh family such that each mesh ${\cal T}_{h}$ consists} of a finite number of non-empty disjoint open polygonal cells $K$ with { boundary $\partial K$} that { comprises of}  planar faces and covers $\Omega$ exactly.
% For every $h>0$,  the mesh ${\cal T}_{h}$ is such that  $\bar{\Omega}=\cup_{K\in {\cal T}_{h}}\bar{K}$\footnote{change this}
%    and comprises of a finite number of non-empty disjoint open polygon cells $K$ each with a planar face. 
The presence of hanging nodes is possible in such meshes, and the mesh-size is defined by $h:=\max_{K\in {\cal T}_{h}}h_K$, where $h_{K}$ represents the diameter of cell $K$. %{The boundary ${\cal F}_K$ of any mesh cell $K\in {\cal T}_{h}$ is partitioned {into interior and boundary faces} denoted as ${\cal F}_K^{\text{i}}:=\overline{{{\cal F}_K \cap {\Omega}}}$ and ${\cal F}_K^{\text{b}}:= {\cal F}_K\cap \partial \Omega.$ We partition the mesh as ${\cal T}_{h}:={\cal T}_{h}^{\text{i}}\cup {\cal T}_{h}^{\text{b}}.$ } 
A closed subset $F$ of $\Omega$ is denoted as a mesh face if it is a subset of an affine hyperplane $H_{F}$ with positive $(d-1)$-dimensional Hausdorff measure, and if either of the following two conditions holds: (i). There exist $K_{1}(F)$ and $K_{2}(F)$ in ${\cal T}_{h}$ such that $F \subset \partial K_{1}(F)\cap \partial K_{2}(F)\cap H_{F}$. Then, $F$ is called as an internal face. (ii). There exists $K(F)\in {\cal T}_{h}$ such that $F \subset \partial K(F) \cap \partial \Omega\cap H_{F}$. In this case, $F$ is referred to as a boundary face.
 The collection of all faces of ${\cal T}_{h}$ is denoted by  ${\cal F}_{h}$ and ${\cal F}^{0}_{h}={\cal F}_{h}\setminus\partial\Omega$ denotes the set of all interior faces. 
 %We assume that the mesh ${\cal T}_{h}~(h>0)$ is admissible \cite[Definition 1]{MR3283758}, that is, ${\cal T}_{h}$ allows for a matching simplicial submesh ${\cal T}^{1}_{h}$ (where each cell and face in ${\cal T}^{1}_{h}$ is a subset of a cell and a face of ${\cal T}_{h}$, respectively). 
 The mesh family ${\cal T}_{h}$  is shape-regular, that is,  %ll cells and faces of ${\cal T}_{h}$ have diameters that are uniformly comparable to those of the corresponding cells and faces of ${\cal T}^{1}_{h}$. Now, for 
 for any $K\in {\cal T}_{h}$ and $F\in {\cal F}_{K}$, the diameter $h_{F}$ is comparable to $h_{K}$, 
%\begin{equation}
  $ 2\rho^{2} h_{K} \leq h_{F} \leq h_{K},$
%\end{equation}
where $\rho$ is the mesh regularity parameter. In addition, there exists an integer $M_{\partial}$ depending on $\rho$ and $d$ such that 
 $\max_{K\in {\cal T}_{h}} \mbox{card}({\cal F}_{K})\leq M_{\partial}.$ {For any partition ${\cal T}_{h}$ of $\Omega$,  let  $H^{r}({\cal T}_{h})=  \Pi_{K\in {\cal T}_{h}} H^{r}(K)$. % and ${\cal P}_{r}(K),$ is the space of polynomials of degree at most $r\geq 0$ in each $K.
 Let $\mathcal{P}_r(K)$ denotes the polynomial space of degree $r \geq 0$ on a cell $K\in {\cal T}_{h}$ and $\mathcal{P}_r(\mathcal{F}_K) = \Pi_{F \in \mathcal{F}_K} \mathcal{P}_r(F)$ represents the face polynomial space. Moreover, we denote $\mathcal{P}_{-1}(\mathcal{F}_{K})=\{0\}.$ 
For any cell $X:=K\in {\cal T}_{h}$ or any face $X:=F$ of $K$, denote the $L^2$ inner product (resp. norm)  by  $(\cdot,\cdot)_{X}$ (resp. $\norm{\cdot}_{X}$).}

\begin{table}[H]
\centering
 \begin{tabular}{|l||p{6cm}|p{6cm}|}
    \hline
    \textbf{Properties} & \textbf{Local HHO Space} & \textbf{Global HHO Space} \\\hline
    Definition & ${\widehat{V}_{K}} := \mathcal{P}_{k+2}(K) \times \mathcal{P}_{r}(\mathcal{F}_{K}) \times \mathcal{P}_{k}(\mathcal{F}_{K})$ for $K\in\mathcal{T}_h$  with $r=k+1$ (resp. $k+2$) for HHO-A (resp. HHO-B) & $\widehat{V}_{h} := \mathcal{P}_{k+2}(\mathcal{T}_{h}) \times \mathcal{P}_{r}(\mathcal{F}_{h}) \times \mathcal{P}_{k}(\mathcal{F}_{h})$ with $r=k+1$ (resp. $k+2$) for HHO-A (resp. HHO-B)\\
    \hline 
    Generic Element & $\widehat{v}_{K} :=(v_K, v_{\mathcal{F}_K}, \zeta_{\mathcal{F}_K})$ & $\widehat{v}_{h} := (v_{h}, v_{\mathcal{F}_{h}}, \zeta_{\mathcal{F}_{h}})$ \\
    \hline
    Components & $v_K \in \mathcal{P}_{k+2}(K)$, $v_{\mathcal{F}_K} \in \mathcal{P}_{r}(\mathcal{F}_K)$, $\zeta_{\mathcal{F}_K} \in \mathcal{P}_k(\mathcal{F}_K)$ & $v_{h} = (v_{K})_{K\in \mathcal{T}_{h}}$, $v_{\mathcal{F}_h} = (v_{F})_{F\in \mathcal{F}_{h}}$, $\zeta_{\mathcal{F}_{h}} = (\zeta_{F})_{F\in \mathcal{F}_{h}}$ %\textcolor{blue}{such that $v_{\mathcal{F}_h}|_{F} = v_{F}$, $\zeta_{\mathcal{F}_h}|_{F} = \zeta_{F}$} 
    \\
    \hline
    Component-wise Meaning & Solution in cell $K$, trace on $\mathcal{F}_K$, and its normal derivative on the  cell boundary along the direction of the outward unit normal ${\bf n}_K$ & Solution in all cells, traces on all faces,  and its the normal derivative in the direction of the unit normal vector ${\bf n}_F$ orienting $F$ with single-valued interface unknowns \\
    \hline
    % Face Relations & {\color{red} \centering ---} & $v_{\mathcal{F}_K}|_{F} = v_{F}$, $\zeta_{\mathcal{F}_K}|_{F} = (\mathbf{n}_{K} \cdot \mathbf{n}_{F})\zeta_{F}$ \\
    % \hline
\end{tabular}
\caption{Definitions of local and global HHO Spaces.}
\label{HHO_Table}
\end{table}

%\medskip \noindent
% For any $K\in {\cal T}_{h},$ the {\it local HHO space} reads
% \begin{equation}
% V_{K}=\{\widehat{v}_{K}:=(v_{K},(v_{F})_{F\in {\cal F}_{K}},(\zeta_{F})_{F\in {\cal F}_{K}}): v_{K}\in {\cal P}_{k+2}(K),\; v_{F}\in {\cal P}_{r}(F),\;\zeta_{F}\in {\cal P}_{k}(F),\;\forall F\in {\cal F}_{K}\},\nonumber
% \end{equation}
% where ${\cal P}_{s}(X)~(X=K,F)$ denotes the polynomial space of degree $s\ge 0$ defined on $X$,
% $ r:= \begin{cases} 
% k+1 \text{ for HHO A}, \\
% k+2 \text{ for HHO B},
% \end{cases}$
% $v_{K},$ $v_{F},$ and $\zeta_{F}$ are the interior of element $K,$ trace and normal derivative at the face $F$, respectively.
% \medskip\noindent
% The {\it global HHO space} with the single-valued interface reads  
%  \begin{align*}
%  &V_{h}=\big\{\widehat{v}_h:=((v_{K})_{K\in {\cal T}_{h}},(v_{F})_{F\in {\cal F}_{h}},(\zeta_{F})_{F\in {\cal F}_{h}}):\nonumber\\&~~~~~~~~~~~v_{K}\in {\cal P}_{k+2}(K)\; \text{ for all } K\in{\cal T}_{h}, \; v_{F}\in {\cal P}_{r}(F),\;\zeta_{F}\in {\cal P}_{k}(F) \;\text{ for all } F\in {\cal F}_{h}\big\}.\nonumber
%  \end{align*}
%  The restriction of a generic element $\widehat{v}_h \in V_{h}$ to $K\in {\cal T}_{h}$ is denoted by $\widehat{v}_{K}=(v_{K},(v_{F})_{F\in {\cal F}_{K}},(\zeta_{F})_{F\in {\cal F}_{K}}) \in V_{K}.$ Moreover, denote $v_{h}=(v_{K})_{K\in {\cal T}_{h}}.$

\medskip
\noindent
For a $K \in {\cal T}_h$, the local (resp. global) HHO space ${\widehat{V}_{K}}$ (resp. ${\widehat{V}_h})$ and the details of discrete unknowns are presented in Table~\ref{HHO_Table}.  Impose the boundary conditions in $\widehat{V}_{h}$ to define
$ \displaystyle	\widehat{V}^{0}_{h}\coloneqq \{\widehat{v}_{h}\in \widehat{V}_{h}:v_{{\cal F}_{h}}|_{F}=\zeta_{{\cal F}_{h}}|_{F}=0\;\text{for all } F\in {\cal F}_{h}\cap\partial \Omega\}.$
\subsection{Local and global operators}\label{subsec-pro}
This subsection is dedicated to several useful local and global projection operators and the local reconstruction operator for HHO methods.

\medskip\noindent
{\it{$L^2$-projection operator.}} For any integer $s\ge 0$ and given $X=K \in {\cal T}_{h}$ or $F\in{\cal F}_{h}$ or ${\cal F}_{K}$ (the boundary of $K\in {\cal T}_{h}$),
 the $L^2$-projection operator $\Pi^{s}_{X}: L^{2}(X)\rightarrow {\cal P}_{s}(X)$ is defined as follows: For any $v\in L^{2}(X),$
\begin{eqnarray}
\left(\Pi^{s}_{X}(v),w\right)_{X}:=\left(v,w\right)_{X}\;\; \text{ for all }\; w\in {\cal P}_{s}(X).\label{pi-pro}
\end{eqnarray}
\medskip\noindent{\it{Interpolation operator \cite[eqn. 2.12]{MR4485999}.}} For HHO-A (in two dimensions), for all $F \in {\mathcal F}_h$,  let $J_{F}^{k+1}:H^{1}(F)\rightarrow {\cal P}_{k+1}(F)$ be the {\it interpolation operator} that satisfies: %generated by affine mappings. Moreover, $J_{F}^{k+1}$
%has the following properties defined as: 
for any $v\in H^{1}(F)$,
\begin{equation}\label{eq:def.JF}
(v-J_{F}^{k+1}(v),{w})_{F}=0\;\text{ for all } \;w \in {\cal P}_{k-1}(F) \quad\mbox{and}\quad  (\partial_{\bf t}(v-J_{F}^{k+1}(v)),w)_{F}=0\;\text{ for all }\;w \in {\cal P}_{k}(F),
\end{equation}
where the tangential derivative $\partial_{\bf t}(\bullet)$ is understood to act facewise. Next, we write \eqref{eq:def.JF} on the whole boundary of the mesh cell $K$. To end this, we define $J_{\mathcal{F}_{K}}^{k+1}:H^{1}(\mathcal{F}_{K})\to {\cal P}_{k+1}(\mathcal{F}_{K})$ such that $J_{\mathcal{F}_{K}}^{k+1}(v)= J_{F_{K}}^{k+1}(v|_{F})$ for all $v\in H^{1}(\mathcal{F}_{K}):=\left\{v\in L^{2}(\mathcal{F}_{K}): v|_{F}\in H^{1}(F) \text{ for all } F\in \mathcal{F}_{K}\right\}$; see \cite[eqn. 2.14]{MR4485999}.
%In addition,
% $J_{\mathcal{F}_{K}}|_{F}:= J_{F}$ for all $F\in \mathcal{F}_{K}.$

% let $J_{\mathcal{F}_{K}}^{k+1}:H^{1}(\mathcal{F}_{K})\rightarrow {\cal P}_{k+1}(\mathcal{F}_{K})$ be the {\it interpolation operator}  \cite[eqn. 2.14]{MR4485999} %generated by affine mappings. Moreover, $J_{F}^{k+1}$
% %has the following properties defined as: 
% with $v\in H^{1}(\mathcal{F}_{K})$,
% \begin{equation}
% (v-J_{\mathcal{F}_{K}}^{k+1}(v),\widetilde{\theta})_{\mathcal{F}_{K}}=0\;\text{ for all } \;\widetilde{\theta}\in {\cal P}_{k-1}(\mathcal{F}_{K}) \quad\mbox{and}\quad  (\partial_{\bf t}(v-J_{\mathcal{F}_{K}}^{k+1}(v)),\widetilde{\zeta})_{\mathcal{F}_{K}}=0\;\text{ for all }\;\widetilde{\zeta}\in {\cal P}_{k}(\mathcal{F}_{K}),
% \end{equation}
% {where the tangential derivative $\partial_{\bf t}(\bullet)$ is understood to act facewise}. In addition,
% $J_{\mathcal{F}_{K}}|_{F}:= J_{F}$ for all $F\in \mathcal{F}_{K}.$

%$\partial_{\bf t}v:={\bf t}_{F}\cdot \nabla v$ and ${\bf t}_{F}$ denotes the unit tangential vector along the face $F$.}

\medskip\noindent
{\it{Local reduction operator.}} For given $K\in {\cal T}_{h}$ and for all $v\in H^{2}(K)$, the {\it local reduction operator} $\widehat{I}^{k}_{K}:H^{2}(K)\rightarrow \widehat{V}_{K}$ reads: {For all $v\in H^{2}(K)$,}
\begin{equation}
 \widehat{I}^{k}_{K}(v):=(\Pi^{k+2}_{K}(v),{\cal Q}_{{\cal F}_{K}}(v),\Pi^{k}_{{\cal F}_{K}}(\partial_{\bf n} v)),\nonumber  
\end{equation}
where $\partial_{\bf n} v:={\bf n}_{K}\cdot\nabla v$ and ${\cal Q}_{\mathcal{F}_{K}}:=\begin{cases}
   J^{k+1}_{\mathcal{F}_{K}}  \text{ for HHO-A},\\
   \Pi^{k+2}_{\mathcal{F}_{K}} \text{ for HHO-B}.
\end{cases}$\\
%=J^{k+1}_{F}$(resp.  ${\cal Q}_{F}=\Pi^{k+1}_{F}$) for $r=k+1$(resp. $r=k+2$).
\medskip \noindent
{\it{Global reduction operator.}} The {\it global reduction operator} $\widehat{I}^{k}_{h}:H^{2}(\Omega)\rightarrow \widehat{V}_{h}$ reads: For all $v\in H^{2}(\Omega),$
\begin{equation}
 \widehat{I}^{k}_{h}(v):=(\Pi^{k+2}_{h}(v),{\cal Q}_{{\cal F}_{h}}(v),\Pi^{k}_{{\cal F}_{h}}(\partial_{\bf n} v)),\label{inter}
\end{equation}
where $(\Pi^{k+2}_{h}(v))|_{K}:=\Pi^{k+2}_{K}(v),$ $({\cal Q}_{{\cal F}_{h}}(v))|_{F}:={\cal Q}_{F}(v)=\begin{cases}
   J^{k+1}_{K}(v)  \text{ for HHO-A},\\
   \Pi^{k+2}_{K}(v) \text{ for HHO-B},
\end{cases}$  and $\Pi^{k}_{{\cal F}_{h}}(\partial_{\bf n} v)|_{F}:=\Pi^{k}_{F}({\bf n}_{F}\cdot\nabla v)$.\\
{\it{Local reconstruction operator.}} The {\it local reconstruction} operator ${\cal R}_{K}:\widehat{V}_{K}\rightarrow {\cal P}_{k+2}(K)$ is determined by the following equations: For given $\widehat{v}_{K}=(v_{K},v_{{\cal F}_{K}},\zeta_{{\cal F}_{K}})\in \widehat{V}_{K}$ and $K\in {\cal T}_{h},$
\begin{align}
(\nabla^{2}{\cal R}_{K}(\widehat{v}_{K}),\nabla^{2} w)_{K}&:=	(\nabla^{2} v_{K},\nabla^{2} w)_{K}+\sum_{F\in {\cal F}_{K}}\left(v_{K}-v_{F},\partial_{{\bf n}}\Delta w\right)_{F}-\sum_{F\in {\cal F}_{K}}(\partial_{{\bf n}}v_{K}-\zeta_{F},\partial_{{\bf n}{\bf n}} w)_{F}\nonumber\\&\qquad-\sum_{F\in {\cal F}_{K}}(\partial_{{ {\bf t}}}(v_{K}-v_{F}),\partial_{{\bf n}{{\bf t}}}w)_{F}\;\text{ for all }\; w\in{\cal P}_{k+2}(K),\label{loc_recon_op_2}\\
({\cal R}_{K}(\widehat{v}_{K}),\phi)_{K}&:=(v_{K},\phi)_{K}\;\text{ for all }\; \phi\in{\cal P}_{1}(K).\nonumber
\end{align} 
{\it{Elliptic type operator.}} The operator ${\cal E}_{K}$  is defined as ${\cal E}_{K}:={\cal R}_{K}\circ \widehat{I}^{k}_{K}:H^{2}(K)\rightarrow {\cal P}_{k+2}(K)$. Note that ${\cal E}_{K}$ is $H^{2}$-elliptic projection if $r=k+1$. But, it is no longer $H^{2}$-elliptic projection for $r=k+2$, but it maintains the properties of the elliptic operator, {see\cite[Equations~(4.2) and (5.2)]{MR4485999}}. More precisely, in the case of the HHO-A scheme, the elliptic projection ${\cal E}_{K}$ holds: For all $w\in H^{2}(K),$
\begin{equation}\label{eq:elli.proj.use_1}
(\nabla^{2}(w-{\cal E}_{K}(w)),\nabla^{2}\phi)_{K}=0 \quad \text{ for all } \phi \in {\cal P}_{k+2}(K). 
\end{equation}

\subsection{Discrete functional framework for HHO method}\label{subsec-stab}
{This subsection presents the bilinear forms in the HHO methods. In addition, we also introduce the intermediate projection operator $\widehat{E}_{h}$, which will be used in the subsequent analysis.}

\medskip\noindent The {\it local stabilization bilinear form } ${\cal S}_{K}:\widehat{V}_{K}\times \widehat{V}_{K}\rightarrow \mathbb{R}$ has two parts; that is, $S_K:={\cal S}_{K}^{1}+{\cal S}_{K}^{2}$, where \begin{align}
{\cal S}_{K}^{1}(\widehat{v}_{K},\widehat{w}_{K}):= \begin{cases}\sum_{F\in {\cal F}_{K}} h^{-3}_{K}(J^{k+1}_{F}(v_{F}-v_{K}),J^{k+1}_{F}(w_{F}-w_{K}))_{F}& \text{ for HHO A},\nonumber\\
\sum_{F\in {\cal F}_{K}} h^{-3}_{K}(v_{F}-v_{K},w_{F}-w_{K})_{F} &\text{ for HHO B},
%\nonumber\\&\qquad+\sum_{F\in {\cal F}_{K}} h^{-1}_{K}({\Pi }^{k}_{F}(\zeta_{F}-\partial_{{\bf n}}v_{K}),{\Pi}^{k}_{F}(\chi_{F}-\partial_{{\bf n}}w_{K}))_{F}\;\;\mbox{for}\;r=k+1,\label{stab_op1}
\end{cases} 
\end{align} and
%\begin{align}
${\cal S}_{K}^2(\widehat{v}_{K},\widehat{w}_{K}):=%\sum_{F\in {\cal F}_{K}} h^{-3}_{K}(v_{F}-v_{K},w_{F}-w_{K})_{F}\nonumber\\&\qquad+
\sum_{F\in {\cal F}_{K}} h^{-1}_{K}({\Pi }^{k}_{F}(\zeta_{F}-\partial_{{\bf n}}v_{K}),{\Pi}^{k}_{F}(\chi_{F}-\partial_{{\bf n}}w_{K}))_{F}.$  

\medskip\noindent
%{\textit{Combined reconstruction-stabilization bilinear form:}} 
For all $\widehat{v}_{h},\widehat{w}_{h} \in \widehat{V}_{h},$ the {\it bilinear form} $a_h:\widehat{V}_{h}\times \widehat{V}_{h}\rightarrow \mathbb{R}$ corresponding to $\nabla^2$ is defined by
\begin{align*}
a_{h}(\widehat{v}_{h},\widehat{w}_{h}):= \sum_{K\in{\cal T}_{h}}a_{K}(\widehat{v}_{K},\widehat{w}_{K}):= \sum_{K\in{\cal T}_{h}}(\nabla^{2}{\cal R}_{K}(\widehat{v}_{K}),\nabla^{2}{\cal R}_{K}(\widehat{w}_{K}))_{K}+{\cal S}_{h}(\widehat{v}_{h},\widehat{w}_{h}),\nonumber
\end{align*}
where ${\cal S}_{h}(\bullet,\bullet)$ denotes the \emph{global stabilization term} obtained by assembling the corresponding \emph{local stabilization forms}
$${\cal S}_{h}(\widehat{v}_{h},\widehat{w}_{h}):=\sum_{K\in{\cal T}_{h}}{\cal S}_{h}(\widehat{v}_{h},\widehat{w}_{h}).$$
\medskip\noindent
{\textit{Projection operator $\widehat{E}_{h}$.}}
Let ${\cal Y}=\{w\in H^{2}_{0}(\Omega): \Delta^{2}w\in L^{2}(\Omega)\}$. Motivated by \cite{ern2023convergence}, for a fixed time $t\in (0,T],$ define the projection operator $\widehat{E}_{h}: {\cal Y}\rightarrow\widehat{V}_{h}^{0}$ as
%\footnote{Y is subset of H2, do we need intersection here?}\\
\begin{equation}
a_{h}(\widehat{E}_{h}(w), \widehat{v}_{h})=( \Delta^{2}w,v_{h})\;\; \text{ for all } \widehat{v}_{h}\in \widehat{V}_{h}^{0}.\label{proj_op_1}
\end{equation} 
The well-posedness of the operator  $\widehat{E}_{h}%:=(E_{h},E_{{\cal F}_{h}},\widetilde{E}_{{\cal F}_{h}})
$ follows from boundedness and coercivity of the bilinear form $a_{h}(\bullet,\bullet)$~(see Lemma \ref{bdd_lmma_a_T}) and Lax-Milgram lemma.}
\subsection{Discrete norms}\label{subs_dis_nor}
\medskip\noindent
%{\textit{
%Local $H^2$-like seminorm:}} 
For all $\widehat{v}_{K}=(v_{K},v_{{\cal F}_{K}},\zeta_{{\cal F}_{K}})\in \widehat{V}_{K}$ %the  $H^2$-like seminorm on $\widehat{V}_{K}$ is defined by 
define 
\begin{equation}
|\widehat{v}_{K}|^{2}_{\widehat{V}_{K}}:=\norm{\nabla^{2}v_{K}}^{2}_{K}+\sum_{F\in {\cal F}_{K}}h^{-3}_{K}\norm{v_{F}-v_{K}}^{2}_{F}+\sum_{F\in {\cal F}_{K}}h_{K}^{-1}\norm{\zeta_{F}-\partial_{n}v_{K}}_{F}^{2}.\label{Seminorm_def_1}
\end{equation}
%{\textit{$H^2$-like norm:}} %Let $|v_{h}|_{\widehat{V}_{h}}^{2}=\sum_{K\in {\cal T}_{h}}|\widehat{v}_{K}|^{2}_{\widehat{V}_{K}}$
For all $\widehat{v}_{h}\in \widehat{V}_{h}^{0},$ define  $H^2$-like norm
%the $H^2$-like norm on the HHO space $\widehat{V}_{h}^{0}$ reads
%\begin{equation}
$\displaystyle \norm{\widehat{v}_{h}}_{\widehat{V}_{h}}^{2}:=\sum_{K\in {\cal T}_{h}}|\widehat{v}_{K}|^{2}_{\widehat{V}_{K}}$.
%\;\; \text{ for all }\widehat{v}_{h}\in \widehat{V}^{0}_{h}.$
%\end{equation}
 %{\textit{Energy norm:}} 
 For all $\widehat{v}_{h}\in \widehat{V}_{h}^0,$ the energy-norm is defined as
\begin{equation}
\norm{\widehat{v}_{h}}_{a,h}^{2}
%:=\sum_{K\in {\cal T}_{h}}a_{K}(\widehat{v}_{K},\widehat{v}_{K})
:=a_{h}(\widehat{v}_{h},\widehat{v}_{h}).\label{norm}
\end{equation}
%The norms $\norm{\cdot}_{\widehat{V}_{h}}$ and $\norm{\cdot}_{a,h}$ are equivalent in $\widehat{V}_{h}^{0},$ see Lemma \ref{bdd_lmma_a_T}.
%{\textit{Intermediate norm:}} 
For function $w\in H^{2+r}(K),\; r>\frac{3}{2}$ and  $K\in {\cal T}_{h}$, we also introduce the semi-norm $\norm{\bullet}_{\#,K}$ by
\begin{equation}
\norm{w}_{\#,K}^{2}:=\norm{\nabla^{2}w}^{2}_{K}+\sum_{F\in {\cal F}_{K}}(h^{3}_{K}\norm{\partial_{\bf n}\Delta w}^{2}_{F}+h_{K}\norm{\partial_{\bf n\bf n}w}^{2}_{F}+h_{K}\norm{\partial_{\bf n \bf t} w}^{2}_{F}).\label{m1_H_2_like_nm_def}
\end{equation}
\begin{lemma}\label{eq:lem.equiv}\rm{(}{norm equivalence and boundedness}\cite[Lemma $4.1$]{MR4485999} \rm{)}\label{bdd_lmma_a_T} For all $\widehat{v}_{K}\in \widehat{V}_{K}$ and $K\in {\cal T}_{h}$, it holds
$
|\widehat{v}_{K}|^{2}_{\widehat{V}_{K}}\lesssim  a_{K}(\widehat{v}_{K},\widehat{v}_{K}) \lesssim|\widehat{v}_{K}|^{2}_{\widehat{V}_{K}}$
where the constant absorbed in "$\lesssim$" depends only on the mesh shape-regularity, the polynomial degree $k$, and the space dimension $d.$ Consequently, 
$|\widehat{v}_{h}|^{2}_{\widehat{V}_{h}}\lesssim a_{h}(\widehat{v}_{h},\widehat{v}_{h})\lesssim |\widehat{v}_{h}|^{2}_{\widehat{V}_{h}}$ for all $\widehat{v}_{h} \in \widehat{V}_{h}$ and 
the norms $\norm{\widehat{v}_{h}}_{a,h}:=\big( a_{h}(\widehat{v}_{h},\widehat{v}_{h}) \big)^{1/2}$
and $\norm{\widehat{v}_{h}}_{\widehat{V}_{h}}$ are equivalent in $\widehat{V}_{h}^{0}$.  Moreover, there exists  $ \alpha>0 $ such that $a_{h}(\widehat{v}_{h},\widehat{w}_{h}) \le \alpha\norm{\widehat{v}_{h}}_{a,h}\norm{\widehat{w}_{h}}_{a,h}$ for all $\widehat{v}_{h},\widehat{w}_{h} \in \widehat{V}_{h}^{0}$.
 \end{lemma}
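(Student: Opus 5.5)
The plan is to prove the local two-sided bound $|\widehat{v}_{K}|^{2}_{\widehat{V}_{K}}\lesssim a_{K}(\widehat{v}_{K},\widehat{v}_{K})\lesssim|\widehat{v}_{K}|^{2}_{\widehat{V}_{K}}$ on a fixed cell. Everything global then follows by summation over $K\in{\cal T}_h$ and the Cauchy--Schwarz inequality. Throughout, two tools do all the work. The first is the discrete trace inequality $\norm{q}_{F}\lesssim h_K^{-1/2}\norm{q}_{K}$ for polynomials $q$, which relies on shape-regularity and the bound $\mathrm{card}({\cal F}_K)\le M_\partial$. The second is the inverse inequality $\norm{\nabla^j q}_K\lesssim h_K^{-j}\norm{q}_K$, applied to derivatives of $w\in{\cal P}_{k+2}(K)$.

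\emph{Upper bound.} Test \eqref{loc_recon_op_2} with $w={\cal R}_K(\widehat{v}_K)$. The traces of $\partial_{\bf n}\Delta w$, $\partial_{\bf nn}w$ and $\partial_{\bf nt}w$ are controlled by $h_K^{-3/2}$, $h_K^{-1/2}$ and $h_K^{-1/2}$ times $\norm{\nabla^2 w}_K$, respectively. Here it matters that $\nabla^2 w$ is the quantity being inverse-estimated, and third derivatives of $w$ are bounded by $h_K^{-1}\norm{\nabla^2 w}_K$.

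The tangential term needs care. For the face-jump $v_K-v_F$, an inverse inequality on the face gives $\norm{\partial_{\bf t}(v_K-v_F)}_F\lesssim h_K^{-1}\norm{v_K-v_F}_F$. With these estimates, Cauchy--Schwarz yields $\norm{\nabla^2{\cal R}_K\widehat{v}_K}_K\lesssim|\widehat{v}_K|_{\widehat{V}_K}$.

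For the stabilization, ${\cal S}^2_K$ is bounded directly because $\Pi^k_F$ is $L^2$-stable. For ${\cal S}_K^1$ in HHO-A, I need $L^2(F)$-stability of $J^{k+1}_F$ on polynomials of degree at most $k+2$, scaled by $h_F$. I would prove this by a scaling argument to the reference face, using the fact that $J^{k+1}_F$ is well defined and linear on a finite-dimensional space. HHO-B requires no such step.

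\emph{Lower bound.} This is the main obstacle. The plan is the standard HHO argument, carried out in three steps.

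First, take $w=v_K$ in \eqref{loc_recon_op_2}. This gives
\[
\norm{\nabla^2 v_K}_K^2=(\nabla^2{\cal R}_K\widehat v_K,\nabla^2 v_K)_K-\text{(face terms)}.
\]
The face terms are bounded by $\norm{\nabla^2 v_K}_K$ times the quantities $h_K^{-3/2}\norm{v_K-v_F}_F$ and $h_K^{-1/2}\norm{\zeta_F-\partial_{\bf n}v_K}_F$, which in turn must be controlled by the stabilization.

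Second, observe that $\partial_{\bf nn}w|_F\in{\cal P}_k(F)$, so the normal-derivative term only sees $\Pi^k_F(\zeta_F-\partial_{\bf n}v_K)$. This is exactly what ${\cal S}^2_K$ controls.

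Third, the full jump norms in $|\widehat v_K|_{\widehat V_K}$ are not yet fully seen. I would bound them using the triangle inequality around $\Pi^k_F$ (respectively $J^{k+1}_F$), together with the trace of ${\cal R}_K\widehat v_K - v_K$.

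The delicate part is HHO-A. There, the orthogonalities \eqref{eq:def.JF} must show that the terms $(v_K-v_F,\partial_{\bf n}\Delta w)_F$ and $(\partial_{\bf t}(v_K-v_F),\partial_{\bf nt}w)_F$ depend only on $J^{k+1}_F(v_K-v_F)$. This works because $\partial_{\bf n}\Delta w\in{\cal P}_{k-1}(F)$ and $\partial_{\bf nt}w\in{\cal P}_k(F)$. Combining the three steps with Young's inequality \eqref{Young's} closes the estimate $|\widehat v_K|^2_{\widehat V_K}\lesssim a_K(\widehat v_K,\widehat v_K)$.

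I expect the remaining unprojected parts of the jumps to be genuinely hard, since they are controlled only up to polynomial remainders. For those I would fall back on \cite[Lemma 4.1]{MR4485999}, which is exactly this statement.

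Finally, I sum over cells to obtain the global bound. On $\widehat V_h^0$, $\norm{\cdot}_{\widehat V_h}$ is a norm: if it vanishes, then $v_h$ is piecewise affine with matching traces and normal derivatives, and the clamped conditions force $v_h=0$ and hence $\widehat v_h=0$. Equivalence of the two norms on $\widehat V_h^0$ then follows. Boundedness of $a_h$ with $\alpha=1$ is Cauchy--Schwarz for the symmetric positive semidefinite form $a_h$.
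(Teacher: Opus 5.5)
The paper gives no proof of this lemma; it quotes it from \cite[Lemma~4.1]{MR4485999}. As a self-contained argument, most of your proposal is sound:
\begin{itemize}
\item the upper bound;
\item continuity with $\alpha=1$, by Cauchy--Schwarz for the symmetric positive semidefinite form $a_h$;
\item definiteness of $\norm{\cdot}_{\widehat V_h}$ on $\widehat V_h^0$;
\item your first two lower-bound steps. Testing \eqref{loc_recon_op_2} with $w=v_K$ and using $\partial_{\bf nn}v_K|_F\in{\cal P}_k(F)$, $\partial_{\bf n}\Delta v_K|_F\in{\cal P}_{k-1}(F)$, $\partial_{\bf nt}v_K|_F\in{\cal P}_k(F)$ together with the orthogonalities \eqref{eq:def.JF} gives $\norm{\nabla^2 v_K}_K\lesssim a_K(\widehat v_K,\widehat v_K)^{1/2}$.
\end{itemize}
The gap is the last step of the lower bound. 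You leave the control of the full jumps $h_K^{-3/2}\norm{v_F-v_K}_F$ and $h_K^{-1/2}\norm{\zeta_F-\partial_{\bf n}v_K}_F$ to \cite[Lemma~4.1]{MR4485999}, which is the statement being proved. So the lower bound is not actually established. Also, the ingredient you suggest, the trace of ${\cal R}_K\widehat v_K-v_K$, is not the relevant one.

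The missing idea is polynomial reproduction: the remainders depend only on $v_K$ and vanish on affine functions, so they are cheap. Let $q:=\Pi^1_K v_K$, so that $v_K-q\in H^2(K)^\perp$. Since $F$ is planar, $\partial_{\bf n}q$ is constant on $F$, hence
\[
\zeta_F-\partial_{\bf n}v_K=\Pi^k_F(\zeta_F-\partial_{\bf n}v_K)-(I-\Pi^k_F)\partial_{\bf n}(v_K-q).
\]
The discrete trace inequality and the Poincar\'e inequality of Theorem~\ref{lm1_in_dis_tr_11}(a) give $\norm{(I-\Pi^k_F)\partial_{\bf n}(v_K-q)}_F\le\norm{\nabla(v_K-q)}_F\lesssim h_K^{-1/2}\norm{\nabla(v_K-q)}_K\lesssim h_K^{1/2}\norm{\nabla^2v_K}_K$.

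For HHO-A, $J^{k+1}_F v_F=v_F$ and $J^{k+1}_F$ reproduces ${\cal P}_1(F)$, so $v_F-v_K=J^{k+1}_F(v_F-v_K)-(I-J^{k+1}_F)(v_K-q)$. You already need the scaled $L^2(F)$-stability of $J^{k+1}_F$ on ${\cal P}_{k+2}(F)$ for the upper bound; it gives $\norm{(I-J^{k+1}_F)(v_K-q)}_F\lesssim\norm{v_K-q}_F\lesssim h_K^{-1/2}\norm{v_K-q}_K\lesssim h_K^{3/2}\norm{\nabla^2v_K}_K$. For HHO-B, ${\cal S}^1_K$ is unprojected, so nothing is needed for the trace jump.

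After the weights $h_K^{-1/2}$ and $h_K^{-3/2}$, both remainders are therefore bounded by $\norm{\nabla^2v_K}_K$, which your first step already controls. Hence $|\widehat v_K|^2_{\widehat V_K}\lesssim a_K(\widehat v_K,\widehat v_K)$ follows with no appeal to the cited lemma.
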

\subsection{Approximation properties}\label{Sect_4}
This subsection states some well-known estimates, approximation properties, and an integration by parts formula that are frequently used in the proofs of the main results.

%{Integration by parts formula}
\noindent For any $K\in {\cal T}_{h}$ and for sufficiently smooth functions  $w$ and $v,$ {\it integration by parts} \cite[Section 2.1]{MR4485999} shows 
\begin{equation}
(\Delta^{2}w,v)_{K}
=(\nabla^{2}w,\nabla^{2}v)_{K}+\sum_{F\in {\cal F}_{K}}\big((\partial_{\bf n}\Delta w,v)_{F}-(\partial_{\bf n n} w,\partial_{\bf n}v)_{F}-(\partial_{\bf n t}w,\partial_{\bf t}v)_{F}\big).\label{int_part_1}
\end{equation}
\begin{lemma}\cite[Lemmas $4.3$-$4.4$]{MR4485999}\label{bd_elli_proj} For any $w\in H^{2+r}(\Omega)$ with $r>3/2,$ it holds 
 \begin{align}
{\cal S}_{K}(\widehat{I}_{h}^{k}(w),\widehat{I}_{h}^{k}(w))^{\frac{1}{2}}&\lesssim \norm{w-\Pi^{k+2}_{K}(w)}_{\#,K} ,\label{eq:stab.bound}\\
\norm{w-{\cal E}_{K}(w)}_{\#,K}&\lesssim \norm{w-\Pi^{k+2}_{K}(w)}_{\#,K}\label{eq:w.EK.bound}.  
 \end{align}
\end{lemma}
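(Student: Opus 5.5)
The two estimates in Lemma~\ref{bd_elli_proj} are local. I would prove them cellwise on a fixed $K\in{\cal T}_h$, working in $\widehat{V}_K$ and using $\widehat{I}^k_K(w)$. The common tool is the consistency of the reduction operator on polynomials: for $p\in{\cal P}_{k+2}(K)$ one checks that $\widehat{I}^k_K(p)=(p,{\cal Q}_{{\cal F}_K}(p),\Pi^k_{{\cal F}_K}(\partial_{\bf n}p))$ and that ${\cal R}_K(\widehat{I}^k_K(p))=p$. The second identity follows from \eqref{loc_recon_op_2}, because the face corrections vanish for the test polynomials. For HHO-A this uses that $\partial_{\bf n}\Delta w\in{\cal P}_{k-1}(F)$ and $\partial_{{\bf n}{\bf t}}w\in{\cal P}_k(F)$, together with the defining orthogonalities \eqref{eq:def.JF} of $J^{k+1}_F$ and $\partial_{{\bf n}{\bf n}}w\in{\cal P}_k(F)$. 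For HHO-B it uses exactness of $\Pi^{k+2}_F$.

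For \eqref{eq:stab.bound}, I would set $\eta:=w-\Pi^{k+2}_K(w)$. The first step is to note that the stabilization of $\widehat{I}^k_K(w)$ only sees $\eta$. For HHO-A, $J^{k+1}_F(J^{k+1}_F w-\Pi^{k+2}_Kw)=J^{k+1}_F(\eta|_F)$, using $J^{k+1}_F\circ J^{k+1}_F=J^{k+1}_F$. Similarly $\Pi^k_F(\Pi^k_F\partial_{\bf n}w-\partial_{\bf n}\Pi^{k+2}_Kw)=\Pi^k_F(\partial_{\bf n}\eta)$. Hence ${\cal S}_K(\widehat{I}^k_K w,\widehat{I}^k_K w)\lesssim \sum_F\big(h_K^{-3}\norm{J^{k+1}_F\eta}_F^2+h_K^{-1}\norm{\Pi^k_F\partial_{\bf n}\eta}_F^2\big)$.

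The main obstacle is bounding these face traces of $\eta$ by $\norm{\eta}_{\#,K}$. The $\#$-norm contains only second derivatives in the cell and higher-derivative face traces, so there is no $L^2$ or $H^1$ control of $\eta$ itself. I would exploit the orthogonality of $\eta$ to ${\cal P}_{k+2}(K)$: in particular $(\eta,\phi)_K=0$ for $\phi\in{\cal P}_1(K)$. Poincar\'e-type inequalities then give $\norm{\eta}_K\lesssim h_K^2\norm{\nabla^2\eta}_K$ and $\norm{\nabla\eta}_K\lesssim h_K\norm{\nabla^2\eta}_K$; the gradient bound needs $\int_K\nabla\eta=0$, which follows from orthogonality to ${\cal P}_{k+2}\supset{\cal P}_1$ after integrating by parts. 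Combining these with multiplicative trace inequalities gives $h_K^{-3}\norm{\eta}_F^2+h_K^{-1}\norm{\partial_{\bf n}\eta}_F^2\lesssim\norm{\nabla^2\eta}_K^2$. For HHO-A the $L^2(F)$-stability $\norm{J^{k+1}_F v}_F\lesssim\norm{v}_F+h_F\norm{\partial_{\bf t}v}_F$ is also needed, and the tangential term is controlled by $\norm{\nabla\eta}_F$, again via traces. This yields \eqref{eq:stab.bound}.

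For \eqref{eq:w.EK.bound}, I would write $w-{\cal E}_Kw=\eta+(\Pi^{k+2}_Kw-{\cal E}_Kw)$. By polynomial consistency, $\Pi^{k+2}_Kw-{\cal E}_Kw=-{\cal R}_K(\widehat{I}^k_K w-\widehat{I}^k_K\Pi^{k+2}_K w)$, which is a polynomial $p$. Testing \eqref{loc_recon_op_2} with $p$ expresses $\norm{\nabla^2 p}_K^2$ as a sum of terms of the form $(\nabla^2\eta,\nabla^2p)_K$ plus face pairings of (projected) traces of $\eta$ against $\partial_{\bf n}\Delta p$, $\partial_{{\bf n}{\bf n}}p$, $\partial_{{\bf n}{\bf t}}p$. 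I would bound the traces of $\eta$ as in the previous step, bound the traces of $p$ by discrete trace and inverse inequalities, $h_K^{3/2}\norm{\partial_{\bf n}\Delta p}_F+h_K^{1/2}\norm{\nabla^2p}_F\lesssim\norm{\nabla^2p}_K$, and then divide through to get $\norm{\nabla^2 p}_K\lesssim\norm{\eta}_{\#,K}$. Finally, $\norm{p}_{\#,K}\lesssim\norm{\nabla^2p}_K$ by the same inverse and trace inequalities, and the triangle inequality concludes.
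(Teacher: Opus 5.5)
Your architecture is sound and is the standard one behind the cited result; the paper gives no proof of its own and defers to \cite[Lemmas~4.3--4.4]{MR4485999}. You reduce everything to $\eta:=w-\Pi^{k+2}_K(w)$ by polynomial consistency, control the face traces of $\eta$ by $\norm{\nabla^2\eta}_K$, and control the polynomial $\Pi^{k+2}_K(w)-{\cal E}_K(w)=-{\cal R}_K(\widehat{I}^k_K(\eta))$ by discrete trace and inverse inequalities.

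One step, however, is false as written, and every face-trace bound in both parts of your argument passes through it. You claim that $\int_K\nabla\eta=0$ follows from $\eta\perp{\cal P}_{k+2}(K)$ by integration by parts. Integration by parts only gives $\int_K\partial_i\eta=\int_{\partial K}\eta\,n_i$, and $L^2(K)$-orthogonality says nothing about boundary values. Concretely, take $K=(0,1)^2$, $k=0$, and the shifted Legendre polynomial $\eta(x,y)=20x^3-30x^2+12x-1$. It is $L^2(K)$-orthogonal to ${\cal P}_2(K)$, so it has the form $w-\Pi^{2}_K(w)$ on $K$. Yet $\int_K\partial_x\eta=\int_0^1\big(\eta(1,y)-\eta(0,y)\big)\,dy=2$.

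The bound you actually need, $\norm{\nabla\eta}_K\lesssim h_K\norm{\nabla^2\eta}_K$, is nevertheless true and requires only orthogonality to ${\cal P}_1(K)$; it is the Poincar\'e inequality in Theorem~\ref{lm1_in_dis_tr_11}(a). One way to see it:
\begin{itemize}
\item Pick $q\in{\cal P}_1(K)$ with $\norm{\eta-q}_K+h_K\norm{\nabla(\eta-q)}_K\lesssim h_K^2\norm{\nabla^2\eta}_K$ (Bramble--Hilbert on the shape-regular cell).
\item Since $\Pi^1_K(\eta)=0$, best approximation gives $\norm{\eta}_K\le\norm{\eta-q}_K$, hence $\norm{q}_K\le 2\norm{\eta-q}_K$.
\item The inverse inequality $\norm{\nabla q}_K\lesssim h_K^{-1}\norm{q}_K$ then yields $\norm{\nabla\eta}_K\le\norm{\nabla(\eta-q)}_K+\norm{\nabla q}_K\lesssim h_K\norm{\nabla^2\eta}_K$.
\end{itemize}
With this substitution both parts go through as you describe. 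Note also that, since $\Pi^{k+2}_K(\eta)=0$, the cell term in \eqref{loc_recon_op_2} vanishes, so $\norm{\nabla^2 p}_K^2$ is a pure sum of face pairings rather than containing $(\nabla^2\eta,\nabla^2 p)_K$; this is harmless.

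For HHO-A you could shortcut the cell part of \eqref{eq:w.EK.bound} with \eqref{eq:elli.proj.use_1}, which gives $\norm{\nabla^2(w-{\cal E}_K(w))}_K\le\norm{\nabla^2\eta}_K$ by best approximation. Your direct bound on ${\cal R}_K(\widehat{I}^k_K(\eta))$ has the merit of covering HHO-B verbatim, where ${\cal E}_K$ is not an elliptic projection.
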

\begin{thm}\label{lm1_in_dis_tr_11}
Let ${\cal T}_{h}$ be a shape-regular mesh sequence and let the polynomial degree $k\geq 0$. For each $K\in{\cal T}_{h}$ and $F\in {\cal F}_{K},$ the following estimates hold.

\medskip
\noindent
(a)  {\rm (discrete trace,  discrete inverse inequalities, Poincar\'e inequality)\cite[Lemmas $2.2$ and $(2.8)$]{MR4485999} }. For all  $w_{h}\in {\cal P}_{k}(K)$, 
$\norm{w_{h}}_{F}\lesssim h^{-\frac{1}{2}}_{K}\norm{w_{h}}_{K},\;
 \norm{\nabla w_{h}}_{K}\lesssim h^{-1}_{K}\norm{w_{h}}_{K},\;\;
 \text{and } \norm{\partial_{\bf t} w_{h}}_{F}\lesssim h^{-1}_{K}\norm{w_{h}}_{F}.$ Let $H^{2}(K)^{\perp}:=\big\{w\in H^{2}(K): \big(w,\zeta\big)_{K}=0 \;\;\text{ for all } \zeta\in {\cal P}_{1}(K)\big\}.$ Then for all $ v\in H^{2}(K)^{\perp},$  $h^{-2}_{K}\norm{v}_{K}+h^{-1}_{K}\norm{\nabla v}_{K}\lesssim \norm{\nabla^{2}v}_{K},$
   where the constants in "$\lesssim$" in all the inequalities depend on the mesh shape-regularity, the polynomial degree $k$, and the space dimension $d$.
%\medskip
%\noindent
%(b) {\rm (Poincar\'e inequality on cell)}. For all $ v\in H^{2}(K)^{\perp},$ where $H^{2}(K)^{\perp}:=\big\{w\in H^{2}(K): \big(w,\zeta\big)_{K}=0 \;\;\text{ for all } \zeta\in {\cal P}_{1}(K)\big\},$
%where the constants in $\lesssim$ in the above inequality depend on the mesh shape-regularity and the space dimension $d$.

\noindent
(b) {\rm (trace inequality)\cite[Lemma $2.2$]{MR4387067}}. For all  $v\in H^{s}(K),\;s\in(\frac{1}{2},1],$
we have $\norm{v}_{F}\lesssim \big(h^{-\frac{1}{2}}_{K}\norm{v}_{K}+h^{s-\frac{1}{2}}_{K}|v|_{H^{s}(K)}\big),$
where the constants in "$\lesssim$" depend on the mesh shape-regularity and the space dimension $d$.\\

\noindent
(c) {\rm (discrete energy estimate \cite[Theorem $4.6$]{MR4485999})}. For any  $w\in H^{2+r}(\Omega) \text{ with }r>3/2,$ it holds
\begin{align*}
&\norm{\widehat{I}^{k}_{h}(w)-\widehat{E}_{h}(w)}_{a,h}
\lesssim \big(\sum_{K \in {\cal T}_h}\norm{w-\Pi_K^{k+2}w}^2_{\#,K}\big)^{1/2}.
\end{align*}
Moreover, letting $\beta:=\min(r-1,1)\in (\frac{1}{2},1]$, we have
\begin{align*}
 \big(\sum_{K \in {\cal T}_h}\norm{w-\Pi_K^{k+2}w}^2_{\#,K}\big)^{1/2} \lesssim \begin{cases}h\big(|{w}|_{H^{3}({\cal T}_{h})}+h^{\beta}|{w}|_{H^{3+\beta}({\cal T}_{h})}\big)  \; \;\text{ for }k =0\;\;\text{if}\;\;w \in H^{3}({\cal T}_{h})\cap{H^{3+\beta}({\cal T}_{h})} ,
\\ h^{k+1}|{w}|_{H^{k+3}({\cal T}_{h})} \; \;\text{ for }k \ge 1\;\;\text{if}\;\;w \in H^{k+3}({\cal T}_{h}).
\end{cases}
\end{align*}

\medskip
\noindent
(d) {\rm (an approximation property)\cite[Lemma $2.5$]{MR4387067}}. For all  $v\in H^{r}(K), \; r\in [0,s+1],\; m\in \{0,\dots, \lfloor r\rfloor\},\; \mbox{and}\; s\geq0$,
$|v-\Pi^{s}_{K}(v)|_{H^{m}(K)}\lesssim h^{r-m}_{K}|v|_{H^{r}(K)}  \text{ for all } v\in H^{r}(K),$
where constants in "$\lesssim$"  depend on   $s.$

%\medskip\noindent
%(d) {\rm (discrete Sobolev embedding)}. Let $1\leq q\leq p^{*}$ if $1\leq p <d~(d=2,3)$, where $p*=\begin{cases}
   % \frac{d p}{d-p} \; \text{if}\;p<d,\\
  %  +\infty \;\; \text{if}\; p\geq d
%\end{cases}$ and $1\leq q <+\infty $ if $p\geq d.$ Then, it holds $\norm{v_h}_{L^{q}(\Omega)} \lesssim \norm{\widehat{v}_{h}}_{1,p,h}$ for all $\widehat{v}_{h}\in V_{h}^{0},$ where $v_h=(v_K)_{K \in {\cal T}_h}$,  $\norm{\widehat{v}_{h}}_{1,p,h}=\big(\displaystyle\sum_{K\in {\cal K}_{h}}\norm{\widehat{v}_{K}}_{1,p,K}^{2}\big)^{\frac{1}{2}}$ with $\norm{\widehat{v}_{K}}_{1,p,K}=\big(\norm{D v_{K}}_{L^{p}(K)^{d}}^{2}+\displaystyle\sum_{F\in {\cal F}_{T}}h^{1-p}\norm{v_{F}-v_{K}}^{2}_{F}\big)^{\frac{1}{2}}.$ The constants in $\lesssim$ in the above inequality depend on the mesh shape-regularity, $p$, $q$, the polynomial degree $k$, and the space dimension $d.$

\noindent
(e)  {\rm (discrete Poincar\'e inequality)\cite[Proposition $5.4$]{MR3647954}}. For all  $\widehat{v}_{h}=(v_{h},v_{{\cal F}_{h}},\zeta_{{\cal F}_{h}})\in \widehat{V}^{0}_{h}$, it holds that 
%and $ 1\leq q < \infty$,
\begin{equation}
\norm{v_h}\le  C_P \norm{\widehat{v}_{h}}_{a,h}. \label{poinacre}
\end{equation}
The constant $C_P$  depends on the mesh shape-regularity, $k$, and the space dimension $d$.
\end{thm}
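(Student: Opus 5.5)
Theorem~\ref{lm1_in_dis_tr_11} collects tools whose items (a), (b), (d), (e) are quoted from the cited references. The plan is therefore to verify each item by standard arguments and to spend real effort only on (c), where the HHO structure enters.

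\textbf{Items (a), (b), (d).} These are proved on a reference configuration and transported by scaling, using shape-regularity.
\begin{itemize}
\item For the discrete trace and inverse inequalities in (a), equivalence of norms on the finite-dimensional space ${\cal P}_k$ on a reference simplex gives the bound there. Scaling by $h_K$ (affine map) and covering the polytope $K$ by a uniformly bounded number of sub-simplices (possible since $\mathrm{card}({\cal F}_K)\le M_\partial$) transfers it to $K$.
\item The Poincar\'e inequality on $H^2(K)^\perp$ follows by applying the first-order Poincar\'e inequality twice, on a star-shaped/shape-regular $K$. Orthogonality to ${\cal P}_1$ kills the mean of $v$ and, after an integration by parts against linear functions, controls the mean of $\nabla v$. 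Each application contributes one factor of $h_K$.
\item The continuous trace inequality (b) is the scaled multiplicative trace theorem for $H^s$, $s>1/2$.
\item Item (d) is the Bramble--Hilbert lemma combined with scaling: $\Pi_K^s$ preserves ${\cal P}_s$ and is $L^2$-stable. For non-integer $r$, interpolate between integer orders.
\end{itemize}

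\textbf{Item (c).} Set $\widehat{e}_h:=\widehat{I}^k_h(w)-\widehat{E}_h(w)\in\widehat{V}^0_h$; the boundary values vanish because $w\in H^2_0(\Omega)$. By coercivity (Lemma~\ref{bdd_lmma_a_T}) and the definition \eqref{proj_op_1},
\[
\norm{\widehat{e}_h}_{a,h}^2=a_h(\widehat{I}^k_h w,\widehat{e}_h)-(\Delta^2 w,e_h).
\]
I will expand $a_K(\widehat{I}^k_K w,\widehat{e}_K)$ using \eqref{loc_recon_op_2} with the test function ${\cal E}_K w$, and expand $(\Delta^2 w,e_K)_K$ using \eqref{int_part_1}. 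Single-valuedness of the face unknowns, together with the continuity of the traces of $w$, lets me insert $v_F,\zeta_F$ on interior faces, while the boundary faces vanish. The difference then reduces to:
\begin{itemize}
\item face terms of the form $(\partial_{\bf n}\Delta(w-{\cal E}_Kw),e_K-e_F)_F$, $(\partial_{\bf nn}(w-{\cal E}_Kw),\partial_{\bf n}e_K-\zeta_F)_F$ and the tangential analogue;
\item the stabilization term ${\cal S}_K(\widehat{I}^k_Kw,\widehat{e}_K)$.
\end{itemize}
The volume terms cancel by definition of ${\cal R}_K$. Each face term is bounded by Cauchy--Schwarz with weights $h_K^{3/2}$ and $h_K^{-3/2}$ (resp.\ $h_K^{1/2}$ and $h_K^{-1/2}$). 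This gives $\norm{w-{\cal E}_Kw}_{\#,K}\,|\widehat{e}_K|_{\widehat{V}_K}$, and \eqref{eq:w.EK.bound} converts the first factor into $\norm{w-\Pi^{k+2}_Kw}_{\#,K}$. The stabilization term is bounded using \eqref{eq:stab.bound}. Summing over $K$ and dividing by $\norm{\widehat e_h}_{a,h}$ yields the first estimate of (c).

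The \textbf{main obstacle} is the tangential and trace face terms in HHO-A, where $v_F$ enters only through $J^{k+1}_F$ and $\partial_{\bf t}$. The definition \eqref{eq:def.JF} must be used to show that only the $J$-projected differences appear, so that the stabilization ${\cal S}^1_K$ still controls them. I would follow \cite{MR4485999} closely here.

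For the rate bound in (c), apply (d) together with (b) facewise to each term of $\norm{w-\Pi^{k+2}_Kw}_{\#,K}$. For $k\ge1$, $w\in H^{k+3}$ gives $h^{k+1}$. For $k=0$, the term $h_K^{3/2}\norm{\partial_{\bf n}\Delta(w-\Pi^2_Kw)}_F$ requires the trace inequality (b) with $s=\beta$ applied to third derivatives; this produces the extra $h^{1+\beta}|w|_{H^{3+\beta}}$ term.

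\textbf{Item (e).} I would invoke the discrete Sobolev/Poincar\'e argument of \cite{MR3647954}. Use the $H^1$-like HHO norm built from $(v_h,v_{{\cal F}_h})$, bound it by $\norm{\widehat v_h}_{\widehat V_h}$ via the discrete inverse and trace inequalities of (a), and then apply Lemma~\ref{bdd_lmma_a_T}.
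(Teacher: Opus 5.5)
The paper gives no proof of this theorem: every item is quoted from \cite{MR4485999}, \cite{MR4387067}, \cite{MR3647954}, so your reconstruction has to stand on its own. Items (b), (d) and your consistency argument for (c) are the standard ones. \textbf{The sketch of (e), however, contains a step that fails.} You propose to bound the $H^1$-like norm $\big(\sum_{K}\|\nabla v_K\|_K^2+\sum_{K}\sum_{F\in{\cal F}_K}h_K^{-1}\|v_F-v_K\|_F^2\big)^{1/2}$ by $\|\widehat v_h\|_{\widehat V_h}$ ``via the discrete inverse and trace inequalities of (a)''. Inverse inequalities bound higher derivatives by lower ones, not the reverse. Moreover, $\|\nabla v_K\|_K$ is not controlled by any local quantity. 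Take $v_K=\ell|_K$ for a global affine $\ell$, $v_F=\ell|_F$ and $\zeta_F=\partial_{{\bf n}_F}\ell$ on interior faces. Then every cell term and every interior-face term of $\|\widehat v_h\|_{\widehat V_h}$ vanishes, and $\nabla\ell$ is seen only on boundary faces, where $v_F=\zeta_F=0$. So bounding the broken gradient is itself a global Poincar\'e inequality.

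The missing idea is to apply the discrete Poincar\'e inequality of \cite{MR3647954} twice.
\begin{itemize}
\item First apply it componentwise to the hybrid gradient $\big((\nabla v_K)_{K},(\zeta_F{\bf n}_F+\nabla_{\bf t}v_F)_{F}\big)$, which vanishes on boundary faces. Its $H^1$-like norm is bounded by $\|\widehat v_h\|_{\widehat V_h}$, because $\|\nabla v_K|_F-\zeta_F{\bf n}_F-\nabla_{\bf t}v_F\|_F\le\|\partial_{\bf n}v_K-\zeta_F\|_F+\|\nabla_{\bf t}(v_K-v_F)\|_F$. The face inverse inequality then gives $\|\nabla_{\bf t}(v_K-v_F)\|_F\lesssim h_K^{-1}\|v_K-v_F\|_F$; this is where (a) legitimately enters. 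The result is $\|\nabla_h v_h\|\lesssim\|\widehat v_h\|_{\widehat V_h}$.
\item Then apply the inequality to $(v_h,v_{{\cal F}_h})$, use $h_K^{-1}\|v_F-v_K\|_F^2\le\mathrm{diam}(\Omega)^2h_K^{-3}\|v_F-v_K\|_F^2$, and conclude with Lemma~\ref{bdd_lmma_a_T}.
\end{itemize}

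There are also three smaller corrections.
\begin{itemize}
\item In (c), the volume terms $\sum_K(\nabla^2(w-{\cal E}_Kw),\nabla^2e_K)_K$ do not cancel ``by definition of ${\cal R}_K$''. They vanish only for HHO-A, where ${\cal E}_K$ is the $H^2$-elliptic projection \eqref{eq:elli.proj.use_1}, which relies on \eqref{eq:def.JF}. For HHO-B the paper itself notes that ${\cal E}_K$ is not an elliptic projection. There the term survives and must be bounded by $\|w-{\cal E}_Kw\|_{\#,K}|\widehat e_K|_{\widehat V_K}$. This is harmless, but (c) is used for both variants.
\item The ``main obstacle'' you single out is not one. $|\cdot|_{\widehat V_K}$ already contains the unprojected $h_K^{-3}\|v_F-v_K\|_F^2$. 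Once Lemma~\ref{bdd_lmma_a_T} is available, all face terms are controlled directly, including the tangential one after the face inverse inequality. The HHO-A-specific work with $J^{k+1}_F$ sits inside that norm equivalence.
\item In (a), integrating by parts against affine functions does not make the mean of $\nabla v$ vanish, since a boundary term remains. Instead, subtract from $v$ the affine function with the same mean and mean gradient, and apply the first-order Poincar\'e inequality twice to the difference $w$. Then observe that $v=w-\Pi^1_Kw$, and conclude with the $L^2$-stability of $\Pi^1_K$ and an inverse inequality.
\end{itemize}
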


\medskip \noindent 
\subsection{Temporal discretization setting}
Let $0=t_{0}<t_{1}<t_{2}<\cdots<t_{N-1}<t_{N}=T$ be the partition of $[0,T]$ with a uniform step size $\Delta t=t_{i+1}-t_{i} \text{ for any }  i=0,1,\cdots,N-1.$ For any  function $\phi: \left[0,T\right]\rightarrow \Omega,$ define
 \begin{subequations}\label{temp_dis}
\begin{align}
& \phi^n  := \phi(x,t_n)= \phi(t_n), \quad \phi^{n+1/2}:= \frac{1}{2}\left(\phi^{n+1}+\phi^n \right),
\quad \phi^{n,1/4} :=\frac{1}{4} \left(\phi^{n+1}+2\phi^n+  \phi^{n-1}\right),\label{P3 phi_c1 }\\
& \bar{\partial}_t \phi^{n+1/2} :=\frac{\phi^{n+1}-\phi^{n}}{\Delta t} ,\quad 
{\bar{\partial}}^2_t \phi^n := \frac{\phi^{n+1}-2\phi^n+\phi^{n-1}}{\Delta t^2} ,\quad 
\delta_t \phi^n := \frac{\phi^{n+1}-\phi^{n-1}}{2\Delta t}. \label{P3 phi_c2}
\end{align}
 \end{subequations}
 {It should be noted that definitions \eqref{temp_dis} extend to an element in HHO space.  For instance, if 
$
\widehat{v}_{h} ^n = (v_{h}^n,v_{\mathcal{F}_{h}}^n,\zeta_{\mathcal{F}_{h}}^n)$ $\in \widehat{V}_h,$
then \(
\widehat{v}_{h}^{n+1/2} = (v_{h}^{n+1/2},v_{\mathcal{F}_{h}}^{n+1/2},\zeta_{\mathcal{F}_{h}}^{n+1/2}),
\) and so on.

\medskip\noindent 
For  sequences $\{g_n\}$ and $\{h_n\}$, the \textit{summation by parts formulae} given below holds:
\begin{subequations}
 \begin{align}
 &\sum_{n=1}^m g_n(h_n-h_{n-1})= g_mh_m-g_1h_0-\sum_{n=1}^{m-1} (g_{n+1}-g_{n})h_{n},\label{sum_by_parts}\\
 &\sum_{n=0}^m g_{n+1}(h_{n+1}-h_{n})= g_{m+1}h_{m+1}-g_0h_0-\sum_{n=1}^{m} (g_{n+1}-g_{n})h_{n}.\label{sum_by_parts2}
 \end{align}
\end{subequations}
 
\medskip \noindent
 % \corr{}{}{[RK: The question mark on labels are not used so either we will use them in the correction of the manuscript or we need to comment these labels ... for example this label of (2.14), this is not used by (2.14), we have used like Lemm2.4(c)]. To resolve quickly I included refcheck package. At the moment I haven't commented on anyone, in corrections we may need these..}
%\section{Proof of Theorem \ref{L2_bound}}
{
\begin{lemma}{\rm (truncation error bounds  \cite[Theorem~4.4]{MR3003381},\cite[Lemma~4.1]{deka}\label{Sec_lma_1})}
Let $\varphi \in C^2([0,T]; L^2(\Omega))$ and  in addition, if
%\begin{itemize}
    ($a$)  $\varphi_{ttt} \in L^\infty(0,t_1;L^2(\Omega))$, then  $\norm{2 (\Delta t)^{-1}(\bar{\partial}_t \varphi^{1/2}-\varphi_t^0)-\varphi_{tt}^{1/2}} \lesssim  \Delta t \norm{\varphi_{ttt}}_{L^\infty(0,t_1;L^2(\Omega))},$\newline 
    ($b$)  $\varphi \in H^3(L^2(\Omega))$, then for any $1\le m\le n$, with $1\le n \le N-1$, there holds 
$$ \displaystyle \Delta t \sum_{n=1}^{m}\norm{\bar{\partial}_t \varphi^{n+1/2}- \varphi_t^{n+1/2}}^2\lesssim ( \Delta t)^4 \norm{\varphi_{ttt}}^2_{L^2(L^2(\Omega))} ,$$ and 
    ($c$)  $\varphi \in H^4(L^2(\Omega))$, then for any $1\le m\le n$, there holds 
    $$\displaystyle \Delta t \sum_{n=1}^{m} 
\norm{\bar{\partial}^2_t \varphi^n -\varphi_{tt}^{n,1/4} }^2 \lesssim  ( \Delta t )^{4}\norm{\varphi_{tttt}}^2_{L^2(L^2(\Omega))}.$$
%\end{itemize}
 \end{lemma}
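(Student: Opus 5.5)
The three bounds are purely temporal consistency estimates. I will prove each by Taylor expansion with integral remainder in the Bochner sense, valued in $L^2(\Omega)$, followed by Minkowski's integral inequality and the Cauchy--Schwarz inequality in time.

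For (a), with $\bar\partial_t\varphi^{1/2}=(\varphi^1-\varphi^0)/\Delta t$, I write
\[
\varphi^1=\varphi^0+\Delta t\,\varphi_t^0+\int_0^{t_1}(t_1-s)\varphi_{tt}(s)\,ds .
\]
This gives
\[
2(\Delta t)^{-1}(\bar\partial_t\varphi^{1/2}-\varphi_t^0)=\frac{2}{\Delta t^2}\int_0^{t_1}(t_1-s)\varphi_{tt}(s)\,ds .
\]
The weights $2(t_1-s)/\Delta t^2$ integrate to $1$ over $(0,t_1)$. Hence the quantity equals a weighted average of $\varphi_{tt}$ over $(0,t_1)$, minus $\varphi_{tt}^{1/2}$. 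I interpret $\varphi_{tt}^{1/2}$ either as $\varphi_{tt}(t_{1/2})$ or as $\tfrac12(\varphi_{tt}^0+\varphi_{tt}^1)$. In either case $\varphi_{tt}(s)-\varphi_{tt}^{1/2}=\int\varphi_{ttt}$ over an interval of length at most $\Delta t$. Taking $L^2(\Omega)$ norms inside the integral then yields the bound $\Delta t\,\|\varphi_{ttt}\|_{L^\infty(0,t_1;L^2)}$.

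For (b), on each interval $(t_n,t_{n+1})$ I use the Peano kernel representation of the midpoint/trapezoid error. With $\varphi_t^{n+1/2}=\tfrac12(\varphi_t^n+\varphi_t^{n+1})$,
\[
\bar\partial_t\varphi^{n+1/2}-\varphi_t^{n+1/2}=\frac{1}{\Delta t}\int_{t_n}^{t_{n+1}}\Big(\varphi_t(s)-\tfrac12\big(\varphi_t^n+\varphi_t^{n+1}\big)\Big)\,ds=-\frac{1}{2\Delta t}\int_{t_n}^{t_{n+1}}(s-t_n)(t_{n+1}-s)\,\varphi_{ttt}(s)\,ds .
\]
This is the standard trapezoidal error kernel, and it is exact for quadratics. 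The kernel is bounded by $\Delta t^2/4$, so Cauchy--Schwarz gives
\[
\|\cdot\|^2\lesssim \Delta t^3\int_{t_n}^{t_{n+1}}\|\varphi_{ttt}\|^2\,ds .
\]
Multiplying by $\Delta t$ and summing over $n$ produces $(\Delta t)^4\|\varphi_{ttt}\|^2_{L^2(L^2)}$.

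For (c), I expand $\varphi^{n\pm1}$ and $\varphi_{tt}^{n\pm1}$ about $t_n$ up to order three, with integral remainders involving $\varphi_{tttt}$ over $(t_{n-1},t_{n+1})$. One checks that $\bar\partial_t^2\varphi^n-\varphi_{tt}^{n,1/4}$ vanishes on cubics; this uses the symmetry of the stencil together with the correct second moment $\tfrac14(1+2+1)$. So it has the form
\[
\Delta t^{-2}\int_{t_{n-1}}^{t_{n+1}}K_n(s)\,\varphi_{tttt}(s)\,ds,\qquad |K_n|\lesssim\Delta t^3 .
\]
Cauchy--Schwarz then gives $\|\cdot\|^2\lesssim\Delta t^3\int_{t_{n-1}}^{t_{n+1}}\|\varphi_{tttt}\|^2$. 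Summing with the factor $\Delta t$ counts each interval at most twice, which yields the claim.

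The main obstacle is the bookkeeping in (c): exact cancellation through third order must be verified so that the kernel bound $|K_n|\lesssim \Delta t^3$ holds. That verification is routine. The remaining estimates follow directly.
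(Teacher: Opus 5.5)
Your argument is correct: the Taylor/Peano-kernel representations you give in (a)--(c) are exact (in particular $\bar{\partial}_t^2\varphi^n-\varphi_{tt}^{n,1/4}$ does vanish on cubics, so the kernel is $\mathcal{O}(\Delta t)$ after the $\Delta t^{-2}$ prefactor), and Cauchy--Schwarz in time then gives the stated powers of $\Delta t$. The paper does not prove this lemma itself but quotes it from \cite{MR3003381,deka}; your Taylor-with-integral-remainder route is the standard argument behind such bounds and is the same technique the paper uses for $\bar{\partial}_t\xi^{n+1/2}$ in Step~3 of the proof of Theorem~\ref{err_thm}.
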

 \begin{lemma}[discrete Gronwall lemma {\cite[Lemma~4.1]{MR3003381}\label{Ch2-d-gronwall}}]
 Let $\{a_n\}$, $\{b_n\}$, and $\{c_n\}$ be three non-negative sequences, with $\{c_n\}$ monotone, that satisfy 
 %\begin{equation*}
  $\   a_m+b_m \le c_m + \mu \sum_{n=0}^{m-1} a_n, \quad \mu >0, \ a_0+b_0 \le c_0. $ Then for $m \ge 0,$ it holds that 
  $    a_m+b_m \le c_m e^{m \mu}.$
\end{lemma}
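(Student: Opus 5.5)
The statement is the discrete Gronwall lemma (Lemma~\ref{Ch2-d-gronwall}). We are given non-negative sequences with $\{c_n\}$ monotone (non-decreasing) satisfying
\[
a_m+b_m\le c_m+\mu\sum_{n=0}^{m-1}a_n, \qquad a_0+b_0\le c_0 .
\]
We must show $a_m+b_m\le c_m e^{m\mu}$. The plan is a strong induction on $m$ proving the auxiliary bound $a_m\le c_m(1+\mu)^m$. This suffices because $(1+\mu)^m\le e^{m\mu}$, and we then feed it back into the hypothesis to recover $b_m$ as well.

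\textbf{Base case and inductive step for $a_m$.} For $m=0$ the hypothesis gives $a_0\le a_0+b_0\le c_0$, which is the claim. Assume $a_n\le c_n(1+\mu)^n$ for all $n<m$. Dropping the non-negative $b_m$ and using the hypothesis,
\[
a_m\le c_m+\mu\sum_{n=0}^{m-1}a_n\le c_m+\mu\sum_{n=0}^{m-1}c_n(1+\mu)^n .
\]
Monotonicity of $c_n$ is used here to replace each $c_n$, $n\le m-1$, by $c_m$. This gives
\[
a_m\le c_m\Bigl(1+\mu\sum_{n=0}^{m-1}(1+\mu)^n\Bigr)=c_m(1+\mu)^m ,
\]
by the geometric sum $\mu\sum_{n=0}^{m-1}(1+\mu)^n=(1+\mu)^m-1$.

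\textbf{Recovering $b_m$.} We return to the hypothesis with the full left-hand side and insert the bound just proved:
\[
a_m+b_m\le c_m+\mu\sum_{n=0}^{m-1}a_n\le c_m+\mu\sum_{n=0}^{m-1}c_m(1+\mu)^n=c_m(1+\mu)^m .
\]
Finally, $1+\mu\le e^{\mu}$ gives $(1+\mu)^m\le e^{m\mu}$, so $a_m+b_m\le c_m e^{m\mu}$.

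\textbf{Main point of care.} The only step needing attention is the direction of monotonicity. The argument requires $c_n\le c_m$ for $n\le m$, so "monotone" must be read as non-decreasing; this is the situation in the later applications, where $c_m$ is a cumulative sum of data terms. Non-negativity of $b_m$ is used only to justify dropping it in the induction on $a_m$. Everything else is the geometric series identity.
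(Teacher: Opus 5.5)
Your proof is correct. The base case is exactly the hypothesis at $m=0$. The inductive step correctly uses $c_n\le c_m$ for $n\le m-1$ and the identity $\mu\sum_{n=0}^{m-1}(1+\mu)^n=(1+\mu)^m-1$, and $(1+\mu)^m\le e^{m\mu}$ finishes it, so you in fact obtain the slightly sharper factor $(1+\mu)^m$.

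The paper gives no proof of this lemma and simply imports it from \cite[Lemma~4.1]{MR3003381}, so your argument is a self-contained substitute rather than a competing route. Your two displayed chains coincide, so you could also run the induction directly on the claim $a_m+b_m\le c_m(1+\mu)^m$, using $a_n\le a_n+b_n$.

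Your reading of ``monotone'' as non-decreasing is not merely convenient but necessary. Take $c_0=1$, $a_0=1$, $a_1=\mu$, and set all other entries of the three sequences to zero. Then $\{c_n\}$ is monotone and the hypotheses hold, while $a_1+b_1=\mu>0=c_1e^{\mu}$.

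One small correction to your closing aside. In several of the paper's applications (e.g.\ the Crank--Nicolson error analysis), $c_m$ contains non-cumulative terms such as $\norm{\xi^{m+1}}^2$ or $\norm{\bar{\partial}_{t}\theta_h^{m+1/2}}^2$. There one must first replace $c_m$ by a non-decreasing majorant (for instance $\max_{j\le m}c_j$ or a uniform bound) before invoking the lemma. This is harmless, but it is an extra step rather than automatic.
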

\begin{rem}\label{rem2.6}
    Throughout this article, Lemma~\ref{Ch2-d-gronwall} is applied when $c_m$ is uniformly bounded and 
    $\mu =2{\Delta t}/{T}$. Hence, the growth factor in the estimate
       $ a_m + b_m \leq c_m \, e^{m\mu}$
    is uniformly bounded follows  by 
      $e^{m\mu} = e^{{m\Delta t}/{T}} \leq e \approx 2.718$
(with $m\,\Delta t \leq T$ in last inequality) . Thus, the 
    resulting constant depends solely on Euler's number $e$, and in particular 
    is independent of the discretization parameters $h$ and $\Delta t$.
\end{rem}
 }
\section{Main results}\label{Sect_3}
\noindent
This section presents the main results of the article, with the detailed proofs presented in the subsequent two sections. 
We  establish a novel $L^2$ approximation property of the projection $\widehat{E}_h$ in Subsection~\ref{subsec;l2} which  estimates $\widehat{\Pi}_h - \widehat{E}_h$ in the $L^2$ norm and  is crucial to prove $L^2$-error bounds in this article.
The Newmark scheme is introduced and its stability and error estimates are presented in Subsection~\ref{nm_schm_sub_1}.  In Subsection~\ref{crn_sub}, we introduce the Crank-Nicolson scheme and provide its stability and error estimates. This section ends with a detailed comparison between the Newmark and Crank-Nicolson schemes in Subsection~\ref{subsec-comp}.
\subsection{A novel $L^2$ projection estimate for HHO-A scheme}\label{subsec;l2}
This section establishes a novel $L^2$ approximation property for 
the HHO-A scheme, which plays a crucial role in the subsequent error 
analysis. The key idea is to apply an Aubin--Nitsche-type duality 
argument to the 
fourth-order biharmonic problem with HHO-A  setting. In contrast to the standard energy-norm 
estimates presented in this article, the $L^2$ estimate 
requires additional regularity to gain 
 extra powers of the mesh size $h$ in convergence.

\medskip \noindent
Our goal is to derive a sharp bound on 
$\|\Pi_h^{k+2}(w) - E_h(w)\|$ for any $w\in H^{2+r}(\Omega)$ with $r>3/2$. Since $\Pi_h^{k+2}(w) - E_h(w) \in L^2(\Omega)$, the Lax--Milgram lemma 
guarantees the existence of a unique $\Psi \in H^2_0(\Omega)$ satisfying
\begin{equation}
    \Delta^2 \Psi = \Pi_h^{k+2}(w) - E_h(w) \;\text{ in }\; \Omega, \qquad 
    \Psi = \partial_{\mathbf{n}}\Psi = 0 \;\text{ on }\; \partial\Omega.
    \label{eq:dual}
\end{equation}
The derivation of an optimal $L^2$ bound hinges on the additional 
$H^4$-regularity of the solution $\Psi$ to this dual problem, which 
we now make precise. Assume that the solution $\Psi \in H^2_0(\Omega)$ 
of problem \eqref{eq:dual} belongs to $H^4(\Omega)$ and satisfies 
the stability bound
\begin{equation}
    \|\Psi\|_{H^4(\Omega)} \lesssim\, \|\Pi_h^{k+2}(w) - E_h(w)\|.
    \label{eq:reg_bound}
\end{equation}
%This condition is certainly satisfied for convex domains. 
Under assumption \eqref{eq:reg_bound}, we are in a position to state and prove  the main result of this subsection.
\begin{thm}[A novel space $L^2$ approximation property]\label{m1_H^2_bound}
Any $w\in H^{2+r}(\Omega)$ with $r>3/2$ satisfying \eqref{eq:reg_bound}  yields the following bounds for HHO-A scheme:
\begin{align*}
&\norm{\Pi_{h}^{k+2}(w)-E_{h}(w)} \nonumber\\&\;\; \lesssim \begin{cases}h^{2}\big(|w|_{H^{3}({\cal T}_{h})}+h^{\beta}|w|_{H^{3+\beta}({\cal T}_{h})}+\norm{ \Delta^{2}w}\big) \;\;\text{ with }\beta:=\min(r-1,1) \text{ for }  \; k=0, \text{ if } w\in H^{3+\beta}({\cal T}_{h}),
\\ h^{k+3}\big(|w|_{H^{k+3}({\cal T}_{h})}+\norm{ \Delta^{2}w}_{H^{k-1}({\cal T}_{h})}\big) \; \;\text{ for }k \ge 1,\text{ if } w\in H^{k+3}({\cal T}_{h}).
% \label{app_prop}
\end{cases}
\end{align*}
\end{thm}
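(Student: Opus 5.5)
Set $\eta:=\Pi_h^{k+2}(w)-E_h(w)$, where $E_h(w)$ is the cell component of $\widehat{E}_h(w)$. We use the dual problem \eqref{eq:dual} with data $\eta$ and test $\Delta^2\Psi=\eta$ against $\eta$ cellwise. The aim is to rewrite $\|\eta\|^2$ as a discrete bilinear form evaluated on the reduction of $\Psi$, plus consistency remainders.

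\emph{Step 1: rewrite $\|\eta\|^2$ through the dual problem.} We have
\[
\|\eta\|^2=(\Delta^2\Psi,\eta)=(\Delta^2\Psi,\Pi_h^{k+2}w)-(\Delta^2\Psi,E_h w).
\]
Let $\widehat{\eta}_h:=\widehat{I}^k_h(w)-\widehat{E}_h(w)\in\widehat{V}^0_h$, whose cell component is $\eta$. Apply integration by parts \eqref{int_part_1} on each $K$ to $(\Delta^2\Psi,\eta)_K$. The face unknowns of $\widehat{\eta}_h$ are single valued and vanish on $\partial\Omega$, and $\Psi$ is smooth. Hence we can insert $\eta_{F}$ and $\zeta_F$ in the face terms; this is the standard consistency step of \cite{MR4485999}. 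It gives
\[
(\Delta^2\Psi,\eta)=a_h(\widehat{I}^k_h\Psi,\widehat{\eta}_h)+\mathcal{C}_h(\Psi,\widehat{\eta}_h).
\]
The consistency error satisfies $|\mathcal{C}_h(\Psi,\widehat{\eta}_h)|\lesssim(\sum_K\|\Psi-\Pi^{k+2}_K\Psi\|^2_{\#,K})^{1/2}\|\widehat{\eta}_h\|_{a,h}$. For HHO-A this uses \eqref{eq:elli.proj.use_1}: $\mathcal{R}_K\circ\widehat I_K=\mathcal{E}_K$ is the $H^2$-elliptic projection, so $\mathcal{C}_h$ reduces to face terms involving $\Psi-\mathcal{E}_K\Psi$, plus the stabilization $\mathcal{S}_h(\widehat I_h\Psi,\widehat\eta_h)$, which \eqref{eq:stab.bound} controls. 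Since $\Psi\in H^4$, Theorem~\ref{lm1_in_dis_tr_11}(c) with $k$ replaced by the relevant degree gives this factor $\lesssim h^{\min(k+1,2)}\|\Psi\|_{H^4}$. The factor $\|\widehat{\eta}_h\|_{a,h}\lesssim h^{k+1}|w|_{H^{k+3}}$ (or the $k=0$ variant) by Theorem~\ref{lm1_in_dis_tr_11}(c).

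\emph{Step 2: handle the main term.} Split $a_h(\widehat I_h\Psi,\widehat\eta_h)=a_h(\widehat I_h\Psi,\widehat I_h w)-a_h(\widehat I_h\Psi,\widehat E_h w)$. By symmetry and \eqref{proj_op_1}, the second part equals $(\Delta^2 w,\Pi^{k+2}_h\Psi)$. For the first part, use the same consistency identity with the roles of $w$ and $\Psi$ swapped: $a_h(\widehat I_h w,\widehat I_h\Psi)=(\Delta^2 w,\Psi)+\mathcal{C}$-type terms. These remaining terms are products of two consistency errors, namely $(\sum_K\|w-\Pi^{k+2}_Kw\|_{\#,K}^2)^{1/2}(\sum_K\|\Psi-\Pi^{k+2}_K\Psi\|_{\#,K}^2)^{1/2}$. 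They are bounded using $(\nabla^2(w-\mathcal{E}_Kw),\nabla^2\mathcal{E}_K\Psi)_K=0$ and the stabilization bound. The total then contains $(\Delta^2 w,\Psi-\Pi^{k+2}_h\Psi)=(\Delta^2 w-\Pi^{k-1}_h\Delta^2 w,\Psi-\Pi^{k+2}_h\Psi)$. By Theorem~\ref{lm1_in_dis_tr_11}(d) this is $\lesssim h^{k-1}\|\Delta^2 w\|_{H^{k-1}(\mathcal{T}_h)}\,h^{4}\|\Psi\|_{H^4}$ for $k\ge1$. For $k=0$ we do not project $\Delta^2w$ and obtain $h^{2}\|\Delta^2w\|\,\|\Psi\|_{H^2}$-type bounds, only $\lesssim h^2\|\Delta^2 w\|\|\Psi\|_{H^4}$ since the degree-2 approximation of $\Psi$ in $L^2$ gives $h^{3}$ at best; $h^2$ suffices.

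\emph{Step 3: collect the bounds and divide by $\|\eta\|$ using \eqref{eq:reg_bound}.} For $k\ge1$ the products give $h^{k+1}\cdot h^2=h^{k+3}$, since $\Psi\in H^4$ yields the $\#$-seminorm rate $h^{2}$ (degree $k+2\ge3$). For $k=0$ they give $h\cdot h=h^2$, which matches the stated cases.

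The main obstacle is Step 1/2: identifying $\mathcal{C}_h$ precisely, and showing that the face terms pair as products of two small quantities rather than one small quantity times an $O(1)$ norm. In particular, the $J^{k+1}_F$ interpolation in HHO-A must make the tangential-derivative face term $(\partial_{\bf t}(v_K-v_F),\partial_{\bf nt}\Psi)$ vanish against the polynomial part. I would first verify this orthogonality using \eqref{eq:def.JF} and the fact that $\partial_{\bf nt}\mathcal{E}_K\Psi|_F\in\mathcal{P}_k(F)$. This is precisely where HHO-B fails.
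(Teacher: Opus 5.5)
Your proposal is correct and is essentially the paper's proof: it uses the same dual problem \eqref{eq:dual}, cellwise integration by parts with the single-valued face unknowns inserted, the reconstruction tested against $\mathcal{E}_K(\Psi)$, and the definition \eqref{proj_op_1}; your $\mathcal{C}_h$ collects the paper's $T_1,T_2,T_3,T_7$ and your Step~2 produces $T_4,T_5,T_6$, all bounded the same way. The ``main obstacle'' you flag is not one, because the HHO-A orthogonality you want to verify is exactly \eqref{eq:elli.proj.use_1}, which you already invoke, and the face terms never need to vanish: Cauchy--Schwarz against $\|\widehat{\eta}_h\|_{a,h}$ (with a face inverse inequality for the tangential term) already gives products of two small factors.
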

\subsection{Newmark scheme}\label{nm_schm_sub_1}
In this subsection, we develop a Newmark time-implicit scheme for the  biharmonic wave problem. We state the result on stability in Theorem~\ref{P2 full_stabilty}.  The initial error bounds, energy-norm estimates, and $L^2-$norm  estimates are discussed in Lemma~\ref{P1 lemma_on_norm_initial}, Theorem~\ref{P1 implicit_Th2}, and Theorem~\ref{err_L2_new}, respectively. The proofs of these results are presented in  Section~\ref{sect_6}.

\medskip \noindent
Let $\widehat{u}_{h}^{n}$ denote the discrete approximation of continuous solution $u$ at time $t=t_n$ for $n=0,1,\cdots,N$. Let $\widehat{u}_{h}^{0}$ be the interpolation $\widehat{I}^{k}_{h}$ of the given initial displacement $u_0$. Then using $\widehat{u}_{h}^{0}$ and the given initial velocity $u_1$, we apply Crank-Nicolson time stepping scheme at $t=t_1$ to obtain $\widehat{u}_{h}^{1}$, that is,
\begin{eqnarray}\label{hwave_int_cond}
\begin{cases}
 &\widehat{u}_{h}^{0}:=\widehat{I}^{k}_{h}(u_{0}),\\
 &{2}{(\Delta t)^{-1}}(\bar{\partial}_{t}u_{h}^{{1}/{2}},v_{h})+a_{h}(\widehat{u}_{h}^{1/{2}},\widehat{v}_{h})=(f^{1/{2}}+{2 u_{1}}{(\Delta t)^{-1}},v_{h}).
 \end{cases}
\end{eqnarray}

\medskip \noindent
For given $\widehat{u}_{h}^{0}$ and $\widehat{u}_{h}^{1}$ from \eqref{hwave_int_cond}, the {\it fully discrete Newmark scheme} seeks $\widehat{u}_{h}^{n+1}:= ({u}^{n+1}_{h},{u}^{n+1}_{{\cal F}_{h}},{\zeta}^{n+1}_{{\cal F}_{h}})\in \widehat{V}_{h}^{0}$ such that
\begin{equation}
(\bar{\partial}_{t}^{2}{u}_{h}^{n},v_{h})+a_{h}(\widehat{u}_{h}^{n,{1}/{4}},\widehat{v}_{h})=(f^{n,1/4},v_{h})\;\; \text{ for all } \widehat{v}_{h} \in \widehat{V}_h^0 \text{ and } n=1,2,\cdots,N-1.\label{hwave_algorithm}
\end{equation}

\begin{thm}[stability]\label{P2 full_stabilty}
If $u_0\in H^2_0(\Omega)$, $u_1 \in L^2(\Omega)$, and $f \in L^\infty(L^2(\Omega))$, then  \eqref{hwave_int_cond}-\eqref{hwave_algorithm} is unconditionally stable. Moreover, for $1 \le m \le N-1$,
\begin{equation*}
\norm{\bar{\partial}_{t}{u}_{h}^{m+{1}/{2}}}+\norm{\widehat{u}_{h}^{m+{1}/{2}}}_{a,h} \lesssim  \norm{\bar{\partial}_{t}{u}_{h}^{{1}/{2}}}+\norm{\widehat{u}_{h}^{{1}/{2}}}_{a,h}+ \norm{f}_{L^{\infty}(L^{2}(\Omega))},
\end{equation*}
where the  constant absorbed in "$\lesssim$" depends on $T$.
\end{thm}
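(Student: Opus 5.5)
The plan is a standard discrete energy argument for the Newmark scheme. In \eqref{hwave_algorithm} take the test function $\widehat{v}_h=\widehat{u}_h^{n+1}-\widehat{u}_h^{n-1}=2\Delta t\,\delta_t\widehat{u}_h^n\in\widehat{V}_h^0$. The two left-hand terms then telescope.

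For the time-derivative term we have $\bar{\partial}_t^2 u_h^n=(\bar{\partial}_t u_h^{n+1/2}-\bar{\partial}_t u_h^{n-1/2})/\Delta t$ and $u_h^{n+1}-u_h^{n-1}=\Delta t(\bar{\partial}_t u_h^{n+1/2}+\bar{\partial}_t u_h^{n-1/2})$. This gives
\[
(\bar{\partial}_t^2 u_h^n,\,u_h^{n+1}-u_h^{n-1})=\norm{\bar{\partial}_t u_h^{n+1/2}}^2-\norm{\bar{\partial}_t u_h^{n-1/2}}^2 .
\]
For the stiffness term we use $\widehat{u}_h^{n,1/4}=\tfrac12(\widehat{u}_h^{n+1/2}+\widehat{u}_h^{n-1/2})$ and $\widehat{u}_h^{n+1}-\widehat{u}_h^{n-1}=2(\widehat{u}_h^{n+1/2}-\widehat{u}_h^{n-1/2})$. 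Together with the symmetry of $a_h$, this yields
\[
a_h(\widehat{u}_h^{n,1/4},\widehat{u}_h^{n+1}-\widehat{u}_h^{n-1})=\norm{\widehat{u}_h^{n+1/2}}_{a,h}^2-\norm{\widehat{u}_h^{n-1/2}}_{a,h}^2 .
\]
Write $E^{n+1/2}:=\norm{\bar{\partial}_t u_h^{n+1/2}}^2+\norm{\widehat{u}_h^{n+1/2}}_{a,h}^2$. Summing over $n=1,\dots,m$ gives
\[
E^{m+1/2}=E^{1/2}+\sum_{n=1}^m\big(f^{n,1/4},\,u_h^{n+1}-u_h^{n-1}\big).
\]
No relation between $h$ and $\Delta t$ is used anywhere, which is the unconditional stability claim.

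For the source term, write $u_h^{n+1}-u_h^{n-1}=\Delta t(\bar{\partial}_t u_h^{n+1/2}+\bar{\partial}_t u_h^{n-1/2})$. Using $\norm{f^{n,1/4}}\le\norm{f}_{L^\infty(L^2(\Omega))}$ and Young's inequality \eqref{Young's}, each summand is bounded by
\[
\Delta t\,\tfrac{T}{2}\norm{f}^2_{L^\infty(L^2(\Omega))}+\tfrac{\Delta t}{2T}\big(\norm{\bar{\partial}_t u_h^{n+1/2}}^2+\norm{\bar{\partial}_t u_h^{n-1/2}}^2\big).
\]
The $n=m$ term still contains $\norm{\bar{\partial}_t u_h^{m+1/2}}^2$ with coefficient $\Delta t/(2T)$. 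For $\Delta t\le T$ this coefficient is at most $1/2$, so the term can be absorbed into the left-hand side. What remains is
\[
E^{m+1/2}\lesssim E^{1/2}+T^2\norm{f}^2_{L^\infty(L^2(\Omega))}+\frac{2\Delta t}{T}\sum_{n=0}^{m-1}E^{n+1/2}.
\]
Applying Lemma~\ref{Ch2-d-gronwall} with $\mu=2\Delta t/T$, as in Remark~\ref{rem2.6}, bounds $E^{m+1/2}$ by a constant depending on $T$ times $E^{1/2}+\norm{f}^2_{L^\infty(L^2(\Omega))}$. Taking square roots gives the stated estimate.

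Two details need checking. The first is the exact bookkeeping of the absorption step, namely that the coefficient in front of the top index is small enough to move to the left. The second is that $f^{n,1/4}$ is well defined, which requires point values of $f$; the hypothesis $f\in L^\infty(L^2(\Omega))$ is taken to permit this, for instance assuming $f$ is continuous in time. The remaining identities are routine algebra. Since the right-hand side of the theorem contains $E^{1/2}$ itself, no estimate of the initialization step \eqref{hwave_int_cond} is needed here.
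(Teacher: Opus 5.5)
Your proposal is correct and follows essentially the same route as the paper: the test function $\widehat{u}_h^{n+1}-\widehat{u}_h^{n-1}$, the two telescoping identities (which, as you note, rely on the symmetry of $a_h$), Young's inequality with absorption of the top-index velocity term via $\Delta t/(2T)\le 1/2$, and the discrete Gronwall lemma with $\mu=2\Delta t/T$. There is one harmless bookkeeping slip: if the squared velocity norms carry the coefficient $\tfrac{\Delta t}{2T}$, Young's inequality requires $\Delta t\,T$ rather than $\Delta t\,\tfrac{T}{2}$ in front of $\norm{f^{n,1/4}}^2$ (as in the paper's choice $\epsilon=2T$), and this still sums to the $T^2\norm{f}^2_{L^\infty(L^2(\Omega))}$ you wrote.
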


 %\begin{equation}
%\widehat{e}_{h}^{n}:=\widehat{I}^{k}_{h}(u^{n})-\widehat{u}_{h}^{n}=\Big(\widehat{I}^{k}_{h}(u^{n})-\widehat{E}_{h}(u^{n})\Big)+\Big(\widehat{E}_{h}(u^{n})-\widehat{u}_{h}^{n}\Big):=\widehat{\theta}_{h}^{n}+\widehat{\rho}_{h}^{n}.\label{split}
% \end{equation}
\begin{lemma}[initial error bounds] \label{P1 lemma_on_norm_initial}
Let $u \in C([0,T];H^2_0(\Omega)\cap H^{2+r}(\Omega))$, with $r >3/2$ be a solution to \eqref{P1 weak_form} and $(\widehat{u}_h^0,u_h^{1})\in \widehat{V}_{h}^{0}  \times\widehat{V}_{h}^{0} $ solves \eqref{hwave_int_cond}. Assume that $u\in C^2([0,t_1];L^2(\Omega))$ and $u_{ttt}$ satisfy the global regularity $u_{ttt} \in L^\infty(0,t_1;L^2(\Omega)).$ In addition, if 
\begin{itemize}
\item $u \in L^\infty(0,t_1; H^{3+\beta}(\mathcal{T}_h))$, $u_t \in L^\infty(0,t_1;  H^{3}(\mathcal{T}_h))$ with  $\beta =\min(r-1,1)$, for $k=0$ ,
\item $u \in L^\infty(0,t_1; H^{k+3}(\mathcal{T}_h))$, \text{ and }$u_t \in L^\infty(0,t_1;  H^{k+3}(\mathcal{T}_h))$, for  $k\ge 1$, then
\end{itemize}
\vspace{-0.35cm}
 \begin{align*} \norm{\bar{\partial}_t(u^{1/2}-u_h^{1/2})}&+\big(\sum_{K\in {\cal T}_{h}}\norm{\nabla^{2}(u^{1/2}-{\cal R}_{K}(\widehat{u}_h ^{1/2}))}^2_{K} \big)^{1/2}=
 \begin{cases}
 \mathcal{O}(h+h^{1+\beta}+(\Delta t)^{2})\;&\text{ for }\; k = 0,\\
\mathcal{O}(h^{k+1}+(\Delta t)^{2})\;&\text{ for }\; k \ge 1.
 \end{cases}
\end{align*}
%where the constant absorbed in ``$\lesssim$'' constant in \textcolor{blue}{constant in approximation properties.}.
\end{lemma}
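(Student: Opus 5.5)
The plan is to compare the Crank--Nicolson start-up step \eqref{hwave_int_cond} with the weak form \eqref{P1 weak_form} averaged at $t_0,t_1$, splitting the error through the projection $\widehat{E}_h$. Write
\[
\widehat{u}_h^{n}-\widehat{E}_h(u^n)=:\widehat{\rho}_h^{n},\qquad u^n-E_h(u^n)=:\theta^n,\quad n=0,1,
\]
where $E_h$ is the cell component of $\widehat{E}_h$. The definition \eqref{proj_op_1} gives $a_h(\widehat{E}_h(u^{1/2}),\widehat{v}_h)=(\Delta^2u^{1/2},v_h)=(f^{1/2}-u_{tt}^{1/2},v_h)$. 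Subtracting this from \eqref{hwave_int_cond} leads to the error equation
\[
2(\Delta t)^{-1}(\bar{\partial}_t\rho_h^{1/2},v_h)+a_h(\widehat{\rho}_h^{1/2},\widehat{v}_h)
=\big(u_{tt}^{1/2}-2(\Delta t)^{-1}(\bar{\partial}_t u^{1/2}-u_1),v_h\big)+2(\Delta t)^{-1}(\bar{\partial}_t\theta^{1/2},v_h)
\]
for all $\widehat{v}_h\in\widehat{V}_h^0$. By Lemma~\ref{Sec_lma_1}(a), the first right-hand term is $\mathcal{O}(\Delta t\,\|u_{ttt}\|_{L^\infty(0,t_1;L^2)})\|v_h\|$.

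Next I test with $\widehat{v}_h=\widehat{\rho}_h^{1/2}=\tfrac12(\widehat{\rho}_h^1+\widehat{\rho}_h^0)$, noting that $\widehat{\rho}_h^{1}-\widehat{\rho}_h^0=\Delta t\,\bar{\partial}_t\widehat{\rho}_h^{1/2}$. The cleaner route is to test with $\bar{\partial}_t\widehat{\rho}_h^{1/2}$ instead. The left side then becomes
\[
2(\Delta t)^{-1}\|\bar\partial_t\rho_h^{1/2}\|^2+\tfrac{1}{2\Delta t}\big(\|\widehat\rho_h^1\|_{a,h}^2-\|\widehat\rho_h^0\|_{a,h}^2\big).
\]
After multiplying by $\Delta t$, Young's inequality \eqref{Young's} absorbs the $\|\bar\partial_t\rho_h^{1/2}\|$ factors on the right. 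This gives
\[
\|\bar\partial_t\rho_h^{1/2}\|^2+\|\widehat\rho_h^1\|_{a,h}^2\lesssim\|\widehat\rho_h^0\|_{a,h}^2+(\Delta t)^4\|u_{ttt}\|^2_{L^\infty(0,t_1;L^2)}+\|\bar\partial_t\theta^{1/2}\|^2 .
\]
Here $\|\widehat\rho_h^0\|_{a,h}=\|\widehat I_h^k(u_0)-\widehat E_h(u_0)\|_{a,h}$, which Theorem~\ref{lm1_in_dis_tr_11}(c) bounds by $h^{k+1}|u_0|_{H^{k+3}}$, or by $h(|u_0|_{H^3}+h^\beta|u_0|_{H^{3+\beta}})$ when $k=0$. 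The term $\bar\partial_t\theta^{1/2}=(\Delta t)^{-1}\int_0^{t_1}(u_t-E_h u_t)\,dt$ uses the linearity and time-independence of $E_h$. Writing $u_t-E_hu_t=(u_t-\Pi_h^{k+2}u_t)+(\Pi_h^{k+2}u_t-E_hu_t)$, the first piece is controlled by Theorem~\ref{lm1_in_dis_tr_11}(d). The second piece is cell data, so Theorem~\ref{lm1_in_dis_tr_11}(e) and (c) bound it by $\|\widehat I_h^k u_t-\widehat E_hu_t\|_{a,h}$, giving the rate $h^{k+1}$ (resp.\ $h$ for $k=0$) under the stated regularity of $u_t$. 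The energy estimate requires only this rate, not the $L^2$ rate of Theorem~\ref{m1_H^2_bound}.

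Finally I reassemble the quantities in the statement. For the velocity, $\bar\partial_t(u^{1/2}-u_h^{1/2})=\bar\partial_t\theta^{1/2}-\bar\partial_t\rho_h^{1/2}$, and both pieces are now bounded. For the Hessian term, on each $K$ I split
\[
\nabla^2\big(u^{1/2}-\mathcal R_K(\widehat u_h^{1/2})\big)=\nabla^2\big(u^{1/2}-\mathcal E_K u^{1/2}\big)+\nabla^2\mathcal R_K\big(\widehat I^k_K u^{1/2}-\widehat E_h u^{1/2}\big)-\nabla^2\mathcal R_K\big(\widehat\rho_h^{1/2}\big).
\]
The first piece is bounded by \eqref{eq:w.EK.bound} together with Theorem~\ref{lm1_in_dis_tr_11}(c). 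The second and third are bounded by $a_K$ (the reconstruction part of the energy), hence by $\|\cdot\|_{a,h}$ via Lemma~\ref{bdd_lmma_a_T}, using Theorem~\ref{lm1_in_dis_tr_11}(c) for the middle piece and $\|\widehat\rho_h^{1/2}\|_{a,h}\le\tfrac12(\|\widehat\rho_h^1\|_{a,h}+\|\widehat\rho_h^0\|_{a,h})$ for the last. Taking $L^\infty(0,t_1)$ norms of the regularity terms yields $\mathcal{O}(h^{k+1}+(\Delta t)^2)$, or $\mathcal{O}(h+h^{1+\beta}+(\Delta t)^2)$ for $k=0$.

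The main obstacle is the $(\Delta t)^{-1}$ scaling on the first step. Testing must be done with $\bar\partial_t\widehat\rho_h^{1/2}$ rather than $\widehat\rho_h^{1/2}$ so that the factor $2/\Delta t$ becomes a favorable coercive term. The consistency term from Lemma~\ref{Sec_lma_1}(a) is only $\mathcal{O}(\Delta t)$, and it reaches $(\Delta t)^2$ only after multiplication by the $\Delta t$ that appears when the energy identity is formed. I would verify this bookkeeping carefully, since a single-step argument offers no Gronwall summation to fall back on.
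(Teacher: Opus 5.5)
Your error equation, your test with $\bar\partial_t\widehat\rho_h^{1/2}$, and the resulting bound
$\|\bar\partial_t\rho_h^{1/2}\|^2+\|\widehat\rho_h^1\|_{a,h}^2\lesssim\|\widehat\rho_h^0\|_{a,h}^2+(\Delta t)^4\|u_{ttt}\|^2_{L^\infty(0,t_1;L^2(\Omega))}+\|\bar\partial_t\theta^{1/2}\|^2$
are all correct, and the $(\Delta t)^{-1}$ bookkeeping works as you expect. The problem is the term $\|\bar\partial_t\theta^{1/2}\|$ when $k=0$. This term is present because you compare with $\widehat E_h(u)$. To bound it you need $\|\widehat I_h^0(w)-\widehat E_h(w)\|_{a,h}$ for $w=\bar\partial_t u^{1/2}$. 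For $k=0$, Theorem~\ref{lm1_in_dis_tr_11}(c) costs $h\big(|w|_{H^{3}(\mathcal T_h)}+h^{\beta}|w|_{H^{3+\beta}(\mathcal T_h)}\big)$. The $H^{3+\beta}$ seminorm cannot be dropped: $\Pi_K^2 w$ is quadratic, so the face term $h_K^3\|\partial_{\mathbf n}\Delta(w-\Pi_K^2 w)\|_F^2$ in $\|\cdot\|_{\#,K}$ is a trace of third derivatives of $w$, which is undefined for $w\in H^3$. For $k=0$ the lemma assumes only $u_t\in L^\infty(0,t_1;H^3(\mathcal T_h))$. Hence $|w|_{H^{3+\beta}(\mathcal T_h)}$ is controlled only by $2(\Delta t)^{-1}\|u\|_{L^\infty(0,t_1;H^{3+\beta}(\mathcal T_h))}$, which produces $h^{1+\beta}/\Delta t$ instead of $\mathcal O(h+h^{1+\beta}+(\Delta t)^2)$. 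So your claim of rate $h$ for $k=0$ ``under the stated regularity of $u_t$'' fails, and the same term reappears when you reassemble the velocity error $\bar\partial_t\theta^{1/2}-\bar\partial_t\rho_h^{1/2}$. For $k\ge1$ the assumption $u_t\in L^\infty(0,t_1;H^{k+3}(\mathcal T_h))$ suffices and your argument closes. One further detail: do not write $\bar\partial_t\theta^{1/2}=(\Delta t)^{-1}\int_0^{t_1}(u_t-E_hu_t)\,dt$, since that presumes $u_t(t)\in\mathcal Y$. Apply the estimates directly to the fixed function $w=\bar\partial_t u^{1/2}\in H^{2+r}(\Omega)\cap\mathcal Y$.

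The paper never differentiates the projection error in time, because it measures the error against $\widehat I_h^k(u)$. The initialization $\widehat u_h^0=\widehat I_h^k(u_0)$ gives $\widehat e_h^0=0$, and hence $\widehat e_h^{1/2}=\tfrac{\Delta t}{2}\bar\partial_t\widehat e_h^{1/2}$. In the mass term, $\Pi_h^{k+2}$ drops out against $v_h$ by $L^2$-orthogonality. The $\widehat E_h$-consistency error then appears only as $a_h(\widehat\theta_h^{1/2},\widehat v_h)$. Testing with $\widehat e_h^{1/2}$ yields $\|\bar\partial_t e_h^{1/2}\|^2+\|\widehat e_h^{1/2}\|_{a,h}^2\lesssim(\Delta t)^2\|\widetilde R^0\|^2+\|\widehat\theta_h^{1/2}\|_{a,h}^2$. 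Here $\|\widehat\theta_h^{1/2}\|_{a,h}$ depends only on the spatial regularity of $u^0$ and $u^1$. The only remaining velocity term is $\|\bar\partial_t u^{1/2}-\Pi_h^{k+2}(\bar\partial_t u^{1/2})\|$, which needs merely $u_t\in L^\infty(0,t_1;H^{k+1}(\mathcal T_h))$. So either switch to this split, or add $u_t\in L^\infty(0,t_1;H^{3+\beta}(\mathcal T_h))$ to the $k=0$ hypotheses. Your $\widehat E_h$-based version does have one advantage: it accommodates any initialization for which $\|\widehat u_h^0-\widehat E_h(u_0)\|_{a,h}$ is small.
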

\begin{thm}[error estimate]\label{P1 implicit_Th2} 
Let $u \in  C([0,T];H^2_0(\Omega)\cap H^{2+r}(\Omega))$ with $r >3/2$~(resp. $\widehat{u}_{h}^{n}\in\widehat{V}_{h}^{0} ~(1 \le n\le N)$) solve \eqref{P1 weak_form}~(resp. \eqref{hwave_int_cond}-\eqref{hwave_algorithm}) and assume that $u \in H^4(L^2(\Omega))$. In addition, if 
\begin{itemize}
\item $u \in H^1( H^{2+r}(\Omega)\cap H^{3+\beta}(\mathcal{T}_h)), u_t\in L^\infty(H^{3+\beta}(\mathcal{T}_h)) \text{ with }\beta =\min(r-1,1),  \text{ for } k=0,$
\item $u \in H^1(H^{2+r}(\Omega)\cap H^{k+3}(\mathcal{T}_h)), u_t\in L^\infty(H^{k+3}(\mathcal{T}_h)),  \text{ for }  k\ge1,$ then for any $1 \le m \le N-1$
\end{itemize}
\vspace{-0.35cm}
%\begin{align}
 %\text{for } k=0,& \quad   L(u,t_1)=\norm{p}_{L^\infty(0,t_1;H^{3+\beta}(\mathcal{T}_h)}+\norm{u_t}_{L^2(H^{3+\beta}(\mathcal{T}_h))} \text{ with }\beta =\min(r-1,1),\\
 %&\text{for } k>0,& \quad norm{p}_{L^\infty(0,t_1;H^{k+3}(\mathcal{T}_h)}+
%\end{align}
% $$u \in L^\infty(H^{2+r}(\Omega) \cap H^{3+\beta}(\mathcal{T}_h)), u_t\in L^\infty(H^{2+r}(\Omega) \cap H^{3+\beta}(\mathcal{T}_h)) \text{ with }\beta =\min(r-1,1)  \text{ for } k=0,$$
 % $$u \in L^\infty(H^{2+r}(\Omega) \cap H^{k+3}(\mathcal{T}_h)), u_t\in L^\infty(H^{2+r}(\Omega) \cap H^{k+3}(\mathcal{T}_h))  \text{ for }  k\ge1.$$  
 \begin{align*} \norm{\bar{\partial}_t(u^{m+1/2}-u_h^{m+1/2})}&+\big(\sum_{K\in {\cal T}_{h}}\norm{\nabla^{2}(u^{m+1/2}-{\cal R}_{K}(\widehat{u}_h ^{m+1/2}))}^2_{K}\big)^{1/2} 
  =  \begin{cases}
 \mathcal{O}(h+h^{1+\beta}+(\Delta t)^{2})\;&\text{ for }\; k = 0,\\
\mathcal{O}(h^{k+1}+(\Delta t)^{2})\;&\text{ for }\; k \ge 1.
 \end{cases}  
\end{align*}
%The generic constant in the last bound depends on final time $T$ and $\alpha$.
%where the  constant absorbed in ``$\lesssim$'' depends on $T$  and $C$ from contnuity of $a_h(\bullet,\bullet)$.
\end{thm}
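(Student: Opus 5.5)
The plan is to prove Theorem~\ref{P1 implicit_Th2} by the standard splitting of the error through the projection $\widehat{E}_h$ of \eqref{proj_op_1}. Write
\[
\widehat{u}_h^n-\widehat{I}^k_h(u^n)=\big(\widehat{u}_h^n-\widehat{E}_h(u^n)\big)+\big(\widehat{E}_h(u^n)-\widehat{I}^k_h(u^n)\big)=:\widehat{\xi}_h^n+\widehat{\eta}_h^n .
\]
The target quantity is bounded by the triangle inequality in three pieces.

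\begin{itemize}
\item The first piece is $u-{\cal E}_K(u)$ on each cell. Since ${\cal R}_K\widehat{I}^k_K={\cal E}_K$, this is handled by \eqref{eq:w.EK.bound} together with Theorem~\ref{lm1_in_dis_tr_11}(c). It is $\mathcal{O}(h^{k+1})$, respectively $\mathcal{O}(h+h^{1+\beta})$ for $k=0$.
\item The second piece is $\nabla^2{\cal R}_K(\widehat{\eta}_h)$. By Lemma~\ref{bdd_lmma_a_T} it is controlled by $\|\widehat{\eta}_h\|_{a,h}$, which Theorem~\ref{lm1_in_dis_tr_11}(c) bounds at the same order.
\item The third piece is $\|\widehat{\xi}_h^{m+1/2}\|_{a,h}$, again via Lemma~\ref{bdd_lmma_a_T}.
\end{itemize}

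For the velocity, split $\bar\partial_t(u-u_h)$ as $\bar\partial_t(u-\Pi^{k+2}_h u)+\bar\partial_t(\Pi^{k+2}_h u-E_h u)+\bar\partial_t\xi_h$. The first two terms are $\mathcal{O}(h^{k+1})$. This uses the Poincaré bound \eqref{poinacre} applied to $\bar\partial_t\widehat{\eta}_h=\widehat{\eta}_h$ of the difference quotient (by linearity) with $u_t\in L^\infty(H^{k+3}({\cal T}_h))$, or Theorem~\ref{lm1_in_dis_tr_11}(d). So everything reduces to an energy estimate for $\widehat{\xi}_h$.

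The error equation comes next. Take \eqref{P1 weak_form}, i.e.\ $u_{tt}+\Delta^2u=f$ pointwise in time, average it with weights $1/4,1/2,1/4$, and test with $v_h$. By the definition \eqref{proj_op_1}, $(\Delta^2 u^{n,1/4},v_h)=a_h(\widehat{E}_h(u^{n,1/4}),\widehat{v}_h)$. Subtracting from \eqref{hwave_algorithm} then gives
\[
(\bar\partial_t^2\xi_h^n,v_h)+a_h(\widehat{\xi}_h^{n,1/4},\widehat{v}_h)=(\tau^n,v_h),\qquad \tau^n:=u_{tt}^{n,1/4}-\bar\partial_t^2 E_h(u^n).
\]
Decompose $\tau^n=(u_{tt}^{n,1/4}-\bar\partial_t^2u^n)+\bar\partial_t^2(u^n-E_h u^n)$. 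The first part is bounded by Lemma~\ref{Sec_lma_1}(c) at $\mathcal{O}((\Delta t)^2)$ in the discrete $L^2$-in-time sense.

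Choose $\widehat{v}_h=\delta_t\widehat{\xi}_h^n=\tfrac12(\bar\partial_t\widehat{\xi}_h^{n+1/2}+\bar\partial_t\widehat{\xi}_h^{n-1/2})$. The left-hand side then telescopes to
\[
\frac{1}{2\Delta t}\Big(\|\bar\partial_t\xi_h^{n+1/2}\|^2-\|\bar\partial_t\xi_h^{n-1/2}\|^2+\|\widehat{\xi}_h^{n+1/2}\|_{a,h}^2-\|\widehat{\xi}_h^{n-1/2}\|_{a,h}^2\Big).
\]
This uses $\widehat{\xi}^{n,1/4}=\tfrac12(\widehat{\xi}^{n+1/2}+\widehat{\xi}^{n-1/2})$. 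Multiply by $2\Delta t$ and sum over $n=1,\dots,m$. Bound the right-hand side by Cauchy--Schwarz and Young \eqref{Young's}, then apply Lemma~\ref{Ch2-d-gronwall} as in Remark~\ref{rem2.6}. The result is
\[
\|\bar\partial_t\xi_h^{m+1/2}\|^2+\|\widehat{\xi}_h^{m+1/2}\|_{a,h}^2\lesssim \|\bar\partial_t\xi_h^{1/2}\|^2+\|\widehat{\xi}_h^{1/2}\|_{a,h}^2+\Delta t\sum_n\|\tau^n\|^2 .
\]
The initial terms are controlled by Lemma~\ref{P1 lemma_on_norm_initial}, after subtracting the $\eta$ and $u-{\cal E}_K$ parts in the same way.

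The main obstacle is the projection part $\|\bar\partial_t^2(u^n-E_hu^n)\|$. It involves second time differences, whereas only $u\in H^1(H^{2+r})$ and $u_t\in L^\infty(H^{k+3}({\cal T}_h))$ are assumed. A direct bound would need $u_{tt}$ in spatial high-regularity spaces. I would avoid this with summation by parts in time, using \eqref{sum_by_parts}. Write $\bar\partial_t^2\rho^n=(\bar\partial_t\rho^{n+1/2}-\bar\partial_t\rho^{n-1/2})/\Delta t$ with $\rho=u-E_hu$ componentwise. Then
\[
\Delta t\sum_n(\bar\partial_t^2\rho^n,\delta_t\xi_h^n)
\]
becomes boundary terms $(\bar\partial_t\rho^{m+1/2},\xi_h^{m+1/2})$-type plus sums of $(\bar\partial_t\rho,\bar\partial_t\text{-differences of }\xi)$, moved onto $\xi$.

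This transfers one time derivative onto $\xi$. The resulting terms are bounded by $\|\bar\partial_t\rho\|\,\|\xi_h\|\le C_P\|\bar\partial_t\rho\|\,\|\widehat{\xi}_h\|_{a,h}$ via \eqref{poinacre}. Here $\|\bar\partial_t\rho^{n+1/2}\|\le(\Delta t)^{-1/2}\|\rho_t\|_{L^2(t_n,t_{n+1};L^2)}$. The $L^2$ bound on $\rho_t=u_t-E_hu_t$, obtained from Theorem~\ref{lm1_in_dis_tr_11}(c,d) and \eqref{poinacre} in the energy scale, gives the needed $\mathcal{O}(h^{k+1})$ under the stated hypotheses $u\in H^1(H^{2+r}(\Omega)\cap H^{k+3}({\cal T}_h))$.

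Any leftover $\delta_t$-type pairing is absorbed into the Gronwall sum. If the summation-by-parts bookkeeping produces an $\ell^\infty$ boundary term in $\widehat{\xi}_h$, it is absorbed with a small Young parameter into the left-hand side. Combining the three pieces then yields the stated rates.
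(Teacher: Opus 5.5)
Your splitting, the telescoping with $\widehat v_h=\delta_t\widehat\xi_h^n$, and the use of Lemma~\ref{Sec_lma_1}(c) for the temporal residual are all fine. The step you identify as the main obstacle, however, does not work as described, and it is exactly the step that has to respect the $H^1$-in-time hypothesis. Because you measure the error against $\widehat E_h(u)$, the spatial error enters only through the $L^2$-pairing
\[
2\Delta t\sum_{n=1}^{m}\big(\bar\partial_t^2\rho^n,\delta_t\xi_h^n\big)=2\sum_{n=1}^{m}\big(\bar\partial_t\rho^{n+1/2}-\bar\partial_t\rho^{n-1/2},\delta_t\xi_h^n\big),\qquad \rho=u-E_hu .
\]
This is the discrete analogue of $\int_0^t(\rho_{tt},\xi_t)\,\mathrm{d}s=\big[(\rho_t,\xi_t)\big]_0^t-\int_0^t(\rho_t,\xi_{tt})\,\mathrm{d}s$. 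Summation by parts via \eqref{sum_by_parts} gives the boundary terms $(\bar\partial_t\rho^{m+1/2},\delta_t\xi_h^m)$ and $(\bar\partial_t\rho^{1/2},\delta_t\xi_h^1)$, which are harmless. It also gives the interior sum $\sum_{n=1}^{m-1}(\bar\partial_t\rho^{n+1/2},\delta_t\xi_h^{n+1}-\delta_t\xi_h^n)$, which is the problem. The differences $\delta_t\xi_h^{n+1}-\delta_t\xi_h^n$ are $\Delta t$ times discrete accelerations of $\xi_h$. The energy $\|\bar\partial_t\xi_h\|^2+\|\widehat\xi_h\|_{a,h}^2$ does not control them, and there is no factor $\Delta t$ in front of the sum. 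If you bound them by $\max_n\|\bar\partial_t\xi_h^{n+1/2}\|$, you are left with $\sum_n\|\bar\partial_t\rho^{n+1/2}\|\le(\Delta t)^{-1}\|\rho_t\|_{L^1(L^2(\Omega))}$. The spatial error is then multiplied by $(\Delta t)^{-1}$ and the unconditional estimate is lost. A term of the form $\|\bar\partial_t\rho\|\,\|\xi_h\|$ cannot arise here, because summation by parts only shifts a difference between the two factors. Within your formulation, the direct alternative is to bound $\Delta t\sum_n\|\bar\partial_t^2\rho^n\|^2$. That needs $u\in H^2(H^{2+r}(\Omega)\cap H^{k+3}(\mathcal T_h))$, which is the hypothesis of Theorem~\ref{err_L2_new}, not of this theorem.

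The missing idea is the choice of error variable. The paper works with $\widehat e_h^n=\widehat I^k_h(u^n)-\widehat u_h^n$, which is $-(\widehat\xi_h^n+\widehat\eta_h^n)$ in your notation. The cell component of $\widehat I^k_h$ is the $L^2$-projection $\Pi_h^{k+2}$, so $(\bar\partial_t^2\Pi_h^{k+2}u^n,v_h)=(\bar\partial_t^2u^n,v_h)$. Hence in \eqref{erroreqn} the only spatial error is $a_h(\widehat\theta_h^{n,1/4},\widehat v_h)$, where $\widehat\theta_h=\widehat I^k_h(u)-\widehat E_h(u)$ carries no time differences. With $\widehat v_h=\widehat e_h^{n+1}-\widehat e_h^{n-1}=2(\widehat e_h^{n+1/2}-\widehat e_h^{n-1/2})$, this pairing carries a single difference, and \eqref{sum_by_parts} moves it onto $\widehat\theta_h$. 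What remains is:
\begin{itemize}
\item boundary terms, which are absorbed by Young's inequality;
\item the sum $\sum_n a_h(\widehat\theta_h^{n+1,1/4}-\widehat\theta_h^{n,1/4},\widehat e_h^{n+1/2})$.
\end{itemize}
That sum is handled by continuity of $a_h$, Young with weight $\Delta t/T$, the bound $\|\widehat\theta_h^{n+1,1/4}-\widehat\theta_h^{n,1/4}\|_{a,h}^2\lesssim\Delta t\int_{t_{n-1}}^{t_{n+2}}\|\widehat\theta_{ht}\|_{a,h}^2\,\mathrm{d}t$, Gronwall, and Theorem~\ref{lm1_in_dis_tr_11}(c) applied to $u_t$. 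Only $u_t\in L^2(H^{k+3}(\mathcal T_h))$ is needed, consistent with the stated hypotheses. As a further benefit, the velocity error then involves only $\bar\partial_t(u-\Pi_h^{k+2}u)$, so you never need $\bar\partial_t(\Pi_h^{k+2}u-E_hu)$ in $L^2$.
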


\begin{thm}[$L^2$-error estimates]\label{err_L2_new} 
Let $u \in  C([0,T];H^2_0(\Omega)\cap H^{2+r}(\Omega))$ with $r >3/2$~(resp. $\widehat{u}_{h}^{n}\in\widehat{V}_{h}^{0} ~(1 \le n\le N)$) solve \eqref{P1 weak_form}~(resp. \eqref{hwave_int_cond}-\eqref{hwave_algorithm}) and assume that $\widehat{I}^{k}_{h}(u^{0}) = \widehat{E}_{h}(u^{0})$ and $u \in H^4(L^2(\Omega))$. In addition, if
% Let $(u,p)$ and $(\widehat{u}_h^{n+1},p_h^{n+1})$ solve \eqref{P1 strong_form_1} and \eqref{HHO_Crank_Nico_sc_1}, respectively. Assume that  $u_{ttt},u_{tttt} \in L^2(L^2(\Omega))$, and for $r >1/2$,
%\begin{align}
 %\text{for } k=0,& \quad   L(u,t_1)=\norm{p}_{L^\infty(0,t_1;H^{3+\beta}(\mathcal{T}_h)}+\norm{u_t}_{L^2(H^{3+\beta}(\mathcal{T}_h))} \text{ with }\beta =\min(r-1,1),\\
 %&\text{for } k>0,& \quad norm{p}_{L^\infty(0,t_1;H^{k+3}(\mathcal{T}_h)}+
%\end{align}
\begin{itemize}
\item $u \in H^2(H^{2+r}(\Omega) \cap H^{3+\beta}(\mathcal{T}_h))\text{ with }\beta =\min(r-1,1)  \text{ for } k=0,$
\item $u \in H^2(H^{2+r}(\Omega) \cap H^{k+3}(\mathcal{T}_h))  \text{ for }  k\ge1,$ \text{ then } 
\end{itemize} for HHO-A scheme and {for any $1 \le m \le N-1$}, 
$ \norm{u^{m+1}- u_h^{m+1} } = 
\begin{cases}
    \mathcal{O}(h^ 2+ h^{2+\beta}+(\Delta t)^2) \quad\;\;\; \text{ for } k=0,\\
    \mathcal{O}(h^{k+3} +(\Delta t)^2 ) \qquad\qquad \text{ for }  k\ge1.
\end{cases} 
$
\end{thm}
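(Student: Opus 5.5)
We need to prove Theorem \ref{err_L2_new}, the $L^2$ error estimate for the Newmark scheme with HHO-A. The plan uses the standard splitting through the projection $\widehat{E}_h$ together with the novel $L^2$ bound of Theorem~\ref{m1_H^2_bound}.

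\textbf{Splitting.} Write
\[
u^{n}-u_h^{n}=\big(u^{n}-\Pi_h^{k+2}u^{n}\big)+\big(\Pi_h^{k+2}u^{n}-E_h(u^{n})\big)+\big(E_h(u^{n})-u_h^{n}\big)=:\eta^n+\theta^n+\rho_h^n .
\]
Here $\widehat{\rho}_h^n:=\widehat{E}_h(u^n)-\widehat{u}_h^n\in\widehat{V}_h^0$.
\begin{itemize}
\item The term $\eta^n$ is handled by Theorem~\ref{lm1_in_dis_tr_11}(d), which gives $h^{k+3}$ accuracy. For $k=0$, the available regularity gives $h^{3+\beta}$, which is even better.
\item The term $\theta^n$ is exactly the quantity controlled by Theorem~\ref{m1_H^2_bound}, applied pointwise in time to $w=u(t_n)$. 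The resulting factor $\norm{\Delta^2 u^n}_{H^{k-1}}$ is finite under the $H^2(H^{2+r}\cap H^{k+3}(\mathcal T_h))$ assumption.
\end{itemize}
So it remains to bound $\norm{\rho_h^{m+1}}$ by $\mathcal O(h^{k+3}+(\Delta t)^2)$, and the hypothesis $\widehat{I}_h^k(u^0)=\widehat{E}_h(u^0)$ gives $\widehat\rho_h^0=0$.

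\textbf{Error equation.} Test the weak form \eqref{P1 weak_form} at times $t_{n\pm1},t_n$ with $v_h$, combine with weights $1/4,1/2,1/4$, and use $a_h(\widehat E_h(u),\widehat v_h)=(\Delta^2u,v_h)=(f-u_{tt},v_h)$. Subtracting \eqref{hwave_algorithm} then gives
\[
(\bar\partial_t^2\rho_h^n,v_h)+a_h(\widehat\rho_h^{n,1/4},\widehat v_h)=(\bar\partial_t^2E_h(u^n)-u_{tt}^{n,1/4},v_h),
\]
where
\[
\bar\partial_t^2E_h(u^n)-u_{tt}^{n,1/4}=\big(\bar\partial_t^2u^n-u_{tt}^{n,1/4}\big)-\bar\partial_t^2(\eta^n+\theta^n).
\]
\begin{itemize}
\item The first piece is $\mathcal O((\Delta t)^2)$ in the sense of Lemma~\ref{Sec_lma_1}(c).
\item The second piece is estimated by writing $\bar\partial_t^2(\eta+\theta)$ as a time average of $(\Pi_h^{k+2}-I)u_{tt}$ and $(\Pi_h^{k+2}-E_h)u_{tt}$, which is legitimate since both operators are linear and time-independent. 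Applying Theorem~\ref{m1_H^2_bound} and Theorem~\ref{lm1_in_dis_tr_11}(d) to $u_{tt}$ gives an $L^2(L^2)$ bound of order $h^{k+3}$. This is why $u\in H^2(\cdots)$ is assumed.
\end{itemize}

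\textbf{Energy argument.} Take $\widehat v_h=\widehat\rho_h^{n+1}-\widehat\rho_h^{n-1}=2\Delta t\,\delta_t\widehat\rho_h^n$ and sum over $n=1,\dots,m$. This yields the standard telescoping identity
\[
\norm{\bar\partial_t\rho_h^{m+1/2}}^2+\norm{\widehat\rho_h^{m+1/2}}_{a,h}^2
\le\norm{\bar\partial_t\rho_h^{1/2}}^2+\norm{\widehat\rho_h^{1/2}}_{a,h}^2+2\Delta t\sum_n\norm{\tau^n}\,\norm{\delta_t\rho_h^n}.
\]
Bound $\norm{\delta_t\rho_h^n}$ by the average of $\norm{\bar\partial_t\rho_h^{n\pm1/2}}$, apply Young's inequality, and close with the discrete Gronwall Lemma~\ref{Ch2-d-gronwall}. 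This controls $\max_m\norm{\bar\partial_t\rho_h^{m+1/2}}$. Then, since $\rho_h^0=0$,
\[
\norm{\rho_h^{m+1}}\le\Delta t\sum_{j\le m}\norm{\bar\partial_t\rho_h^{j+1/2}}\le T\max_j\norm{\bar\partial_t\rho_h^{j+1/2}},
\]
so the $L^2$ bound follows from the velocity bound alone. This is the key point: no $h^{k+1}$ energy-norm error ever enters.

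\textbf{Initial step (main obstacle).} The hardest part is the first step: showing $\norm{\bar\partial_t\rho_h^{1/2}}+\norm{\widehat\rho_h^{1/2}}_{a,h}=\mathcal O(h^{k+3}+(\Delta t)^2)$ rather than the $h^{k+1}$ of Lemma~\ref{P1 lemma_on_norm_initial}.
\begin{itemize}
\item Subtract \eqref{hwave_int_cond} from the averaged weak form at $t_{1/2}$. Because $\rho_h^0=0$, the error equation reads
\[
2(\Delta t)^{-1}(\bar\partial_t\rho_h^{1/2},v_h)+a_h(\widehat\rho_h^{1/2},\widehat v_h)=(\tau^0,v_h).
\]
\item The residual is
\[
\tau^0=2(\Delta t)^{-1}\big(\bar\partial_tu^{1/2}-u_t^0\big)-u_{tt}^{1/2}-2(\Delta t)^{-1}\bar\partial_t(\eta+\theta)^{1/2}.
\]
The bracket is $\mathcal O(\Delta t)$ by Lemma~\ref{Sec_lma_1}(a).
\item Test with $\widehat v_h=\widehat\rho_h^{1/2}=\widehat\rho_h^1/2=\tfrac{\Delta t}{2}\bar\partial_t\widehat\rho_h^{1/2}$. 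The left side becomes $\norm{\bar\partial_t\rho_h^{1/2}}^2+\norm{\widehat\rho_h^{1/2}}_{a,h}^2$, and the right side is
\[
\tfrac{\Delta t}{2}(\tau^0,\bar\partial_t\rho_h^{1/2})\le \Delta t\norm{\tau^0}\norm{\bar\partial_t\rho_h^{1/2}}.
\]
\item Hence the initial quantity is bounded by $\Delta t\norm{\tau^0}$. This gives $(\Delta t)^2$ from the truncation part, and $h^{k+3}$ from the projection part via Theorem~\ref{m1_H^2_bound} applied to $u_t$ averaged over $(0,t_1)$.
\end{itemize}
Combining the three pieces gives the stated rates for $k\ge1$, and $h^2+h^{2+\beta}$ for $k=0$ from the first case of Theorem~\ref{m1_H^2_bound}.
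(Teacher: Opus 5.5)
Your argument is correct and is essentially the paper's proof. The paper uses the same $\widehat E_h$-based error equation whose right-hand side contains only $L^2$ residuals $R^n-\bar\partial_t^2\theta_h^n$, the same test function $\widehat\rho_h^{n+1}-\widehat\rho_h^{n-1}$ followed by Young's inequality and the discrete Gronwall lemma, and the same initial step using $\widehat\rho_h^0=0$ with $\widehat v_h=\widehat\rho_h^{1/2}=\tfrac{\Delta t}{2}\bar\partial_t\widehat\rho_h^{1/2}$. The only variation is the last step: you recover $\norm{\rho_h^{m+1}}$ by summing the discrete velocities from $\rho_h^0=0$, whereas the paper writes $\rho_h^{m+1}=\rho_h^{m+1/2}+\tfrac{\Delta t}{2}\bar\partial_t\rho_h^{m+1/2}$ and applies the discrete Poincar\'e inequality to $\norm{\widehat\rho_h^{m+1/2}}_{a,h}$, which is equally superconvergent because it is controlled by the same Gronwall bound.
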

\subsection{Crank-Nicolson scheme}\label{crn_sub}
In this subsection, we develop a time-implicit scheme for the first-order mixed (in time)
biharmonic wave problem. The biharmonic wave equation, where the displacement $u$ itself carries structural information but the 
vibrational velocity $u_t$ is 
of equal or greater physical significance in many applications. In the mechanics 
of biological tissues modelled as plate-like structures, shear wave velocity 
fields derived from the deflection are the primary quantity used to assess 
mechanical properties such as stiffness and viscoelasticity, with direct clinical 
relevance to the diagnosis of pathologies such as fibrosis and cancer 
\cite{Kijanka2024}. In cortical bone, modelled as a fluid-coupled elastic plate, 
the velocity of propagating wave modes captures dispersion characteristics that 
are directly linked to bone quality and porosity \cite{Aggelis2014, Nguyen2013}. 
Furthermore, in structural acoustics, the transverse velocity field of a biharmonic plate is 
the central quantity in computing structural intensity which identifies energy sources, 
sinks, and transmission paths \cite{LeBot2008}. %Furthermore, in cochlear 
%mechanics, the basilar membrane is modelled as a biharmonic plate, and its 
%vibrational velocity captures the frequency-selective travelling wave behaviour 
%underlying the mammalian hearing mechanism \cite{Kim2018}. 

\medskip\noindent
The aforementioned  
applications motivate the formulation introduced here: Rather than treating 
$u_t$ merely as an intermediate quantity, we introduce the vibrational velocity as 
an independent unknown $p = u_t$, and rewrite the equation as a first-order (in time)  
system of PDEs. This mixed formulation simultaneously approximates both the 
displacement $u$ and the vibrational velocity $p$ as primary variables, each 
governed by its own equation. The stability of the discrete solution 
is derived and the corresponding error estimates are established; the proofs of 
these main results in Theorem~\ref{Stab-1}--Theorem~\ref{err_lma} are presented 
in Section~\ref{Sec_cr_nic}.

\medskip \noindent
Introduce the velocity $p=u_{t}$ as independent unknown and rewrite \eqref{P1 strong_form}-\eqref{P1 strong_icbc} as
\begin{align}
 u_{t}-p=0 \quad \mbox{and} \quad p_{t} + \Delta^2 u = f(x,t) \quad  (x,t) \in \Omega \times \left(0,T \right] \label{P1 strong_form_1}
\end{align}
with initial and clamped boundary conditions
\begin{equation}
u(x,0)=u_0(x) \;\; \mbox{and}\;\; p(x,0)=u_1(x) \text{ in }\Omega,\quad    u=\frac{\partial u}{\partial n} =0 \text{ on }\partial \Omega \times (0,T], \text{ respectively}.\label{P1 strong_icbc_1}
\end{equation}
\begin{comment}
\textcolor{cyan}{In this subsection, we develop a time-implicit scheme for the first-order mixed biharmonic wave problem. This formulation involves splitting the equation into a system of PDEs by introducing a new variable 
$p=u_t$. The analysis not only approximates the displacement, but also the vibrational velocity. The stability of the solution 
$(\widehat{u}^n_{h}, p_h^n)$ is derived and the corresponding error estimates are established. The proofs of these main results in Theorem \ref{Stab-1} -Theorem~\ref{err_lma} are presented in  Section~\ref{Sec_cr_nic}.}
 
\medskip \noindent
%The first order (in time)  model formulation for \eqref{P1 strong_form}-\eqref{P1 strong_icbc} is given by i
\end{comment}
The Crank-Nicolson HHO scheme for \eqref{P1 strong_form_1}-\eqref{P1 strong_icbc_1} 
%the fully discrete HHO scheme for \eqref{P1 strong_form}-\eqref{P1 strong_icbc}
is defined as: Given  $\widehat{u}_{h}^{0}=\widehat{E}_{h}(u_{0})$ and  ${p}^{0}_{h}=\Pi_{h}^{k+2}(u_1),$ for $n=1,2,\cdots,N-1,$ seek $(\widehat{u}_{h}^{n+1},p^{n+1}_h)\in \widehat{V}_{h}^{0} \times {\cal P}_{k+2}({\cal T}_{h})$ such that
\begin{subequations}\label{HHO_Crank_Nico_sc_1}
\begin{align}
(\bar{\partial}_{t}u_{h}^{n+1/2},q_h)=(p_h^{n+1/2},q_h)\;\text{ for all }\; {q}_{h}\; \in {\cal P}_{k+2}({\cal T}_{h}),\label{HHO_Crank_Nico_sc_1-a}\\   (\bar{\partial}_{t}p^{n+1/2}_{h},v_{h})+a_{h}(\widehat{u}_{h}^{n+1/2},\widehat{v}_{h})=(f^{n+1/2},v_{h})\;\text{ for all }\;\widehat{v}_{h}\in \; \widehat{V}_{h}^{0} \label{HHO_Crank_Nico_sc_1-b}.
\end{align}
\end{subequations}
\begin{thm}[stability]\label{Stab-1}  
If $u_0\in \mathcal{Y},u_1 \in L^2(\Omega),$ and $f\in H^1(L^2(\Omega)),$ then  \eqref{HHO_Crank_Nico_sc_1} is unconditionally stable. Moreover, for  $0 \le m\le N-1$,  the solution $(\widehat{u}_{h}^{m+1},p_h^{m+1})$ satisfies
$$\norm{p_{h}^{m+1}}+\norm{\widehat{u}_{h}^{m+1}}_{a,{h}}\lesssim \norm{p_h^0}+\norm{\widehat{u}_{h}^{0}}_{a,{h}}+\norm{f}_{L^\infty(L^2(\Omega))}+\norm{f_t}_{L^2(L^2(\Omega))},$$
where the constant absorbed in "$\lesssim$" depends on $T$ linearly and $C_P$ from \eqref{poinacre}.
\end{thm}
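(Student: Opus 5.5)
The plan is a discrete energy argument for the Crank--Nicolson system \eqref{HHO_Crank_Nico_sc_1}, with a summation by parts in time to handle the source. The test functions are chosen so that the coupling terms cancel. In \eqref{HHO_Crank_Nico_sc_1-b} we take $\widehat{v}_h=\bar{\partial}_t\widehat{u}_h^{n+1/2}\in\widehat{V}_h^0$. Its cell component is $\bar{\partial}_t u_h^{n+1/2}$, which by \eqref{HHO_Crank_Nico_sc_1-a} (tested with $q_h=\bar{\partial}_t p_h^{n+1/2}$) satisfies
\[
(\bar{\partial}_t p_h^{n+1/2},\bar{\partial}_t u_h^{n+1/2})=(\bar{\partial}_t p_h^{n+1/2},p_h^{n+1/2})=\frac{1}{2\Delta t}\big(\norm{p_h^{n+1}}^2-\norm{p_h^n}^2\big).
\]
By symmetry of $a_h$ we also have
\[
a_h(\widehat{u}_h^{n+1/2},\bar{\partial}_t\widehat{u}_h^{n+1/2})=\frac{1}{2\Delta t}\big(\norm{\widehat{u}_h^{n+1}}_{a,h}^2-\norm{\widehat{u}_h^{n}}_{a,h}^2\big).
\]
The right-hand side is $(f^{n+1/2},\bar{\partial}_t u_h^{n+1/2})$. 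Multiplying by $2\Delta t$ and summing over $n=0,\dots,m$ gives
\[
\norm{p_h^{m+1}}^2+\norm{\widehat{u}_h^{m+1}}_{a,h}^2=\norm{p_h^0}^2+\norm{\widehat{u}_h^0}_{a,h}^2+2\sum_{n=0}^m (f^{n+1/2},u_h^{n+1}-u_h^n).
\]
Since $\mathcal{P}_{k+2}(\mathcal{T}_h)$ is finite dimensional, the cell component $u_h^{n+1}-u_h^n$ is an admissible test function. Unconditional stability follows from the identity itself: no CFL coupling between $h$ and $\Delta t$ appears.

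The main obstacle is the source term, and the reason for the hypothesis $f\in H^1(L^2(\Omega))$. The naive bound uses $u_h^{n+1}-u_h^n=\Delta t\,p_h^{n+1/2}$ and then Gronwall on $\norm{p_h}$. I would instead apply summation by parts \eqref{sum_by_parts2} (after reindexing) to get
\[
\sum_{n=0}^m (f^{n+1/2},u_h^{n+1}-u_h^n)=(f^{m+1/2},u_h^{m+1})-(f^{1/2},u_h^0)-\sum_{n=1}^m (f^{n+1/2}-f^{n-1/2},u_h^n).
\]
This places no time difference on the discrete solution and only requires control of $u_h$ in $L^2$. The discrete Poincaré inequality \eqref{poinacre} provides this, via $\norm{u_h^n}\le C_P\norm{\widehat{u}_h^n}_{a,h}$.

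Next I bound each term. For the first, $(f^{m+1/2},u_h^{m+1})\le C_P\norm{f}_{L^\infty(L^2)}\norm{\widehat{u}_h^{m+1}}_{a,h}$, and Young's inequality \eqref{Young's} with $\epsilon=1/2$ lets half of $\norm{\widehat{u}_h^{m+1}}_{a,h}^2$ be absorbed into the left side. The second term is bounded by $C_P\norm{f}_{L^\infty(L^2)}\norm{\widehat{u}_h^0}_{a,h}$. For the sum, $f^{n+1/2}-f^{n-1/2}=\int f_t$ over an interval of length $\Delta t$ (up to averaging), so Cauchy--Schwarz gives
\[
\norm{f^{n+1/2}-f^{n-1/2}}\le \Delta t^{1/2}\norm{f_t}_{L^2(t_{n-1},t_{n+1};L^2)}.
\]
Hence the sum is bounded by $\sum_n \Delta t\,\norm{\widehat{u}_h^n}_{a,h}^2+C_P^2\norm{f_t}_{L^2(L^2)}^2$.

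Finally, let $a_m$ denote the discrete energy at step $m$. We arrive at $a_{m+1}\lesssim c+\Delta t\sum_{n\le m}a_n$, where $c$ collects $\norm{p_h^0}^2$, $\norm{\widehat{u}_h^0}_{a,h}^2$, $\norm{f}^2_{L^\infty(L^2)}$ and $\norm{f_t}^2_{L^2(L^2)}$. Applying the discrete Gronwall Lemma~\ref{Ch2-d-gronwall} in the spirit of Remark~\ref{rem2.6} bounds the growth factor by a constant depending on $T$. Taking square roots yields the claimed bound. A more careful variant is to bound $\max_n\norm{\widehat{u}_h^n}_{a,h}$ directly instead of Gronwall, which would give the linear dependence on $T$ asserted in the statement.
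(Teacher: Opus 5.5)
Your proposal is correct and follows the paper's proof essentially step for step: the same test functions $q_h=\bar{\partial}_t p_h^{n+1/2}$ and $\widehat{v}_h=\bar{\partial}_t\widehat{u}_h^{n+1/2}$, the same energy identity, the summation by parts \eqref{sum_by_parts2} with $2(f^{n+1/2}-f^{n-1/2})=f^{n+1}-f^{n-1}$, and then \eqref{poinacre}, Young's inequality and Lemma~\ref{Ch2-d-gronwall}. The only detail the paper handles differently is the Young weighting: it pairs $\frac{TC_P^2}{2\Delta t}\norm{f^{n+1}-f^{n-1}}^2$ with $\frac{\Delta t}{2T}\norm{\widehat{u}_h^n}_{a,h}^2$, so the Gronwall factor is an absolute constant as in Remark~\ref{rem2.6} and the constant is linear in $T$, whereas your unscaled $\Delta t\sum_n\norm{\widehat{u}_h^n}_{a,h}^2$ would give a factor $e^{cT}$ (your $\max_n$ variant also fixes this, and incidentally your ``naive'' route via $\bar{\partial}_t u_h^{n+1/2}=p_h^{n+1/2}$ would close as well, needing only $f\in L^\infty(L^2(\Omega))$).
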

\begin{thm}[error estimates]\label{err_thm}  
Let $(u,p)$~(resp. $(\widehat{u}_h^{n+1},p_h^{n+1})~(0 \le m\le N-1)$) solve \eqref{P1 strong_form_1}~(resp. \eqref{HHO_Crank_Nico_sc_1}). Assume that $u\in H^4(L^2(\Omega))$ and $p\in H^4(L^2(\Omega))$. In addition, if
%\begin{align}
 %\text{for } k=0,& \quad   L(u,t_1)=\norm{p}_{L^\infty(0,t_1;H^{3+\beta}(\mathcal{T}_h)}+\norm{u_t}_{L^2(H^{3+\beta}(\mathcal{T}_h))} \text{ with }\beta =\min(r-1,1),\\
 %&\text{for } k>0,& \quad norm{p}_{L^\infty(0,t_1;H^{k+3}(\mathcal{T}_h)}+
%\end{align}
\begin{itemize}
\item  $u \in H^2(H^{2+r}(\Omega) \cap H^{3+\beta}(\mathcal{T}_h)) \text{ and }p \in L^\infty( H^{3}(\mathcal{T}_h))\text{ with }\beta =\min(r-1,1) \text{ and }(r> 3/2)  \text{ for } k=0,$
\item $u \in H^2(H^{2+r}(\Omega) \cap H^{k+3}(\mathcal{T}_h))\text{ and }p \in L^\infty( H^{k+3}(\mathcal{T}_h))  \text{ for }  k\ge1,$ \text{ then } 
\end{itemize}
 % $$u_t \in L^\infty(H^{2+r}(\Omega) \cap H^{k+3}(\mathcal{T}_h)), u_{tt}\in L^2(H^{2+r}(\Omega) \cap H^{k+3}(\mathcal{T}_h))  \text{ for }  k\ge1. \text{ Then,   for any  } 0 \le m\le N-1,$$
 \begin{align*} 
\big(\sum_{K\in {\cal T}_{h}}\norm{\nabla^{2}(u^{m+1/2}-{\cal R}_{K}(\widehat{u}_h ^{m+1}))}^{2}_{K}\big)^{1/2} + \norm{p^{m+1}-p_h^{m+1/2}}
=
 \begin{cases}
 \mathcal{O}(h+h^{1+\beta}+(\Delta t)^{2})\;&\text{ for }\; k = 0,\\
\mathcal{O}(h^{k+1}+(\Delta t)^{2})\;&\text{ for }\; k \ge 1.
 \end{cases}
\end{align*}
Moreover, for HHO-A, it holds 
 %\begin{align*} 
 $\displaystyle \norm{ p^{m+1}- p_h^{m+1} } = \begin{cases}
    \mathcal{O}(h^2+ h^{2+\beta}+(\Delta t)^2) \quad\;\;\; &\text{ for } k=0,\\
    \mathcal{O}(h^{k+3} +(\Delta t)^2 ) \quad &\text{ for }  k\ge1.
\end{cases}$
%\end{align*}
%where the  constant absorbed in ``$\lesssim$''  depends on $T$ linearly and $C_P$ from \eqref{poinacre}.
\end{thm}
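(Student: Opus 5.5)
The statement to prove is Theorem~\ref{err_thm}, the Crank--Nicolson error estimate. I would use the standard splitting through the projection $\widehat{E}_h$ of \eqref{proj_op_1}. Write
\[
u^n-u_h^n=(u^n-E_h(u^n))+(E_h(u^n)-u_h^n)=:\eta^n+\widehat{\theta}_h^n,
\]
with $\widehat{\theta}^n_h:=\widehat{E}_h(u^n)-\widehat{u}^n_h\in\widehat{V}^0_h$. For the velocity, write
\[
p^n-p_h^n=(p^n-\Pi_h^{k+2}p^n)+(\Pi_h^{k+2}p^n-p_h^n)=:\xi^n+\sigma_h^n .
\]
Since $\widehat{u}_h^0=\widehat{E}_h(u_0)$ and $p_h^0=\Pi_h^{k+2}u_1$, both $\widehat{\theta}_h^0=0$ and $\sigma_h^0=0$.

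\textbf{Error equations.} Take \eqref{P1 strong_form_1} at the two time levels, average them, and test the second relation with $v_h$. The definition \eqref{proj_op_1} gives $a_h(\widehat{E}_h(u^{n+1/2}),\widehat{v}_h)=(\Delta^2u^{n+1/2},v_h)$, so
\[
(\bar\partial_t\sigma_h^{n+1/2},v_h)+a_h(\widehat{\theta}_h^{n+1/2},\widehat{v}_h)=(\tau_1^n,v_h),
\qquad
\tau_1^n=\bar\partial_t p^{n+1/2}-p_t^{n+1/2}.
\]
The term $\bar\partial_t\xi^{n+1/2}$ drops out because $v_h$ is piecewise $\mathcal P_{k+2}$. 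The first relation, tested with $q_h$, gives
\[
(\bar\partial_t\theta_h^{n+1/2},q_h)=(\sigma_h^{n+1/2},q_h)+(\tau_2^n,q_h),
\]
where $\tau_2^n=(\bar\partial_t u^{n+1/2}-u_t^{n+1/2})+\bar\partial_t(E_h u-\Pi_h^{k+2}u)^{n+1/2}$, using that $p^{n+1/2}-u_t^{n+1/2}=0$.

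\textbf{Energy argument.} The first relation alone controls only the cell component, so the velocity equation cannot be tested directly with $\bar\partial_t\widehat{\theta}_h$. Instead, take $v_h=\sigma_h^{n+1/2}$ in the velocity error equation and $q_h=\Pi_h$-projection of the relation applied to $\widehat{\theta}_h$ via $a_h$. Concretely, I would apply $\bar\partial_t$ to the scheme structure and mimic the stability proof (Theorem~\ref{Stab-1}), obtaining the identity
\[
\tfrac{1}{2\Delta t}\big(\|\sigma_h^{n+1}\|^2-\|\sigma_h^n\|^2+\|\widehat\theta_h^{n+1}\|_{a,h}^2-\|\widehat\theta_h^n\|_{a,h}^2\big)=(\tau_1^n,\sigma_h^{n+1/2})+a_h(\widehat{\theta}_h^{n+1/2},\cdot)\text{-consistency term}.
\]
The consistency term involves $\tau_2$ measured in a dual norm. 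I would move its time difference onto $\widehat\theta_h$ with the summation by parts formula \eqref{sum_by_parts}, and bound it with Lemma~\ref{Sec_lma_1} (which requires $u,p\in H^4(L^2)$) together with Theorem~\ref{lm1_in_dis_tr_11}(c) applied to $u$ and $u_t$, using $\widehat{I}_h$ as an intermediate. The discrete Gronwall lemma \ref{Ch2-d-gronwall}, with Remark~\ref{rem2.6}, then gives $\|\sigma_h^{m+1}\|+\|\widehat\theta_h^{m+1}\|_{a,h}=\mathcal O(h^{k+1}+(\Delta t)^2)$.

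\textbf{Energy-norm bound.} I would convert this to the stated norm by the triangle inequality
\[
\nabla^2(u-{\cal R}_K\widehat u_h)=\nabla^2(u-{\cal E}_Ku)+\nabla^2{\cal R}_K(\widehat I_hu-\widehat E_hu)+\nabla^2{\cal R}_K\widehat\theta_h .
\]
The first two pieces are bounded by Lemma~\ref{bd_elli_proj}, Theorem~\ref{lm1_in_dis_tr_11}(c) and Lemma~\ref{bdd_lmma_a_T}. The third is controlled by the Gronwall bound through Lemma~\ref{bdd_lmma_a_T}. The time-averaging mismatch $u^{m+1/2}$ versus $u^{m+1}$ costs only $\mathcal O(\Delta t^2)$ after the mismatch is handled similarly, and $\|\xi\|$ is of higher order by Theorem~\ref{lm1_in_dis_tr_11}(d).

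\textbf{$L^2$ bound for HHO-A.} Here the only change is that $\tau_2$ is bounded in $L^2$ by Theorem~\ref{m1_H^2_bound} applied to $u_t$ (requiring $u\in H^2(H^{2+r})$). Combining this with $\|\xi^{m+1}\|\lesssim h^{k+3}|p|_{H^{k+3}}$ gives the $h^{k+3}$ rate for $\|p^{m+1}-p_h^{m+1}\|$. The energy argument must be arranged so that $\|\sigma_h\|$ depends only on $\|\tau_2\|$ in $L^2$ and not in energy norm; this relies on $\widehat\theta^0_h=0$.

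\textbf{Main obstacle.} The delicate step is handling the coupling term $a_h(\widehat\theta_h^{n+1/2},\cdot)$ against the consistency error $\tau_2$, because $\tau_2$ lives only in cell unknowns while $a_h$ needs full HHO test functions. I would first try testing the velocity error equation with $\bar\partial_t\widehat\theta_h^{n+1/2}$ and replacing its cell component via the first relation, using that $\Pi^{k+2}_h$ is the identity on $\theta_h$. This leaves the term $(\bar\partial_t\sigma_h,\tau_2)$, which I would sum by parts in time.
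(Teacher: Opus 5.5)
Your error equations are correct. In the paper's notation your $\widehat\theta_h,\sigma_h,\tau_1^n,\tau_2^n$ are $\widehat\rho_h,\gamma_h,\eta^{n+1}$ and $\xi^{n+1}-\bar\partial_t\theta_h^{n+1/2}$. The route in your ``Main obstacle'' paragraph is exactly the paper's proof:
\begin{itemize}
\item test the velocity equation with $\bar\partial_t\widehat\theta_h^{n+1/2}$ and the displacement relation with $q_h=\bar\partial_t\sigma_h^{n+1/2}$, so the cross terms cancel;
\item sum in time and shift the differences in $(\tau_1^n,\bar\partial_t\theta_h^{n+1/2})$ and $(\tau_2^n,\bar\partial_t\sigma_h^{n+1/2})$ onto $\tau_1,\tau_2$ by summation by parts, using \eqref{poinacre} for the first;
\item apply Gronwall.
\end{itemize}

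The paragraph you present as the main plan is wrong and should be dropped. Its premise is false: $\bar\partial_t\widehat\theta_h^{n+1/2}$ is a difference of elements of $\widehat V_h^0$, so it is an admissible test function. Its mechanism also fails. Testing with a function whose cell part is $\sigma_h^{n+1/2}=\bar\partial_t\theta_h^{n+1/2}-\Pi_h^{k+2}\tau_2^n$ leaves the term $a_h(\widehat\theta_h^{n+1/2},(\Pi_h^{k+2}\tau_2^n,0,0))$. Bounding it by continuity of $a_h$ against $\|\widehat\theta_h^{n+1/2}\|_{a,h}$ needs the energy norm of a cell-only function. By the discrete trace and inverse inequalities that costs $h^{-2}\|\tau_2^n\|$. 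The time-truncation part then contributes $(\Delta t)^2h^{-2}$, a CFL-type coupling that contradicts the unconditional estimate. The spatial part loses two powers of $h$, which is fatal for the HHO-B energy rate and for the $h^{k+3}$ rate of $p$.

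That term must not be estimated; rewrite it through the velocity equation:
\[
a_h(\widehat\theta_h^{n+1/2},(\Pi_h^{k+2}\tau_2^n,0,0))=(\tau_1^n-\bar\partial_t\sigma_h^{n+1/2},\Pi_h^{k+2}\tau_2^n).
\]
This returns you to the correct identity. With it, the spatial error enters only through $L^2$ norms of cell quantities, namely $\bar\partial_t$ and $\bar\partial_t^2$ of $\Pi_h^{k+2}u-E_h u$. So, given $\widehat\theta_h^0=0$ and $\sigma_h^0=0$, the HHO-A rate follows from Theorem~\ref{m1_H^2_bound} applied to $u_t$ and $u_{tt}$, with no extra arrangement.

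Your claim that the index mismatch in the statement costs only $\mathcal{O}((\Delta t)^2)$ is also false. The statement pairs $u^{m+1/2}$ with $\widehat u_h^{m+1}$, and $p^{m+1}$ with $p_h^{m+1/2}$. But $u^{m+1}-u^{m+1/2}=\tfrac{\Delta t}{2}\bar\partial_t u^{m+1/2}$ is generically of exact order $\Delta t$ in the broken $H^2$ seminorm, and likewise $p_h^{m+1}-p_h^{m+1/2}$. You may be conflating the average $u^{m+1/2}$ with $u(t_{m+1/2})$, which differ only by $\mathcal{O}((\Delta t)^2)$. These indices are a misprint. What the paper proves, and what your argument delivers through $\|\widehat\theta_h^{m+1}\|_{a,h}$, $\|\sigma_h^{m+1}\|$ and $\|p^{m+1}-\Pi_h^{k+2}p^{m+1}\|$, is the matched-level estimate for $\big(\sum_{K}\|\nabla^2(u^{m+1}-{\cal R}_K(\widehat u_h^{m+1}))\|_K^2\big)^{1/2}$ and $\|p^{m+1}-p_h^{m+1}\|$. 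State and prove that version rather than asserting the mismatch is harmless.
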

\begin{thm}[$L^2$-error estimates for $u$]\label{err_lma} 
Let $(u,p)$~(resp. $(\widehat{u}_h^{n+1},p_h^{n+1})~(0 \le n\le N-1)$) solve \eqref{P1 strong_form_1}~(resp. \eqref{HHO_Crank_Nico_sc_1}).  Assume that $u\in H^3(L^2(\Omega))$ and $p\in H^3(L^2(\Omega))$. In addition, if
% Let $(u,p)$ and $(\widehat{u}_h^{n+1},p_h^{n+1})$ solve \eqref{P1 strong_form_1} and \eqref{HHO_Crank_Nico_sc_1}, respectively. Assume that  $u_{ttt},u_{tttt} \in L^2(L^2(\Omega))$, and for $r >1/2$,
%\begin{align}
 %\text{for } k=0,& \quad   L(u,t_1)=\norm{p}_{L^\infty(0,t_1;H^{3+\beta}(\mathcal{T}_h)}+\norm{u_t}_{L^2(H^{3+\beta}(\mathcal{T}_h))} \text{ with }\beta =\min(r-1,1),\\
 %&\text{for } k>0,& \quad norm{p}_{L^\infty(0,t_1;H^{k+3}(\mathcal{T}_h)}+
%\end{align}
\begin{itemize}
\item $u \in H^1(H^{2+r}(\Omega) \cap H^{3+\beta}(\mathcal{T}_h))\text{ with }\beta =\min(r-1,1)  \text{ for } k=0,$
\item $u \in H^1(H^{2+r}(\Omega) \cap H^{k+3}(\mathcal{T}_h))  \text{ for }  k\ge1,$ 
\end{itemize}
then for HHO-A, it holds
$ \norm{u^{m+1}- u_h^{m+1} } = 
\begin{cases}
    \mathcal{O}(h^ 2+ h^{2+\beta}+(\Delta t)^2) \quad\;\;\;& \text{ for } k=0,\\
    \mathcal{O}(h^{k+3} +(\Delta t)^2 ) \quad &\text{ for }  k\ge1.
\end{cases}
$
%The generic constant in the last bound depends on final time $T$ and $C_P$ from \eqref{poinacre}.
% \begin{align*} 
 % \begin{cases}
   % h^2\big( \norm{p}_{L^\infty(0,t_1;H^{3+\beta}(\mathcal{T}_h)} +\norm{u_t}_{L^2(H^{3+\beta}(\mathcal{T}_h))}\big)+(\Delta t)^2\big(\norm{u_{ttt}}_{L^2(L^2(\Omega))}+\norm{u_{tttt}}_{L^2(L^2(\Omega))}\big) \quad \text{ for } k=0,\\
   % h^{k+3}\big(\norm{p}_{L^\infty(0,t_1;H^{k+3}(\mathcal{T}_h)}+ \norm{u_t}_{L^\infty(0,t_1;H^{k+3}(\mathcal{T}_h)}\big)+(\Delta t)^2\big(\norm{u_{ttt}}_{L^2(L^2(\Omega))}+\norm{u_{tttt}}_{L^2(L^2(\Omega))}\big) \quad \text{ for } k>0.
%\end{cases}
%\end{align*}
%\mathcal{L}\big(h,k,\norm{p}_{L^\infty(0,t_1;H^{3+\beta}(\mathcal{T}_h)},\norm{p}_{L^\infty(0,t_1;H^{k+3}(\mathcal{T}_h)}\big)_{L^2}\\
%&+\mathcal{L}(h,k,\norm{u_t}_{L^2(H^{3+\beta}(\mathcal{T}_h)},\norm{u_t}_{L^2(H^{k+3}(\mathcal{T}_h))})_{L^2} \\
%&+(\Delta t)^2\big(\norm{u_{ttt}}_{L^2(L^2(\Omega))}+\norm{u_{tttt}}_{L^2(L^2(\Omega))}\big), 
%\end{align*}
%with the  constant absorbed in ``$\lesssim$'' dependent on $T$ linearly  .
\end{thm}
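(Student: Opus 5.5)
The plan is to split the error so that only one genuinely discrete piece remains, and to handle that piece by telescoping the first equation \eqref{HHO_Crank_Nico_sc_1-a} instead of running a new energy argument. With $\Pi_h:=\Pi_h^{k+2}$, write
\[
u^{m+1}-u_h^{m+1}=\big(u^{m+1}-\Pi_h u^{m+1}\big)+\big(\Pi_h u^{m+1}-u_h^{m+1}\big).
\]
The first term is bounded by Theorem~\ref{lm1_in_dis_tr_11}(d), taking $m=0$ and $r=k+3$ (respectively $r=3+\beta$ when $k=0$). This gives $h^{k+3}|u^{m+1}|_{H^{k+3}({\cal T}_h)}$ (respectively $h^{2}(|u|_{H^{3}({\cal T}_h)}+h^{\beta}|u|_{H^{3+\beta}({\cal T}_h)})$). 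These seminorms are controlled uniformly in time because $u\in H^1(H^{k+3}({\cal T}_h))\hookrightarrow C(H^{k+3}({\cal T}_h))$.

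For the second term, note that $\bar{\partial}_t u_h^{n+1/2}-p_h^{n+1/2}$ lies in ${\cal P}_{k+2}({\cal T}_h)$. Since \eqref{HHO_Crank_Nico_sc_1-a} holds for every $q_h\in{\cal P}_{k+2}({\cal T}_h)$, it holds pointwise:
\[
u_h^{n+1}=u_h^{n}+\Delta t\,p_h^{n+1/2}.
\]
Likewise $\Pi_h u^{n+1}=\Pi_h u^n+\Delta t\,\Pi_h\bar{\partial}_t u^{n+1/2}$. Subtracting and summing over $n=0,\dots,m$ gives
\[
\Pi_h u^{m+1}-u_h^{m+1}=\big(\Pi_h u^0-E_h(u_0)\big)+\Delta t\sum_{n=0}^{m}\Big(\Pi_h\big(\bar{\partial}_t u^{n+1/2}-p^{n+1/2}\big)+\big(\Pi_h p^{n+1/2}-p_h^{n+1/2}\big)\Big),
\]
where the initial choice $\widehat u_h^0=\widehat E_h(u_0)$ has been used. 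The three contributions are then bounded separately.

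\begin{itemize}
\item \emph{Initial term.} Theorem~\ref{m1_H^2_bound} applies directly and gives the rate $h^{k+3}$ (respectively $h^2+h^{2+\beta}$), with the extra data term $\norm{\Delta^2u_0}_{H^{k-1}({\cal T}_h)}$.
\item \emph{Time truncation term.} Use the $L^2$-stability of $\Pi_h$, Cauchy--Schwarz in the sum (so that $\Delta t\sum_n\le T^{1/2}(\Delta t\sum_n|\cdot|^2)^{1/2}$), and Lemma~\ref{Sec_lma_1}(b) with $\varphi=u$ and $u_t=p$. This gives $\mathcal O((\Delta t)^2\norm{u_{ttt}}_{L^2(L^2(\Omega))})$.
\item \emph{Velocity term.} Average over the endpoints $n$ and $n+1$ and apply the triangle inequality:
\[
\norm{\Pi_h p^{j}-p_h^{j}}\le\norm{p^j-p_h^j}+\norm{p^j-\Pi_hp^j},\qquad j\in\{n,n+1\}.
\]
The first piece is the HHO-A $L^2$ velocity estimate of Theorem~\ref{err_thm}. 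The second is again Theorem~\ref{lm1_in_dis_tr_11}(d). At $j=0$ the first piece vanishes because $p_h^0=\Pi_h u_1$. Since $\Delta t\sum_{n\le m}1\le T$, this contributes $T\max_j\norm{\Pi_hp^j-p_h^j}=\mathcal O(h^{k+3}+(\Delta t)^2)$, respectively $\mathcal O(h^2+h^{2+\beta}+(\Delta t)^2)$ for $k=0$.
\end{itemize}

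The main obstacle is the velocity term. The whole argument rests on having an $L^2$ bound for $p-p_h$ of order $h^{k+3}$, uniform in time, from Theorem~\ref{err_thm}; that bound in turn depends on Theorem~\ref{m1_H^2_bound} and on the dual regularity \eqref{eq:reg_bound}. I would therefore check carefully that the hypotheses listed for the present statement suffice to invoke Theorem~\ref{err_thm} for $p$. If they do not, I would fall back on a duality-type energy argument. In that case I would test \eqref{HHO_Crank_Nico_sc_1-a} with $\rho^{n+1/2}:=(E_h u-u_h)^{n+1/2}$, derive a recursion $\norm{\rho^{n+1}}^2-\norm{\rho^{n}}^2\le 2\Delta t\,\norm{\text{consistency}}\,\norm{\rho^{n+1/2}}$, and close it with Lemma~\ref{Ch2-d-gronwall} and Remark~\ref{rem2.6}. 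The consistency terms would be controlled using $\bar{\partial}_t$ of Theorem~\ref{m1_H^2_bound} applied to $u$, which is the reason for the $H^1$-in-time regularity of $u$ in the hypotheses.
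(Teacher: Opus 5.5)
Your reduction is sound. Since $\bar{\partial}_t u_h^{n+1/2}-p_h^{n+1/2}\in{\cal P}_{k+2}({\cal T}_h)$, \eqref{HHO_Crank_Nico_sc_1-a} holds pointwise, and your telescoped identity is just the first error equation \eqref{zeta} summed in time. The initial term (Theorem~\ref{m1_H^2_bound} with $w=u_0$) and the $\xi$-term (Lemma~\ref{Sec_lma_1}(b)) are fine under the stated assumptions.

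The gap is the velocity term, exactly where you suspected. The HHO-A bound $\norm{p^j-p_h^j}=\mathcal{O}(h^{k+3}+(\Delta t)^2)$ of Theorem~\ref{err_thm} comes from \eqref{et1}. That estimate is derived by testing with $\bar{\partial}_t\gamma_h^{n+1/2}$ and $\bar{\partial}_t\widehat{\rho}_h^{n+1/2}$ and then summing by parts, which moves one extra time difference onto the data. It therefore produces $\Delta t\sum_n\norm{\bar{\partial}_t\xi^{n+1/2}}^2$, $\Delta t\sum_n\norm{\bar{\partial}_t\eta^{n+1/2}}^2$ and $\Delta t\sum_n\norm{\bar{\partial}_t^2\theta_h^n}^2$, and so needs $u_{tttt},p_{tttt}\in L^2(L^2(\Omega))$ and $u\in H^2(H^{2+r}(\Omega)\cap H^{k+3}({\cal T}_h))$. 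The present theorem assumes only $u,p\in H^3(L^2(\Omega))$ and $u\in H^1(\cdots)$. Your main route therefore proves the estimate only under the strictly stronger hypotheses of Theorem~\ref{err_thm}. Separately, your extra piece $\norm{p^j-\Pi_h^{k+2}p^j}$, which would need $u_t\in L^\infty(H^{k+3}({\cal T}_h))$, is unnecessary, since $\Pi_h^{k+2}p^j-p_h^j=\Pi_h^{k+2}(p^j-p_h^j)$.

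Your fallback does not close either. Testing \eqref{zeta} with $\rho_h^{n+1/2}$ leaves $(\gamma_h^{n+1/2},\rho_h^{n+1/2})$ on the right. Here $\gamma_h$ is the unknown velocity error, not a consistency term, so you are back to needing an $L^2$ bound for it.

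The missing idea is to eliminate $\gamma_h$ by integrating the second error equation in time. Set $\widehat{\sigma}_h^{n}:=\Delta t\sum_{k=0}^{n-1}\widehat{\rho}_h^{k+1/2}$. Since $\gamma_h^0=0$, summing \eqref{sec_lm41_new} gives, for all $\widehat{v}_h\in\widehat{V}_h^0$,
$(\gamma_h^{n+1/2},v_h)=-a_h(\widehat{\sigma}_h^{n+1/2},\widehat{v}_h)+(\tfrac{\Delta t}{2}\eta^{n+1}+\Delta t\sum_{k=0}^{n-1}\eta^{k+1},v_h)$.
Inserting this into \eqref{zeta} gives $(\bar{\partial}_t\rho_h^{n+1/2},v_h)+a_h(\widehat{\sigma}_h^{n+1/2},\widehat{v}_h)=(\epsilon_h^n,v_h)$. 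Here $\epsilon_h^n$ contains only $\bar{\partial}_t\theta_h^{n+1/2}$, $\xi^{n+1}$ and the accumulated $\eta$-terms.

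Test with $\widehat{v}_h=\widehat{\rho}_h^{n+1/2}=\bar{\partial}_t\widehat{\sigma}_h^{n+1/2}$. This telescopes to $\norm{\rho_h^{m+1}}^2+\norm{\widehat{\sigma}_h^{m+1}}_{a,h}^2=2\Delta t\sum_{n=0}^m(\epsilon_h^n,\rho_h^{n+1/2})$. Lemma~\ref{Ch2-d-gronwall} then yields $\norm{\rho_h^{m+1}}^2\lesssim\Delta t\sum_{n}\norm{\epsilon_h^n}^2$.

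This bound requires only $u_{ttt},p_{ttt}\in L^2(L^2(\Omega))$ and, through Theorem~\ref{m1_H^2_bound}, $u_t\in L^2(H^{k+3}({\cal T}_h))$. Adding $\theta_h^{m+1}$ and $u^{m+1}-\Pi_h^{k+2}u^{m+1}$ by the triangle inequality then gives the claim under the stated hypotheses.
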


\subsection{Newmark vs Crank-Nicolson 
Scheme}\label{subsec-comp}
We conclude this section with a comparative discussion of the HHO-Newmark and HHO-Crank--Nicolson schemes. While both schemes are 
implicit and unconditionally stable, they differ in several important 
aspects regarding their structure, the variables they approximate, 
the regularity they require, and the norms in which optimal convergence 
is achieved.

\medskip \noindent
The Newmark scheme is an uncoupled three-step method that solves a single 
equation for the displacement $u$ at each time level, but requires an 
additional initialization step at $t=t_1$ via the second equation in 
\eqref{hwave_int_cond}. In contrast, the Crank--Nicolson scheme is a 
two-step method that avoids this extra initialization but needs to solve 
a coupled system for both the displacement $u$ and the velocity $p$ at 
each time step. Consequently, the Newmark scheme yields an approximation 
only for $u$, whereas the Crank--Nicolson scheme  approximates 
both $u$ and $p$. Regarding the data assumptions for stability,  the Newmark 
scheme requires only $u_0 \in H^2_0(\Omega)$, $u_1 \in L^2(\Omega)$, and 
$f \in L^\infty(L^2(\Omega))$, while the Crank--Nicolson scheme demands 
the slightly higher regularity $u_0 \in \mathcal{Y}$, $u_1 \in L^2(\Omega)$, 
and $f \in H^1(L^2(\Omega))$, reflecting the need to handle the coupled 
system consistently at the discrete level.

\medskip \noindent 
Both schemes achieve identical optimal convergence rates in the energy norm 
and in the $L^2$-norm (for the HHO-A variant); see Table~\ref{tab:comparison}. 
However, the regularity requirements on the continuous solution exhibit a 
subtle reversal between the two norms. For energy-norm estimates, the Newmark 
scheme requires less regularity than the Crank-Nicolson scheme. This hierarchy is reversed for $L^2$-norm 
estimates, where the condition $u_0 \in \mathcal{Y}$ becomes essential  
even for the Newmark scheme to give meaning to the compatibility condition 
$\widehat{I}^k_h(u^0) = \widehat{E}_h(u^0)$ in Theorem~\ref{err_L2_new}. {However, the condition $\widehat{I}^k_h(u^0) = \widehat{E}_h(u^0)$ is not required in numerical implementation.} 
We emphasize that the regularity listed in Table~\ref{tab:comparison} represents 
the maximum required to ensure all results in this paper (with the precise requirements indicated in Subsections ~\ref{nm_schm_sub_1}-\ref{crn_sub}), and that 
Table~\ref{tab:comparison} provides a comprehensive  summary of both the schemes.
\begin{table}[H]
\centering
\small
\renewcommand{\arraystretch}{1.3}
\begin{tabular}{|p{2.7cm}||p{5.8cm}|p{6.5cm}|}
 \hline
  \textbf{Features} & \textbf{HHO-Newmark} & \textbf{HHO-Crank-Nicolson} \\
    \hline
    Structure & Uncoupled PDE & Coupled PDE system\\
    \hline 
    Variables & Displacement $u$ & Displacement $u$ and velocity $p$\\
    \hline 
    Time-stepping &{Implicit}&{Implicit}\\
    \hline
    Stability &{Unconditional}&{Unconditional}\\
    \hline
     Sequential steps & 3-step scheme & 2-step scheme \\
    
    \hline
    Assumption on data & $u(x,0)=u_0 \in \mathcal{Y}$, with $\mathcal{Y}$ defined in \eqref{proj_op_1} \par\smallskip
                          $ u_t(x,0)=u_1 \in L^2(\Omega)$ \par\smallskip
                          $ f\in L^\infty(L^2(\Omega))$
    &{$u_0 \in \mathcal{Y}$}  \par\smallskip
                          $ u_1 \in L^2(\Omega)$ \par\smallskip
                          $ f\in H^1(L^2(\Omega))$\\
    \hline
    Regularity requirements of solution
    &   \textbullet\; $u \in C([0,T];H^2_0(\Omega))\cap H^4(L^2(\Omega))$\par\smallskip
      \textbullet\; {$k=0$:}\; $u \in H^2(H^{2+r}(\Omega)
          \cap H^{3+\beta}(\mathcal{T}_h))$\;
         \par\smallskip
      \textbullet\; {$k\ge1$:}\; $u \in H^2(H^{2+r}(\Omega)
          \cap H^{k+3}(\mathcal{T}_h))$
    &    \textbullet\; $u \in C([0,T];H^2_0(\Omega))\cap H^4(L^2(\Omega))$\par\smallskip 
      \textbullet\; {$k\ge0$:}\; $p \in L^\infty( H^{k+3}(\mathcal{T}_h)) \cap  H^4(L^2(\Omega))$\par\smallskip
     \textbullet\; {$k=0$:}\; $u \in H^2(H^{2+r}(\Omega) \cap H^{3+\beta}(\mathcal{T}_h))$\par\smallskip
      \textbullet\; {$k\ge1$:}\; $u \in H^2(H^{2+r}(\Omega) \cap H^{k+3}(\mathcal{T}_h))$ \\
       \hline
    \multicolumn{3}{|c|}{\textit{Convergence}} \\
    \hline
{Energy norm estimates for both HHO-A and HHO-B}
  & \begin{minipage}[t]{5.6cm}
      \vspace{4pt}
      $\begin{gathered}
        \Bigl(\sum_{K\in \mathcal{T}_{h}}
          \|\nabla^{2}(u^{m+1/2}-\mathcal{R}_{K}(\widehat{u}_h^{m+1/2}))\|^2_{K}
        \Bigr)^{1/2} \\
        =\begin{cases}
            \mathcal{O}(h+h^{1+\beta}+(\Delta t)^{2}) & k = 0,\\[4pt]
            \mathcal{O}(h^{k+1}+(\Delta t)^{2}) & k \ge 1.
        \end{cases}
      \end{gathered}$
      \vspace{4pt}
    \end{minipage}
  & \begin{minipage}[t]{5.9cm}
      \vspace{4pt}
      $\begin{gathered}
 \Bigl(\sum_{K\in \mathcal{T}_{h}}
          \|\nabla^{2}(u^{m+1}-\mathcal{R}_{K}(\widehat{u}_h^{m+1}))\|^2_{K}
        \Bigr)^{1/2} \qquad\qquad\\
     =\begin{cases}
            \mathcal{O}(h+h^{1+\beta}+(\Delta t)^{2}) & k = 0,\\[4pt]
            \mathcal{O}(h^{k+1}+(\Delta t)^{2}) & k \ge 1.
        \end{cases}
      \end{gathered}$
      \vspace{4pt}
    \end{minipage} \\
\hline
{$L^2$ norm estimates for HHO-A}
  & \begin{minipage}[t]{5.6cm}
      \vspace{4pt}
      $\begin{gathered}
     \|u^{m+1}- u_h^{m+1}\|\qquad \qquad\qquad\qquad\qquad\quad\quad\\
       =\begin{cases}
            \mathcal{O}(h^{2}+h^{2+\beta}+(\Delta t)^{2}) & k = 0,\\[4pt]
            \mathcal{O}(h^{k+3}+(\Delta t)^{2}) & k \ge 1.
        \end{cases}
      \end{gathered}$
      \vspace{4pt}
    \end{minipage}
  & \begin{minipage}[t]{6.5cm}
      \vspace{4pt}
      $\begin{gathered}
         \|u^{m+1}-u_h^{m+1}\|+\|p^{m+1}-p_h^{m+1}\|\qquad\qquad\qquad\\
        =\begin{cases}
            \mathcal{O}(h^{2}+h^{2+\beta}+(\Delta t)^{2}) & k = 0,\\[4pt]
            \mathcal{O}(h^{k+3}+(\Delta t)^{2}) & k \ge 1.
        \end{cases}
      \end{gathered}$
      \vspace{4pt}
    \end{minipage} \\
\hline
\end{tabular}
\caption{Comparison of Newmark and Crank-Nicolson schemes with  HHO-A and HHO-B discretizations.}
\label{tab:comparison}
\end{table}
\section{A novel $L^2$ projection estimate for HHO-A scheme}\label{subsec_l2_est}
In this section, we prove the novel $L^2$ approximation properties of the HHO-A method, which will be used to prove $L^2$ error estimates for HHO-A  Newmark and Crank-Nicolson fully discrete schemes.

\medskip\noindent
\uline{\textbf{Proof of Theorem \ref{m1_H^2_bound}.}}
The proof is divided into ten steps. We describe the settings in \textit{Step~1} and the error equation in \textit{Step~2} that leads to the identity \eqref{ct_tt}. The terms appearing on the right-hand side of \eqref{ct_tt} are estimated in \textit{Step~3}-\textit{Step 9}. The proof is concluded in \textit{Step~10}.

\medskip\noindent
{{\text{{\it Step 1. (setting)}}}}
Set $\widehat{\varphi}_{h}:=\widehat{I}^{k}_{h}(w)-\widehat{E}_{h}(w):=(\varphi_{h},\varphi_{{\cal F}_{h}},\widetilde{\varphi}_{{\cal F}_{h}})$ with   ${\varphi}_{h}=\Pi^{k+2}_{h}(w)-{E}_{h}(w).$ 

\medskip\noindent
{{\text{{\it Step 2. (error equation)}}}}
To derive the sharp bound in $L^{2}$ norm, we recall \eqref{eq:dual} and the assumption \eqref{eq:reg_bound}: Given ${\varphi}_{h}\in L^{2}(\Omega),$ there exist a unique  $\Psi\in H^{2}_{0}(\Omega) \cap H^4(\Omega)$ such that
\begin{equation}
\Delta^{2}\Psi=\varphi_{h}\; \text{in}\; \Omega,\quad \Psi=\partial_{\bf n}\Psi=0 \; \text{on}\; \partial \Omega\;\; \text{ with }\;\; \norm{\psi}_{H^{4}(\Omega)}\lesssim \norm{{\varphi}_{h}}.\label{dual_1}
\end{equation}
%where $a(\Psi,v):= (\nabla^{2}\Psi,\nabla^{2}v)$. 
Test \eqref{dual_1} with $\varphi_{h}$ and employ  {\it integration by parts} from \eqref{int_part_1}  to obtain 
\begin{align*}
\norm{\varphi_{h}}^{2}=(\Delta^{2}\Psi,\varphi_{h})= \sum_{K\in {\cal T}_{h}}\big((\nabla^{2}\Psi,\nabla^{2}\varphi_{K})_{K}+\sum_{F\in {\cal F}_{K}}\big((\varphi_{K},\partial_{\bf n}\Delta \Psi)_{F}-(\partial_{\bf n}\varphi_{K},\partial_{\bf n\bf n} \Psi)_{F}-(\partial_{\bf t}\varphi_{K},\partial_{\bf n\bf t} \Psi)_{F}\big)\big).
\end{align*}
Since $\Psi\in H^{4}(\Omega),$ $\partial_{\bf n}\Delta \Psi$, $\partial_{\bf n\bf n} \Psi$, and $\partial_{\bf n\bf t} \Psi$~(see Table~\eqref{HHO_Table} for notation) are single-valued and are $L^{2}(K)$ functions across the local interfaces. In addition, $\varphi_{F}$, $\widetilde{\varphi}_{F}$, and $\partial_{\bf t}\varphi_{F}$ are single-valued at every mesh-interface and vanish at every boundary face. Hence, the last displayed equation yields
\begin{align}
\norm{\varphi_{h}}^{2}&=\sum_{K\in {\cal T}_{h}}\big((\nabla^{2}\Psi,\nabla^{2}\varphi_{K})_{K}+\sum_{F\in {\cal F}_{K}}\big((\varphi_{K}-\varphi_{F},\partial_{\bf n}\Delta \Psi)_{F}\nonumber\\&\qquad\qquad -(\partial_{\bf n}\varphi_{K}-\widetilde{\varphi}_{F},\partial_{\bf n\bf n} \Psi)_{F}-(\partial_{\bf t}(\varphi_{K}-\varphi_{F}),\partial_{\bf n\bf t} \Psi)_{F}\big)\big).\label{dual_eq_2}
\end{align}
The definition ${\cal R}_{K}={\cal E}_{K}\circ \widehat{I}_{K}^{k}$, symmetry of ${\cal R}_{K}$, \eqref{loc_recon_op_2}, and elementary algebra show
\begin{align}
&(\nabla^{2}{\cal R}_{K}(\widehat{\varphi}_{K}),\nabla^{2}{\cal R}_{K}(\widehat{I}^{k}_{K}(\Psi)))_{K}=(\nabla^{2}{\cal R}_{K}(\widehat{\varphi}_{K}),\nabla^{2}{\cal E}_{K}(\Psi))_{K}=(\nabla^{2}\varphi_{K},\nabla^{2}{\cal E}_{K}(\Psi))_{K}\nonumber\\
&\qquad+\sum_{F\in {\cal F}_{K}}\big((\varphi_{K}-\varphi_{F},\partial_{\bf n}\Delta{\cal E}_{K}(\Psi))_{F}
 -(\partial_{\bf n}\varphi_{K}-\widetilde{\varphi}_{F},\partial_{\bf n\bf n}{\cal E}_{K}(\Psi))_{F}-(\partial_{\bf t}(\varphi_{K}-{\varphi}_{F}),\partial_{\bf n\bf t}{\cal E}_{K}(\Psi))_{F}\big).\label{5_5_eq_lb}
\end{align}
It follows from \eqref{dual_eq_2} and \eqref{5_5_eq_lb}, after a rearrangement of terms, that
\begin{align}
\norm{\varphi_{h}}^{2}&=\sum_{K\in {\cal T}_{h}}(\nabla^{2}(\Psi-{\cal E}_{K}(\Psi)),\nabla^{2}\varphi_{K})_{K}+\sum_{K\in {\cal T}_{h}}\sum_{F\in {\cal F}_{K}}(\varphi_{K}-\varphi_{F},\partial_{\bf n}\Delta(\Psi-{\cal E}_{K}(\Psi)))_{F}
 \nonumber\\&\qquad-\sum_{K\in {\cal T}_{h}}\sum_{F\in {\cal F}_{K}}( \partial_{\bf n}\varphi_{K}-\widetilde{\varphi}_{F},\partial_{\bf n\bf n}(\Psi-{\cal E}_{K}(\Psi)))_{F}-\sum_{K\in {\cal T}_{h}}\sum_{F\in {\cal F}_{K}}(\partial_{\bf t}(\varphi_{K}-{\varphi}_{F}),\partial_{\bf n\bf t}(\Psi-{\cal E}_{K}(\Psi)))_{F} \nonumber\\&\qquad
 +\sum_{K\in {\cal T}_{h}}\sum_{F\in {\cal F}_{K}}(\nabla^{2}{\cal R}_{K}(\widehat{\varphi}_{K}),\nabla^{2}{\cal R}_{K}(\widehat{I}^{k}_{K}(\Psi)))_{K}.\label{ct_tt}
\end{align}
In the case of the HHO-A scheme, the first term on the right-hand side of \eqref{ct_tt} vanishes by using \eqref{eq:elli.proj.use_1} with $w=\Psi$ and $\phi=\varphi_{K}.$
% by the defining property of the elliptic projection ${\cal E}_{K}$ from Subsection~\ref{subsec-pro}, which yields
% \begin{equation}\label{eq:elli.proj.use_1}
% (\nabla^{2}(\Psi-{\cal E}_{K}(\Psi)),\nabla^{2}\varphi_{K})_{K}=0.    
% \end{equation}

\medskip \noindent
Now, we will simplify the last term on the right-hand side of \eqref{ct_tt}. The definitions of ${a_h(\bullet,\bullet)}$ and  $\widehat{\varphi}_{h}=\widehat{I}^{k}_{h}(w)-\widehat{E}_{h}(w)$ followed by \eqref{proj_op_1} lead to 
\begin{align}\label{eq:t4.simplify}
&{\sum_{K\in {\cal T}_{h}}}(\nabla^{2}{\cal R}_{K}(\widehat{\varphi}_{K}),\nabla^{2}{\cal R}_{K}(\widehat{I}^{k}_{h}(\Psi)))_{K}= a_{h}(\widehat{\varphi}_{h},\widehat{I}^{k}_{h}(\Psi))-{\cal S}_{h}(\widehat{\varphi}_{h},\widehat{I}^{k}_{h}(\Psi)) 
\nonumber\\& =a_{h}(\widehat{I}^{k}_{h}(w),\widehat{I}^{k}_{h}(\Psi))-( \Delta^{2}w,\Pi_{h}^{k+2}(\Psi))-{\cal S}_{h}(\widehat{\varphi}_{h},\widehat{I}^{k}_{h}(\Psi))
% \nonumber\\
% \end{align*}
% \begin{align*}
% &=a_{h}(\widehat{I}^{k}_{h}(w),\widehat{I}^{k}_{h}(\Psi))-( \Delta^{2}w-\Pi_{h}^{k+2}( \Delta^{2}w),\Pi_{h}^{k+2}(\Psi)-\Psi)-(\nabla^{2}w,\nabla^{2}\Psi)
% -{\cal S}_{h}(\widehat{\varphi}_{h},\widehat{I}^{k}_{h}(\Psi))\nonumber\\&
\nonumber\\&=
\sum_{K\in {\cal T}_{h}}(\nabla^{2}(w-{\cal E}_{K}(w)),\nabla^{2}(\Psi-{\cal E}_{K}(\Psi)))_{K} -( \Delta^{2}w-\Pi_{h}^{k+2}( \Delta^{2}w),\Pi_{h}^{k+2}(\Psi)-\Psi)\nonumber\\& \qquad + {\cal S}_{h}(\widehat{I}_{h}^{k}(w),\widehat{I}^{k}_{h}(\Psi))-{\cal S}_{h}(\widehat{\varphi}_{h},\widehat{I}^{k}_{h}(\Psi)),
\end{align}
where an integration by parts that leads to $( \Delta^{2}w,\Psi)=(\nabla^{2}w,\nabla^{2}\Psi)$; the definitions of $\Pi_{h}^{k+2}$ and ${\cal E}_{h}$, and elementary manipulations are used in the last step.

\medskip\noindent
Finally, substitute \eqref{eq:elli.proj.use_1} and \eqref{eq:t4.simplify} in~\eqref{ct_tt} to obtain
\begin{align}
\norm{\varphi_{h}}^{2}&=\sum_{K\in {\cal T}_{h}}\sum_{F\in {\cal F}_{K}}(\varphi_{K}-\varphi_{F},\partial_{\bf n}\Delta(\Psi-{\cal E}_{K}(\Psi)))_{F}
 \nonumber\\&\qquad-\sum_{K\in {\cal T}_{h}}\sum_{F\in {\cal F}_{K}}( \partial_{\bf n}\varphi_{K}-\widetilde{\varphi}_{F},\partial_{\bf n\bf n}(\Psi-{\cal E}_{K}(\Psi)))_{F}-\sum_{K\in {\cal T}_{h}}\sum_{F\in {\cal F}_{K}}(\partial_{\bf t}(\varphi_{K}-{\varphi}_{F}),\partial_{\bf n\bf t}(\Psi-{\cal E}_{K}(\Psi)))_{F} \nonumber\\&
 \qquad +\sum_{K\in {\cal T}_{h}}(\nabla^{2}(w-{\cal E}_{K}(w)),\nabla^{2}(\Psi-{\cal E}_{K}(\Psi)))_{K} -( \Delta^{2}w-\Pi_{h}^{k+2}( \Delta^{2}w),\Pi_{h}^{k+2}(\Psi)-\Psi)\nonumber\\& \qquad +{\cal S}_{h}(\widehat{I}_{h}^{k}(w),\widehat{I}^{k}_{h}(\Psi))-{\cal S}_{h}(\widehat{\varphi}_{h},\widehat{I}^{k}_{h}(\Psi)):=\sum_{i=1}^{7}T_{i}.
 \label{eq:sum}
\end{align}

 \medskip \noindent
{{\text{{\it Step 3. (control of $T_{1}$)}}}} 
The Cauchy--Schwarz inequality, definition of $\norm{\bullet}_{\widehat{V}_{h}},$ Lemma~\ref{eq:lem.equiv}, and \eqref{m1_H_2_like_nm_def} yield
\begin{align*}
|T_{1}|&\leq \sum_{K\in {\cal T}_{h}}\sum_{F\in {\cal F}_{K}}\norm{\varphi_{K}-\varphi_{F}}_{F}\norm{\partial_{\bf n}\Delta(\Psi-{\cal E}_{K}(\Psi))}_{F}\\
  & \leq \sum_{K\in {\cal T}_{h}}\left(\sum_{F\in {\cal F}_{K}}h_{K}^{-3}\norm{\varphi_{K}-\varphi_{F}}_{F}^2\right)^{\frac{1}{2}}\left(\sum_{F\in {\cal F}_{K}}h_{K}^{3}\norm{\partial_{\bf n}\Delta(\Psi-{\cal E}_{K}(\Psi))}_{F}^2\right)^{1/2}\\
  &\lesssim \sum_{K\in {\cal T}_{h}}|\widehat{\varphi}_{K}|_{\widehat{V}_{K}}\norm{\Psi-{\cal E}_{K}(\Psi)}_{\#,K} 
\lesssim  \norm{\widehat{\varphi}_{h}}_{a,h}\left(\sum_{K\in {\cal T}_{h}}\norm{\Psi-{\cal E}_{K}(\Psi)}_{\#,K}\right).
\end{align*}
An application of \eqref{eq:w.EK.bound} and Theorem~\ref{lm1_in_dis_tr_11}$(c)$ yields 
\begin{equation}\label{eq:psi.EK.bound}
\begin{aligned}
\sum_{K\in {\cal T}_{h}}\norm{\Psi-{\cal E}_{K}(\Psi)}_{\#,K} \lesssim \sum_{K\in {\cal T}_{h}}\norm{\Psi-{\Pi}^{k+2}_{K}(\Psi)}_{\#,K}
&\lesssim 
\begin{cases}
 \sum\limits_{K\in {\cal T}_{h}} h_{K}\big(|\Psi|_{H^{3}(K)}+h_{K}|\Psi|_{H^{4}(K)}\big) & \text{for } k=0 \\
 \sum\limits_{K\in {\cal T}_{h}} h_{K}^{2} |\Psi|_{H^{4}(K)} & \text{for } k\geq 1
\end{cases} \\
&\lesssim
\begin{cases}
 h\norm{\varphi_{h}} & \text{for } k=0,\\
 h^{2}\norm{\varphi_{h}} & \text{for } k\geq 1
\end{cases}
\end{aligned}
\end{equation}
with \eqref{dual_1} in the last step. 
A combination of the above two displayed results yields
$$\displaystyle |T_{1}|\lesssim  \begin{cases}
h\norm{\widehat{\varphi}_{h}}_{a,h}\norm{\varphi_{h}}&\text{for}\; k=0,\\
h^{2}\norm{\widehat{\varphi}_{h}}_{a,h}\norm{\varphi_{h}}& \text{for}\; k\geq 1.
 \end{cases} $$

\medskip\noindent
{{\text{{\it Step 4. (control of $T_{2}$)}}}} 
% The  Cauchy--Schwarz inequality and the definition of stabilizer from Subsection~\ref{subsec-stab} for HHO-A scheme to obtain
% \begin{align*}
% |T_{2}| \leq \sum_{K\in {\cal T}_{h}}\sum_{F\in {\cal F}_{K}} \norm{\partial_{\bf n}\varphi_{K}-\widetilde{\varphi}_{F}}_{F}\norm{\partial_{\bf n\bf n}(\Psi-{\cal E}_{K}(\Psi))}_{F} \leq \sum_{K\in {\cal T}_{h}}\sum_{F\in {\cal F}_{K}}h_{K}^{\frac{1}{2}} {\cal S}_{K}(\widehat{\varphi}_{K},\widehat{\varphi}_{K})^{\frac{1}{2}} \norm{\partial_{\bf n\bf n}(\Psi-{\cal E}_{K}(\Psi))}_{F}.
% \end{align*}
% An application of Theorem~\ref{lm1_in_dis_tr_11}(b),(d) and \eqref{dual_1} yield
% \begin{align*}
% \sum_{K\in {\cal T}_{h}}\sum_{F\in {\cal F}_{K}}h_{K}^{\frac{1}{2}} \norm{\partial_{\bf n\bf n}(\Psi-{\cal E}_{K}(\Psi))}_{F} 
% &\lesssim \sum_{K\in {\cal T}_{h}} h_{K}^{\frac{1}{2}}\big(h_{K}^{-\frac{1}{2}}\norm{\Psi-{\cal E}_{K}(\Psi)}_{H^{2}(K)}+h_{K}^{\frac{1}{2}}\norm{\Psi-{\cal E}_{K}(\Psi)}_{H^{3}(K)}\big)\nonumber\\&
% \lesssim \sum_{K\in {\cal T}_{h}}h^{2}_{K}\norm{\Psi}_{H^{4}(K)}\lesssim h^{2}\norm{{\varphi}_{h}}.
% \end{align*}
% Now, the above two displayed inequalities and the definition of $\norm{\bullet}_{a,h}$ establish
% $|T_{2}|\lesssim h^{2}\norm{\widehat{\varphi}_{h}}_{a,h} \norm{{\varphi}_{h}}.$
The  Cauchy--Schwarz inequality, definition of $\norm{\bullet}_{\widehat{V}_{h}},$ Lemma~\ref{eq:lem.equiv}, and \eqref{m1_H_2_like_nm_def} yield
\begin{align*}
|T_{2}|&\leq \sum_{K\in {\cal T}_{h}}\sum_{F\in {\cal F}_{K}} \norm{\partial_{\bf n}\varphi_{K}-\widetilde{\varphi}_{F}}_{F}\norm{\partial_{\bf n\bf n}(\Psi-{\cal E}_{K}(\Psi))}_{F}\\
&\lesssim \sum_{K\in {\cal T}_{h}}\left(\sum_{F\in {\cal F}_{K}} h_{K}^{-1}\norm{\partial_{\bf n}\varphi_{K}-\widetilde{\varphi}_{F}}_{F}^2\right)^{\frac{1}{2}}\left(\sum_{F\in {\cal F}_{K}}h_{K}\norm{\partial_{\bf n\bf n}(\Psi-{\cal E}_{K}(\Psi))}_{F}^2\right)^{\frac{1}{2}}\\
&\lesssim \sum_{K\in {\cal T}_{h}}|\widehat{\varphi}_{K}|_{\widehat{V}_{K}}\norm{\Psi-{\cal E}_{K}(\Psi)}_{\#,K} 
\lesssim  \norm{\widehat{\varphi}_{h}}_{a,h}\left(\sum_{K\in {\cal T}_{h}}\norm{\Psi-{\cal E}_{K}(\Psi)}_{\#,K}\right).
\end{align*}
% where definition of $\norm{\bullet}_{a,h}$ and \eqref{m1_H_2_like_nm_def} are used in last step.
% From~\eqref{m1_H_2_like_nm_def}, Lemma~\ref{bd_elli_proj}, and Theorem~\ref{lm1_in_dis_tr_11}$(c),$ we have
% \begin{align*}
% &\sum_{K\in {\cal T}_{h}}\sum_{F\in {\cal F}_{K}}h_{K}^{\frac{1}{2}} \norm{\partial_{\bf n\bf n}(\Psi-{\cal E}_{K}(\Psi))}_{F} 
% \lesssim \sum_{K\in {\cal T}_{h}}\norm{\Psi-{\cal E}_{K}(\Psi)}_{\#,K}\\&
% \lesssim \begin{cases}
%  \sum\limits_{K\in {\cal T}_{h}} h_{K}\big(|\Psi|_{H^{3}(K)}+h_{K}|\Psi|_{H^{4}(K)}\big) & \text{for}\; k=0\\
%     \sum\limits_{K\in {\cal T}_{h}} h^{2}_{K} {|\Psi|_{H^{4}(K)} }& \text{for}\; k\geq 1
%  \end{cases}\overset{\eqref{dual_1}}\lesssim
%  \begin{cases}
%  h\norm{\varphi_{h}}&\text{for}\; k=0,\\
% h^{2}\norm{\varphi_{h}}& \text{for}\; k\geq 1,\nonumber 
%  \end{cases}
% \end{align*}
The above inequality and \eqref{eq:psi.EK.bound} lead to
$$|T_{2}|\lesssim \begin{cases}
h\norm{\widehat{\varphi}_{h}}_{a,h}\norm{\varphi_{h}}&\text{for}\; k=0,\\
h^{2}\norm{\widehat{\varphi}_{h}}_{a,h}\norm{\varphi_{h}}& \text{for}\; k\geq 1.
 \end{cases} $$

\medskip\noindent
{{\text{{\it Step~5. (control of $T_{3}$)}}}} The bound for $T_{3}$ follows from analogous arguments in Step~4.

\medskip\noindent
{{\text{{\it Step 6. (control of $T_{4}$)}}}} 
% A Cauchy--Schwarz inequality reveals
% Theorem \ref{lm1_in_dis_tr_11}$(d),$ and  \eqref{dual_1} yield
% \begin{align}
% |T_{4}|&\lesssim \sum_{K\in {\cal T}_{h}}\norm{\nabla^{2}(w-{\cal E}_{K}(w))}_{K}\norm{\nabla^{2}(\Psi-{\cal E}_{K}(\Psi))}_{K} \overset{\eqref{m1_H_2_like_nm_def}}\lesssim h^{2}\sum_{K\in {\cal T}_{h}}\norm{w-\Pi^{k+2}_{K}(w)}_{\#,K}|\Psi|_{H^{4}(\Omega)}\nonumber\\& \lesssim h^{2}\sum_{K\in {\cal T}_{h}}\norm{w-\Pi^{k+2}_{K}(w)}_{\#,K} \norm{\varphi_{h}}.
% \end{align}
A Cauchy--Schwarz inequality and \eqref{m1_H_2_like_nm_def} reveal
\begin{align}\label{eq:T4.int.bound}
 |T_{4}|&\lesssim \sum_{K\in {\cal T}_{h}}\norm{\nabla^{2}(w-{\cal E}_{K}(w))}_{K}\norm{\nabla^{2}(\Psi-{\cal E}_{K}(\Psi))}_{K}\lesssim
 \sum_{K\in {\cal T}_{h}}\norm{w-{\cal E}_{K}(w)}_{\#,K}\norm{\Psi-{\cal E}_{K}(\Psi)}_{\#,K}.
\end{align}
An application of \eqref{eq:w.EK.bound} and Theorem~\ref{lm1_in_dis_tr_11}$(c)$ reveals
\begin{align*}
\norm{w-{\cal E}_{K}(w)}_{\#,K} \lesssim \begin{cases}h\big(|{w}|_{H^{3}({\cal T}_{h})}+h^{\beta}|{w}|_{H^{3+\beta}({\cal T}_{h})}\big),\; \; \beta =\min(r-1,1) & \text{ for } k=0,
\\ h^{k+1}|{w}|_{H^{k+3}({\cal T}_{h})}& \text{ for } k\geq 1.
\end{cases}      
\end{align*}
Substitute this bound and \eqref{eq:psi.EK.bound} in~\eqref{eq:T4.int.bound} to obtain
\begin{align*}
|T_{4}| \lesssim \begin{cases}h^2\big(|{w}|_{H^{3}({\cal T}_{h})}+h^{\beta}|{w}|_{H^{3+\beta}({\cal T}_{h})}\big)\norm{\varphi_{h}},\; \; \beta =\min(r-1,1) & \text{ for } k=0,
\\ h^{k+3}|{w}|_{H^{k+3}({\cal T}_{h})}\norm{\varphi_{h}}& \text{ for } k\geq 1.
\end{cases}    
\end{align*}

\medskip\noindent
{{\text{{\it Step 7. (control of $T_{5}$)}}}}
A Cauchy--Schwarz inequality shows
\begin{align}\label{eq:T5.int.bound}
|T_{5}| & \leq \norm{ \Delta^{2}w-\Pi_{h}^{k+2}( \Delta^{2}w)}\norm{\Pi_{h}^{k+2}(\Psi)-\Psi}.
\end{align}
First, we estimate for $k=0.$ The triangle inequality, $L^2$-boundedness property of $\Pi^{2}_h$ and Theorem~\ref{lm1_in_dis_tr_11}$(d)$ yield
$$\norm{ \Delta^{2}w-\Pi_{h}^{2}( \Delta^{2}w)}\norm{\Pi_{h}^{k+2}(\Psi)-\Psi}\lesssim \left(\norm{ \Delta^{2}w}+\norm{\Pi_{h}^{2}( \Delta^{2}w)}\right)\norm{\Pi_{h}^{k+2}(\Psi)-\Psi}\lesssim  h^{2}\norm{\Delta^{2}w}|\Psi|_{H^{2}(\Omega)}.$$
Next, for $k\geq1,$ again by invoking Theorem~\ref{lm1_in_dis_tr_11}$(d)$~(for both the terms), we obtain
$$\norm{ \Delta^{2}w-\Pi_{h}^{k+2}( \Delta^{2}w)}\norm{\Pi_{h}^{k+2}(\Psi)-\Psi} \lesssim h^{k+3}|\Delta^2{w}|_{H^{k-1}({\cal T}_{h})}|\Psi|_{H^{4}(\Omega)} \;\; \text{ with } w\in H^{k+3}({\cal T}_{h}).$$
These two above bounds together with \eqref{dual_1} in \eqref{eq:T5.int.bound}  establish
\begin{align*}
|T_{5}| \lesssim \begin{cases}h^2\norm{\Delta^{2}w}\norm{\varphi_{h}} & \text{ for } k=0,
\\ h^{k+3}|\Delta^2{w}|_{H^{k-1}({\cal T}_{h})}\norm{\varphi_{h}}& \text{ for } k\geq 1.
\end{cases}    
\end{align*}

\medskip\noindent
{{\text{{\it Step 8. (control of $T_{6}$)}}}} 
From a Cauchy--Schwarz inequality and \eqref{eq:stab.bound}, we can deduce that
\begin{align*}
|T_{6}| \lesssim \sum_{K\in {\cal T}_{h}} \norm{w-\Pi^{k+2}_{K}(w)}_{\#,K} \norm{\Psi-\Pi^{k+2}_{K}(\Psi)}_{\#,K}.
% \lesssim h^{2}\sum_{K\in {\cal T}_{h}}\norm{w-\Pi^{k+2}_{K}(w)}_{\#,K} \norm{\varphi_{h}}.
\end{align*}
Now, using arguments analogous to those employed in the estimate of $T_{4}$ in Step~6, we obtain
\begin{align*}
|T_{6}| \lesssim \begin{cases}h^2\big(|{w}|_{H^{3}({\cal T}_{h})}+h^{\beta}|{w}|_{H^{3+\beta}({\cal T}_{h})}\big)\norm{\varphi_{h}},\; \; \beta =\min(r-1,1) & \text{ for } k=0,
\\ h^{k+3}|{w}|_{H^{k+3}({\cal T}_{h})}\norm{\varphi_{h}}& \text{ for } k\geq 1.
\end{cases}    
\end{align*}

\medskip\noindent
{{\text{{\it Step 9. (control of $T_{7}$)}}}} 
A Cauchy--Schwarz inequality, definition of $\norm{\bullet}_{a,h}$, and \eqref{eq:stab.bound} reveal
\begin{align*}
 |T_{7}| \leq {\cal S}_{h}(\widehat{\varphi}_{h},\widehat{\varphi}_{h})^{\frac{1}{2}}{\cal S}_{h}(\widehat{I}^{k}_{h}(\Psi),\widehat{I}^{k}_{h}(\Psi))^{\frac{1}{2}}\lesssim \norm{\widehat{\varphi}_{h}}_{a,h}\sum_{K\in {\cal T}_{h}} \norm{\Psi-\Pi^{k+2}_{K}(\Psi)}_{\#,K}.
 \end{align*}
 Now, using arguments in \eqref{eq:psi.EK.bound} yield
$\displaystyle |T_{7}|\lesssim  \begin{cases}
h\norm{\widehat{\varphi}_{h}}_{a,h}\norm{\varphi_{h}}&\text{for}\; k=0,\\
h^{2}\norm{\widehat{\varphi}_{h}}_{a,h}\norm{\varphi_{h}}& \text{for}\; k\geq 1.
 \end{cases} $
 
% where Lemma \ref{bd_elli_proj} is used in the second  step and  Theorem \ref{lm1_in_dis_tr_11}$(d)$ along with \eqref{dual_1} is used in last step.

\noindent\medskip
{{\text{{\it Step~10. (final bound)}}}} Utilize the bounds from Steps 3-9 in \eqref{eq:sum}  to obtain
\begin{align*}
\norm{\varphi_{h}}\lesssim \begin{cases}
h\norm{\widehat{\varphi}_{h}}_{a,h}+h^{2}\left(\norm{ \Delta^{2}w}+|{w}|_{H^{3}({\cal T}_{h})}+h^{\beta}|{w}|_{H^{3+\beta}({\cal T}_{h})}\right),\; \; \beta =\min(r-1,1) &\text{for}\;\; k=0,\\
h^{2}\norm{\widehat{\varphi}_{h}}_{a,h}+h^{k+3}\big(|{w}|_{H^{k+3}({\cal T}_{h})}+|\Delta^2{w}|_{H^{k-1}({\cal T}_{h})}\big)&\text{for}\;\; k\geq 1.
\end{cases}
\end{align*}
The above inequality and Theorem~\ref{lm1_in_dis_tr_11}$(c)$ concludes the  of the theorem.  \qed
\section{Newmark scheme}\label{sect_6}
The stability  of  Newmark scheme is established in Subsection~\ref{sub_st}. The proofs of Lemma \ref{P1 lemma_on_norm_initial}, Theorem \ref{P1 implicit_Th2}, and Theorem~\ref{err_L2_new} that discuss initial error bounds, energy estimates, and  $L^2$ error bounds, respectively, for the Newmark scheme are discussed in Subsection \ref{subsec-error-new}.

\subsection{Stability}\label{sub_st}
This subsection proves a bound that is uniform in time (in terms of solution at initial 2 time steps) for the solution to the Newmark scheme. In finite dimensions, stability ensures the well-posedness of the discrete problem  \eqref{hwave_algorithm}.

{ The next theorem establishes stability result and its proof relies on the energy method combined with the discrete Gronwall lemma. The key idea is to choose a suitable test function in the discrete scheme that leads to an {\it energy balance equation}. A summation over time steps yields a global energy identity. The forcing term is then controlled via Cauchy--Schwarz and Young's inequalities to absorb the  energy term at the final time level to the left-hand side, leaving an inequality which fits the discrete Gronwall lemma setting.}

\medskip \noindent 
\uline{\textbf{Proof of Theorem \ref{P2 full_stabilty}.}}
The choice of test function $\widehat{v}_{h}=\widehat{u}_{h}^{n+1}-\widehat{u}_{h}^{n-1} \in \widehat{V}_h^0$  in \eqref{hwave_algorithm} leads to
\begin{equation}
    (\bar{\partial}_{t}^{2}{u}_{h}^{n},{u}_{h}^{n+1}-{u}_{h}^{n-1})+a_{h}(\widehat{u}_{h}^{n,{1}/{4}},\widehat{u}_{h}^{n+1}-\widehat{u}_{h}^{n-1})=(f^{n,1/4},{u}_{h}^{n+1}-{u}_{h}^{n-1}).\label{stab}
\end{equation}
The definitions in \eqref{temp_dis} and  elementary manipulations show that
\begin{align}
    (\bar{\partial}_{t}^{2}{u}_{h}^{n},{u}_{h}^{n+1}-{u}_{h}^{n-1})&=(\bar{\partial}_{t}{u}_{h}^{n+1/2}-{\bar{\partial}_{t}u}_{h}^{n-1/2},\bar{\partial}_{t}{u}_{h}^{n+1/2}+{\bar{\partial}_{t}u}_{h}^{n-1/2}), \label{stab_ist}\\
    a_{h}(\widehat{u}_{h}^{n,{1}/{4}},\widehat{u}_{h}^{n+1}-\widehat{u}_{h}^{n-1})&=a_h(\widehat{u}_{h}^{n+1/2}+\widehat{u}_{h}^{n-1/2},\widehat{u}_{h}^{n+1/2}-\widehat{u}_{h}^{n-1/2}),\label{stab_2}\\
    (f^{n,1/4},{u}_{h}^{n+1}-{u}_{h}^{n-1})&=\Delta t (f^{n,1/4},\bar{\partial}_{t}{u}_{h}^{n+1/2}+{\bar{\partial}_{t}u}_{h}^{n-1/2}) \label{stab_3}.
\end{align}
A combination of \eqref{stab_ist}-\eqref{stab_3} in \eqref{stab} with linearity of $a_h(\bullet,\bullet)$ yields
\begin{eqnarray}
  \norm{\bar{\partial}_{t}{u}_{h}^{n+{1}/{2}}}^{2}-\norm{\bar{\partial}_{t}{u}_{h}^{n-{1}/{2}}}^{2}+ a_{h}(\widehat{u}_{h}^{n+{1}/{2}},\widehat{u}_{h}^{n+{1}/{2}})-a_{h}(\widehat{u}_{h}^{n-{1}/{2}},\widehat{u}_{h}^{n-{1}/{2}})=\Delta t (f^{n,1/4},\bar{\partial}_{t}{u}_{h}^{n+1/2}+{\bar{\partial}_{t}u}_{h}^{n-1/2}).\nonumber
\end{eqnarray}
A summation from $n=1,\cdots,m,$ with $1\le m\leq N-1,$ and an application of \eqref{norm} leads to  the {\it energy balance equation} as
%\end{equation*}
\begin{eqnarray}
  &&\norm{\bar{\partial}_{t}{u}_{h}^{m+{1}/{2}}}^{2}+\norm{\widehat{u}_{h}^{m+{1}/{2}}}_{a,h}^{2}= \norm{\bar{\partial}_{t}{u}_{h}^{{1}/{2}}}^{2}+\norm{\widehat{u}_{h}^{{1}/{2}}}_{a,h}^{2}+\Delta t\sum^{m}_{n=1}(f^{n,1/4},\bar{\partial}_{t}{u}_{h}^{n+1/2}+{\bar{\partial}_{t}u}_{h}^{n-1/2}). \label{stab4}
\end{eqnarray}
A Cauchy--Schwarz inequality $(f^{n,1/4},\bar{\partial}_{t}{u}_{h}^{n+1/2}+{\bar{\partial}_{t}u}_{h}^{n-1/2}) \le \norm{f^{n,1/4}}\norm{\bar{\partial}_{t}{u}_{h}^{n+1/2}+{\bar{\partial}_{t}u}_{h}^{n-1/2}}$ followed by Young's inequality $ab \le \frac{\epsilon}{2} a^2 +\frac{1}{2\epsilon} b^2$ with $a=\norm{f^{n,1/4}}$, $b=\norm{\bar{\partial}_{t}{u}_{h}^{n+1/2}+{\bar{\partial}_{t}u}_{h}^{n-1/2}}$ and $\epsilon =2T$ (applied to right-hand side of above equation) and the inequalities $\Delta t\sum_{n=1}^m\norm{f^{n,1/4}}^2 \le m \Delta t \norm{f}^2_{L^\infty(L^2(\Omega))}\le T\norm{f}^2_{L^\infty(L^2(\Omega))}$ show
\begin{align}
   \Delta t\sum^{m}_{n=1}(f^{n,1/4}&,\bar{\partial}_{t}{u}_{h}^{n+1/2}+{\bar{\partial}_{t}u}_{h}^{n-1/2})\le \Delta t  T\sum_{n=1}^m\norm{f^{n,1/4}}^2+\frac{\Delta t}{4T}\sum_{n=1}^m \norm{\bar{\partial}_{t}{u}_{h}^{n+1/2}+{\bar{\partial}_{t}u}_{h}^{n-1/2}}^2\nonumber\\
    &\le \Delta t T \sum_{n=1}^m\norm{f^{n,1/4}}^2+\frac{\Delta t}{2T}\norm{\bar{\partial}_{t}{u}_{h}^{1/2}}^2+\frac{\Delta t}{2T}\norm{\bar{\partial}_{t}{u}_{h}^{m+1/2}}^2+\frac{\Delta t}{T}\sum_{n=1}^{m-1} \norm{\bar{\partial}_{t}{u}_{h}^{n+1/2}}^2\nonumber\\
   &\le T^2 \norm{f}^2_{L^\infty(L^2(\Omega))}+\frac{1}{2}\norm{\bar{\partial}_{t}{u}_{h}^{{1}/{2}}}^{2}+\frac{1}{2}\norm{\bar{\partial}_{t}{u}_{h}^{m+{1}/{2}}}^{2}+\frac{\Delta t}{T} \sum_{n=1}^{m-1}\norm{\bar{\partial}_{t}{u}_{h}^{n+{1}/{2}}}^{2}
    \label{f1}
\end{align}
with $\Delta t/2 T\le 1/2$ (for second and third terms) in the last step. A combination of \eqref{stab4}-\eqref{f1} leads to
%\begin{align*}
%&\norm{\bar{\partial}_{t}{u}_{h}^{m+{1}/{2}}}^{2}+\norm{\widehat{u}_{h}^{m+{1}/{2}}}_{a,h}^{2}\le
%  \norm{\bar{\partial}_{t}{u}_{h}^{{1}/{2}}}^{2}+\norm{\widehat{u}_{h}^{{1}/{2}}}_{a,h}^{2}+T^2 \norm{f}^2_{{L^\infty}(L^2(\Omega))}+\frac{\Delta t}{2T}\norm{\bar{\partial}_{t}{u}_{h}^{1/{2}}}^{2}+\frac{\Delta t}{2T}\norm{\bar{\partial}_{t}{u}_{h}^{{m+1}/{2}}}^{2}+\frac{\Delta t}{T} \sum^{m-1}_{n=1}\norm{\bar{\partial}_{t}{u}_{h}^{n+1/2}}^2.
%\end{align*}
%with elementary manipulations in the last step.\\
%Next, the observations $m \Delta t\le T$ (applied to third term on right-hand side) and $\Delta t/2T \le 1/2$ (applied to fourth and fifth term on right-hand side) with elementary manipulations reveal
\begin{equation*}
   \frac{1}{2} \norm{\bar{\partial}_{t}{u}_{h}^{m+{1}/{2}}}^{2}+\norm{\widehat{u}_{h}^{m+{1}/{2}}}_{a,h}^{2} \le   \frac{3}{2}\norm{\bar{\partial}_{t}{u}_{h}^{{1}/{2}}}^{2}+\norm{\widehat{u}_{h}^{{1}/{2}}}_{a,h}^{2}+ T^2\norm{f}^2_{L^{\infty}(L^{2}(\Omega))}+\frac{\Delta t}{T} \sum^{m-1}_{n=1}\norm{\bar{\partial}_{t}{u}_{h}^{n+1/2}}^2.
\end{equation*}
Finally, an application of Lemma~\ref{Ch2-d-gronwall} and Remark~\ref{rem2.6} conclude the proof. \qed
\subsection{Error estimates}\label{subsec-error-new}
In this subsection, we first introduce the error function at time step $t_n$. The initial error bounds are established in Lemma~\ref{P1 lemma_on_norm_initial} followed by the energy and $L^2$ estimates in Theorem~\ref{P1 implicit_Th2} and Theorem~\ref{err_L2_new}, respectively, for Newmark scheme.

%To derive the error estimate, the error function is defined as:
%Let  $\widehat{E}_{h}(u_{0})=\widehat{I}_{h}^{k}(u_{0}).$
\medskip
\noindent
Let   $\widehat{e}^{n}_{h}:=\widehat{I}^{k}_{h}(u^{n})-\widehat{u}^{n}_{h}=({e}^{n}_{h},{e}^{n}_{{\cal F}_{h}},{\kappa}^{n}_{{\cal F}_{h}})$ with $\widehat{e}^{n}_{h}|_K= \widehat{I}^{k}_{K}(u^{n})-\widehat{u}^{n}_{K} :=\widehat{e}^{n}_{K}=({e}^{n}_{K},{e}^{n}_{{\cal F}_{K}},{\kappa}^{n}_{{\cal F}_{K}})$ for any $n=0,1,\cdots,N$. Split the error as 
\begin{align} 
%\textcolor{red}{
%u^{n}-\widehat{u}_{h}^{n}:=\text{\footnote{Do we need to ristrict the interpolation on cell only for first sum other how is the first sum compatible}}(u^{n}-\widehat{I}^{k}_{h}(u^{n}))}+\widehat{e}^{n}_{h} \; \text{ with } 
\widehat{e}^{n}_{h}:=\widehat{\theta}^{n}_{h}+\widehat{\rho}^{n}_{h},\;\text{where}\; \widehat{\theta}^{n}_{h}=\widehat{I}^{k}_{h}(u^{n})-\widehat{E}_{h}(u^{n}) \: \text{and }
\widehat{\rho}^{n}_{h}=\widehat{E}_{h}(u^{n})-\widehat{u}_{h}^{n}.\label{split_err_eq1}
\end{align}
 Recall that ${\cal {E}}_{K}:={\cal R}_{K}\circ \widehat{I}^{k}_{K}$ and introduce a local split for the reconstruction error as
\begin{equation}
u^{n}-{\cal R}_{K}(\widehat{u}^{n}_{K})= {\cal R}_{K}(\widehat{e}^{n}_{K})+u^{n}-{\cal {E}}_{K}({u}^{n}).\label{Recons_err_split}
\end{equation}
{The next lemma establishes the initial error bounds, and its proof relies on the energy method combined with approximation properties of the interpolation operator. We derive an error equation by substituting the error split in \eqref{split_err_eq1} into the discrete scheme and utilizing the fact that the exact solution satisfies the PDE at each time step. Then a suitable test function leads to an energy-type identity for the initial error. The resulting terms are then controlled via Cauchy--Schwarz and Young's inequalities to absorb the error norms at the final time level into the left-hand side, and the remaining terms are bounded using the approximation properties of the interpolation, projection operators, and a truncation error estimate yielding the desired initial error bounds.}

\medskip \noindent 
\uline{\textbf{Proof of the Lemma \ref{P1 lemma_on_norm_initial}.}}  
The definition of $\widehat{e}^{n}_{h}$ from above followed by \eqref{inter}  and  \eqref{pi-pro}  show, for  any $\widehat{v}_h\in \widehat{V}_h^0 $
%For any $\widehat{v}_h\in \widehat{V}_h^0 $ in  \eqref{hwave_int_cond} and the\eqref{split_err_eq1}  show that 
\begin{align*}
 {2}{(\Delta t)^{-1}}(\bar{\partial}_t e_h^{1/2},v_h )&+ a_h( \widehat{e}_h ^{1/2},\widehat{v}_h)={2}{(\Delta t)^{-1}}(\bar{\partial}_t \Pi^{k+2}_h(u^{1/2})-\bar{\partial}_t u_h^{1/2},v_h) +a_h(\widehat{I}^{k}_h(u^{1/2})-{\widehat{u}}_h^{1/2},\widehat{v}_h) \nonumber\\
&\qquad={2}{(\Delta t)^{-1}}\big[(\bar{\partial}_t u^{1/2},v_h) -(\bar{\partial}_t u_h^{1/2},v_h)\big]+a_h(\widehat{I}^{k}_h(u^{1/2}),\widehat{v}_h) -a_h({\widehat{u}}_h^{1/2},\widehat{v}_h)\nonumber\\
 &\qquad= {2}{(\Delta t)^{-1}}(\bar{\partial}_t u^{1/2},v_h)-(f^{1/2} +{2}{(\Delta t)^{-1}} {u_1},v_h )+a_h( \widehat{I}^{k}_h(u^{1/2}),\widehat{v}_h)\nonumber
\end{align*}
with
%${2}{(\Delta t)^{-1}}(\bar{\partial}_{t}u_{h}^{1/{2}},v_{h})+a_{h}(\widehat{u}_{h}^{1/{2}},\widehat{v}_{h})=(f^{1/{2}}+{2}{(\Delta t)^{-1}}{ u_{1}},v_{h})$ from 
\eqref{hwave_int_cond} in the last identity. The identities $ \widehat{I}^{k}_h (u^{1/2})=\widehat{\theta}_h^{1/2}+\widehat{E}_h(u^{1/2})$ from \eqref{split_err_eq1} and $a_h( \widehat{E}_h(u^{1/2}),\widehat{v}_h)=(\Delta^2 u^{1/2},{v}_h)$  from \eqref{proj_op_1}  reveal
$$a_h( \widehat{I}^{k}_h (u^{1/2}),\widehat{v}_h)=a_h(\widehat{\theta}_h^{1/2},\widehat{v}_h)+(\Delta^2 u^{1/2},{v}_h).$$
Moreover, since $u$ satisfies \eqref{P1 strong_form}, it follows that
$$(\Delta^2 u^{1/2},{v}_h)=(f^{1/2}-u_{tt}^{1/2},{v}_h).$$
A combination of the last three displayed identities shows that for all $\widehat{v}_{h} \in \widehat{V}_h^0$
\begin{align}
   {2}{(\Delta t)^{-1}}(\bar{\partial}_t e_h^{1/2},v_h )+ a_h( \widehat{e}_h ^{1/2},\widehat{v}_h)&=
  %(\frac{2}{\Delta t}(\bar{\partial}_t \Pi^{k+2}_h u^{1/2}-\textcolor{blue}{u_1})
 ( {2}{(\Delta t)^{-1}}(\bar{\partial}_t u^{1/2}-{u_1}) -u_{tt}^{1/2},v_h)+a_h(\widehat{\theta}_h^{1/2},\widehat{v}_h)\nonumber\\
&=(\widetilde{R}^0,v_h)+a_h(\widehat{\theta}_h^{1/2},\widehat{v}_h)\label{initial_eqn}
\end{align}
%\footnote{THE $R_0$ IS NOT EXACTLY THIS, then explain, now it is }
with $\widetilde{R}^0:={2}{(\Delta t)^{-1}}(\bar{\partial}_t u^{1/2}-{u_1}) -u_{tt}^{1/2}$.

\medskip \noindent
%Since $\widehat{I}^{k}_{h}(u_{0})=\widehat{u}_{h}^{0}$, so $\widehat{e}_h^0=0$ and hence $\frac{2}{\Delta t}\widehat{e}_h^{1/2}=\frac{1}{\Delta t}\widehat{e}_h^{1}=\bar{\partial}_t \widehat{e}_h^{1/2}$.
Recall $\widehat{e}_h^0=0$ from \eqref{hwave_int_cond} (which leads to $\widehat{e}_h^{1/2}=\frac{\Delta t}{2}\bar{\partial}_t \widehat{e}_h^{1/2}$). This, 
the choice of test function $\widehat{v}_h=\widehat{e}^{1/2}_h$ in  \eqref{initial_eqn}, and  \eqref{norm} yield
\begin{equation}
\norm{\bar{\partial}_t e_h^{1/2}}^{2}+ \norm{\widehat{e}_h ^{1/2}}_{a,h}^{2}=\frac{\Delta t}{2}(\widetilde{R}^0,\bar{\partial}_t e_h^{1/2})+a_h(\widehat{\theta}_h^{1/2},\widehat{e}^{1/2}_{h}).\label{ref_eh_bd}
\end{equation}
A Cauchy--Schwarz inequality and the continuity of $a_h(\bullet,\bullet)$ from Lemma \ref{bdd_lmma_a_T} reveals
\begin{equation*}
  \frac{\Delta t}{2}(\widetilde{R}^0,\bar{\partial}_t e_h^{1/2})+a_h(\widehat{\theta}_h^{1/2},\widehat{e}^{1/2}_{h})  \le \frac{\Delta t}{2}\norm{\widetilde{R}^0}\norm{\bar{\partial}_t e_h^{1/2}}+ \alpha\norm{\widehat{\theta}_{h}^{1/2}}_{a,h}\norm{\widehat{e}^{1/2}_h}_{a,h}.\label{cs}
\end{equation*}
An appeal to Young's inequality with $a=\Delta t\norm{\widetilde{R}^0}$ (resp. $a=\alpha\norm{\widehat{\theta}_{h}^{1/2}}_{a,h}$), $b=\norm{\bar{\partial}_t e_h^{1/2}}$ (resp. $b=\norm{\widehat{e}^{1/2}_h}_{a,h}$) and $\epsilon=1$ (resp. $\epsilon=2$)  for the first (resp. second) term on the right side of above leads to
\begin{equation}
    \frac{\Delta t}{2}\norm{\widetilde{R}^0}\norm{\bar{\partial}_t e_h^{1/2}}+ \alpha \norm{\widehat{\theta}_h^{1/2}}_{a,h}\norm{\widehat{e}^{1/2}_h}_{a,h} \le \frac{{\Delta t}^2}{4}\norm{\widetilde{R}^0}^2+\frac{1}{4}\norm{\bar{\partial}_t e_h^{1/2}}^2+\alpha^2\norm{\widehat{\theta}_h^{1/2}}_{a,h}^2+\frac{1}{4}\norm{\widehat{e}^{1/2}_h}_{a,h}^2.\label{yng}
\end{equation}
A combination of \eqref{ref_eh_bd}-\eqref{yng} 
%and the bound $\norm{\widetilde{R}^0}^2 \lesssim (\Delta t)^2\norm{u_{ttt}}^2_{L^\infty(0,t_1;L^2{(\Omega)})}$ from Lemma~\ref{Sec_lma_1}(a)
yields
\begin{align}
\frac{3}{4}\norm{\bar{\partial}_t e_h^{1/2}}^{2}+\frac{3}{4} \norm{\widehat{e}_h ^{1/2}}_{a,h}^{2}&\le \frac{{\Delta t}^2}{4}\norm{\widetilde{R}^0}^2+\alpha^2\norm{\widehat{\theta}_h^{1/2}}_{a,h}^2
.\label{ref_1_eh_bd_1}
\end{align}
%\label{ref_2_bd_ah}
%\end{equation}
Recall that $\widehat{e}^{1/2}_{h}:=\widehat{I}^{k}_{h}(u^{1/2})-\widehat{u}^{1/2}_{h}$. 
The identity $\bar{\partial}_tu^{1/2}-\bar{\partial}_tu_h^{1/2}=\bar{\partial}_t e_h^{1/2}+\bar{\partial}_tu^{1/2}-\Pi_h^{k+2}(\bar{\partial}_tu^{1/2})$ follows  from this and \eqref{inter}. Thus a  triangle inequality shows 
\begin{align}
   \norm{\bar{\partial}_tu^{1/2}-\bar{\partial}_tu_h^{1/2}}^{2} \le 2\big(\norm{\bar{\partial}_t e_h^{1/2}}^{2}+ \norm{\bar{\partial}_tu^{1/2}-\Pi_h^{k+2}(\bar{\partial}_tu^{1/2})}^{2}\big).\label{ref_2_bd_ah2}
\end{align}
From \eqref{Recons_err_split}, $u^{1/2}-{\cal R}_{K}(\widehat{u}_h ^{1/2})={\cal R}_{K}(\widehat{e}_{K}^{1/2})+u^{1/2}-{\cal E}_{K}({u} ^{1/2})$.  This,  an application of triangle inequality, the inequality $
\sum_{K\in {\cal T}_{h}}\norm{\nabla^{2}{\cal R}_{K}(\widehat{e}_{K}^{1/2})}_{K}^2\leq \norm{\widehat{e}_h^{1/2}}_{a,h}^2$ from  definition of bilinear from ${a_h}(\bullet,\bullet)$, and some elementary algebra reveal
\begin{align}
    \sum_{K\in {\cal T}_{h}}\norm{\nabla^{2}(u^{1/2}-{\cal R}_{K}(\widehat{u}_h ^{1/2}))}^{2}_{K} \le 2\big(\norm{\widehat{e}_h^{1/2}}_{a,h}^2+\sum_{K\in {\cal T}_{h}}\norm{\nabla^{2}(u^{1/2}-{\cal E}_{K}({u} ^{1/2}))}^{2}_{K}\big).\label{ref_2_bd_ah}
\end{align}
Sum up  \eqref{ref_2_bd_ah} and \eqref{ref_2_bd_ah2} then utilize \eqref{ref_1_eh_bd_1} to obtain
\begin{align}
&\norm{\bar{\partial}_tu^{1/2}-\bar{\partial}_tu_h^{1/2}}^{2}+\sum_{K\in {\cal T}_{h}}\norm{\nabla^{2}(u^{1/2}-{\cal R}_{K}(\widehat{u}_h ^{1/2}))}^{2}_{K}\nonumber\\
&\le \frac{2{\Delta t}^2}{3}\norm{\widetilde{R}^0}^2 +\frac{8\alpha^2}{3}\norm{\widehat{\theta}_h^{1/2}}_{a,h}^2+2\norm{\bar{\partial}_tu^{1/2}-\Pi_h^{k+2}(\bar{\partial}_tu^{1/2})}^{2}+2\sum_{K\in {\cal T}_{h}}\norm{\nabla^{2}(u^{1/2}-{\cal E}_{K}({u} ^{1/2}))}^{2}_{K}.\label{firs_last}
\end{align}
The remainder of the proof derives the bounds of the terms on the right-hand side. An application of  Lemma~\ref{Sec_lma_1}(a) reveals
\begin{equation}
\norm{\widetilde{R}^0}^2= \norm{{2}{(\Delta t)^{-1}}(\bar{\partial}_t u^{1/2}-{u_1}) -u_{tt}^{1/2}}^2 \lesssim (\Delta t)^2\norm{u_{ttt}}^2_{L^\infty(0,t_1;L^2{(\Omega)})}.\label{new_isst}
\end{equation}
     The definition \eqref{P3 phi_c1 }  and application of  triangle inequality shows $\|\widehat{\theta}_h^{1/2}\|_{a,h}^2 \le\frac{1}{2}\big(\|\widehat{\theta}_h^{0}\|_{a,h}^2+\|\widehat{\theta}_h^{1}\|_{a,h}^2\big)$. This in combination with the definition of $\widehat{\theta}_h^{1/2}$ from \eqref{split_err_eq1} and  Theorem~\ref{lm1_in_dis_tr_11}(c) yields
    \begin{align}
\norm{\widehat{\theta}_h^{1/2}}_{a,h}^2\lesssim\begin{cases}
 h^{2}\norm{u}^2_{L^\infty(0,t_1;H^{3}(\mathcal{T}_h))}+h^{2(1+\beta)}\norm{u}^2_{L^\infty(0,t_1;H^{3+\beta}(\mathcal{T}_h))}\;&\text{ for }\; k = 0,\\[6pt]
h^{2(k+1)}\norm{u}^2_{L^\infty(0,t_1;H^{k+3}(\mathcal{T}_h))}\;&\text{ for }\; k \ge 1.
 \end{cases}\label{theta-def}
\end{align}
Utilize  \eqref{P3 phi_c2} to argue $\bar{\partial}_tu^{1/2}=(\Delta t)^{-1}\int_{0}^{t_{1}}u_t\dt$ and $\Pi_h^{k+2}(\bar{\partial}_tu^{1/2})=(\Delta t)^{-1}\int_{0}^{t_{1}}\Pi_h^{k+2}u_t\dt$. This  followed by $\|\int_{0}^{t_{1}}(u_t-\Pi_h^{k+2}u_t) \dt\|\le \int_{0}^{t_{1}}\|u_t-\Pi_h^{k+2}u_t\| \dt$ and the bounds from  Theorem \ref{lm1_in_dis_tr_11}(d) lead to
\begin{align}
    &\norm{\bar{\partial}_tu^{1/2}-\Pi_h^{k+2}(\bar{\partial}_tu^{1/2})}^{2}\lesssim h^{2(k+1)} {(\Delta t)^{-2}}\big(\int_{0}^{t_1}\norm{u_t}^2_{H^{k+1}(\mathcal{T}_h)}\dt\big)^2 \lesssim  h^{2(k+1)}\norm{u_t}^2_{L^\infty(0,t_1;H^{k+1}(\mathcal{T}_h))}.\label{new_ll2}
    \end{align}
Apply \eqref{P3 phi_c1 } and  triangle inequality to obtain $\norm{\nabla^{2}(u^{1/2}-{\cal E}_{K}({u}^{1/2}))}_K^2 \le \frac{1}{2}\big(\norm{\nabla^{2}(u^{0}-{\cal E}_{K}({u}^{0}))}_K^2+\norm{\nabla^{2}(u^{1}-{\cal E}_{K}({u}^{1}))}_K^2\big)$. This and  the bounds from  Theorem \ref{lm1_in_dis_tr_11}(d)  reveal
\begin{align}
    &\sum_{K\in {\cal T}_{h}}\norm{\nabla^{2}(u^{1/2}-{\cal E}_{K}({u}^{1/2}))}_K^2\lesssim  h^{2(k+1)}\norm{u}^2_{L^\infty(0,t_1;H^{k+3}(\mathcal{T}_h))}.\label{new_1last}
    \end{align}    
A combination \eqref{firs_last}-\eqref{new_1last}  leads to
\begin{align*}
&\norm{\bar{\partial}_tu^{1/2}-\bar{\partial}_tu_h^{1/2}}^{2}+\sum_{K\in {\cal T}_{h}}\norm{\nabla^{2}(u^{1/2}-{\cal R}_{K}(\widehat{u}_h ^{1/2}))}^{2}_{K}
\lesssim  \begin{cases}
 h^2+h^{2(1+\beta)}+(\Delta t)^{4}\;&\text{ for }\; k = 0,\\
h^{2(k+1)}+(\Delta t)^{4}\;&\text{ for }\; k \ge 1.
 \end{cases} 
\end{align*}
Elementary manipulations conclude the proof.
%along with the bounds of $\norm{\widetilde{R}^0}^2$ from Lemma~\textcolor{blue}{truncation lemma} and $\norm{\widehat{\theta}_h^{1/2}}_{a,h}^2$ from \textcolor{blue}{approximation properties} conclude the proof.
\qed
%\end{proof}

%\begin{proof}[\textbf{Proof}]
\medskip
\noindent
\uline{\textbf{Error equation.}} The definitions  $\widehat{e}^{n}_{h}=\widehat{I}^{k}_{h}(u^{n})-\widehat{u}^{n}_{h}$,  \eqref{inter}, and \eqref{pi-pro} show that $\text{for all }\widehat{v}_{h} \in \widehat{V}_h^0$,
\begin{align*}
    (\bar{\partial}_{t}^{2}{e}_{h}^{n},{v}_{h})+a_{h}(\widehat{e}^{n,{1}/{4}},\widehat{v}_{h})&=(\bar{\partial}_{t}^{2}{\Pi}^{k+2}_{h}({u}^{n})-\bar{\partial}_{t}^{2}{u}_h^{n},v_{h})+a_{h}(\widehat{I}^{k}_{h}(u^{n,{1}/{4}})-\widehat{u}^{n,1/4}_{h},\widehat{v}_{h})\\
    &=(\bar{\partial}_{t}^{2}{u}^{n},v_{h})-(\bar{\partial}_{t}^{2}{u}_h^{n},v_{h})+a_{h}(\widehat{I}^{k}_{h}(u^{n,{1}/{4}}),\widehat{v}_{h})-a_{h}(\widehat{u}^{n,1/4}_{h},\widehat{v}_{h})\\
    &=(\bar{\partial}_{t}^{2}u^n,v_{h})-(f^{n,1/4},{v}_{h})+a_{h}(\widehat{I}^{k}_{h}(u^{n,{1}/{4}}),\widehat{v}_{h})
\end{align*}
with \eqref{hwave_algorithm}  in the last identity. Utilize  $\widehat{I}^{k}_{h}(u^{n,{1}/{4}})=\widehat{\theta}_{h}^{n,{1}/{4}}+\widehat{E}_{h}(u^{n,{1}/{4}})$ from  \eqref{split_err_eq1} and \eqref{proj_op_1} to obtain $$a_{h}(\widehat{I}^{k}_{h}(u^{n,{1}/{4}}),\widehat{v}_{h})=a_{h}(\widehat{\theta}_{h}^{n,{1}/{4}},\widehat{v}_{h})+(\Delta^2 u^{n,1/4},{v}_{h}).$$
Moreover, since $u$ satisfies the PDE \eqref{P1 strong_form}, it follows that
\begin{equation*}
    (f^{n,1/4},{v}_{h})=(u_{tt}^{n,{1}/{4}},{v}_{h})+(\Delta^2 u^{n,1/4},{v}_{h}).
\end{equation*}
\begin{comment}
The HHO scheme \eqref{hwave_algorithm} and \eqref{hwave_err_eq_1}   leads to
\begin{equation}
(u_{tt}^{n,{1}/{4}}-\bar{\partial}_{t}^{2}{u}_{h}^{n},v_{h})-a_{h}(\widehat{u}_h^{n{1}/{4}},\widehat{v}_{h})+(\Delta^{2} u^{n,1/4},v_{h})=0.\nonumber
\end{equation}
The above equation along with the definitions \eqref{proj_op_1} and \eqref{split_err_eq1} implies that
\begin{align}
a_{h}(\widehat{\theta}^{n,{1}/{4}}_{h},\widehat{v}_{h})&=(u_{tt}^{n,{1}/{4}}-\bar{\partial}_{t}^{2}{u}_{h}^{n},v_{h})+a_{h}(\widehat{e}^{n,{1}/{4}},\widehat{v}_{h})\nonumber\\
&=(\bar{\partial}_{t}^{2}{I}^{k}_{h}({u}^{n})-\bar{\partial}_{t}^{2}{I}^{k}_{h}({u}^{n})+u_{tt}^{n,{1}/{4}}-\bar{\partial}_{t}^{2}{u}_{h}^{n},v_{h})+a_{h}(\widehat{e}^{n,{1}/{4}},\widehat{v}_{h}).\label{hwave_err_lm1_eq_1}
\end{align}
The definition of $\Pi_{h}^{k+2}$ and \eqref{P3 phi_c2} yields
\begin{align}
   (\bar{\partial}_{t}^{2}{I}^{k}_{h}({u}^{n}),v_{h})= (\bar{\partial}_{t}^{2}{\Pi}^{k+2}_{h}({u}^{n}),v_{h})
   &={\Delta t}^{-2}({{\Pi}^{k+2}_{h}({u}^{n+1})-2{\Pi}^{k+2}_{h}({u}^{n})+{\Pi}^{k+2}_{h}({u}^{n-1})},v_{h})\nonumber\\
   &={\Delta t}^{-2}({{u}^{n+1}-2{u}^{n}+{u}^{n-1}},v_{h})= (\bar{\partial}_{t}^{2}{u}^{n},v_{h}).\label{hwave_err_lm1_eq_2}
\end{align} 
\end{comment}
A combination of the last three displayed identities reveals that for all $\widehat{v}_{h} \in \widehat{V}_h^0$
\begin{equation}
(\bar{\partial}_{t}^{2}{e}_{h}^{n},{v}_{h})+a_{h}(\widehat{e}^{n,{1}/{4}}_h,\widehat{v}_{h})=(\bar{\partial}_{t}^{2}{u}^{n}-u_{tt}^{n,{1}/{4}},v_{h})+a_{h}(\widehat{\theta}^{n,{1}/{4}}_{h},\widehat{v}_{h})=(R^n,v_{h})+a_{h}(\widehat{\theta}^{n,{1}/{4}}_{h},\widehat{v}_{h})\label{erroreqn} 
\end{equation}
with ${R}^j:=\bar{\partial}_{t}^{2}{u}^{j}-u_{tt}^{j,{1}/{4}}$ for  any $j=1,2,\cdots,N-1.$

\medskip
\noindent
{
The next theorem establishes the energy error estimates for time levels greater than $t_1$ and its proof relies on the energy method combined with the discrete Gronwall lemma. We derive an error equation by substituting the error split into the discrete scheme  and utilize the fact that the exact solution satisfies the PDE at each time step. An appropriate  test function yields a key energy identity with initial error terms, temporal truncation error, and interpolation error. After consolidating all estimates, the discrete Gronwall lemma is applied to the resulting cumulative sum of energies at previous time steps.}

\medskip \noindent 
\uline{\textbf{Proof of the Theorem \ref{P1 implicit_Th2}.}}
The proof proceeds in four steps. A key identity which decomposes the right-hand side into initial error terms plus two terms $T_1$ 
and $T_2$ is derived in 
\textit{Step~1}. The initial error terms are bounded in Lemma~\ref{P1 lemma_on_norm_initial} while  $T_1$ 
and $T_2$ are estimated separately in \textit{Step~2} 
and \textit{Step~3}, respectively. The final bound is obtained in 
\textit{Step~4} by consolidating the preceding estimates.

\medskip
\noindent
{{\text{{\it Step 1. (key identity)}}}}  The choice of the test function $\widehat{v}_{h}=\widehat{e}_{h}^{n+1}-\widehat{e}_{h}^{n-1}$ in \eqref{erroreqn} leads to
\begin{eqnarray*}
    (\bar{\partial}_{t}^{2}{e}_{h}^{n},{e}_{h}^{n+1}-{e}_{h}^{n-1})+a_{h}(\widehat{e}_{h}^{n,{1}/{4}},\widehat{e}_{h}^{n+1}-\widehat{e}_{h}^{n-1})=({R}^n,{e}_{h}^{n+1}-{e}_{h}^{n-1})+a_{h}(\widehat{\theta}^{n,{1}/{4}}_{h},\widehat{e}_{h}^{n+1}-\widehat{e}_{h}^{n-1}).
\end{eqnarray*}
 Now, proceed similar as in Theorem~\ref{P2 full_stabilty} from \eqref{stab_ist}-\eqref{stab4} to obtain
\begin{equation}
\norm{\bar{\partial}_{t}e_{h}^{m+{1}/{2}}}^{2}+\norm{\widehat{e}_{h}^{m+{1}/{2}}}^{2}_{a,h}=\norm{\bar{\partial}_{t}e_{h}^{{1}/{2}}}^{2}+\norm{\widehat{e}_{h}^{{1}/{2}}}^{2}_{a,h}+T_{1}+T_{2},\label{P2 all}
\end{equation}
where $T_1:=\Delta t\displaystyle\sum_{n=1}^{m}(R^n,\bar{\partial}_{t}{e}_{h}^{n+{1}/{2}}+\bar{\partial}_{t}{e}_{h}^{n-{1}/{2}})$ and $T_2:=2\displaystyle\sum_{n=1}^{m}a_{h}(\widehat{\theta}^{n,{1}/{4}}_{h},\widehat{e}_{h}^{n+1/2}-\widehat{e}_{h}^{n-{1}/{2}})$.

\medskip
\noindent
{{\text{{\it Step 2. (control of $T_{1}$)}}}} %A Cauchy--Schwarz inequality $( R^n,\partial_{t}{e}_{h}^{n+{1}/{2}}+\partial_{t}{e}_{h}^{n-{1}/{2}})\le \norm{R^n}\norm{\bar{\partial}_{t}{e}_{h}^{n+1/2}+{\bar{\partial}_{t}e}_{h}^{n-1/2}}$ followed by Young's inequality $ab \le \frac{\epsilon}{2} a^2 +\frac{1}{2\epsilon} b^2$ with $a=\norm{R^n}$, $b=\norm{\bar{\partial}_{t}{e}_{h}^{n+1/2}+{\bar{\partial}_{t}e}_{h}^{n-1/2}}$ and $\epsilon =2T$ lead to
{Arguments analogous to \eqref{f1} with $f^{n,1/4}$ replaced by $R^n$ and $\widehat{u}_h^n$ replaced by $\widehat{e}_h^n$
%followed by an application of Lemma~\ref{Sec_lma_1}(c) 
yield
\begin{align*}
    T_1 %&\le \Delta t  \sum_{n=1}^m\norm{R^n}^2+\frac{\Delta t}{4T}\sum_{n=1}^m \norm{\bar{\partial}_{t}{e}_{h}^{n+1/2}+{\bar{\partial}_{t}e}_{h}^{n-1/2}}^2\nonumber\\
   % &\le \Delta t  \sum_{n=1}^m\norm{R^n}^2+\frac{\Delta t}{2T}\norm{\bar{\partial}_{t}{e}_{h}^{1/2}}^2+\frac{\Delta t}{2T}\norm{\bar{\partial}_{t}{e}_{h}^{m/2}}^2+\frac{\Delta t}{T}\sum_{n=1}^{m-1} \norm{\bar{\partial}_{t}{e}_{h}^{n+1/2}}^2\nonumber\\
    &\le \Delta t T \sum_{n=1}^m\norm{R^n}^2+\frac{1}{2}\norm{\bar{\partial}_{t}{e}_{h}^{{1}/{2}}}^{2}+\frac{1}{2}\norm{\bar{\partial}_{t}{e}_{h}^{m+{1}/{2}}}^{2}+\frac{\Delta t}{T} \sum_{n=1}^{m-1}\norm{\bar{\partial}_{t}{e}_{h}^{n+{1}/{2}}}^{2}.
    %&\le \frac{41}{1260}(\Delta t)^4\norm{u_{tttt}}^2_{L^2(L^2(\Omega))}+\frac{1}{2}\norm{\bar{\partial}_{t}{e}_{h}^{{1}/{2}}}^{2}+\frac{1}{2}\norm{\bar{\partial}_{t}{e}_{h}^{m+{1}/{2}}}^{2}+\frac{\Delta t}{T} \sum_{n=1}^{m-1}\norm{\bar{\partial}_{t}{e}_{h}^{n+{1}/{2}}}^{2}.
    %\label{T1}.
\end{align*}}
%Utilize the bounds for $R^n$ from \textcolor{blue}{trunation error lemma} and 
%In the last step, the observation $\Delta t/2 T\le 1/2$ (for second and third terms) is used.
%\begin{equation}
%    T_1 \le \textcolor{red}{C} \Delta t^{4}\norm{u_{tttt}}_{L^{2}(L^{2}(\Omega))}^{2} +\frac{1}{2}\norm{\bar{\partial}_{t}{e}_{h}^{{1}/{2}}}^{2}+\frac{1}{2}\norm{\bar{\partial}_{t}{e}_{h}^{m+{1}/{2}}}^{2}+\frac{\Delta t}{T} \sum_{n=1}^{m-1}\norm{\bar{\partial}_{t}{e}_{h}^{n+{1}/{2}}}^{2}\label{T1}.
%\end{equation}
{{\text{{\it Step 3. (control of $T_{2}$)}}}} An application of \eqref{sum_by_parts} and continuity of $a_h(\bullet,\bullet)$ from Lemma~\ref{bdd_lmma_a_T} yield
\begin{align}
\frac{1}{2}T_2&=\displaystyle\sum_{n=1}^{m}a_{h}(\widehat{\theta}^{n,{1}/{4}}_{h},\widehat{e}_{h}^{n+1/2}-\widehat{e}_{h}^{n-{1}/{2}})=a_{h}(\widehat{\theta}^{m,{1}/{4}}_{h},\widehat{e}_{h}^{m+{1}/{2}})-a_{h}(\widehat{\theta}^{1,{1}/{4}}_{h},\widehat{e}_{h}^{{1}/{2}})+\sum_{n=1}^{m-1}a_{h}(\widehat{\theta}^{n+1,{1}/{4}}_{h} -\widehat{\theta}^{n,{1}/{4}}_{h},\widehat{e}_{h}^{n+{1}/{2}})\nonumber\\
    &\le  \alpha \Big[\norm{\widehat{\theta}_h^{m,{1}/{4}}}_{a,h}\norm{\widehat{e}_h^{m+{1}/{2}}}_{a,h}+\norm{\widehat{\theta}_h^{1,{1}/{4}}}_{a,h}\norm{\widehat{e}_h^{{1}/{2}}}_{a,h}+\sum_{n=1}^{m-1}\norm{\widehat{\theta}^{n+1,{1}/{4}}_{h} -\widehat{\theta}^{n,{1}/{4}}_{h}}_{a,h}\norm{\widehat{e}_{h}^{n+{1}/{2}}}_{a,h}  \Big].\label{New_er}
\end{align}
Apply Young's inequality with  $(a,b,\epsilon)=( \alpha\norm{\widehat{\theta}_h^{m,{1}/{4}}}_{a,h},\norm{\widehat{e}_h^{m+{1}/{2}}}_{a,h},1/2)$, $( \alpha\norm{\widehat{\theta}_h^{1,{1}/{4}}}_{a,h},\norm{\widehat{e}_h^{{1}/{2}}}_{a,h},1/2)$, and 
$(\alpha \norm{\widehat{\theta}^{n+1,{1}/{4}}_{h} -\widehat{\theta}^{n,{1}/{4}}_{h}}_{a,h},\norm{\widehat{e}_{h}^{n+{1}/{2}}}_{a,h} ,\frac{\Delta t}{T} )$   to bound the first, second, and third terms, respectively, on the right-hand of last inequality as
\begin{align}
  & \alpha\norm{\widehat{\theta}_h^{m,{1}/{4}}}_{a,h}\norm{\widehat{e}_h^{m+{1}/{2}}}_{a,h}
  \le {\alpha^2}\norm{\widehat{\theta}_h^{m,{1}/{4}}}_{a,h}^2+\frac{1}{4}\norm{\widehat{e}_h^{m+{1}/{2}}}_{a,h}^2,\\
 &  \alpha\norm{\widehat{\theta}_h^{1,{1}/{4}}}_{a,h}\norm{\widehat{e}_h^{{1}/{2}}}_{a,h}\le{\alpha^2}\norm{\widehat{\theta}_h^{1,{1}/{4}}}_{a,h}^2+\frac{1}{4}\norm{\widehat{e}_h^{{1}/{2}}}_{a,h}^2,  \\
 &   \alpha \norm{\widehat{\theta}^{n+1,{1}/{4}}_{h} -\widehat{\theta}^{n,{1}/{4}}_{h}}_{a,h}\norm{\widehat{e}_{h}^{n+{1}/{2}}}_{a,h}  \le \frac{\alpha^2T}{2\Delta t} \norm{\widehat{\theta}^{n+1,{1}/{4}}_{h} -\widehat{\theta}^{n,{1}/{4}}_{h}}_{a,h}^2+\frac{\Delta t}{2T}\norm{\widehat{e}_h^{n+{1}/{2}}}_{a,h}^2.\label{thn}
\end{align}
Note that from \eqref{P3 phi_c1 },   
$\widehat{\theta}^{n+1,{1}/{4}}_{h} -\widehat{\theta}^{n,{1}/{4}}_{h}=\frac{1}{4}\big(\int_{t_{n}}^{t_{n+2}}{\widehat{\theta}_{ht}(t)}\dt+\int_{t_{n-1}}^{t_{n+1}}\widehat{\theta}_{ht}(t)\dt\big).$ This, an application of triangle inequality $\|\widehat{\theta}^{n+1,{1}/{4}}_{h} -\widehat{\theta}^{n,{1}/{4}}_{h}\|_{a,h}\le \frac{1}{4}\big(\int_{t_{n}}^{t_{n+2}}\|{\widehat{\theta}_{ht}(t)}\|_{a,h}\dt+\int_{t_{n-1}}^{t_{n+1}}\|\widehat{\theta}_{ht}(t)\|_{a,h}\dt\big)$, and  the  Cauchy--Schwarz inequality $\int_{t_{j}}^{t_{j+2}}\norm{\widehat{\theta}_{ht}(t)}_{a,h}\dt \le \sqrt{ 2\Delta t}\big(\int_{t_{j}}^{t_{j+2}}\norm{\widehat{\theta}_{ht}(t)}_{a,h}^2\dt \big)^{1/2}$ (for $j=n-1,n$) with some  elementary manipulations reveal
\begin{align}
    \norm{\widehat{\theta}^{n+1,{1}/{4}}_{h} -\widehat{\theta}^{n,{1}/{4}}_{h}}_{a,h}^2 \le \frac{\Delta t}{4}\big(\int_{t_{n}}^{t_{n+2}}\norm{\widehat{\theta}_{ht}(t)}_{a,h}^2\dt+\int_{t_{n-1}}^{t_{n+1}}\norm{\widehat{\theta}_{ht}(t)}_{a,h}^2\dt \big).\label{new_22}
\end{align}
A combination of \eqref{New_er}-\eqref{new_22} leads to 
\begin{align*}
&T_{2} \le 2{\alpha^2}\big[\norm{\widehat{\theta}_h^{m,{1}/{4}}}_{a,h}^2+\norm{\widehat{\theta}_h^{1,{1}/{4}}}_{a,h}^2\big]+\frac{1}{2}\norm{\widehat{e}_h^{m+{1}/{2}}}_{a,h}^2+\frac{1}{2}\norm{\widehat{e}_h^{{1}/{2}}}_{a,h}^2\nonumber\\&\qquad \qquad +\frac{\alpha^2T}{4}\sum_{n=1}^{m-1}\Big[\int_{t_{n}}^{t_{n+2}}\norm{\widehat{\theta}_{ht}(t)}_{a,h}^2\dt+\int_{t_{n-1}}^{t_{n+1}}\norm{\widehat{\theta}_{ht}(t)}_{a,h}^2\dt \Big]+\frac{\Delta t}{T}\sum_{n=1}^{m-1}\norm{\widehat{e}_h^{n+{1}/{2}}}_{a,h}^2.
\end{align*}
{{\text{{\it Step 4. (consolidation)}}}}
The bounds from Steps 2-3 in \eqref{P2 all}, estimates  from \eqref{ref_1_eh_bd_1},  and a rearrangement  of the terms reveal
\begin{align*}
  &\frac{1}{2}\big[\norm{\bar{\partial}_{t}e_{h}^{m+{1}/{2}}}^{2}+\norm{\widehat{e}_{h}^{m+{1}/{2}}}^{2}_{a,h}\big]\le \frac{{\Delta t}^2}{2}\norm{\widetilde{R}^0}^2+2\alpha^2\norm{\widehat{\theta}_h^{1/2}}_{a,h}^2+\Delta t T  \sum_{n=1}^m\norm{R^n}^2 
  %+\Delta t^{4}\norm{u_{tttt}}_{L^{2}(L^{2}(\Omega))}^{2}
  +2{\alpha^2}\big[\norm{\widehat{\theta}_h^{m,{1}/{4}}}_{a,h}^2 + \norm{\widehat{\theta}_h^{1,{1}/{4}}}_{a,h}^2\big]\\
  &\qquad +\frac{\alpha^2T}{4}\sum_{n=1}^{m-1}\Big[\int_{t_{n}}^{t_{n+2}}\norm{\widehat{\theta}_{ht}(t)}_{a,h}^2\dt+\int_{t_{n-1}}^{t_{n+1}}\norm{\widehat{\theta}_{ht}(t)}_{a,h}^2\dt \Big] +\frac{\Delta t}{T}\sum_{n=1}^{m-1}\norm{\widehat{e}_h^{n+{1}/{2}}}_{a,h}^2+\frac{\Delta t}{T}\sum_{n=1}^{m-1}\norm{\bar{\partial}_{t}{e}_{h}^{n+{1}/{2}}}^{2}.
\end{align*}
An application of  Lemma~\ref{Ch2-d-gronwall} with Remark~\ref{rem2.6} and a reordering of the terms lead  to%and the bound $\Delta t  \sum_{n=1}^m\norm{R^n}^2 \lesssim (\Delta t)^{4} \norm{u_{tttt}}^2_{L^2(L^2(\Omega))} $ from  Lemma~\ref{Sec_lma_1}(b) lead to
\begin{align}
\norm{\bar{\partial}_{t}e_{h}^{m+{1}/{2}}}^{2}+\norm{\widehat{e}_{h}^{m+{1}/{2}}}^{2}_{a,h}&\lesssim {\Delta t}^2\norm{\widetilde{R}^0}^2+\Delta t  \sum_{n=1}^m\norm{R^n}^2 +
  %+\Delta t^{4}\norm{u_{tttt}}_{L^{2}(L^{2}(\Omega))}^{2}
  \norm{\widehat{\theta}_h^{1/2}}_{a,h}^2
  %+\Delta t^{4}\norm{u_{tttt}}_{L^{2}(L^{2}(\Omega))}^{2}
  +\norm{\widehat{\theta}_h^{1,{1}/{4}}}_{a,h}^2+\norm{\widehat{\theta}_h^{m,{1}/{4}}}_{a,h}^2\nonumber\\
  &\qquad + \sum_{n=1}^{m-1}\Big[\int_{t_{n}}^{t_{n+2}}\norm{\widehat{\theta}_{ht}(t)}_{a,h}^2\dt+\int_{t_{n-1}}^{t_{n+1}}\norm{\widehat{\theta}_{ht}(t)}_{a,h}^2\dt \Big].\label{thm8_e_bd}
\end{align}
Analogous arguments with $1/2$ replaced by $m+1/2$ in  \eqref{ref_2_bd_ah2} and \eqref{ref_2_bd_ah}  yield
\begin{align*}
&\norm{\bar{\partial}_tu^{m+1/2}-\bar{\partial}_tu_h^{m+1/2}}^{2} \le 2\big(\norm{\bar{\partial}_t e_h^{m+1/2}}^{2}+\norm{\bar{\partial}_tu^{m+1/2}-\Pi_h^{k+2}(\bar{\partial}_tu^{m+1/2})}^{2}\big),\\
    &\sum_{K\in {\cal T}_{h}}\norm{\nabla^{2}(u^{m+1/2}-{\cal R}_{K}(\widehat{u}_h ^{m+1/2}))}^{2}_{K} \le 2\big(\norm{\widehat{e}_h^{m+1/2}}_{a,h}^2+\sum_{K\in {\cal T}_{h}}\norm{\nabla^{2}(u^{m+1/2}-{\cal E}_{K}({u} ^{m+1/2}))}^{2}_{K}\big).
\end{align*}
Sum up the last two displayed inequalities and utilize \eqref{thm8_e_bd} to show 
%, and  $\norm{\bar{\partial}_t(u^{m+1/2}-\Pi_h^{k+2}(u^{m+1/2}))}^{2} \lesssim \norm{(u_t-\Pi_h^{k+2}(u_t))}^{2} _{L^\infty(t_{m},t_{m+1};V_h)} $ from Taylor series expansion 
\begin{align}
&\norm{\bar{\partial}_t(u^{m+1/2}-u_h^{m+1/2})}^{2}+\sum_{K\in {\cal T}_{h}}\norm{\nabla^{2}(u^{m+1/2}-{\cal R}_{K}(\widehat{u}_h ^{m+1/2}))}^{2}_{K}\lesssim {\Delta t}^2\norm{\widetilde{R}^0}^2+\Delta t  \sum_{n=1}^m\norm{R^n}^2\nonumber\\
%&\lesssim \norm{\bar{\partial}_t(u^{m+1/2}-\Pi_h^{k+2}(u^{m+1/2}))}^{2}+\sum_{K\in {\cal T}_{h}}\norm{\nabla^{2}(u^{m+1/2}-{\cal E}_{K}({u} ^{m+1/2}))}^{2}_{K}+\norm{\bar{\partial}_t e_h^{m+1/2}}^{2}+ \norm{\widehat{e}_h ^{m+1/2}}_{a,h}^{2}\nonumber\\
&\quad+\norm{\widehat{\theta}_h^{1/2}}_{a,h}^2+\norm{\widehat{\theta}_h^{1,{1}/{4}}}_{a,h}^2+\norm{\widehat{\theta}_h^{m,{1}/{4}}}_{a,h}^2+ \sum_{n=1}^{m-1}\Big[\int_{t_{n}}^{t_{n+2}}\norm{\widehat{\theta}_{ht}(t)}_{a,h}^2\dt+\int_{t_{n-1}}^{t_{n+1}}\norm{\widehat{\theta}_{ht}(t)}_{a,h}^2\dt \Big]\nonumber\\
& \quad+ \norm{\bar{\partial}_tu^{m+1/2}-\Pi_h^{k+2}(\bar{\partial}_tu^{m+1/2})}^{2}+\sum_{K\in {\cal T}_{h}}\norm{\nabla^{2}(u^{m+1/2}-{\cal E}_{K}({u} ^{m+1/2}))}^{2}_{K}
.\label{last}
\end{align}
The remaining parts of the proof bounds the terms on the right-hand of \eqref{last}. Recall the definition  of $\widetilde{R}^0$ (resp. $R^n$) from \eqref{initial_eqn} (resp. \eqref{erroreqn}) and then  apply Lemma~\ref{Sec_lma_1}(a) (resp. Lemma~\ref{Sec_lma_1}(c))  to obtain
\begin{align*}
    {\Delta t}^2\norm{\widetilde{R}^0}^2+\Delta t  \sum_{n=1}^m\norm{R^n}^2 \lesssim (\Delta t)^{4} \big(\norm{u_{ttt}}^2_{L^\infty(L^2(\Omega))}+ \norm{u_{tttt}}^2_{L^2(L^2(\Omega))}\big) .
\end{align*}
Note that  $\norm{\widehat{\theta}_h^{j,{1}/{4}}}_{a,h}^2\le \frac{1}{2}\big(\norm{\widehat{\theta}_h^{j+{1}/{2}}}_{a,h}^2+\norm{\widehat{\theta}_h^{j-{1}/{2}}}_{a,h}^2\big)$ follows from \eqref{P3 phi_c1 } and triangle inequality for  $j=1,m$. This and the analogous arguments to that in \eqref{theta-def}  reveal
\begin{align*}
    \norm{\widehat{\theta}_h^{1/2}}_{a,h}^2+\norm{\widehat{\theta}_h^{1,{1}/{4}}}_{a,h}^2+\norm{\widehat{\theta}_h^{m,{1}/{4}}}_{a,h}^2 \lesssim\begin{cases}
 h^{2}\norm{u}^2_{L^\infty(H^{3}(\mathcal{T}_h))}+h^{2(1+\beta)}\norm{u}^2_{L^\infty(H^{3+\beta}(\mathcal{T}_h))}\;&\text{ for }\; k = 0,\\[6pt]
h^{2(k+1)}\norm{u}^2_{L^\infty(H^{k+3}(\mathcal{T}_h))}\;&\text{ for }\; k \ge 1.
 \end{cases}
\end{align*}
An application of Theorem~\ref{lm1_in_dis_tr_11}(c) and the bound
$\sum_{j=1}^{m-1}\int_{t_{j}}^{t_{j+2}}\|u_t\|^2_{X}\,dt \lesssim \|u_t\|^2_{L^2(X)}$ 
for $X =H^{3}(\mathcal{T}_h),$ $ H^{3+\beta}(\mathcal{T}_h), $ $ H^{k+3}(\mathcal{T}_h)$ 
and $j=n-1,n$, yield
 \begin{align*}
   \sum_{n=1}^{m-1}\Big[\int_{t_{n}}^{t_{n+2}}\norm{\widehat{\theta}_{ht}(t)}_{a,h}^2\dt+\int_{t_{n-1}}^{t_{n+1}}\norm{\widehat{\theta}_{ht}(t)}_{a,h}^2\dt \Big]\lesssim \begin{cases}
 h^{2}\norm{u_t}^2_{L^2(H^{3}(\mathcal{T}_h))}+h^{2(1+\beta)}\norm{u_t}^2_{L^2(H^{3+\beta}(\mathcal{T}_h))}\;&\text{ for }\; k = 0,\\[6pt]
h^{2(k+1)}\norm{u_t}^2_{L^2(H^{k+3}(\mathcal{T}_h))}\;&\text{ for }\; k \ge 1.
 \end{cases}
\end{align*}
Analogous arguments to those in~\eqref{new_ll2} and~\eqref{new_1last}, with $1/2$ replaced by $m+1/2$, reveal
\begin{align*}
    &\norm{\bar{\partial}_tu^{m+1/2}-\Pi_h^{k+2}(\bar{\partial}_tu^{m+1/2})}^{2} \lesssim h^{2(k+1)} \norm{u_t}^2_{L^\infty({t_m},t_{m+1};H^{k+1}(\mathcal{T}_h))},\\[6pt]
  &  \sum_{K\in {\cal T}_{h}}\norm{\nabla^{2}(u^{m+1/2}-{\cal E}_{K}({u} ^{m+1/2}))}^{2}_{K}\lesssim h^{2(k+1)} \norm{u}^2_{L^\infty({t_m},t_{m+1};H^{k+3}(\mathcal{T}_h))}.
\end{align*}
%\footnote{is it 2(k+3) or 2(k+1) in the first inequality, can you write the conclusion statement without order notation please}
%Apply \eqref{ref_1_eh_bd_1} to bound first two terms on right-hand side of \eqref{last}, Theorem \ref{lm1_in_dis_tr_11}(c)  to fourth and fifth term, and Lemma~\ref{bd_elli_proj} for last term. This and Theorem \ref{lm1_in_dis_tr_11}$(d)$
The last five displayed inequalities in \eqref{last} lead to
 \begin{align*} \norm{\bar{\partial}_t(u^{m+1/2}-u_h^{m+1/2})}^2&+\sum_{K\in {\cal T}_{h}}\norm{\nabla^{2}(u^{m+1/2}-{\cal R}_{K}(\widehat{u}_h ^{m+1/2}))}^2_{K}
  \lesssim  \begin{cases}
 h^2+h^{2(1+\beta)}+(\Delta t)^{4}\;&\text{ for }\; k = 0,\\
h^{2(k+1)}+(\Delta t)^{4}\;&\text{ for }\; k \ge 1.
 \end{cases}  
\end{align*}
This concludes the proof.\qed

\medskip
\noindent
{The next theorem establishes optimal order $L^2$-error estimates for the Newmark scheme with HHO-A scheme. Though the proof is motivated from $L^2$-error approximation of Leapfrog scheme for wave equation in \cite{grote2009optimal} using discontinuous Galerkin scheme, there are significant differences due to different space and time approximations, with {\it no requirement of the CFL condition in this proof}. The main idea is to modify \eqref{erroreqn} by using \eqref{split_err_eq1} to simplify the error equation in terms of  ${\rho}_{h}^{n}$ rather than ${e}_{h}^{n}$ in energy error estimate in Theorem~\ref{P1 implicit_Th2}. Unlike Theorem~\ref{P1 implicit_Th2}, this modification leaves the residual terms on the right-hand side of this new error equation in $L^2$ inner product. The choice of a suitable test function in this error equation leads to a telescoping energy identity which involves initial error and  the  residual truncation and projection terms.
The assumption $\widehat{E}_{h}(u^{0})=\widehat{I}_{h}(u^{0})$  makes ${\rho}_{h}^{0}=0$ which becomes crucial to obtain optimal $L^2$ estimates for initial error term. The truncation and projection terms
are then bounded using Lemma~\ref{Ch2-d-gronwall} and Theorem~\ref{m1_H^2_bound}, respectively. The discrete Gronwall lemma then yields a uniform-in-time estimate.}

\medskip
\noindent
\uline{\textbf{Proof of Theorem \ref{err_L2_new}.}}
The proof is divided into six steps. Step~1  reformulates  the error equation and then a 
suitable choice of the test function leads to a key energy identity. Step~2 estimates the residual and projection error terms 
on the right-hand side. 
Step~3 combines the results of Steps~1--2 and yields a uniform in time bound in terms of time truncation and spatial error. Step~4 provides the 
initial error bounds. Step~5 assembles the full error  
through a  triangle inequality 
and Poincar\'{e} estimate. Step~6 
bounds each remaining term using the truncation error lemmas and 
$L^2$ projection estimates.

\medskip
\noindent
{{\text{{\it Step 1. (key identity)}}}}
Utilize $\bar{\partial}_{t}^{2}{e}_{h}^{n}=\bar{\partial}_{t}^{2}{\theta}_{h}^{n}+\bar{\partial}_{t}^{2}{\rho}^{n}_{h}$ and $\widehat{e}^{n,{1}/{4}}=\widehat{\theta}_{h}^{n,1/4}+\widehat{\rho}^{n,/14}_{h}$ from \eqref{split_err_eq1} in \eqref{erroreqn} to obtain the error equation
\begin{equation}
    (\bar{\partial}_{t}^{2}{\rho}_{h}^{n},{v}_{h})+a_{h}(\widehat{\rho}^{n,{1}/{4}},\widehat{v}_{h})=-(\bar{\partial}_{t}^{2}{\theta}_{h}^{n},{v}_{h})+(R^n,v_{h})\text{ with }{R}^n=\bar{\partial}_{t}^{2}{u}^{n}-u_{tt}^{n,{1}/{4}}.\label{erroreqn2} 
\end{equation}
Choose $\widehat{v}_h =\widehat{\rho}^{n+1}-\widehat{\rho}^{n-1}$ in \eqref{erroreqn2} and proceed similar as in Theorem~\ref{P2 full_stabilty} from \eqref{stab}-\eqref{stab4} to show
\begin{align}
    \|\bar{\partial}_{t}{\rho}_{h}^{m+1/2}\|^2+\|\widehat{\rho}_{h}^{m+1/2}\|^2_{a,h}=  \|\bar{\partial}_{t}{\rho}_{h}^{1/2}\|^2+\|\widehat{\rho}_{h}^{1/2}\|^2_{a,h}+\Delta t \sum_{n=1}^m(R^n-\bar{\partial}_{t}^{2}{\theta}_{h}^{n},\bar{\partial}_{t}{\rho}_{h}^{n+1/2}+\bar{\partial}_{t}{\rho}_{h}^{n-1/2}).\label{one-new}
\end{align}
{\text{\it Step 2. (bound on residuals)}}
Arguments analogous  to \eqref{f1} with $f^{n,1/4}$ replaced by $R^n-\bar{\partial}_{t}^{2}{\theta}_{h}^{n}$  and $u_h^n$ by ${\rho}_{h}^n$ lead to
\begin{align} 
&\Delta t\sum_{n=1}^m(R^n-\bar{\partial}_{t}^{2}{\theta}_{h}^{n},\bar{\partial}_{t}{\rho}_{h}^{n+1/2}+\bar{\partial}_{t}{\rho}_{h}^{n-1/2})
\nonumber\\
&\le \Delta t T \sum_{n=1}^m\norm{R^n-\bar{\partial}_{t}^{2}{\theta}_{h}^{n}}^2+\frac{1}{2}\norm{\bar{\partial}_{t}{\rho}_{h}^{{1}/{2}}}^{2}+\frac{1}{2}\norm{\bar{\partial}_{t}{\rho}_{h}^{m+{1}/{2}}}^{2}+\frac{\Delta t}{T} \sum_{n=1}^{m-1}\norm{\bar{\partial}_{t}{\rho}_{h}^{n+{1}/{2}}}^{2}\nonumber\\
&  \le2\Delta t T \sum_{n=1}^m\big(\norm{R^n}^2+\norm{\bar{\partial}_{t}^{2}{\theta}_{h}^{n}}^2\big)+\frac{1}{2}\norm{\bar{\partial}_{t}{\rho}_{h}^{{1}/{2}}}^{2}+\frac{1}{2}\norm{\bar{\partial}_{t}{\rho}_{h}^{m+{1}/{2}}}^{2}+\frac{\Delta t}{T} \sum_{n=1}^{m-1}\norm{\bar{\partial}_{t}{\rho}_{h}^{n+{1}/{2}}}^{2}.\label{two-new}
\end{align}
{{\text{{\it Step 3. (intermediate energy estimate)}}}}
A combination of 
\eqref{one-new}-\eqref{two-new} yields
\begin{align*}
    \frac{1}{2}\|\bar{\partial}_{t}{\rho}_{h}^{m+1/2}\|^2+\|\widehat{\rho}_{h}^{m+1/2}\|^2_{a,h}\le  \frac{3}{2}\|\bar{\partial}_{t}{\rho}_{h}^{1/2}\|^2+\|\widehat{\rho}_{h}^{1/2}\|^2_{a,h}+2\Delta t T \sum_{n=1}^m\big(\norm{R^n}^2+\norm{\bar{\partial}_{t}^{2}{\theta}_{h}^{n}}^2\big)+\frac{\Delta t}{T} \sum_{n=1}^{m-1}\norm{\bar{\partial}_{t}{\rho}_{h}^{n+{1}/{2}}}^{2}.
\end{align*}
An application of  Lemma~\ref{Ch2-d-gronwall} with Remark~\ref{rem2.6} shows
    \begin{align}
    \|\bar{\partial}_{t}{\rho}_{h}^{m+1/2}\|^2+\|\widehat{\rho}_{h}^{m+1/2}\|^2_{a,h}\lesssim  \|\bar{\partial}_{t}{\rho}_{h}^{1/2}\|^2+\|\widehat{\rho}_{h}^{1/2}\|^2_{a,h}+\Delta t  \sum_{n=1}^m\big(\norm{R^n}^2+\norm{\bar{\partial}_{t}^{2}{\theta}_{h}^{n}}^2\big).\label{three-new1}
\end{align}
{{\text{{\it Step 4. (initial error bounds)}}}} The identities $ \bar{\partial}_t e_h^{1/2}=\bar{\partial}_t{\theta}_{h}^{1/2}+\bar{\partial}_t{\rho}^{1/2}_{h}$ and $\widehat{e}^{1/2}=\widehat{\theta}_{h}^{1/2}+\widehat{\rho}^{1/2}_{h}$ from  \eqref{split_err_eq1} applied to  \eqref{initial_eqn} yield
\begin{align}
   {2}{(\Delta t)^{-1}}(\bar{\partial}_t \rho_h^{1/2},v_h )+ a_h( \widehat{\rho}_h ^{1/2},\widehat{v}_h)&=
(\widetilde{R}^0,v_h)-  {2}{(\Delta t)^{-1}}(\bar{\partial}_t \theta_h^{1/2},v_h )\text{ with }\widetilde{R}^0={2}{(\Delta t)^{-1}}(\bar{\partial}_t u^{1/2}-{u_1}) -u_{tt}^{1/2}.\nonumber
\end{align}
The assumption $\widehat{E}_{h}(u^{0})=\widehat{I}_{h}(u^{0})$ and the initial discretiztion $\widehat{u}_{h}^0=\widehat{I}_{h}(u^{0})$ lead to $\widehat{\rho}_h^{0}= \widehat{E}_{h}(u^{0})- \widehat{u}^{0}_{h}=0$. This, the choice of  test function $\widehat{v}_h = \widehat{\rho}_h^{1/2} $, and the identity $\widehat{\rho}_h^{1/2}= \tfrac{1}{2}\Delta t\,\bar{\partial}_t \rho_h^{1/2}$ (which follows from $\rho_h^{0}=0$) yield
\begin{align}
   \|\bar{\partial}_t \rho_h^{1/2}\|^2+\|\widehat{\rho}_h ^{1/2}\|_{a,h}^2&=
\frac{1}{2}\Delta t(\widetilde{R}^0,\bar{\partial}_t \rho_h^{1/2}) - (\bar{\partial}_t \theta_h^{1/2}, \bar{\partial}_t \rho_h^{1/2})\nonumber\\
&\le \frac{1}{2}\Delta t\|\widetilde{R}^0\|\|\bar{\partial}_t \rho_h^{1/2}\|+\|\bar{\partial}_t \theta_h^{1/2}\|\|\bar{\partial}_t \rho_h^{1/2}\|\label{new}
\end{align}
with a Cauchy--Schwarz inequality in the last step. An application of Young's inequality with $a=\frac{1}{2}\Delta t\|\widetilde{R}^0\|$, $b=\|\bar{\partial}_t \rho_h^{1/2}\|$, and $\epsilon=1/2$  (resp. $a=\|\bar{\partial}_t \theta_h^{1/2}\|$, $b=\|\bar{\partial}_t \rho_h^{1/2}\|$, and $\epsilon=1/2$) show 
$$\frac{1}{2}\Delta t\|\widetilde{R}^0\|\|\bar{\partial}_t \rho_h^{1/2}\| \le \frac{1}{4} (\Delta t)^2\|\widetilde{R}^0\|^2+\frac{1}{4}\|\bar{\partial}_t \rho_h^{1/2}\|^2\;\Big(\text{resp. }\|\bar{\partial}_t \theta_h^{1/2}\|\|\bar{\partial}_t \rho_h^{1/2}\| \le \|\bar{\partial}_t \theta_h^{1/2}\|^2+\frac{1}{4}\|\bar{\partial}_t \rho_h^{1/2}\|^2\Big).$$
This and \eqref{new} reveal
\begin{align}
  \frac{1}{2} \|\bar{\partial}_t \rho_h^{1/2}\|^2+\|\widehat{\rho}_h ^{1/2}\|_{a,h}^2 \le \frac{1}{4}(\Delta t)^2\|\widetilde{R}^0\|^2+\|\bar{\partial}_t \theta_h^{1/2}\|^2.\label{four-new}
   \end{align}
\text{\it Step 5. (assembling the errors)}
A combination of \eqref{three-new1} and \eqref{four-new} with a rearrangement of terms  reveals
 \begin{align}
    \|\bar{\partial}_{t}{\rho}_{h}^{m+1/2}\|^2+\|\widehat{\rho}_{h}^{m+1/2}\|^2_{a,h}\lesssim  (\Delta t)^2\|\widetilde{R}^0\|^2+\Delta t  \sum_{n=1}^m\norm{R^n}^2+\|\bar{\partial}_t \theta_h^{1/2}\|^2+\Delta t  \sum_{n=1}^m\norm{\bar{\partial}_{t}^{2}{\theta}_{h}^{n}}^2.\label{three-new}
\end{align}
From \eqref{temp_dis}, we have  $u^{m+1}-u_h^{m+1}=\big(u^{m+1/2}-u^{m+1/2}_h\big)+\frac{1}{2}\Delta t \big(\bar{\partial}_{t}u^{m+1/2}-\bar{\partial}_{t}u_h^{m+1/2}\big)$. This and  \eqref{split_err_eq1}, and some elementary algebra yield
\begin{align*}
u^{m+1}-u_h^{m+1}&=\big(u^{m+1/2}-\Pi^{k+2}_{h}(u^{m+1/2})+\theta_{h}^{m+1/2}+{\rho}_{h}^{m+1/2}\big)\\
&\quad+\frac{1}{2}\Delta t\big(\bar{\partial}_{t}u^{m+1/2}-\Pi^{k+2}_{h}(\bar{\partial}_{t}u^{m+1/2})+\bar{\partial}_{t}\theta_h^{m+1/2}+\bar{\partial}_{t}{\rho}_{h}^{m+1/2}\big).
\end{align*}
The triangle inequality  (applied thrice), $\Delta t \le T$, and $\|{\rho}_h^{m+1/2}\|\le C_P \|\widehat{\rho}_h^{m+1/2}\|_{a,h}$ from \eqref{poinacre} show
\begin{align*}
 \|u^{m+1}-u_h^{m+1}\|^2 
 &\lesssim  \|u^{m+1/2}-\Pi^{k+2}_{h}(u^{m+1/2})\|^2+\norm{\bar{\partial}_tu^{m+1/2}-\Pi_h^{k+2}(\bar{\partial}_tu^{m+1/2})}^{2}\\
 &\quad+\|\theta_h^{m+1/2}\|^2+\|\bar{\partial}_{t}\theta_h^{m+1/2}\|^2+\|\widehat{\rho}_h^{m+1/2}\|_{a,h}^2+\|\bar{\partial}_{t}\rho_h^{m+1/2}\|^2.
\end{align*}
This and estimates from \eqref{three-new} with rearrangement of terms leads to
\begin{align}
     \|u^{m+1}-u_h^{m+1}\|^2 &\lesssim (\Delta t)^2\|\widetilde{R}^0\|^2+\Delta t  \sum_{n=1}^m\norm{R^n}^2+\norm{\bar{\partial}_tu^{m+1/2}-\Pi_h^{k+2}(\bar{\partial}_tu^{m+1/2})}^{2}+ \|u^{m+1/2}-\Pi^{k+2}_{h}(u^{m+1/2})\|^2\nonumber\\
     &\qquad +\|\theta_h^{m+1/2}\|^2+\|\bar{\partial}_t \theta_h^{1/2}\|^2+\|\bar{\partial}_{t}\theta_h^{m+1/2}\|^2+\Delta t  \sum_{n=1}^m\norm{\bar{\partial}_{t}^{2}{\theta}_{h}^{n}}^2.\label{new_all}
\end{align}
{{\text{{\it Step 6. (consolidation)}}}}
Recall the  following bounds for first two terms on the right-hand side of last displayed inequality from Step 4 of Theorem~\ref{P1 implicit_Th2}:
\begin{align}
   &(\Delta t)^2\|\widetilde{R}^0\|^2+\Delta t  \sum_{n=1}^m\norm{R^n}^2 \lesssim (\Delta t)^4 \big(\|u_{ttt}\|^2_{L^\infty(L^2(\Omega))}+\|u_{tttt}\|^2_{L^2(L^2(\Omega))}\big) .\label{new_trunc}
\end{align}
  An application of   Theorem \ref{lm1_in_dis_tr_11}(d) bounds the third term on the right-hand side of \eqref{new_all} viz.
  \begin{align}
  & \norm{\bar{\partial}_tu^{m+1/2}-\Pi_h^{k+2}(\bar{\partial}_tu^{m+1/2})}^{2}\lesssim  h^{2(k+3)}\norm{u_t}^2_{L^\infty(H^{k+3}(\mathcal{T}_h))}. 
\end{align}
For the fourth term, 
we utilize the estimates from Lemma~\ref{lm1_in_dis_tr_11}(d) to obtain
\begin{align}
    &\|u^{m+1/2}-\Pi^{k+2}_{h}(u^{m+1/2})\|^2\lesssim  h^{2(k+3)}\norm{u}^2_{L^\infty(0,t_1;H^{k+3}(\mathcal{T}_h))}.\label{2ju}
\end{align}
 An application of  triangle inequality with~\eqref{P3 phi_c1 } shows $\|{\theta}_h^{m+1/2}\|^2 \le\frac{1}{2}\big(\|{\theta}_h^{m}\|^2+\|{\theta}_h^{m+1}\|^2\big)$. This in combination with \eqref{split_err_eq1} 
 and bound from  Theorem~\ref{m1_H^2_bound} leads to
\begin{align}
 \|\theta_h^{m+1/2}\|^2   \lesssim \begin{cases}
 h^{4}\big(\norm{u}^2_{L^\infty(H^{3}(\mathcal{T}_h))}+h^{2\beta}\norm{u}^2_{L^\infty(H^{3+\beta}(\mathcal{T}_h))}+\|\Delta^2u\|^2_{L^\infty(L^2(\Omega))}\big)\;&\text{ for }\; k = 0,\\[6pt]
h^{2(k+3)}\big(\norm{u}^2_{L^\infty(H^{k+3}(\mathcal{T}_h))}+\|\Delta^2u\|^2_{L^\infty(H^{k-1}(\mathcal{T}_h))}\big)\;&\text{ for }\; k \ge 1.
 \end{cases}
\end{align}
Note that  $\|\bar{\partial}_{t}\theta_{h}^{n+1/2} \|={(\Delta t)^{-1}}\|\int_{t_n}^{t_{n+1}} \theta_{ht}\dt\|\le {(\Delta t)^{-1/2}}\big(\int_{t_n}^{t_{n+1}} \|\theta_{ht}\|^2\dt\big)$ by Cauchy--Schwarz inequality.  This and  bounds from  Theorem~\ref{m1_H^2_bound}  leads to
\begin{align}
\|\bar{\partial}_t \theta_h^{1/2}\|^2+\|\bar{\partial}_{t}\theta_h^{m+1/2}\|^2  \lesssim \begin{cases}
h^{4}\big(\norm{u_t}^2_{L^\infty(H^{3}(\mathcal{T}_h))}+h^{2\beta}\norm{u_t}^2_{L^\infty(H^{3+\beta}(\mathcal{T}_h))}+\|\Delta^2u_t\|^2_{L^\infty(L^2(\Omega))}\big)&\text{ for } k = 0,\\[6pt]
h^{2(k+3)}\big(\norm{u_t}^2_{L^\infty(H^{k+3}(\mathcal{T}_h))}+\|\Delta^2u_t\|^2_{L^\infty(H^{k-1}(\mathcal{T}_h))}\big)&\text{ for } k \ge 1.
 \end{cases}
\end{align}
Apply  Taylor's series and a Cauchy--Schwarz inequality to show $\norm{\bar{\partial}_t^2 \theta^n_h}^2 \le \frac{2}{3}(\Delta t)^{-1}\int_{t_{n-1}}^{t_{n+1}} \norm{\theta_{htt}(t)}^2 \dt.$ This and  Theorem~\ref{m1_H^2_bound}  %(resp. $\norm{ \bar{\partial}_t^2 \chi^n}^2  \le \frac{2}{3}(\Delta t)^{-1}\int_{t_{n-1}}^{t_{n+1}} \norm{\chi_{tt}(t)}^2 \dt$)
reveal that 
\begin{align}\label{new_last}
    \Delta t  \sum_{n=1}^m\norm{\bar{\partial}_{t}^{2}{\theta}_{h}^{n}}^2 \lesssim \begin{cases}
 h^{4}\big(\norm{u_{tt}}^2_{L^2(H^{3}(\mathcal{T}_h))}+h^{2\beta}\norm{u_{tt}}^2_{L^2(H^{3+\beta}(\mathcal{T}_h))}+\|\Delta^2u_{tt}\|^2_{L^2(L^2(\Omega))}\big)\;&\text{ for }\; k = 0,\\[6pt]
h^{2(k+3)}\big(\norm{u_{tt}}^2_{L^2(H^{k+3}(\mathcal{T}_h))}+\|\Delta^2u_{tt}\|^2_{L^2(H^{k-1}(\mathcal{T}_h))}\big)\;&\text{ for }\; k \ge 1.
 \end{cases}
\end{align}
A combination of \eqref{new_all}-\eqref{new_last} leads to
$ \norm{u^{m+1}- u_h^{m+1} }^2 \lesssim 
\begin{cases}
h^ 4+ h^{2(2+\beta)}+(\Delta t)^4\quad\;\;\; &\text{ for } k=0,\\
h^{2(k+3)} +(\Delta t)^4  \quad &\text{ for }  k\ge1.
\end{cases}
$\\
This concludes the proof.\qed
%Application of discrete Gronwall's inequality, estimate for \textcolor{blue}{$\theta_h$ from approximation property lemma and ritz projection estimates} completes the proof.\qed
%\end{proof}
\section{Crank-Nicolson scheme}\label{Sec_cr_nic}
We first establish the stability of the Crank--Nicolson scheme in Subsection~\ref{sub_l2_cr}. Subsection~\ref{Subs_err_cr} 
derives optimal energy error estimates for the variable $u$ in  HHO-A and HHO-B schemes, and optimal $L^2$ error estimates 
for both $u$ and $p$ under the HHO-A scheme.
\subsection{Stability}\label{sub_l2_cr}

{The first theorem proves the stability of the  scheme ~\eqref{HHO_Crank_Nico_sc_1},  
which guarantees the existence, uniqueness, and dependency of the discrete solution on the given data to Crank-Nicolson scheme. The proof relies on the energy method combined with the discrete Gronwall lemma. The key idea is to choose a suitable  test functions in both equations of the coupled system so that, after subtraction, the resulting expression produces an energy balance relation. A summation over all time levels yields a global energy identity containing forcing contributions and initial energies. The forcing terms are then rewritten using summation-by-parts and estimated, leading to an inequality involving accumulated energies at previous time levels. Finally, an application of the discrete Gronwall lemma provides a uniform-in-time stability bound.}

\medskip \noindent
\uline{\textbf{Proof of Theorem \ref{Stab-1}.}}
Choose $q_h=\bar{\partial}_{t}p^{n+1/2}_{h}$ in \eqref{HHO_Crank_Nico_sc_1-a} and 
$\widehat{v}_h=\bar{\partial}_{t}\widehat{u}_{h}^{n+1/2}$ in \eqref{HHO_Crank_Nico_sc_1-b}, and subtract the resulting equations to obtain
\begin{align}
(p_h^{n+1/2},\bar{\partial}_{t}p^{n+1/2}_{h})+a_{h}(\widehat{u}_{h}^{n+1/2},\bar{\partial}_{t} \widehat {u}_{h}^{n+1/2})=(f^{n+1/2},\bar{\partial}_{t}u_{h}^{n+1/2}).\label{full}
\end{align}  
The definitions in \eqref{temp_dis}, linearity of $a_h(\bullet,\bullet)$, \eqref{norm}, and some elementary manipulations yield
$$
2 \Delta t(p_h^{n+1/2},\bar{\partial}_{t}p^{n+1/2}_{h})
=\|p_h^{n+1}\|^2-\|p_h^{n}\|^2
\quad \text{and} \quad
2 \Delta t\, a_{h}(\widehat{u}_{h}^{n+1/2},\bar{\partial}_{t} \widehat{u}_{h}^{n+1/2})
=\|\widehat{u}_{h}^{n+1}\|^2_{a,h}-\|\widehat{u}_{h}^{n}\|^2_{a,h}.
$$
This and a summation for $n=0,1,\cdots, m$ in \eqref{full} with $\Delta t\bar{\partial}_{t}u_{h}^{n+1/2}=u_{h}^{n+1}-u_{h}^{n}$ lead to the  energy balance equation as
\vspace{-0.3cm}
\begin{align}
\norm{p_h^{m+1}}^2+\norm{\widehat{u}_{h}^{m+1}}^2_{a,h}=\norm{p_h^{0}}^2+\norm{\widehat{u}_{h}^{0}}^2_{a,h}+2 \sum_{n=0}^{m}(f^{n+1/2},u_{h}^{n+1}-u_{h}^{n}) \text{ for any } 0 \le m \le N-1.\label{f-all}
\vspace{-0.3cm}
\end{align}
An application of \eqref{sum_by_parts2} and the identity  $2(f^{n+1/2}-f^{n-1/2})=f^{n+1}-f^{n-1}$ from \eqref{temp_dis} show
\begin{align}
   2 \sum_{n=0}^{m}(f^{n+1/2},u_{h}^{n+1}-u_{h}^{n}) %&=(f^{1/2},u_{h}^{1}-u_{h}^{0})+\sum_{n=1}^{m}(f^{n+1/2},u_{h}^{n+1}-u_{h}^{n}) \nonumber\\
   % & =(f^{m+1/2},u_h^{m+1})-(f^{1/2},u_{h}^{0})-\sum_{n=1}^{m}(f^{n+1/2}-f^{n-1/2},u_{h}^{n}) \nonumber\\
    &=2(f^{m+1/2},u_h^{m+1})-2(f^{1/2},u_{h}^{0})-\sum_{n=1}^{m}(f^{n+1}-f^{n-1},u_{h}^{n}) .\label{f-series}
\end{align}
Utilize a Cauchy--Schwarz inequality, \eqref{poinacre}, and Young's inequality  to obtain
\begin{align}
    (f^{m+1/2},u_h^{m+1}) &\le \norm{f^{m+1/2}}\norm{u_h^{m+1}} \le C_P\norm{f^{m+1/2}}\norm{\widehat{u}_h^{m+1}}_{a,h} \le {C_P^2}\norm{f^{m+1/2}}^2+\frac{1}{4}\norm{\widehat{u}_h^{m+1}}_{a,h}^2 ,\\
     -(f^{1/2},u_h^{0}) &\le \norm{f^{1/2}}\norm{u_h^{0}} \le C_P\norm{f^{1/2}}\norm{\widehat{u}_h^{0}}_{a,h} \le {C_P^2}\norm{f^{1/2}}^2+\frac{1}{4}\norm{\widehat{u}_h^{0}}_{a,h}^2, \\
    - (f^{n+1}-f^{n-1},u_{h}^{n})& \le C_P\norm{f^{n+1}-f^{n-1}}\norm{\widehat{u}_h^{n}}_{a,h} \le \frac{TC_P^2}{2\Delta t}\norm{f^{n+1}-f^{n-1}}^2+\frac{\Delta t}{2T}\norm{\widehat{u}_h^{n}}_{a,h}^2.\label{f-l2}
\end{align}
A combination of \eqref{f-series}-\eqref{f-l2} in \eqref{f-all} leads to
\begin{align}
\norm{p_h^{m+1}}^2+\frac{1}{2}\norm{\widehat{u}_{h}^{m+1}}^2_{a,h}&\le \norm{p_h^{0}}^2+\frac{3}{2}\norm{\widehat{u}_{h}^{0}}^2_{a,h}
+\frac{\Delta t }{2T}\sum_{n=1}^{m} \norm{\widehat{u}_h^{n}}_{a,h}^2\nonumber\\
&\quad+2{C_P^2}\Big(\norm{f^{1/2}}^2+ \norm{f^{m+1/2}}^2+\frac{T}{4\Delta t} \sum_{n=1}^{m}\norm{f^{n+1}-f^{n-1}}^2 \Big).\label{two-two}
\end{align}
Utilize  $\|f^{n+1}-f^{n-1}\|=\|\int_{t_{n-1}}^{t_{n+1}}{f_t}\dt\|\le \int_{t_{n-1}}^{t_{n+1}}\|{f_t}\|\dt$ and a Cauchy--Schwarz inequality $\int_{t_{n-1}}^{t_{n+1}}\|{f_t}\|\dt\le\sqrt{\Delta t}\big(\int_{t_{n-1}}^{t_{n+1}}\|{f_t}\|^2\dt\big)^{1/2}$ to show  
$
\sum_{n=1}^{m} \|f^{n+1}-f^{n-1}\|^2 
\le 2\Delta t\norm{f_t}^2_{L^2(L^2(\Omega))}
$. This and the bounds $\|f^{k+1/2}\|^2 \leq \|f\|^2_{L^\infty(L^2(\Omega))}$ for any $k =0,1,\ldots,m$ applied in \eqref{two-two} reveal
\begin{align*}
\norm{p_h^{m+1}}^2+\frac{1}{2}\norm{\widehat{u}_{h}^{m+1}}^2_{a,h}
&\le \norm{p_h^{0}}^2+\frac{3}{2}\norm{\widehat{u}_{h}^{0}}^2_{a,h}
+{C_P^2}\Big(4\norm{f}^2_{L^\infty(L^2(\Omega))}+T\norm{f_t}^2_{L^2(L^2(\Omega))}\Big)+\frac{\Delta t }{2T}\sum_{n=1}^{m} \norm{\widehat{u}_h^{n}}_{a,h}^2.
\end{align*}
An application of  Lemma~\ref{Ch2-d-gronwall} with Remark~\ref{rem2.6}  leads to
\begin{align*}
\norm{p_h^{m+1}}^2+\norm{\widehat{u}_{h}^{m+1}}^2_{a,h}
&\lesssim \norm{p_h^{0}}^2+\norm{\widehat{u}_{h}^{0}}^2_{a,h}
+\norm{f}^2_{L^\infty(L^2(\Omega))}+\norm{f_t}^2_{L^2(L^2(\Omega))}.
\end{align*}
This concludes the proof.
%\begin{align*}
%\norm{p_h^{m+1}}^2+\norm{\widehat{u}_{h}^{m+1}}^2_{a,h}&\lesssim  \norm{p_h^{0}}^2+\norm{\widehat{u}_{h}^{0}}^2_{a,h}
 %+\norm{f}^2_{L^\infty(L^2(\Omega))}+T \Delta t \sum_{n=1}^{m}\norm{\delta_t f^n}^2 .
%\end{align*}
%The proof is complete by Taylor's series and a Cauchy--Schwarz inequality applied to the last term in the above inequality (the proof is similar to that of Lemma~\ref{app-lem}) to show.
%$$\Delta t \sum_{n=1}^{m}\norm{\delta_t f^n}^2 \lesssim \norm{f_t}^2_{L^2(L^2(\Omega))}.$$
%Taylor's series expansion $\Delta t \sum_{n=1}^{m}\norm{\delta_t f^n}^2 \lesssim \norm{f_t}^2_{L^2(L^2(\Omega))}$  completes the proof. \qed
\subsection{Error estimates}\label{Subs_err_cr}
First, let us introduce the split  
\begin{equation}
p^{n}-{p}_h^n=\big(p^n-\Pi_{h}^{k+2}(p^{n})\big)+\big(\Pi_{h}^{k+2}(p^{n})-{p}_{h}^{n}\big):={\chi}_{h}^{n}+{\gamma}_{h}^{n} \label{split-p}.
\end{equation}
Also recall from~\eqref{split_err_eq1} that
$$\widehat{e}^{n}_{h}:=\widehat{I}^{k}_{h}(u^{n})-\widehat{u}^{n}_{h}=\widehat{\theta}^{n}_{h}+\widehat{\rho}^{n}_{h},\;\text{where}\; \widehat{\theta}^{n}_{h}=\widehat{I}^{k}_{h}(u^{n})-\widehat{E}_{h}(u^{n}) \: \text{and }
\widehat{\rho}^{n}_{h}=\widehat{E}_{h}(u^{n})-\widehat{u}_{h}^{n}.
$$
%%$$\widehat{\theta}_h^n=\widehat{I}^{k}_{h}(u^{n})-\widehat{E}_{h}(u^{n})\quad\text{and}\quad\widehat{\rho}_h^n=\widehat{E}_{h}(u^{n})-\widehat{u}_{h}^{n}.$$ 
\uline{\textbf{Error equations.}}
Utilize $\widehat{\rho}_h^{n+1/2}=\widehat{E}_{h}(u^{n+1/2})-\widehat{u}_{h}^{n+1/2}$ and \eqref{HHO_Crank_Nico_sc_1-a} to obtain
\begin{align*}
(\bar{\partial}_{t}{\rho}_{h}^{n+1/2},q_h)&=  (E_{h}(\bar{\partial}_{t}u^{n+1/2})-\bar{\partial}_{t}u_h^{n+1/2},q_h)=(E_{h}(\bar{\partial}_{t}u^{n+1/2})-p_h^{n+1/2},q_h)\quad\text{ for all }{q}_h\in \mathcal{P}^{k+2}(\mathcal{T}_h).
\end{align*}
An application of \eqref{pi-pro}  shows $(\Pi_{h}^{k+2}(\bar{\partial}_{t}u^{n+1/2}),q_h)-(\bar{\partial}_{t}u^{n+1/2},q_h)=0\text{ for all }{q}_h\in \mathcal{P}^{k+2}(\mathcal{T}_h)$. Utilize this, recall $\widehat{\theta}_h^{n+1/2}=\widehat{I}^{k}_{h}(u^{n+1/2})-\widehat{E}_{h}(u^{n+1/2})$,  and introduce {${\xi^{j+1}}:=\bar{\partial}_{t}u^{j+1/2}-u_{t}^{j+1/2}$} for any $j=0,1,2,\cdots, N,$ to show
\begin{align*}
   (\bar{\partial}_{t}{\rho}_{h}^{n+1/2},q_h)&=(E_{h}(\bar{\partial}_{t}u^{n+1/2})-\Pi_{h}^{k+2}(\bar{\partial}_{t}u^{n+1/2}),q_h)+(\bar{\partial}_{t} u^{n+1/2}-p_h^{n+1/2},q_h)\\
&=-(\bar{\partial}_{t} \theta_h^{n+1/2},q_h)+({\xi^{n+1}},q_h)+(u_t^{n+1/2}-p_{h}^{n+1/2},q_h)\quad\text{ for all }  {q}_h\in \mathcal{P}^{k+2}(\mathcal{T}_h).
\end{align*}
%where ${\xi^{n+1}}:=\bar{\partial}_{t}u^{n+1/2}-u_{t}^{n+1/2}$ 
Note that $u_t(t)=p(t)$ for all $t \in [0,T]$, hence $(u_t^{n+1/2},q_h)=(p^{n+1/2},q_h).$ Apply \eqref{pi-pro} with the choice of  polynomial degree $s=k+2$ to show $ (p^{n+1/2},q_h)=(\Pi^{k+2}_h(p^{n+1/2}),q_h).$ Thus, the definition of  $\gamma_h^n$ from \eqref{split-p} leads to {\it first error equation} of the system \eqref{HHO_Crank_Nico_sc_1}, viz.
% $$(u_t^{n+1/2}-p_{h}^{n+1/2},q_h)=(p^{n+1/2}-p_{h}^{n+1/2},q_h)=(\gamma_h^{n+1/2},q_h).$$
 %This together with the definitions of $\theta_h^{n}$ and $\gamma_h^{n}$ in the above displayed equation yields
\begin{align}
(\bar{\partial}_{t}{\rho}_{h}^{n+1/2},q_h)
&=-(\bar{\partial}_{t} \theta_h^{n+1/2},q_h) +({\xi^{n+1}},q_h)
+(\gamma_h^{n+1/2},q_h)\quad\text{ for all }  {q}_h\in \mathcal{P}^{k+2}(\mathcal{T}_h).
\label{zeta}
\end{align}
Utilize definitions ${\gamma}_{h}^{n}=\Pi_{h}^{k+2}(p^{n})-{p}_{h}^{n}$ (resp. ${\rho}_{h}^{n}=\widehat{E}_{h}(u^{n})-\widehat{u}_{h}^{n}$) from \eqref{split-p} (resp. \eqref{split_err_eq1}),  the  projection $a_h(\widehat{E}_{h}({u}^{n+1/2}),\widehat{v}_{h})=(\Delta^2 u^{n+1/2},v_h)$ from \eqref{proj_op_1}, and that $(\widehat{u}^{n+1}_h,p^{n+1})$ satisfy \eqref{HHO_Crank_Nico_sc_1-b} to arrive at
\begin{align}
    (\bar{\partial}_{t}{\gamma}_{h}^{n+1/2},{v}_{h})+a_{h}(\widehat{\rho}_{h}^{n+1/2},\widehat{v}_{h})&=(\Pi_h^{k+1}(\bar{\partial}_{t}p^{n+1/2})-\bar{\partial}_{t}p_h^{n+1/2},v_{h})+a_h(\widehat{E}_{h}({u}^{n+1/2}),\widehat{v}_{h})-a_h(\widehat{u}_h^{n+1/2},\widehat{v}_{h})\nonumber\\
    &=(\Pi_h^{k+1}(\bar{\partial}_{t}p^{n+1/2}),v_{h})-(f^{n+1/2},v_{h})+(\Delta^2 u^{n+1/2},v_h)\nonumber \quad\text{for all}\quad \widehat {v}_h \in \widehat{V}_{h}^{0}.
\end{align}
Note that $(u,p)$ satisfies \eqref{P1 strong_form_1}, hence $(-f^{n+1/2}+\Delta^2 u^{n+1/2},v_h)=-(p_t^{n+1/2},v_h)$. Also, utilize $(\Pi_{h}^{k+2}(p^{n}),v_{h})=(p^{n},v_{h})$ from \eqref{pi-pro} (with the choice of  polynomial degree $s=k+2$) and introduce
{$\eta^{j+1}:=\bar{\partial}_{t}p^{j+1/2}-p_{t}^{j+1/2}$}  for any $j=0,1,2,\cdots, N,$ to obtain the {\it second error equation} of the system \eqref{HHO_Crank_Nico_sc_1} as
\begin{eqnarray}
   (\bar{\partial}_{t}{\gamma}_{h}^{n+1/2},{v}_{h})+a_{h}(\widehat{\rho}_{h}^{n+1/2},\widehat{v}_{h}) =(\bar{\partial}_{t}p^{n+1/2}-p_{t}^{n+1/2},v_{h})=(\eta^{n+1},v_{h}) \quad\text{for all}\quad \widehat {v}_h \in \widehat{V}_{h}^{0}.\label{sec_lm41_new}
\end{eqnarray}
{The next theorem aims to prove the energy estimates for \(u\) and the \(L^2\) estimates for the  variable \(p\)  for Crank-Nicolson scheme. The proof relies on derivation of a coupled system of error equations by using error decompositions for both variables, the discrete scheme, and the fact that the PDE system in \eqref{P1 strong_form_1} is satisfied at each time $t_n$. The key idea is to choose suitable test functions in these error equations so that the coupled terms cancel leading to a discrete energy identity involving only the approximation errors and truncation terms. A summation-by-parts is then utilized to rewrite the right-hand side in a form that separates temporal discretization errors from spatial approximation errors. These contributions are estimated independently using truncation error bounds and projection estimates. Finally, the discrete Gronwall lemma yields stability bounds for the auxiliary error components, from which the energy estimate for $u$ and the $L^2$-estimate for $p$ follow through the error decomposition and approximation properties.}

\medskip
\noindent
\uline{\textbf{Proof of Theorem \ref{err_thm}.}} The proof proceeds in seven steps. A key identity is derived in  \textit{Step~1} by choosing appropriate test functions in \eqref{zeta} (resp.  \eqref{sec_lm41_new}), and the right-hand side of the key identity is modified 
in \textit{Step~2} to decompose it into temporal and spatial discretization errors. Bounds for 
the truncation error terms and the spatial approximation terms are 
obtained in \textit{Step~3} and \textit{Step~4}, respectively. These 
are then combined in \textit{Step~5} to derive the energy estimates 
for $u$. Finally, the suboptimal $L^2$-norm estimate for $p$ is established in 
\textit{Step~6} for both HHO-A and HHO-B schemes. The $L^2$-norm estimate for $p$  in  HHO-A variant is derived in \textit{Step~7}.

\medskip
\noindent
{\it Step 1. (key identity)}
Choose $q_h=\bar{\partial}_{t}\gamma_h^{n+1/2}$ (resp. $\widehat{v}_h=\bar{\partial}_{t}\widehat{\rho}_{h}^{n+1/2}$) in \eqref{zeta} (resp.  \eqref{sec_lm41_new}) as test functions  to deduce
\begin{align*}
&(\bar{\partial}_{t}{\rho}_{h}^{n+1/2},\bar{\partial}_{t}\gamma_h^{n+1/2})
=-(\bar{\partial}_{t} \theta_h^{n+1/2},\bar{\partial}_{t}\gamma_h^{n+1/2}) +(\xi^{n+1},\bar{\partial}_{t}\gamma_h^{n+1/2})
+(\gamma_h^{n+1/2},\bar{\partial}_{t}\gamma_h^{n+1/2}),\\
&\big( \text{resp. }(\bar{\partial}_{t}{\gamma}_{h}^{n+1/2},\bar{\partial}_{t}{\rho}_{h}^{n+1/2})+a_{h}(\widehat{\rho}_{h}^{n+{1}/{2}},\bar{\partial}_{t}{\widehat{\rho}}_{h}^{n+1/2})=(\eta^{n+1},\bar{\partial}_{t}{\rho}_{h}^{n+1/2})\big).
%\label{4_1_eq_1}
\end{align*}
Subtract the last two displayed identities and rearrange the terms (with cancellation of term $(\bar{\partial}_{t}{\rho}_{h}^{n+1/2},\bar{\partial}_{t}\gamma_h^{n+1/2})$) to obtain
\begin{align*}
    (\gamma_h^{n+1/2},\bar{\partial}_{t}\gamma_h^{n+1/2})+a_{h}(\widehat{\rho}_{h}^{n+{1}/{2}},\bar{\partial}_{t}{\widehat{\rho}}_{h}^{n+1/2})&=(\bar{\partial}_{t} \theta_h^{n+1/2},\bar{\partial}_{t}\gamma_h^{n+1/2}) -(\xi^{n+1},\bar{\partial}_{t}\gamma_h^{n+1/2})+(\eta^{n+1},\bar{\partial}_{t}{\rho}_{h}^{n+1/2}) .
\end{align*}
Utilize $2\Delta t(\gamma_h^{n+1/2},\bar{\partial}_{t}\gamma_h^{n+1/2})= \norm{{\gamma}_{h}^{n+1}}^2- \norm{{\gamma}_{h}^{n}}^2$ and $2\Delta ta_{h}(\widehat{\rho}_{h}^{n+{1}/{2}},\bar{\partial}_{t}{\widehat{\rho}}_{h}^{n+1/2})=\norm{\widehat{\rho}_{h}^{n+1}}_{a,h}^2- \norm{\widehat{\rho}_{h}^{n}}_{a,h}^2$ from definitions \eqref{temp_dis} and \eqref{norm} to show
\begin{align*}
    \norm{{\gamma}_{h}^{n+1}}^2- \norm{{\gamma}_{h}^{n}}^2+  \norm{\widehat{\rho}_{h}^{n+1}}_{a,h}^2- \norm{\widehat{\rho}_{h}^{n}}_{a,h}^2&=2\Delta t \big((\bar{\partial}_{t} \theta_h^{n+1/2},\bar{\partial}_{t}\gamma_h^{n+1/2}) -(\xi^{n+1},\bar{\partial}_{t}\gamma_h^{n+1/2})+(\eta^{n+1},\bar{\partial}_{t}{\rho}_{h}^{n+1/2}) \big).
\end{align*}
A summation from $n=0$ to $n=m$ for any $0\le m \le N-1$, and $\gamma_h^{0}=0,\widehat{\rho}_{h}^{0}=0$ (by initialization  $\widehat{u}_{h}^{0}=\widehat{E}_{h}(u_{0})$ and  ${p}^{0}_{h}=\Pi_{h}^{k+2}(p^0)$) lead to
\begin{align}
    \norm{{\gamma}_{h}^{m+1}}^2  +\norm{\widehat{\rho}_{h}^{m+1}}_{a,h}^2&=  2\sum_{n=0}^m\big((\bar{\partial}_{t} \theta_h^{n+1/2},\gamma_h^{n+1}-\gamma_h^{n}) -(\xi^{n+1},\gamma_h^{n+1}-\gamma_h^{n}) +(\eta^{n+1},{\rho}_{h}^{n+1}-{\rho}_{h}^{n})\big). \label{cr}
\end{align}
{\it Step 2. (modification to RHS of \eqref{cr})}
Utilize  \eqref{sum_by_parts2},  the  observation that $\gamma_h^{0}=0$ and $\bar{\partial}_{t} \theta_h^{n+1/2}-\bar{\partial}_{t} \theta_h^{n-1/2}=\Delta t \bar{\partial}^2_{t} \theta_h^{n} $ from \eqref{P3 phi_c2} to infer
\begin{align}
   \sum_{n=0}^m(\bar{\partial}_{t} \theta_h^{n+1/2},\gamma_h^{n+1}-\gamma_h^{n}) &=(\bar{\partial}_{t} \theta_h^{m+1/2},\gamma_h^{m+1}) -(\bar{\partial}_{t} \theta_h^{1/2},\gamma_h^{0})  -\sum_{n=1}^m (\bar{\partial}_{t} \theta_h^{n+1/2}-\bar{\partial}_{t} \theta_h^{n-1/2},\gamma_h^{n})\nonumber\\
   &=  (\bar{\partial}_{t} \theta_h^{m+1/2},\gamma_h^{m+1})  -\Delta t \sum_{n=1}^m (\bar{\partial}^2_{t} \theta_h^{n},\gamma_h^{n}).\label{cr-p1}
\end{align}
Apply a Cauchy--Schwarz  and  Young's inequalities to obtain 
\begin{align}
     (&\bar{\partial}_{t} \theta_h^{m+1/2},\gamma_h^{m+1})\le 2\norm{\bar{\partial}_{t} \theta_h^{m+1/2}}^2+\frac{1}{8}\norm{\gamma_h^{m+1}}^2\; \text{and}\;-(\bar{\partial}^2_{t} 
      \theta_h^{n},\gamma_h^{n}) \le 2{T}\norm{\bar{\partial}^2_{t} \theta_h^{n}}^2+\frac{1}{8T}\norm{\gamma_h^{n}}^2.\label{cc3}
      \end{align}
      A combination of \eqref{cr-p1} and \eqref{cc3} leads to
      \begin{align}
          2 \sum_{n=0}^m(\bar{\partial}_{t} \theta_h^{n+1/2},\gamma_h^{n+1}-\gamma_h^{n}) \le 4 \norm{\bar{\partial}_{t} \theta_h^{m+1/2}}^2+\frac{1}{4}\norm{\gamma_h^{m+1}}^2+  4T\Delta t\sum_{n=1}^m\norm{\bar{\partial}^2_{t} \theta_h^{n}}^2+\frac{\Delta t}{4T}  \sum_{n=1}^m\norm{\gamma_h^{n}}^2.\label{l1}
      \end{align}
      Analogous arguments for the second term on the right-hand side of \eqref{cr} show that
      \begin{align}
          -2\sum_{n=0}^m(\xi^{n+1},\gamma_h^{n+1}-\gamma_h^{n})\le  4\norm{\xi^{m+1}}^2+\frac{1}{4}\norm{\gamma_h^{m+1}}^2+  4T\Delta t\sum_{n=1}^m\norm{\bar{\partial}_{t} \xi^{n+1/2}}^2+\frac{\Delta t}{4T}  \sum_{n=1}^m\norm{\gamma_h^{n}}^2.
\end{align}
Argue similar as in \eqref{cr-p1} and utilize $\eta^{n+1}-\eta^{n}=\Delta t\bar{\partial}_t \eta^{n+1/2}$ from \eqref{P3 phi_c2} to infer
\begin{align*}
   \sum_{n=0}^m(\eta^{n+1},{\rho}_{h}^{n+1}-{\rho}_{h}^{n+1}) =(\eta^{n+1},\rho_h^{m+1})  -\Delta t \sum_{n=1}^m (\bar{\partial}_t \eta^{n+1/2},\rho_h^{n}).
\end{align*}
A Cauchy--Schwarz inequality, \eqref{poinacre}, and a Young's inequality reveal
\begin{align*}
    &(\eta^{n+1},\rho_h^{m+1})  \le \|\eta^{n+1}\|\|\rho_h^{m+1} \|\le C_{P}\|\eta^{n+1}\|\|\widehat{\rho}_h^{m+1} \|_{a,h} \le C_{P}^2\norm{\eta^{m+1}}^2+\frac{1}{4}\norm{\widehat{\rho}_h^{m+1}}^2_{a,h},\\
    & (\bar{\partial}_t \eta^{n+1/2},\rho_h^{n}) \le \|\bar{\partial}_t \eta^{n+1/2}\|\|\rho_h^{n}\| \le C_{P}\|\bar{\partial}_t \eta^{n+1/2}\|\|\widehat{\rho}_h^{n}\|_{a,h} \le TC_{P}^2\|\bar{\partial}_t \eta^{n+1/2}\|^2+\frac{1}{4T}\|\widehat{\rho}_h^{n}\|_{a,h}^2.
\end{align*}
A combination of last three identities 
lead to
\begin{align}
 2\sum_{n=0}^m(\eta^{n+1},{\rho}_{h}^{n+1}-{\rho}_{h}^{n+1})\le  2C_{P}^2\norm{\eta^{m+1}}^2+\frac{1}{2}\norm{\widehat{\rho}_h^{m+1}}^2_{a,h}+ 2\Delta t TC_{P}^2\sum_{n=1}^m\norm{\bar{\partial}_{t} \eta^{n+1/2}}^2+\frac{\Delta t}{2T}  \sum_{n=1}^m\norm{\widehat{\rho}_h^{n}}^2_{a,h}.\label{l2}
\end{align}
A combination of \eqref{l1}-\eqref{l2} in \eqref{cr} and rearrangement of terms results in
\begin{align*}
   \frac{1}{2}\big( \norm{{\gamma}_{h}^{m+1}}^2  +\norm{\widehat{\rho}_{h}^{m+1}}_{a,h}^2\big) &\le  4\big(\norm{ \xi^{m+1}}^2+{T}{\Delta t}\sum_{n=1}^m \norm{ \bar{\partial}_t \xi^{n+1/2}}^2 \big)+ 2{C_{P}^2}\big(\norm{\eta^{m+1}}^2+T{\Delta t}\sum_{n=1}^m \norm{\bar{\partial}_t \eta^{n+1/2}}^2\big)\\
    &\quad+4\big(\norm{\bar{\partial}_{t} \theta_h^{m+1/2}}^2+{T}{\Delta t}\sum_{n=1}^m \norm{\bar{\partial}^2_{t} \theta_h^{n}}^2\big)+ \frac{\Delta t}{ 2T} \sum_{n=1}^m \big(\norm{\gamma_h^{n}}^2+ \norm{\widehat{\rho}_h^{n}}^2_{a,h}\big).
\end{align*}
An application of  Lemma~\ref{Ch2-d-gronwall} with Remark~\ref{rem2.6} reveals
\begin{align}
    &\norm{{\gamma}_{h}^{m+1}}^2  +\norm{\widehat{\rho}_{h}^{m+1}}_{a,h}^2\nonumber\\
    &\lesssim     \norm{ \xi^{m+1}}^2 +{\Delta t}\sum_{n=1}^m \norm{ \bar{\partial}_t \xi^{n+1/2}}^2+ \norm{\eta^{m+1}}^2+{\Delta t}\sum_{n=1}^m \norm{\bar{\partial}_t \eta^{n+1/2}}^2+\norm{\bar{\partial}_{t} \theta_h^{m+1/2}}^2+{\Delta t}\sum_{n=1}^m \norm{\bar{\partial}^2_{t} \theta_h^{n}}^2.\label{et1}
     \end{align}
{Argue as in  \eqref{ref_2_bd_ah}, with $1/2$ replaced by $m+1$, to derive
  \begin{align*}
    \sum_{K\in {\cal T}_{h}}\norm{\nabla^{2}(u^{m+1}-{\cal R}_{K}(\widehat{u}_h ^{m+1}))}^{2}_{K} &\le 2\big(\norm{\widehat{e}_{h}^{m+1}}_{a,h}^2+\sum_{K\in {\cal T}_{h}}\norm{\nabla^{2}(u^{m+1}-{\cal E}_{K}({u} ^{m+1}))}^{2}_{K}\big)\\
    &\le 2\big(2\norm{\widehat{\rho}_{h}^{m+1}}_{a,h}^2+2\norm{\widehat{\theta}_{h}^{m+1}}_{a,h}^2+\sum_{K\in {\cal T}_{h}}\norm{\nabla^{2}(u^{m+1}-{\cal E}_{K}({u} ^{m+1}))}^{2}_{K}\big)
\end{align*}
with  \eqref{split_err_eq1} and a  triangle  inequality in the last step.
}

\medskip
\noindent
Utilize the bounds for the term $\norm{\widehat{\rho}_{h}^{m+1}}_{a,h}^2$ from 
\eqref{et1} and regroup the terms to show
 \begin{align}
    &\sum_{K\in {\cal T}_{h}}\norm{\nabla^{2}(u^{m+1}-{\cal R}_{K}(\widehat{u}_h ^{m+1}))}^{2}_{K} 
    \lesssim  \norm{ \xi^{m+1}}^2 + \norm{\eta^{m+1}}^2 +{\Delta t}\sum_{n=1}^m \norm{ \bar{\partial}_t \xi^{n+1/2}}^2+{\Delta t}\sum_{n=1}^m \norm{\bar{\partial}_t \eta^{n+1/2}}^2\nonumber\\
    &\qquad\qquad\quad+\norm{\widehat{\theta}_{h}^{m+1}}^2+\norm{\bar{\partial}_{t} \theta_h^{m+1/2}}^2+{\Delta t}\sum_{n=1}^m \norm{\bar{\partial}^2_{t} \theta_h^{n}}^2+ \sum_{K\in {\cal T}_{h}}\norm{\nabla^{2}(u^{m+1}-{\cal E}_{K}({u} ^{m+1}))}^{2}_{K}.\label{p1}
    \end{align}
     {\it Step 3. (bound for truncation terms)}
     Recall that $\xi^{m+1}:=\bar{\partial}_{t}u^{m+1/2}-u_{t}^{m+1/2}$ (resp. $\eta^{m+1}=\bar{\partial}_{t}p^{m+1/2}-p_{t}^{m+1/2})$ and utilize the bounds from Lemma~\ref{Sec_lma_1}(b) to obtain 
\begin{equation}
   \norm{ \xi^{m+1}}^2 \lesssim (\Delta t)^4\norm{u_{ttt}}^2_{L^2 (L^2(\Omega))}\; \big(\text{resp. } \norm{ \eta^{m+1}}^2 \lesssim (\Delta t)^4\norm{p_{ttt}}^2_{L^2 (L^2(\Omega))}\big).\label{new15}
\end{equation}
Utilize  \eqref{temp_dis}  to derive  the identity
\begin{align*}
    \bar{\partial}_t \xi^{n+1/2} =\frac{1}{\Delta t} \big( \xi^{n+1}-\xi^{n}\big)=\frac{1}{\Delta t} \big(\bar{\partial}_tu^{n+1/2}-u_t^{n+1/2}-\bar{\partial}_tu^{n-1/2}+u_t^{n-1/2}\big)= \bar{\partial}^2_t u^n-\delta_t u^n_t.
\end{align*}
An application  of \eqref{temp_dis} and  Taylor series reveal that 
\begin{align*}
    \bar{\partial}^2_t u^n &= \frac{u^{n+1}-2u^n+u^{n-1}}{(\Delta t)^2}=u^n_{tt}+\frac{1}{24 (\Delta t)^2}\Big[\int_{t_n}^{t_{n+1}} (t_{n+1}-t)^3u_{tttt}(t)\dt+\int_{t_n}^{t_{n-1}} (t_{n}-t)^3u_{tttt}(t)\dt \Big],\\
    \delta_t u^n_t&= \frac{u^{n+1}-u^{n-1}}{2\Delta t}= u_{tt}^n+\frac{1}{12 \Delta t}\Big[\int_{t_n}^{t_{n+1}} (t_{n+1}-t)^2u_{tttt}(t)\dt+\int_{t_n}^{t_{n-1}} (t_{n}-t)^2u_{tttt}(t)\dt \Big].
\end{align*}
A combination of  last three displayed expressions followed by  $(t_{n}-t) \le 2 \Delta t$ and $(t_{n+1}-t) \le2 \Delta t $ for $t \in [t_{n-1}, t_{n+1}]$, shows
\begin{align*}
    \norm{ \bar{\partial}_t \xi^{n+1/2}}=\|\bar{\partial}^2_t u^n-\delta_t u^n_t\| \lesssim  \Delta t\int_{t_{n-1}}^{t_{n+1}} \norm{u_{tttt}(t)}\dt \lesssim (\Delta t)^{3/2}\Big(\int_{t_{n-1}}^{t_{n+1}} \norm{u_{tttt}(t)}^2\dt\Big)^{1/2}
\end{align*}
with a Cauchy--Schwarz inequality $\int_{t_{n-1}}^{t_{n+1}} \norm{u_{tttt}(t)}\dt  \lesssim \sqrt{\Delta t}\big(\int_{t_{n-1}}^{t_{n+1}} \norm{u_{tttt}(t)}^2\dt\big)^{1/2}$in the last step. 

\medskip \noindent
This (resp. similar arguments replacing $u$ with $p$ in the last inequality) and some basic manipulations show
\begin{equation}
    \Delta t\sum_{n=1}^m \norm{ \bar{\partial}_t \xi^{n+1/2}}^2 \lesssim (\Delta t)^4\norm{u_{tttt}}^2_{L^2 (L^2(\Omega))}\; \big( \text{resp. } \Delta t\sum_{n=1}^m \norm{ \bar{\partial}_t \eta^{n+1/2}}^2 \lesssim (\Delta t)^4\norm{p_{tttt}}^2_{L^2 (L^2(\Omega))}\big). \label{u-ttttt}
\end{equation}
%A combination of \eqref{et1}-\eqref{u-ttttt} reveals
%\begin{align}
  %  \norm{{\gamma}_{h}^{m+1}}^2  +\norm{\widehat{\rho}_{h}^{m+1}}_{a,h}^2 &\lesssim    \norm{\bar{\partial}_{t} \theta_h^{m+1/2}}^2+{\Delta t}\sum_{n=1}^m \norm{\bar{\partial}^2_{t} \theta_h^{n}}^2 +(\Delta t)^4.\label{full-cn}
  %   \end{align}
     {\it Step 4. (bound for spatial approximation terms)}
First note that $\|\bar{\partial}_{t} \widehat{\theta}_h^{m+1/2}\|_{a,h} \le  \Delta t ^{-1}\int_{t_m}^{t_{m+1}}\|\widehat{\theta}_{ht}(t)\|_{a,h} \dt. %\big(\text{resp. } \bar{\partial}_{t} \chi_h^{m+1/2} = \Delta t ^{-1}\int_{t_n}^{t_{n+1}}\chi_{ht}(t) \dt\big)
$ An application of Taylor's series and a Cauchy--Schwarz inequality  show $\norm{\bar{\partial}_t^2 \widehat{\theta}^n_h}^2_{a,h} \le \frac{2}{3}(\Delta t)^{-1}\int_{t_{n-1}}^{t_{n+1}} \norm{\widehat{\theta}_{htt}(t)}^2_{a,h} \dt.$ 
An application of \eqref{poinacre} and the 
last two  displayed inequalities  followed by bounds from Theorem~\ref{lm1_in_dis_tr_11}(c)  %(resp. $\norm{ \bar{\partial}_t^2 \chi^n}^2  \le \frac{2}{3}(\Delta t)^{-1}\int_{t_{n-1}}^{t_{n+1}} \norm{\chi_{tt}(t)}^2 \dt$)
reveal
\begin{align}
&\norm{\widehat{\theta}_{h}^{m+1}}^2+\norm{\bar{\partial}_{t} \theta_h^{m+1/2}}^2+{\Delta t}\sum_{n=1}^m \norm{\bar{\partial}^2_{t} \theta_h^{n}}^2  \le C_P^2\big( \norm{\widehat{\theta}_{h}^{m+1}}_{a,h}^2+\norm{\bar{\partial}_{t} \widehat{\theta}_h^{m+1/2}}^2_{a,h}+{\Delta t}\sum_{n=1}^m \norm{\bar{\partial}^2_{t} \widehat{\theta}_h^{n}}^2_{a,h}\big)\nonumber  \\
&\lesssim \begin{cases}
 h^{2}\big(\norm{u}^2_{L^\infty(H^{3}(\mathcal{T}_h))}+\norm{u_t}^2_{L^\infty(H^{3}(\mathcal{T}_h))}+\norm{u_{tt}}^2_{L^2(H^{3}(\mathcal{T}_h))}\big)\\\qquad+h^{2(1+\beta)}\big(\norm{u}^2_{L^\infty(H^{3+\beta}(\mathcal{T}_h))}+\norm{u_t}^2_{L^\infty(H^{3+\beta}(\mathcal{T}_h))}+\norm{u_{tt}}^2_{L^2(H^{3+\beta}(\mathcal{T}_h))}\big)\;&\text{ for } k = 0,\\[6pt]
h^{2(k+1)}\big(\norm{u}^2_{L^\infty(H^{k+3}(\mathcal{T}_h))}+\norm{u_t}^2_{L^\infty(H^{k+3}(\mathcal{T}_h))}+\norm{u_{tt}}^2_{L^2(H^{k+3}(\mathcal{T}_h))}\big)\;&\text{ for } k \ge 1.
 \end{cases}\label{p3}
\end{align}
%\begin{align}  
%\Delta t \sum_{n=1}^m \norm{\bar{\partial}_t^2 \theta^n_h}^2\le \frac{2}{3} \sum_{n=1}^m\int_{t_{n-1}}^{t_{n}} \norm{\theta_{htt}(t)}^2 \dt +\frac{2}{3} \sum_{n=1}^m\int_{t_{n}}^{t_{n+1}} \norm{\theta_{htt}(t)}^2 \dt \le \frac{4}{3} \norm{\theta_{htt}(t)}^2_{L^2 (L^2(\Omega))}.\label{p3-ist}
%\big(
%\text{resp. }  \Delta t \sum_{n=1}^m \norm{\bar{\partial}_t^2 \chi^n}^2\le \frac{2}{3} \norm{\chi_{tt}(t)}^2_{L^2 (L^2(\Omega))} \big).
%\end{align}
%The similar arguments also leads to the bounds 
%\begin{equation}
 %\norm{ \theta_{ht}^{m+1/2}}^2 \lesssim \norm{\theta_{ht}}^2_{L^\infty (L^2(\Omega))}, \text{ and } \Delta t\sum_{n=1}^m \norm{ \delta_t \theta^n_{ht}}^2\lesssim \norm{\theta_{tt}(t)}^2_{L^2 (L^2(\Omega))} .
%\end{equation}
  Argue similar as in \eqref{new_1last} with $1/2$ replaced by $m+1$ to verify
\begin{align}
    \sum_{K\in {\cal T}_{h}}\norm{\nabla^{2}(u^{m+1}-{\cal E}_{K}({u} ^{m+1}))}^{2}_{K}\lesssim h^{2(k+1)} \norm{u}^2_{L^\infty({t_m},t_{m+1};H^{k+3}(\mathcal{T}_h))}\lesssim h^{2(k+1)} \norm{u}^2_{L^\infty(H^{k+3}(\mathcal{T}_h))}.\label{p2}
\end{align}
{\it Step 5. (energy estimates for $u$)}
A combination of \eqref{p1}-\eqref{p2} leads to
 \begin{align}
    \sum_{K\in {\cal T}_{h}}\norm{\nabla^{2}(u^{m+1}-{\cal R}_{K}(\widehat{u}_h ^{m+1}))}^{2}_{K}\lesssim
 \begin{cases}
 h^{2}+h^{2(1+\beta)}+(\Delta t)^{4}\;&\text{ for }\; k = 0,\\
h^{2(k+1)}+(\Delta t)^{4}\;&\text{ for }\; k \ge 1.
 \end{cases}
    \end{align}
%A combination of  \eqref{full-cn}-\eqref{u-ttttt} and  the approximation properties of $\theta_h(t)$ from  Theorem~\eqref{lm1_in_dis_tr_11}(c) completes the proof.
{\it Step 6. ($L^2$ estimate for $p$)}
 First, we apply \eqref{split-p} and a  triangle inequality  to verify
$
 \norm{p^{m+1/2}-p_h^{m+1/2}}^{2} \le 2\big( \norm{{\chi}_{h}^{m+1}}^{2}+\norm{{\gamma}_{h}^{m+1}}^{2}\big)$. 
This and bound for $\norm{{\gamma}_{h}^{m+1}}^{2}$ from \eqref{et1} with $\norm{\bar{\partial}_{t} \theta_h^{m+1/2}}^2+{\Delta t}\sum_{n=1}^m \norm{\bar{\partial}^2_{t} \theta_h^{n}}^2 \lesssim \norm{\bar{\partial}_{t} \widehat{\theta}_h^{m+1/2}}_{a,h}^2+{\Delta t}\sum_{n=1}^m \norm{\bar{\partial}^2_{t} \widehat{\theta}_h^{n}}^2_{a,h}$ from \eqref{poinacre} yields 
\begin{align}
    \norm{p^{m+1/2}-p_h^{m+1/2}}^{2} &\lesssim    \norm{{\chi}_{h}^{m+1}}^{2}+  \norm{ \xi^{m+1}}^2 +{\Delta t}\sum_{n=1}^m \norm{ \bar{\partial}_t \xi^{n+1/2}}^2+ \norm{\eta^{m+1}}^2\nonumber\\
    &\quad +{\Delta t}\sum_{n=1}^m \norm{\bar{\partial}_t \eta^{n+1/2}}^2+\norm{\bar{\partial}_{t} \widehat{\theta}_h^{m+1/2}}_{a,h}^2+{\Delta t}\sum_{n=1}^m \norm{\bar{\partial}^2_{t} \widehat{\theta}_h^{n}}^2_{a,h}.\label{pener}
\end{align}
 From \eqref{split-p} and Theorem~\ref{lm1_in_dis_tr_11}(d), we have
 \begin{equation}
     \norm{{\chi}_{h}^{m+1}}^{2}=\|p^{m+1}-\Pi_{h}^{k+2}(p^{m+1})\|^2\lesssim h^{2(k+3)}\|p\|^2_{L^\infty(H^{k+3}(\mathcal{T}_h))}.\label{new13}
 \end{equation}
This and the bounds form \eqref{new15}-\eqref{p3} lead to
\begin{align*}
    \norm{p^{m+1/2}-p_h^{m+1/2}}^{2} & \lesssim
 \begin{cases}
 h^{2}+h^{2(1+\beta)}+(\Delta t)^{4}\;&\text{ for }\; k = 0,\\
h^{2(k+1)}+(\Delta t)^{4}\;&\text{ for }\; k \ge 1.
 \end{cases}
\end{align*}
{\it Step 7. ($L^2$-estimates for $p$ in HHO-A scheme)}
 Argue similar to Step 4 to show $$\|\bar{\partial}_{t} \theta_h^{m+1/2}\|\le  \Delta t ^{-1}\int_{t_m}^{t_{m+1}}\|\theta_{ht}(t)\| \dt \text{ and } \norm{\bar{\partial}_t^2 \theta^n_h}^2 \le \frac{2}{3}(\Delta t)^{-1}\int_{t_{n-1}}^{t_{n+1}} \norm{\theta_{htt}(t)}^2 \dt.$$
 These two inequalities and  Theorem~\ref{m1_H^2_bound} yield
%  %(resp. $\norm{ \bar{\partial}_t^2 \chi^n}^2  \le \frac{2}{3}(\Delta t)^{-1}\int_{t_{n-1}}^{t_{n+1}} \norm{\chi_{tt}(t)}^2 \dt$)
%reveals that Apply similar arguments as in \eqref{p4} followed by theorem~\ref{m1_H^2_bound} as 
\begin{align*}
  \norm{\bar{\partial}_{t} \theta_h^{m+1/2}}^2+{\Delta t}\sum_{n=1}^m \norm{\bar{\partial}^2_{t} \theta_h^{n}}^2 \lesssim \begin{cases}
 h^{4}\big(\norm{u_t}^2_{L^\infty(H^{3}(\mathcal{T}_h))}+h^{2{\beta}}\norm{u_t}^2_{L^\infty(H^{3+\beta}(\mathcal{T}_h))}+\norm{\Delta^2 u_t}^2_{L^\infty(L^2(\Omega))}\big)\\
 + h^{4}\big(\norm{u_{tt}}^2_{L^2(H^{3}(\mathcal{T}_h))}+h^{2{\beta}}\norm{u_{tt}}^2_{L^2(H^{3+\beta}(\mathcal{T}_h))}+\norm{\Delta^2 u_{tt}}^2_{L^2(L^2(\Omega))}\big)\;&\text{for } k = 0,\\
h^{2(k+3)}\big(\norm{u_t}^2_{L^\infty(H^{k+3}(\mathcal{T}_h))}+\norm{\Delta^2 u_t}^2_{L^\infty(H^{k-1}(\mathcal{T}_h))}\big)\\[8pt]
+h^{2(k+3)}\big(\norm{u_{tt}}^2_{L^2(H^{k+3}(\mathcal{T}_h))}+\norm{\Delta ^2 u_{tt}}^2_{L^2(H^{k-1}(\mathcal{T}_h))}\big)\;&\text{for } k \ge 1.
 \end{cases}
\end{align*}
This, the bounds from \eqref{new15}, \eqref{u-ttttt}, and  \eqref{new13} in \eqref{pener} lead to
$$\displaystyle \norm{ p^{m+1}- p_h^{m+1} }^2 \lesssim \begin{cases}
    h^4+ h^{2(2+\beta)}+(\Delta t)^4 \quad\;\;\; &\text{ for } k=0,\\
    h^{2(k+3)} +(\Delta t)^4\quad &\text{ for }  k\ge1.
\end{cases}$$
This concludes the proof of the theorem.
\qed

\medskip
\noindent
{The next theorem establishes optimal order $L^2$-error estimates for the primal variable $u$ and is motivated from analysis of \cite{deka}. However, the reference {\it does not  discuss the convergence} analysis for new variable $p$ in the mentioned reference. The proof is inspired by energy arguments commonly used for second-order time discretizations, but requires several additional ingredients due to the HHO spatial approximation and the structure of the coupled residual terms. The main idea is to first reformulate the error equation into a form involving only the auxiliary error component ${\rho}_h^n$, so that all residual contributions appear solely through $L^2$ inner products. An auxiliary sequence $\widehat{\sigma}_h^n$ defined through a discrete time accumulation of ${\rho}_h^{n+1/2}$, is then introduced to convert the resulting relation into a telescoping energy identity. This construction plays a crucial role in avoiding direct estimates of accumulated error terms and allows the error contributions to be separated into spatial approximation and temporal truncation components. These contributions are then bounded independently using projection estimates, truncation error bounds, and the discrete Gronwall lemma, leading to a uniform-in-time optimal order estimate.}

\medskip
\noindent
\uline{\textbf{Proof of Theorem \ref{err_lma}}.} 
The proof is organized into six steps. The error equation \eqref{zeta} is first 
modified in \textit{Step~1} into a suitable form, from which a key 
inequality is derived in \textit{Step~2}. The right-hand side 
of  key 
inequality is then decomposed into spatial approximation and temporal truncation terms in \textit{Step~3}, 
followed by the space approximation bounds in \textit{Step~4} and 
the truncation error bounds in \textit{Step~5}. The proof is 
completed in \textit{Step~6} by consolidating these estimates.

\medskip
\noindent
{{\text{{\it Step 1. 
 (a combined error equation)}}}} First observe that ${\gamma}_{h}^{1/2}=\frac{\Delta t}{2}\bar{\partial}_{t}{\gamma}_{h}^{1/2}$ holds from initialization $\gamma_h^0=0$. Also for $1 \leq n\leq N-1$, the definitions in \eqref{temp_dis} reveal that ${\gamma}_{h}^{n+1/2} ={ \frac{\Delta t}{2} \sum_{k=0}^{n}}\bar{\partial}_{t}{\gamma}_{h}^{k+1/2}+{\frac{\Delta t}{2} \sum_{k=0}^{n-1}}\bar{\partial}_{t}{\gamma}_{h}^{k+1/2}$.
 A combination of these two identities with the property that
 ${v}_h\in \mathcal{P}^{k+2}(\mathcal{T}_h)$
 %for any $\widehat{v}_{h} = (v_{h}, v_{\mathcal{F}_{h}}, \zeta_{\mathcal{F}_{h}})\in \widehat{V}_{h}^{0}$
 allows us to  rewrite \eqref{zeta} as: for all $\widehat{v}_{h} = (v_{h}, v_{\mathcal{F}_{h}}, \zeta_{\mathcal{F}_{h}})\in \widehat{V}_{h}^{0},$
\begin{align}\label{Rho_eq_2}
(\bar{\partial}_{t} {\rho}_{h}^{n+1/2},v_h)=
\begin{cases}
(-\bar{\partial}_{t}{\theta}_{h}^{1/2}+\xi^{1}+\frac{\Delta t}{2}\bar{\partial}_{t}{\gamma}_{h}^{1/2},v_h)\;\; \qquad \qquad \qquad \qquad \qquad  \quad \;\;\text{for } n=0,\\
(-\bar{\partial}_{t}{\theta}_{h}^{n+1/2}+\xi^{n+1}+{\displaystyle \frac{\Delta t}{2} \sum_{k=0}^{n}}\bar{\partial}_{t}{\gamma}_{h}^{k+1/2}+{\displaystyle \frac{\Delta t}{2} \sum_{k=0}^{n-1}}\bar{\partial}_{t}{\gamma}_{h}^{k+1/2},v_h)\;\; \text{for } 1 \leq n\leq N-1.
\end{cases}
\end{align}
Recall   \eqref{sec_lm41_new} for $n=0$ (resp. for $k=0,1,2,\cdots, n$) to obtain 
\begin{align}
&(\bar{\partial}_{t}{\gamma}_{h}^{1/2},v_h)=
-a_h(\widehat{\rho}_{h}^{1/{2}},\widehat{v}_{h})+(\eta^1,v_h). \\
& (\bar{\partial}_{t}{\gamma}_{h}^{k+1/2},{v}_{h})+a_{h}(\widehat{\rho}_{h}^{k+1/2},\widehat{v}_{h})=(\eta^{k+1},v_{h}) \nonumber 
\end{align}
%The equation, follows from \eqref{sec_lm41_new}, holds for  any $ k=0,1,2,\cdots,n$.
Take a summation over $k=0$ to $n$ and $k=0$ to $n-1$ in the last equation and then sum up the resultant equations to obtain: 
\begin{align}
& (\sum_{k=0}^{n}\bar{\partial}_{t}{\gamma}_{h}^{k+1/2}+\sum_{k=0}^{n-1}\bar{\partial}_{t}{\gamma}_{h}^{k+1/2},v_h)=-a_h(\sum_{k=0}^{n}\widehat{\rho}_{h}^{k+\frac{1}{2}}+\sum_{k=0}^{n-1}\widehat{\rho}_{h}^{k+\frac{1}{2}},\widehat{v}_{h})+( \eta^{n+1}+2{\sum_{k=0}^{n-1}}\eta^{k+1},v_h).
\end{align}
Introduce a sequence $\{\widehat{\sigma}_{h}^{n}\}_{n=0}^{N-1}$ as:    $\widehat{\sigma}_{h}^{0}=0$ and $\widehat{\sigma}_{h}^{n}=\Delta t \sum_{k=0}^{n-1}\widehat{\rho}_{h}^{k+\frac{1}{2}}$ for $1\leq n\leq N-1$ . This and the definitions in \eqref{temp_dis} lead to
\begin{align}\label{Sec_sigm_3}
\widehat{\sigma}_{h}^{n+1/2}=\frac{1}{2}\big(\widehat{\sigma}_{h}^{n+1}+\widehat{\sigma}_{h}^{n}\big)=
\begin{cases}
  \frac{\Delta t}{2} \widehat{\rho}_{h}^{1/{2}}&\mbox{for}\; n=0,\\
  {\displaystyle \frac{\Delta t}{2}\sum_{k=0}^{n}}\widehat{\rho}_{h}^{k+\frac{1}{2}}+{\displaystyle \frac{\Delta t}{2}\sum_{k=0}^{n-1}}\widehat{\rho}_{h}^{k+\frac{1}{2}}&\mbox{for}\; 1\leq n\leq N-1.
\end{cases}
\end{align}
We also define a sequence $\{\epsilon_{h}^{n}\}_{n=0}^{N-1}$ by
\begin{align}
\epsilon_{h}^{n}=
\begin{cases}-\bar{\partial}_{t}\theta_{h}^{1/2}+\xi^{1}+\frac{\Delta t}{2} \eta^{1}&\mbox{for}\; n=0,\\
-\bar{\partial}_{t}\theta_{h}^{n+1/2}+\xi^{n+1}+\frac{\Delta t}{2} \eta^{n+1}+{\displaystyle \Delta t\sum_{k=0}^{n-1}}\eta^{k+1}&\mbox{for } 1\leq n\leq N-1.
\end{cases} \label{epsilon}
\end{align}
A combination of \eqref{Rho_eq_2}-\eqref{epsilon} and elementary manipulations reveal
\begin{equation}
(\bar{\partial}_{t}\rho_{h}^{n+1/2},v_{h})+a_{h}(\widehat{\sigma}_{h}^{n+1/2},\widehat{v}_{h})=(\epsilon_{h}^{n},v_{h})\;\; \text{ for all } 0\leq n\leq N-1 \text{ and } \widehat {v}_h \in \widehat{V}_{h}^{0}.\label{eq_8_ne_1}
\end{equation}
{{\text{{\it Step 2. 
 (key inequality)}}}}  Multiply \eqref{eq_8_ne_1} by  $2\Delta t$ then select $\widehat{v}_{h}=\widehat{\rho}_h^{n+1/2}$ as a test function and  utilize the identity $\widehat{\rho}_h^{n+1/2}=\bar{\partial}_{t} \widehat{\sigma}_{h}^{n+1/2}$ from  definition of $\widehat{\sigma}_h^n$,  to obtain
  \begin{equation*}
2\Delta t(\bar{\partial}_{t}\rho_{h}^{n+1/2},\rho_h^{n+1/2})+2\Delta ta_{h}(\widehat{\sigma}_{h}^{n+1/2},\bar{\partial}_{t} \widehat{\sigma}_{h}^{n+1/2})=2\Delta t(\epsilon_{h}^{n},\rho_h^{n+1/2})\;\; \text{ for all } 0\leq n\leq N-1 .
\end{equation*}
Invoke the identities $2\Delta t(\rho_h^{n+1/2},\bar{\partial}_{t}\rho_h^{n+1/2})= \|\rho_{h}^{n+1}\|^2- \|\rho_{h}^{n}\|^2$ and $2\Delta t\, a_{h}(\widehat{\sigma}_{h}^{n+{1}/{2}},\bar{\partial}_{t}{\widehat{\sigma}}_{h}^{n+1/2})=\|\widehat{\sigma}_{h}^{n+1}\|_{a,h}^2- \|\widehat{\sigma}_{h}^{n}\|_{a,h}^2$ from \eqref{temp_dis} and \eqref{norm} to show
 $$\norm{\rho_{h}^{n+1}}^{2}-\norm{\rho_{h}^{n}}^{2}+\norm{\widehat{\sigma}_{h}^{n+1}}^{2}_{a,{h}}-\norm{\widehat{\sigma}_{h}^{n}}^{2}_{a,{h}}=2 \Delta t(\epsilon_{h}^{n},\rho_{h}^{n+1/2}).$$
 A summation for $n=0,1,\cdots,m$, the observations $\widehat{\sigma}_{h}^{0}=0$ from  definition, and $\gamma_h^0=0$ from initialization yield
\begin{equation}
\norm{\rho_{h}^{m+1}}^{2}+\norm{\widehat{\sigma}_{h}^{m+1}}^{2}_{a,{h}}=2 \Delta t\sum_{n=0}^{m}(\epsilon_{h}^{n},\rho_{h}^{n+1/2})\quad\text{ for any $0\leq m \leq N-1$}.\nonumber
\end{equation}
Ignore the non-negative term $\norm{\widehat{\sigma}_{h}^{m+1}}^{2}_{a,{h}}$ on the left-hand side then apply  a Cauchy--Schwarz inequality $(\epsilon_{h}^{n},\rho_{h}^{n+1/2}) $ $\le \|\epsilon_{h}^{n}\|\|\rho_{h}^{n+1/2}\|$ followed by  Young's inequality $\|\epsilon_{h}^{n}\|\|\rho_{h}^{n+1/2}\| \le 2T\|\epsilon_{h}^{n}\|^2+\frac{1}{2T} \|\rho_{h}^{n+1/2}\|^2$ on the right-hand side to establish
%Next, using the facts that $\widehat{\sigma}_{h}^{0}=0$  along with Cauchy--Schwarz and Young's inequality in the above equation leads to
\begin{align}
\norm{\rho_{h}^{m+1}}^{2}&\leq4T\Delta t\sum_{n=0}^{m}\norm{\epsilon_{h}^{n}}^{2}+ \frac{\Delta t}{T}\sum_{n=0}^{m}\norm{\rho_{h}^{n+1/2}}^{2}\nonumber.
\end{align}
Utilize \eqref{P3 phi_c1 } and a triangle inequality to derive
$$ \frac{\Delta t}{T}\sum_{n=0}^{m}\norm{\rho_{h}^{n+1/2}}^{2} \le  \frac{\Delta t}{2T}\sum_{n=0}^{m}\big(\norm{\rho_{h}^{n+1}}^{2}+\norm{\rho_{h}^{n}}^{2}\big) \le  \frac{\Delta t}{2T}\norm{\rho_{h}^{m+1}}^{2}+ \frac{\Delta t}{T}\sum_{n=0}^{m}\norm{\rho_{h}^{n}}^{2} \le  \frac{1}{2}\norm{\rho_{h}^{m+1}}^{2}+\frac{\Delta t}{T}\sum_{n=0}^{m}\norm{\rho_{h}^{n}}^{2}$$
with $\Delta t \le T$ in the last step.
A combination of last two displayed inequalities leads to
\begin{align*}
\frac{1}{2}\norm{\rho_{h}^{m+1}}^{2}&\leq4T\Delta t\sum_{n=0}^{m}\norm{\epsilon_{h}^{n}}^{2}+\frac{\Delta t}{T}\sum_{n=0}^{m}\norm{\rho_{h}^{n}}^{2}.
\end{align*}
An application of Lemma~\ref{Ch2-d-gronwall} with Remark~\ref{rem2.6} reveals
\begin{equation}
\norm{\rho_{h}^{m+1}}^{2}\lesssim \Delta t\sum_{n=0}^{m}\norm{\epsilon_{h}^{n}}^{2}.\label{Sec_4_rho_1}
\end{equation}
%with the constant absorbed in "$\lesssim$" depending of $T$ linearly and Euler number $e.$\\
{{\text{{\it Step 3. 
 (control for RHS of \eqref{Sec_4_rho_1})}}}} Utilize \eqref{epsilon},  triangle inequality, and some basic manipulations to obtain 
\begin{align}
    \Delta t\sum_{n=0}^{m}\norm{\epsilon_{h}^{n}}^{2} &\lesssim 
\Delta t \sum_{n=0}^{m}\norm{\bar{\partial}_{t}\theta_{h}^{n+1/2}}^2+\Delta t \sum_{n=0}^{m}\norm{\xi^{n+1}}^2+{(\Delta t)^3}\sum_{n=0}^{m}\sum_{k=0}^{n}\norm{\eta^{k+1}}^2 \text{ for any $0\leq m \leq N-1$}.\label{ist-in-last}
\end{align}
%The application of 
%\begin{align*}
%\epsilon_{h}^{n}&=-\bar{\partial}_{t}\theta_{h}^{n+1/2}+\xi^{n+1}+\frac{\Delta t}{2}\eta^{n+1}+\Delta t\sum_{k=0}^{n-1}\eta^{k+\frac{1}{2}}=
%-\bar{\partial}_{t}\theta_{h}^{n}+\xi^{n+1}+\frac{\Delta t}{2}\eta^{n+1}+\frac{\Delta t}{2}\sum_{k=0}^{n-1}\eta^{k+{1}}+\frac{\Delta t}{2}\sum_{k=0}^{n-1}\eta^{k}\nonumber\\
%&=-\bar{\partial}_{t}\theta_{h}^{n+1/2}+\xi^{n+1}+\frac{\Delta t}{2}\sum_{k=1}^{n}\eta^{k}+\frac{\Delta t}{2}\sum_{k=0}^{n}\eta^{k}.
%\end{align*}
%For $0 \le n \le M-1$, an application of Cauchy--Schwarz and triangle inequalities leads to
%\begin{align*}
%\norm{\epsilon_{h}^{n}}^{2}&\le \norm{\bar{\partial}_{t}\theta_{h}^{n+1/2}}^{2}+\norm{\xi^{n+1}}^{2}+\frac{(\Delta t)^{2}}{4}\norm{\sum_{k=1}^{n}\eta^{k}}^{2}+\frac{(\Delta t)^{2}}{4}\norm{\sum_{k=0}^{n}\eta^{k}}^{2}\nonumber\\
%&\leq \norm{\bar{\partial}_{t}\theta_{h}^{n+1/2}}^{2}+\norm{\xi^{n+1}}^{2}+\frac{(\Delta t)^{2}}{2}M\sum_{k=0}^{M-1}\norm{\eta^{k}}^{2}\leq \norm{\bar{\partial}_{t}\theta_{h}^{n+1/2}}^{2}+\norm{\xi^{n+1}}^{2}+\frac{T}{2}\big(\Delta t \sum_{k=0}^{M-1}\norm{\eta^{k}}^{2}\big),
%\end{align*}
%where the fact $M \Delta t= T$ in the last step. Sum it for $n=0,\cdots,M-1$ and multiply both sides by $\Delta t$ to reveal
%\begin{align}
%\Delta t\sum_{n=0}^{M-1}\norm{\epsilon_{h}^{n}}^{2} \leq \Delta t\sum_{n=0}^{M-1}\norm{\bar{\partial}_{t}\theta_{h}^{n+1/2}}^{2}+\Delta t\sum_{n=0}^{M-1}\norm{\xi^{n+1}}^{2}+\Delta t\sum_{n=0}^{M-1}\frac{T}{2}\big(\Delta t\norm{\sum_{k=0}^{M-1}\eta^{k}}^{2}\big).\label{Sec_ip_9_y}
%\end{align}
{{\text{{\it Step 4. (spatial bounds)}}}}
 Note that  $\|\bar{\partial}_{t}\theta_{h}^{n+1/2} \|={(\Delta t)^{-1}}\|\int_{t_n}^{t_{n+1}} \theta_{ht}\dt\|\le {(\Delta t)^{-1/2}}\big(\int_{t_n}^{t_{n+1}} \|\theta_{ht}\|^2\dt\big)$ by a Cauchy--Schwarz inequality.  This and the  bounds from  Theorem~\ref{m1_H^2_bound} lead to
 \begin{align}
 {\Delta t}\sum_{n=0}^m \norm{\bar{\partial}_{t} \theta_h^{n+1/2}}^2 \lesssim \begin{cases}
  h^{4}\big(\norm{u_{t}}^2_{L^2(H^{3}(\mathcal{T}_h))}+h^{2{\beta}}\norm{u_{t}}^2_{L^2(H^{3+\beta}(\mathcal{T}_h))}+\norm{\Delta^2 u_{t}}^2_{L^2(L^2(\Omega))}\big)\;\text{ for }\; k = 0,\\[6pt]
h^{2(k+3)}\big(\norm{u_{t}}^2_{L^2(H^{k+3}(\mathcal{T}_h))}+\norm{\Delta ^2 u_{t}}^2_{L^2(H^{k-1}(\mathcal{T}_h))}\big)\;\text{ for }\; k \ge 1.
 \end{cases}\label{con}
\end{align}
{{\text{{\it Step 5. (truncation error bounds)}}}}
It follows from the definitions of $\xi^{n+1}$ and $\eta^{n+1}$, and their bounds from  Lemma \eqref{Sec_lma_1}(b) that
\begin{align}
&\Delta t\sum_{n=0}^{m}\norm{\xi^{n+1}}^{2}\lesssim (\Delta t)^{4} \norm{u_{ttt}}^{2}_{L^{2}(L^{2}(\Omega))},\\
&(\Delta t)^2 \sum_{n=0}^{m}\big(\Delta t\sum_{k=0}^{n}\norm{\eta^{k+1}}^{2}\big)\lesssim (m+1)(\Delta t)^{6} \norm{p_{ttt}}^{2}_{L^{2}(L^{2}(\Omega))} \lesssim (\Delta t)^{4} \norm{p_{ttt}}^{2}_{L^{2}(L^{2}(\Omega))}   \label{xi}
\end{align}
with $m\Delta t \le T$ and $ \Delta t\le T$ in the last inequality.

\medskip\noindent
%The last  equation together with the fact that $\Delta t\sum_{n=1}^{m} \norm{u_{tttt}}^{2}_{L^{2}(L^{2}(\Omega))} \le T \norm{u_{tttt}}^{2}_{L^{2}(L^{2}(\Omega))}  $ and $\Delta t \le T$ leads to
%\begin{equation}
%  \frac{(\Delta t)^3}{4}\norm{\eta^1}^2+ 
  %  \Delta t \sum_{n=1}^{m}\big(\frac{(\Delta t)^2}{4}\norm{\eta^{n+1}}^2+(\Delta t)^2 \sum_{k=0}^{n-1}\norm{\eta^{k+1}}^2\big)\lesssim (\Delta t)^{4} \norm{u_{tttt}}^{2}_{L^{2}(L^{2}(\Omega))}.\label{last-eqn}
%\end{equation}
{\it Step 6. (consolidation)}
A combination of \eqref{Sec_4_rho_1}-\eqref{xi}  shows
\begin{align*}
  \norm{\rho_{h}^{m+1}}^2   \lesssim 
\begin{cases}
    h^4+ h^{4(2+\beta)}+(\Delta t)^4 & \text{ for } k=0,\\
    h^{2(k+3)} +(\Delta t)^4 & \text{ for }  k\ge1.
\end{cases}
\end{align*}
Also note that Theorem~\ref{m1_H^2_bound} yields
 \begin{align*}
 \norm{\theta_h^{m+1}}^2 \lesssim \begin{cases}
  h^{4}\big(\norm{u}^2_{L^\infty(H^{3}(\mathcal{T}_h))}+h^{2{\beta}}\norm{u}^2_{L^\infty(H^{3+\beta}(\mathcal{T}_h))}+\norm{\Delta^2 u}^2_{L^\infty(L^2(\Omega))}\big)\;\text{ for }\; k = 0,\\[6pt]
h^{2(k+3)}\big(\norm{u}^2_{L^\infty(H^{k+3}(\mathcal{T}_h))}+\norm{\Delta ^2 u}^2_{L^\infty(H^{k-1}(\mathcal{T}_h))}\big)\;\text{ for }\; k \ge 1.
 \end{cases}
\end{align*}
An application of \eqref{split_err_eq1} with \eqref{pi-pro} followed by a  triangle inequality  shows $\norm{u^{m+1}- u_h^{m+1}}^2 \lesssim \|u^{m+1}-\Pi^{k+2}_{h}(u^{m+1})\|^2+\norm{\rho_h^{m+1}}^2+ \norm{\theta_h^{m+1}}^2$. This, the bound from \eqref{2ju} for the first and estimates from the last two displayed inequalities for the last two terms on the right-hand side, concludes the proof.
%\end{equation}
%\begin{equation}

%A combination of \eqref{Sec_ip_9_y}-\eqref{xi} in \eqref{Sec_4_rho_1} along with the observation $\norm{\zeta^{n+1/2}}^2_{L^2(V_h)} \le \max_{0\leq n\leq M}\norm{\widehat{\sigma}_{h}^{l}}^{2}_{a,{h}} $ and the application of triangle inequality
%\begin{align*}
% \max_{0\leq n\leq M}\norm{u^{m+1/2}- U^{m+1/2} }^2 &+ \norm{ u^{m+1/2}- U^{m+1/2} }^2_{L^2(V_h)} \\
 %&\le \max_{0\leq n\leq M}\norm{\zeta^{n+1}}^2+\max_{0\leq n\leq M}\norm{\rho^{n+1}}^2  + \norm{ \zeta^{n+1/2}}^2_{L^2(V_h)}+ \norm{ \rho^{n+1/2} }^2_{L^2(V_h)}.   
%\end{align*}
%followed by utilization of bounds from \eqref{app_prop}
 %completes the rest of the proof.
\section{Numerical studies}\label{num_sec}
This section discusses numerical results that validate the theoretical estimates derived in Lemma~\ref{P1 lemma_on_norm_initial}, Theorems \ref{P1 implicit_Th2}-\ref{err_L2_new},  and Theorems \ref{err_thm}-\ref{err_lma}. The convergence rates and efficiency of the HHO scheme in the space direction and Newmark scheme \eqref{hwave_int_cond}-\eqref{hwave_algorithm} and Crank-Nicolson scheme \eqref{HHO_Crank_Nico_sc_1} in the time direction are demonstrated. Two $h$-refined mesh families (Cartesian and polygonal Voronoi-like) meshes have been used for the HHO method in the space direction, see Figure \ref{Exm1_fig1}$(a)$-$(b)$.
The spatial discretization employs the HHO scheme with polynomial orders $k\in \{0,1,2,3\}$, using meshes composed of  16, 64, 256, 1024,  and  4096 elements.

\medskip\noindent
The errors in $L^{\infty}(H^2)$ seminorm and $L^{\infty}(L^{2})$ norm are denoted as
\begin{align*}
  \norm{\mathfrak{E}_h^{u}}_{L^{\infty}(H^{2})}& :=\max_{1\leq n\leq m}\big(\sum_{K\in {\cal T}_{h}}\norm{\nabla^{2}(u^{n}-{\cal R}_{K}(\widehat{u}^{n}_{K}))}_{K}^2\big)^{1/2},\\
 \norm{\mathfrak{E}_h^{u}}_{L^{\infty}(L^{2})}& :=\max_{1\leq n\leq m}\big(\sum_{K\in {\cal T}_{h}}\norm{u^{n}-{\cal R}_{K}(\widehat{u}^{n}_{K})}_{K}^2\big)^{1/2},\\ 
     \norm{\mathfrak{E}_h^{p}}_{L^{\infty}(L^{2})}& :=\max_{1\leq n\leq m}\big(\sum_{K\in {\cal T}_{h}}\norm{p^{n}-{p}^{n}_{K}}_{K}^2\big)^{1/2}.
\end{align*}

\medskip \noindent
\begin{example}[\bf{Convergence for smooth solution}]\rm{
Choose the domain $\Omega\times(0,T]:= (0,1)^{2}\times (0,0.1].$ The solution of the problem \eqref{P1 strong_form}-\eqref{P1 strong_icbc} is given by
%\begin{equation*}
$u(x,y,t)=\exp(-t)x^{2}y^{2}(1-x)^{2}(1-y)^{2}$. Table \ref{summary1} summarizes the details of the tables in which the results of this numerical experiment are presented.
\begin{figure}[h]
  	\begin{subfigure}{0.5\textwidth}	
  		\centering
  		\includegraphics[width=5.00cm, height=4.20cm]{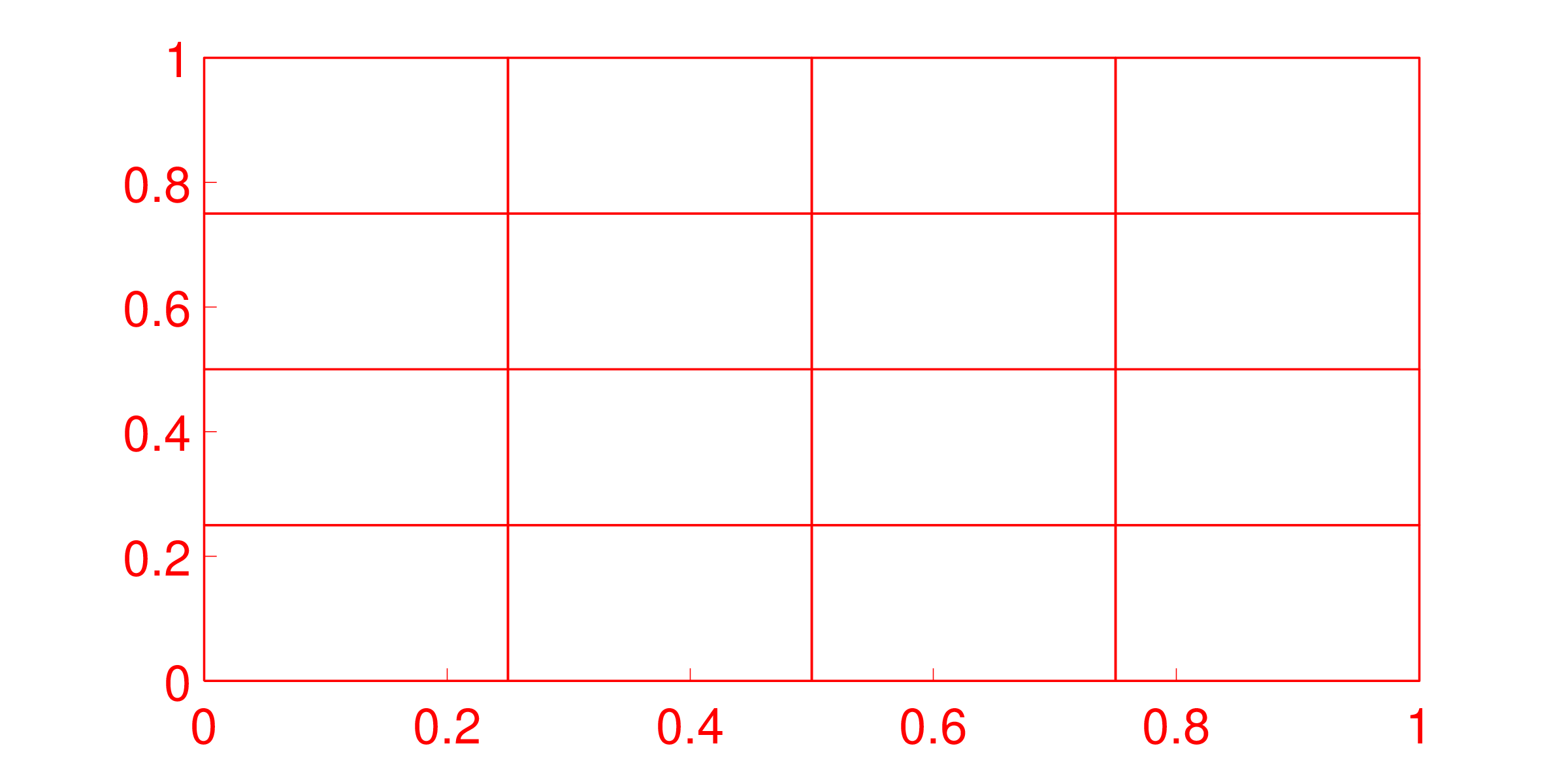}
  		\caption{}
  	\end{subfigure}
  	\begin{subfigure}{0.5\textwidth}	
  		\centering
  		\includegraphics[width=5.00cm, height=4.20cm]{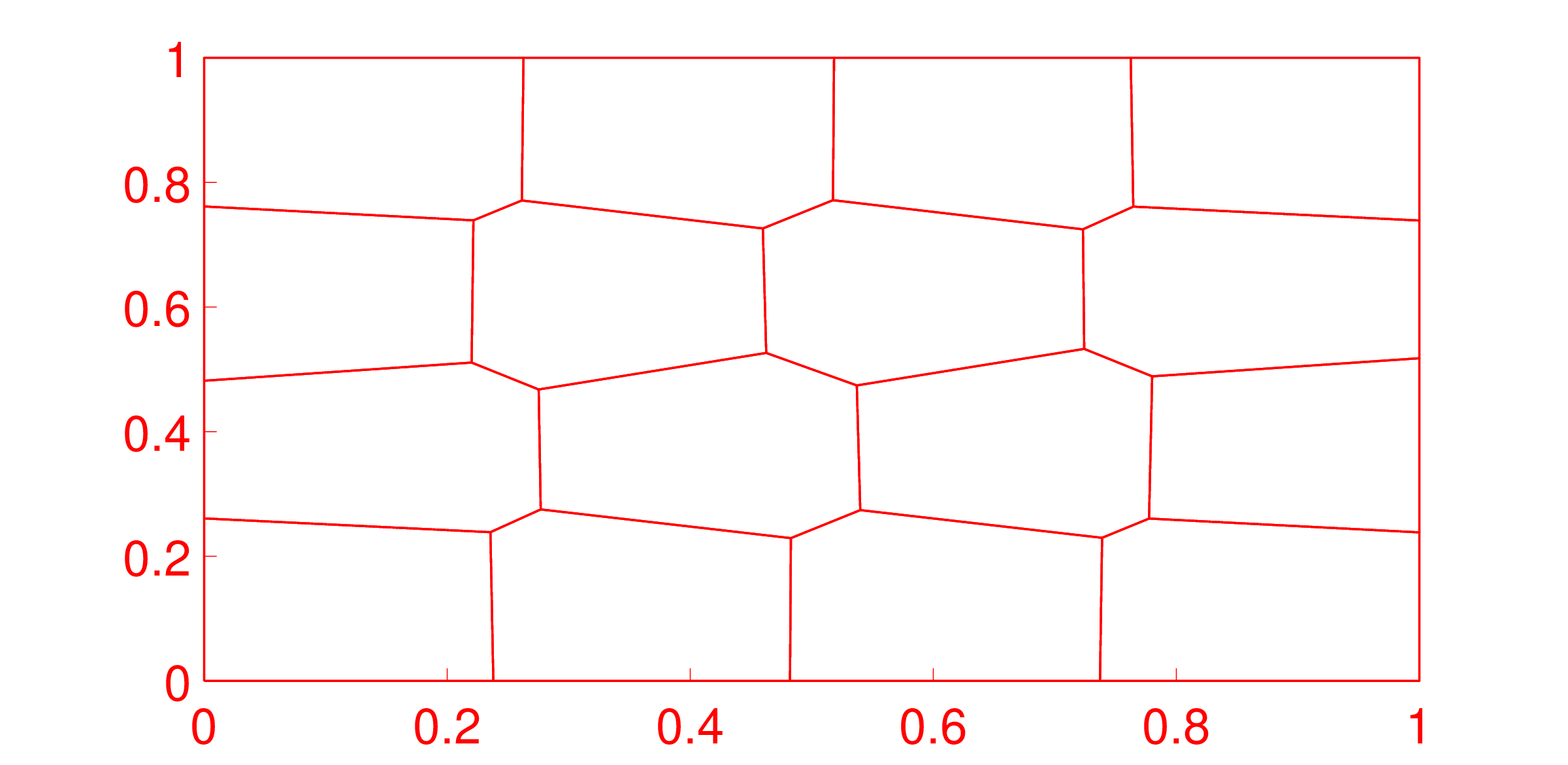}
  		\caption{ }
  	\end{subfigure}
  	\caption{The Cartesian $(a)$ and polygonal $(b)$ meshes with $16$ elements.}
  	\label{Exm1_fig1}
  \end{figure}
\begin{table}[h]
 \caption{Summary of tables for the numerical experiment.} \label{summary1}
\centering
\begin{tabular}{|l|l|}
\hline
 ~~~~\mbox{Newmark scheme}& ~~~~\mbox{Crank-Nicolson scheme}\\ \hline
  \mbox{ Cartesian} ~~~~~~\mbox{ Polygonal}& \mbox{Cartesian} ~~~~~~~~~~~~~ \mbox{ Polygonal}\\ 
 Table \ref{Table_A_1}~~~~~~~~Table \ref{Table_A_2}& Tables \ref{Table_A_3}-\ref{Table_A_5}~~~~~~Tables \ref{Table_A_4}-\ref{Table_A_5}\\
% Table \ref{Rect_exm_1_L_2}~~~~~~~ Table \ref{Poly_exm_1_L_2}& Table \ref{Crank_HHO_A_rect_1_L2}~~~~~~Table \ref{Crank_HHO_A_rect_1_L21}\\
\hline
\end{tabular}
\end{table}

% \begin{table}[h]
%  \caption{Summary of tables for HHO-B scheme.} \label{summary2}
% \centering
% \begin{tabular}{|l|l|l|}
% \hline
% \mbox{Error} & ~~~~\mbox{Newmark scheme}& ~~~~\mbox{Crank-Nicolson scheme}\\ \hline
%  & \mbox{ Cartesian} ~~~~~~\mbox{ Polygonal}& \mbox{Cartesian} ~~~~ \mbox{ Polygonal}\\ 
% $L^{\infty}(H^{2})$ & Table \ref{Rect_exm_1_H_2_HHO_B}~~~~~~~~Table \ref{Poly_exm_1_H_2_HHO_B}&Table \ref{Crank_HHO_B_rect_1_energy_norm}~~~~~~Table \ref{Crank_HHO_B_rect_1_energy_norm1}\\
% $L^{\infty}(L^{2})$& Table \ref{Rect_exm_1_L_2_HHO_B}~~~~~~~ Table \ref{Poly_exm_1_L_2_HHO_B}& Table \ref{Crank_HHO_B_rect_1_L_2_norm}~~~~~~Table \ref{Crank_HHO_B_poly_L_2_norm}\\
% \hline
% \end{tabular}
% \end{table}

\medskip \noindent
Each table illustrate the experimental order of convergence (EOC) and discrete errors in various norms. The convergence rates for the Cartesian and polygonal meshes are consistent with the theoretical estimates derived in  Lemma~\ref{P1 lemma_on_norm_initial}, Theorems \ref{P1 implicit_Th2}-\ref{err_L2_new}, and  Theorem~\ref{err_thm}-\ref{err_lma}. In Table \ref{Table_A_1} (resp. Table \ref{Table_A_2})
%(resp. Tables \ref{Poly_exm_1_H_2}-\ref{Poly_exm_1_L_2}) 
 errors in $L^{\infty}(H^{2})$ and $L^{\infty}(L^{2})$)-norm for the HHO-A/HHO-B Newmark scheme \eqref{hwave_int_cond}-\eqref{hwave_algorithm} are described for Cartesian (resp. polygonal) meshes. For HHO-A/HHO-B Crank-Nicolson scheme \eqref{HHO_Crank_Nico_sc_1}, results are presented in Tables \ref{Table_A_3}-\ref{Table_A_5} (resp. Table \ref{Table_A_4}-\ref{Table_A_5}) for Cartesian (resp. polygonal) meshes.
 For both schemes, we observe that optimal convergence rate $\mathcal{O}(h^{k+1})$, $k\in \{0,1,2,3\}$ is achieved for $L^{\infty}(H^{2})$-error estimate. The $L^{\infty}(L^2)$-error converges at the optimal rate $\mathcal{O}(h^{k+3}),$ except at $k=0,$ where the rate is only $\mathcal{O}(h^{2})$.
 \begin{table}[H]
\setlength{\tabcolsep}{4pt} % Reduce column separation
\renewcommand{\arraystretch}{0.6} % Reduce row separation
\scriptsize % Reduce font size
\caption{{\it Newmark scheme}. Convergence and error profiles  with Cartesian meshes for  $u$.}
	\centering
  \begin{tabular}{|l|l l|l l||l l|l l|}
     \hline
    \multirow{2}{*}{\mbox{\# of cells}} &
      \multicolumn{4}{c||}{HHO-A} &
      \multicolumn{4}{c|}{HHO-B} \\
       \cline{2-9}
    & $\norm{\mathfrak{E}_h^{u}}_{L^{\infty}(H^{2})}$ & EOC & $\norm{\mathfrak{E}_h^{u}}_{L^{\infty}(L^{2})}$ & EOC & $\norm{\mathfrak{E}_h^{u}}_{L^{\infty}(H^{2})}$ & EOC & $\norm{\mathfrak{E}_h^{u}}_{L^{\infty}(L^{2})}$ & EOC \\
    \hline
    \multicolumn{9}{|c|}{$k=0$} \\
    \hline
~~~~~16&6.80e-02&---&1.54e-02&---&2.35e-01&---&1.97e-02&---\\
 ~~~~~64&3.46e-02&0.97&3.92e-03&1.98&
 1.26e-01&0.90&5.15e-03&1.90\\
 ~~~~~256&1.74e-02&0.98&9.87e-04&1.99&6.50e-02&0.96&1.31e-03&1.97\\
 ~~~~~1024&8.78e-03&0.99&2.47e-04&2.00&3.27e-02&0.99&3.31e-04&1.99\\
 ~~~~~4096&4.40e-03&1.00&6.19e-05&2.00&1.64e-02&1.00&8.32e-05&2.00\\
 \hline
   \multicolumn{9}{|c|}{$k=1$} \\
      \hline
   ~~~~~16&5.68e-03&---&6.76e-05&---&2.12e-02&---&2.75e+00&---\\
   ~~~~~64&1.46e-03&1.96&4.37e-06&3.95&5.93e-03&1.85&1.89e-01&3.86\\
   ~~~~~256&3.69e-04&1.98&2.76e-07&3.99&1.54e-03&1.94&1.22e-02&3.95\\
   ~~~~~1024&9.26e-05&2.00&1.73e-08&4.00&3.90e-04&1.98&7.76e-04&3.98\\
   ~~~~~4096&2.31e-05&2.00&1.08e-09&4.00&9.79e-05&1.99&4.89e-05&3.99\\
 \hline
    \multicolumn{9}{|c|}{$k=2$} \\
      \hline
      ~~~~~16&1.29e-05&---&3.67e-06&---&1.76e-04&---&2.53e-03&---\\
      ~~~~~64&1.71e-06&2.91&1.16e-07&4.98&2.21e-05&2.99&8.09e-05&4.97\\
      ~~~~~256&2.20e-07&2.96&3.65e-09&4.99&2.77e-06&2.99&2.55e-06&4.98\\
      ~~~~~1024&2.78e-08&2.98&1.14e-10&5.00&3.47e-07&3.00&8.11e-08&4.98\\
      ~~~~~4096&3.57e-09&2.96&3.30e-12&5.11&4.35e-08&3.00&2.53e-09&5.00\\
      \hline
       \multicolumn{9}{|c|}{$k=3$} \\
      \hline
      ~~~~~16&4.79e-07&---&2.97e-01&---&5.60e-06&---&5.01e-03&---\\
      ~~~~~64&3.10e-08&3.95&7.65e-05&5.96&3.50e-07&4.00&8.20e-05&5.93\\
      ~~~~~256&1.96e-09&3.98&1.72e-08&6.05&2.18e-08&4.00&1.26e-06&6.01\\
      ~~~~~1024&1.23e-10&4.00&2.68e-10&6.00&1.36e-09&4.00&1.95e-08&6.02\\
      ~~~~~4096&7.70e-12&4.00&4.10e-12&6.00&8.43e-11&4.01&3.03e-10&6.00\\
      \hline
  \end{tabular}
  \label{Table_A_1}
\end{table}
\begin{table}[H]
\setlength{\tabcolsep}{4pt} % Reduce column separation
\renewcommand{\arraystretch}{0.6} % Reduce row separation
\scriptsize % Reduce font size
\caption{{\it Newmark scheme}. Convergence and error profiles  with {polygonal meshes} for  $u$.}
\centering
\begin{tabular}{|l|l l|l l||l l|l l|}
    \hline
    \multirow{2}{*}{\mbox{\# of cells}} &
      \multicolumn{4}{c||}{HHO-A} &
      \multicolumn{4}{c|}{HHO-B} \\
       \cline{2-9}
   & $\norm{\mathfrak{E}_h^{u}}_{L^{\infty}(H^{2})}$ & EOC & $\norm{\mathfrak{E}_h^{u}}_{L^{\infty}(L^{2})}$ & EOC & $\norm{\mathfrak{E}_h^{u}}_{L^{\infty}(H^{2})}$ & EOC & $\norm{\mathfrak{E}_h^{u}}_{L^{\infty}(L^{2})}$ & EOC \\
    \hline
    \multicolumn{9}{|c|}{$k=0$} \\
    \hline
~~~~~16 & 4.31e-02 & --- & 1.77e-01 & --- & 1.56e-01 & --- & 8.54e-02 & --- \\
~~~~~64 & 2.07e-02 & 1.05 & 4.61e-02 & 1.93 & 7.92e-02 & 0.98 & 2.24e-02 & 1.93 \\
~~~~~256 & 1.02e-02 & 1.01 & 1.74e-02 & 1.93 & 3.97e-02 & 0.99 & 5.72e-03 & 1.96 \\
~~~~~1024&5.10e-03 & 1.00 & 2.94e-03 & 1.99 & 1.99e-02 & 1.00 & 1.44e-03 & 1.98 \\
~~~~~4096 &2.55e-03&1.00&7.35e-04&2.00&9.98e-03&1.00&3.58e-04&2.00\\
    \hline
    \multicolumn{9}{|c|}{$k=1$} \\
    \hline
~~~~~16 & 5.71e-03 & --- & 4.27e-04 & --- & 7.68e-03 & --- & 2.55e-04 & --- \\
~~~~~64 & 1.35e-03 & 2.08 & 2.75e-05 & 3.96 & 1.95e-03 & 1.97 & 1.64e-05 & 3.96 \\
~~~~~256 & 3.31e-04 & 2.02 & 2.26e-08 & 4.99 & 4.92e-04 & 1.99 & 1.03e-06 & 3.99 \\
~~~~~1024 & 8.25e-05 & 2.00 & 1.09e-07 & 3.99 & 1.23e-04 & 2.00 & 6.48e-08 & 4.00 \\
~~~~~4096&2.05e-05&2.00&6.82e-09&4.00&3.07e-05&2.00&4.03e-09&4.00\\
    \hline
    \multicolumn{9}{|c|}{$k=2$} \\
    \hline
~~~~~16 & 3.16e-05 & --- & 2.26e-05 & --- & 8.93e-05 & --- & 2.36e-05 & --- \\
~~~~~64 & 2.59e-06 & 3.60 & 7.19e-07 & 4.97 & 1.11e-05 & 2.99 & 7.57e-07 & 4.97 \\
~~~~~256 & 2.64e-07 & 3.29 & 3.62e-08 & 5.73 & 1.40e-06 & 3.00 & 2.38e-08 & 4.99 \\
~~~~~1024 & 3.08e-08 & 3.09 & 7.12e-10 & 4.99 & 1.75e-07 & 3.00 & 7.43e-10 & 5.00 \\
~~~~~4096&3.86e-09&3.00&2.22e-11&5.00&2.18e-08&3.00&2.31e-11&5.00\\
    \hline
    \multicolumn{9}{|c|}{$k=3$} \\
    \hline
~~~~~16 & 3.46e-06 & --- & 1.26e-01 & --- & 2.66e-06 & --- & 9.60e-06 & --- \\
~~~~~64 & 1.36e-07 & 4.66 & 1.02e-04 & 5.13 & 1.67e-07 & 3.99 & 1.42e-07 & 6.07 \\
~~~~~256 & 6.68e-09 & 4.35 & 2.26e-08 & 5.73 & 1.04e-08 & 4.00 & 2.15e-09 & 6.04 \\
~~~~~1024 & 3.82e-10 & 4.12 & 3.52e-10&6.00& 6.54e-10 & 4.00 & 3.31e-11 & 6.02 \\
~~~~~4096&2.38e-11&4.00&5.47e-12 & 6.00&4.10e-11&4.00&5.10e-13&6.02\\
    \hline
\end{tabular}
\label{Table_A_2}
\end{table}
\begin{table}[H]
\setlength{\tabcolsep}{4pt} % Reduce column separation
\renewcommand{\arraystretch}{0.6} % Reduce row separation
\scriptsize % Reduce font size
\caption{{\it Crank-Nicolson scheme}. Convergence and error profiles with Cartesian meshes for  $u$.}
\centering
\begin{tabular}{|l|l l|l l||l l|l l|}
    \hline
    \multirow{2}{*}{\mbox{\# of cells}} &
      \multicolumn{4}{c||}{HHO-A} &
      \multicolumn{4}{c|}{HHO-B} \\
       \cline{2-9}
    & $\norm{\mathfrak{E}_h^{u}}_{L^{\infty}(H^{2})}$ & EOC & $\norm{\mathfrak{E}_h^{u}}_{L^{\infty}(L^{2})}$ & EOC & $\norm{\mathfrak{E}_h^{u}}_{L^{\infty}(H^{2})}$ & EOC & $\norm{\mathfrak{E}_h^{u}}_{L^{\infty}(L^{2})}$ & EOC \\
    \hline
    \multicolumn{9}{|c|}{$k=0$} \\
    \hline
~~~~~16 & 1.29e+00 & --- & 1.53e-01 & --- & 1.86e-01 & --- & 1.58e-02 & --- \\
~~~~~64 & 6.46e-01 & 1.00 & 3.85e-02 & 1.99 & 9.31e-02 & 1.00 & 4.00e-03 & 1.99 \\
~~~~~256 & 3.23e-01 & 1.00 & 9.64e-03 & 1.99 & 4.65e-02 & 1.00 & 1.00e-03 & 1.99 \\
~~~~~1024 & 1.61e-01 & 1.00 & 2.41e-03 & 2.00 & 2.32e-02 & 1.00 & 2.50e-04 & 2.00 \\
~~~~~4096&8.05e-02&1.00&6.01e-04&2.00&1.16e-02&1.00&6.25e-05&2.00\\
    \hline
    \multicolumn{9}{|c|}{$k=1$} \\
    \hline
~~~~~16 & 8.41e-02 & --- & 5.71e-04 & --- & 8.94e-03 & --- & 4.70e-04 & --- \\
~~~~~64 & 2.10e-02 & 1.99 & 3.58e-05 & 3.99 & 2.21e-03 & 2.01 & 3.04e-05 & 3.95 \\
~~~~~256 & 5.26e-03 & 1.99 & 2.24e-06 & 3.99 & 5.53e-04 & 2.00 & 1.91e-06 & 3.98 \\
~~~~~1024 & 1.31e-03 & 2.00 & 1.40e-07 & 4.00 & 1.38e-04 & 2.00 & 1.20e-07 & 4.00 \\
~~~~~4096&3.26e-04&2.00&8.72e-09&4.00&3.43e-05&2.00&7.48e-09&4.00\\
    \hline
    \multicolumn{9}{|c|}{$k=2$} \\
    \hline
~~~~~16 & 1.10e-02 & --- & 1.84e-05 & --- & 8.13e-04 & --- & 3.03e-05 & --- \\
~~~~~64 & 1.39e-03 & 2.99 & 5.80e-07 & 5.00 & 1.02e-04 & 2.99 & 9.82e-07 & 4.94 \\
~~~~~256 & 1.74e-04 & 2.99 & 1.81e-08 & 5.00 & 1.28e-05 & 2.99 & 3.09e-08 & 4.98 \\
~~~~~1024 & 2.17e-05 & 3.00 & 5.68e-10 & 5.00 & 1.60e-06 & 3.00 & 9.70e-10 & 5.00 \\
~~~~~4096&2.70e-06&3.00&1.78e-11&5.00&1.99e-07&3.00&3.04e-11&5.00\\
    \hline
    \multicolumn{9}{|c|}{$k=3$} \\
    \hline
~~~~~16 & 4.05e-04 & --- & 2.88e-04 & --- & 2.95e-05 & --- & 7.01e-05 & --- \\
~~~~~64 & 2.54e-05 & 3.99 & 4.58e-06 & 5.97 & 1.85e-06 & 3.99 & 1.10e-06 & 5.99 \\
~~~~~256 & 1.59e-06 & 4.00 & 6.93e-08 & 6.04 & 1.16e-07 & 3.99 & 1.70e-08 & 6.01 \\
~~~~~1024 & 9.94e-08 & 4.00 & 1.06e-09 & 6.02 & 7.25e-09 & 4.00 & 2.63e-10 & 6.00 \\
~~~~~4096 &6.18e-09&4.00&1.65e-11&6.00&4.52e-10&4.00&4.10e-12&6.00\\
    \hline
\end{tabular}
\label{Table_A_3}
\end{table}
\vspace{-0.5cm}
\begin{table}[H] 
\setlength{\tabcolsep}{4pt} % Reduce column separation
\renewcommand{\arraystretch}{0.9} % Reduce row separation
\scriptsize % Reduce font size
\caption{{\it Crank-Nicolson scheme}. Convergence and error profiles with polygonal meshes for $u$.} 
\centering \begin{tabular}{|l|l l|l l||l l|l l|} \hline
\multirow{2}{*}{\mbox{\# of cells}} & \multicolumn{4}{c||}{HHO-A} & \multicolumn{4}{c|}{HHO-B} \\ 
\cline{2-9}
& $\norm{\mathfrak{E}_h^{u}}_{L^{\infty}(H^{2})}$ & EOC & $\norm{\mathfrak{E}_h^{u}}_{L^{\infty}(L^{2})}$ & EOC & $\norm{\mathfrak{E}_h^{u}}_{L^{\infty}(H^{2})}$ & EOC & $\norm{\mathfrak{E}_h^{u}}_{L^{\infty}(L^{2})}$ & EOC \\
\hline 
\multicolumn{9}{|c|}{$k=0$} \\ 
\hline
~~~~~16 & 1.24e+00 & --- & 1.53e-01 & --- & 1.25e+00 & --- & 1.09e-01 & --- \\ 
~~~~~64 & 6.23e-01 & 1.00 & 3.85e-02 & 1.99 & 6.28e-01 & 0.99 & 2.74e-02 & 1.99 \\
~~~~~256 & 3.11e-01 & 1.00 & 9.65e-03 & 1.99 & 3.14e-01 & 0.99 & 6.86e-03 & 2.00 \\ 
~~~~~1024 & 1.55e-01 & 1.00 & 2.41e-03 & 2.00 & 1.57e-01 & 1.00 & 1.71e-03 & 2.00 \\
~~~~~4096&7.72e-02&1.00&5.99e-04&2.00&7.84e-02&1.00&4.25e-04&2.00\\
\hline
\multicolumn{9}{|c|}{$k=1$} \\
\hline 
~~~~~16 & 8.13e-02 & --- & 5.71e-04 & --- & 6.96e-02 & --- & 2.03e-04 & --- \\ 
~~~~~64 & 2.03e-02 & 1.99 & 3.58e-05 & 3.99 & 1.74e-02 & 1.99 & 1.27e-05 & 3.99 \\ 
~~~~~256 & 5.09e-03 & 1.99 & 2.24e-06 & 3.99 & 4.37e-03 & 1.99 & 7.98e-07 & 3.99 \\
~~~~~1024 & 1.27e-03 & 2.00 & 1.40e-07 & 4.00 & 1.09e-03 & 2.00 & 4.98e-08 & 4.00 \\ 
~~~~~4096&3.16e-04&2.00&8.73e-09&4.00&2.72e-04&2.00&3.10e-09&4.00\\
\hline
\multicolumn{9}{|c|}{$k=2$} \\ 
\hline
~~~~~16 & 1.10e-02 & --- & 1.84e-05 & --- & 5.31e-03 & --- & 6.57e-06 & --- \\
~~~~~64 & 1.39e-03 & 2.99 & 5.80e-07 & 4.99 & 6.66e-04 & 3.00 & 2.06e-07 & 4.99 \\
~~~~~256 & 1.74e-04 & 2.99 & 1.81e-08 & 4.99 & 8.33e-05 & 3.00 & 6.44e-09 & 4.99 \\ 
~~~~~1024 & 2.18e-05 & 2.99 & 5.68e-10 & 5.00 & 1.04e-05 & 3.00 & 2.03e-10 & 4.99 \\
~~~~~4096&2.72e-06&3.00&8.89e-12&5.00&1.30e-06&3.00&6.33e-12&5.00\\
\hline \multicolumn{9}{|c|}{$k=3$} \\
\hline
~~~~~16 & 4.05e-04 & --- & 1.05e-05 & --- & 1.94e-04 & --- & 6.49e-04 & --- \\
~~~~~64 & 2.54e-05 & 3.99 & 1.81e-07 & 5.86 & 1.21e-05 & 3.99 & 1.06e-05 & 5.94 \\
~~~~~256 & 1.59e-06 & 3.99 & 2.82e-09 & 6.00 & 7.61e-07 & 3.99 & 1.67e-07 & 5.98 \\ 
~~~~~1024 & 9.96e-08 & 4.00 & 4.34e-11 & 6.02 & 4.76e-08 & 4.00 & 2.63e-09 & 5.99 \\
~~~~~4096&6.20e-09&4.00&6.74e-13&6.00&2.96e-09&4.00&4.10e-11&6.00\\
\hline
\end{tabular}
\label{Table_A_4}
\end{table}
\vspace{-0.5cm}
\begin{table}[H]
\setlength{\tabcolsep}{4pt} % Reduce column separation
\renewcommand{\arraystretch}{0.6} % Reduce row separation
\scriptsize % Reduce font size
\caption{{\it Crank-Nicolson scheme}. Convergence and error profiles for Cartesian and polygonal meshes for velocity ($p$).}
\centering
\begin{tabular}{|l|l l|l l||l l|l l|}
\hline
\multirow{2}{*}{\mbox{\# of cells}} & \multicolumn{4}{c||}{Cartesian Meshes} & \multicolumn{4}{c|}{Polygonal Meshes} \\
\cline{2-9}
& \multicolumn{2}{c|}{HHO-A} & \multicolumn{2}{c||}{HHO-B} & \multicolumn{2}{c|}{HHO-A} & \multicolumn{2}{c|}{HHO-B} \\
\cline{2-9}
& $\norm{\mathfrak{E}_h^{p}}_{L^{\infty}(L^{2})}$ & EOC & $\norm{\mathfrak{E}_h^{p}}_{L^{\infty}(L^{2})}$ & EOC & $\norm{\mathfrak{E}_h^{p}}_{L^{\infty}(L^{2})}$ & EOC & $\norm{\mathfrak{E}_h^{p}}_{L^{\infty}(L^{2})}$ & EOC \\
\hline
\multicolumn{9}{|c|}{$k=0$} \\
\hline
~~~~~16 & 2.84e-01 & --- & 1.54e-01 & --- & 3.78e-01 & --- & 2.43e-01 & --- \\
~~~~~64 & 7.26e-02 & 1.97 & 3.91e-02 & 1.98 & 9.60e-02 & 1.97 & 6.16e-02 & 1.98 \\
~~~~~256 & 1.82e-02 & 1.99 & 9.83e-03 & 1.99 & 2.41e-02 & 1.99 & 1.54e-02 & 1.99 \\
~~~~~1024 & 4.57e-03 & 2.00 & 2.46e-03 & 2.00 & 6.03e-03 & 2.00 & 3.87e-03 & 2.00 \\
~~~~~4096&1.14e-03&2.00&6.12e-04&2.00&1.50e-03&2.00&9.65e-04&2.00\\
\hline
\multicolumn{9}{|c|}{$k=1$} \\
\hline
~~~~~16 & 3.03e-03 & --- & 1.90e-03 & --- & 4.27e-03 & --- & 2.42e-03 & --- \\
~~~~~64 & 1.92e-04 & 3.98 & 1.20e-04 & 3.98 & 2.70e-04 & 3.98 & 1.52e-04 & 3.98 \\
~~~~~256 & 1.21e-05 & 3.99 & 7.56e-06 & 3.99 & 1.69e-05 & 3.99 & 9.58e-06 & 3.99 \\
~~~~~1024 & 7.57e-07 & 4.00 & 4.73e-07 & 4.00 & 1.05e-06 & 4.00 & 5.99e-07 & 4.00 \\
~~~~~4096&4.72e-08&4.00&2.94e-08&4.00&6.53e-08&4.00&3.75e-08&4.00\\
\hline
\multicolumn{9}{|c|}{$k=2$} \\
\hline
~~~~~16 & 9.74e-05 & --- & 6.11e-05 & --- & 1.36e-04 & --- & 7.75e-05 & --- \\
~~~~~64 & 3.09e-06 & 4.97 & 1.93e-06 & 4.98 & 4.32e-06 & 4.98 & 2.44e-06 & 4.98 \\
~~~~~256 & 9.71e-08 & 4.99 & 6.07e-08 & 4.99 & 1.35e-07 & 4.99 & 7.66e-08 & 4.99 \\
~~~~~1024 & 3.04e-09 & 5.00 & 1.89e-09 & 5.00 & 4.24e-09 & 5.00 & 2.39e-09 & 5.00 \\
~~~~~4096&9.46e-11&5.00&5.89e-11&5.00&1.32e-10&5.00&7.42e-11&5.00\\
\hline
\multicolumn{9}{|c|}{$k=3$} \\
\hline
~~~~~16 & 3.96e-05 & --- & 2.51e-05 & --- & 1.61e-02 & --- & 2.77e-05 & --- \\
~~~~~64 & 6.22e-07 & 5.99 & 4.06e-07 & 5.95 & 2.65e-04 & 5.92 & 4.55e-07 & 5.93 \\
~~~~~256 & 9.66e-09 & 6.01 & 6.28e-09 & 6.01 & 4.18e-06 & 5.99 & 7.09e-09 & 6.00 \\
~~~~~1024 & 1.50e-10 & 6.00 & 9.80e-11 & 6.00 & 6.53e-08 & 6.00 & 1.10e-10 & 6.00\\
~~~~~4096&2.34e-12&6.00&1.53e-12&6.00&9.99e-10&6.00&1.72e-12&6.00\\
\hline
\end{tabular}
\label{Table_A_5}
\end{table}
 }
 \end{example}
\begin{example}[\bf{Vibration in heterogeneous media}]\label{exm2_sec}\rm{This example demonstrates the HHO scheme for a  modified wave problem following an approach similar to \cite{MR4444402}. Consider
\begin{equation*}
u_{tt}(x,t) + \Delta(c(x)\Delta u(x,t))= f(x,t), \quad (x,t) \in \Omega \times \left(0,T \right], 
\end{equation*}
with homogeneous clamped boundary conditions and initial conditions
\begin{equation*}
u=\frac{\partial u}{\partial n} =0 \text{ on }\partial \Omega \times \left(0,T \right];\quad u(x,0)=u_0(x) \;\; \mbox{and}\;\; u_t(x,0)=u_1(x) \text{ in }\Omega.
\end{equation*}
Here, the positive coefficient $c$ characterizes the rigidity of the material. The domain is $\Omega = (-1,1)^2$ with material heterogeneity:
\begin{equation*}
 \quad c(x)=\begin{cases}
     1 \quad \text{if} \; x_{2} <0.2\\
     9 \quad \text{if} \; x_{2}\geq 0.2,
 \end{cases}
 \quad T=\frac{3}{100},
 \quad f=0,
\end{equation*}
and the initial data
\begin{equation*}
u_{0}= \frac{1}{5}\exp(-|10x|^{2})(1-x_{1}^{2})^{2}(1-x_{2}^{2})^{2} \quad \text{and} \quad u_{1}=0.
\end{equation*}
\noindent%
\begin{figure}[H]
\begin{minipage}[t]{0.44\textwidth}%
{The system dynamics are initiated by a regularized Dirac impulse give by $c(x)$ defined above. Following the referenced work \cite{MR4444402,amir}, we define a control region (or a sensor) $\Omega_{c}:= (0.75-l_{c},0.75+l_{c})\times(-l_{c}, l_{c})$, with $l_{c}=\frac{1}{32}$ and evaluate the  sensor signal  as $u_{c}(t):= \int_{\Omega_{c}} u_{h}(x,t)dx.$ For the Newmark-HHO scheme with $k=0$, we analyze and compare the signal at different arrival times at the sensor for different spatial grids and time step sizes. Simulations are performed on fixed cubic meshes of $50\times 50$ and $100\times 100$ grids, using a time step $\Delta t= T/N$, where $N$ is the number of time sub-intervals. The numerical results are shown in Figure~\ref{Exm2_fig2} for $50\times 50$ where as the same plot is  obtained for $100\times 100$ grid and hence its plot is omitted.}
\end{minipage}
\hspace{0.08\textwidth}
\begin{minipage}[t]{0.44\textwidth}%
\vspace{0pt}%
\begin{tikzpicture}[scale=2.2]
  % Draw the main square domain
  \draw[thick] (-1,-1) rectangle (1,1);
  
  % Draw the division line at x2 = 0.2
  \draw[thick] (-1,0.2) -- (1,0.2);
  
  % Fill the lower region (c = 1) with light color
  \fill[color=white!90!blue] (-1,-1) rectangle (1,0.2);
  
  % Fill the upper region (c = 9) with slightly darker color
  \fill[color=white!70!red] (-1,0.2) rectangle (1,1);
  
  \fill[color=black!50, opacity=0.6] (0.719,-0.031) rectangle (0.781,0.031);
  \draw[black, thick] (0.719,-0.031) rectangle (0.781,0.031);

  % Add the division line label
  \node[right] at (1.05,0.36) {$x_2 = 0.2$};

  \draw[red,thick] (-1,0.2) -- (1.3,0.2);
  % Add region labels
  \node[align=center] at (0,-0.4) {$c(x_1, x_2) = 1$};
  \node[align=center] at (0,0.6) {$c(x_1, x_2) = 9$};
  
  % Add corner coordinates
  \node[below left] at (-1,-1) {$(-1,-1)$};
  \node[below right] at (1,-1) {$(1,-1)$};
  \node[above left] at (-1,1) {$(-1, 1)$};
  \node[above right] at (1,1) {$(1, 1)$};

 \draw[->, line width=1.2pt] (1, -0.01) -- (1.2, -0.01);
  \node[left=0.15cm] at (1.8,-0.01) {$x_2 = 0$};

  \draw[->, line width=1.2pt] (0,1) -- (0,1.2);
  \node[below=0.15cm] at (0,1.5) {$x_1 = 0$};

  % Arrow pointing to control region Ωc
  \draw[->, line width=1.2pt, green] (0.05, 0.03) -- (0.69, 0.008);
  \node[above right=0.1cm, green] at (-0.23,-0.15) {$\Omega_c$};
\end{tikzpicture}%
\caption{(Example~\ref{exm2_sec}). Piecewise constant function $c(x_{1},x_{2})$ in the domain $\Omega=(-1,1)^2.$}
\label{Fig_6.2_exm2}
\end{minipage}%%
\end{figure}
} 
\noindent
The numerical results obtained for this benchmark problem demonstrate that the proposed Newmark fully discrete HHO scheme performs robustly in a non-homogeneous domain setting. In particular, the sensor signal $u_c(t)$ exhibits negligible variation between simulations performed with larger and smaller time step sizes, providing strong numerical evidence that the scheme is unconditionally stable with respect to the choice of $\Delta t$. Furthermore, the close agreement between the solutions computed on the coarser $50\times 50$ and finer $100\times 100$ spatial grids confirms that the method is spatially efficient, yielding reliable approximations even at moderate mesh resolutions. Taken together, these observations highlight the practical efficiency of the proposed scheme for wave propagation problems in non-homogeneous media, where both temporal and spatial flexibility are of significant computational importance.
\begin{figure}[H]
\centering	
 \includegraphics[width=12.00cm, height=6.80cm]{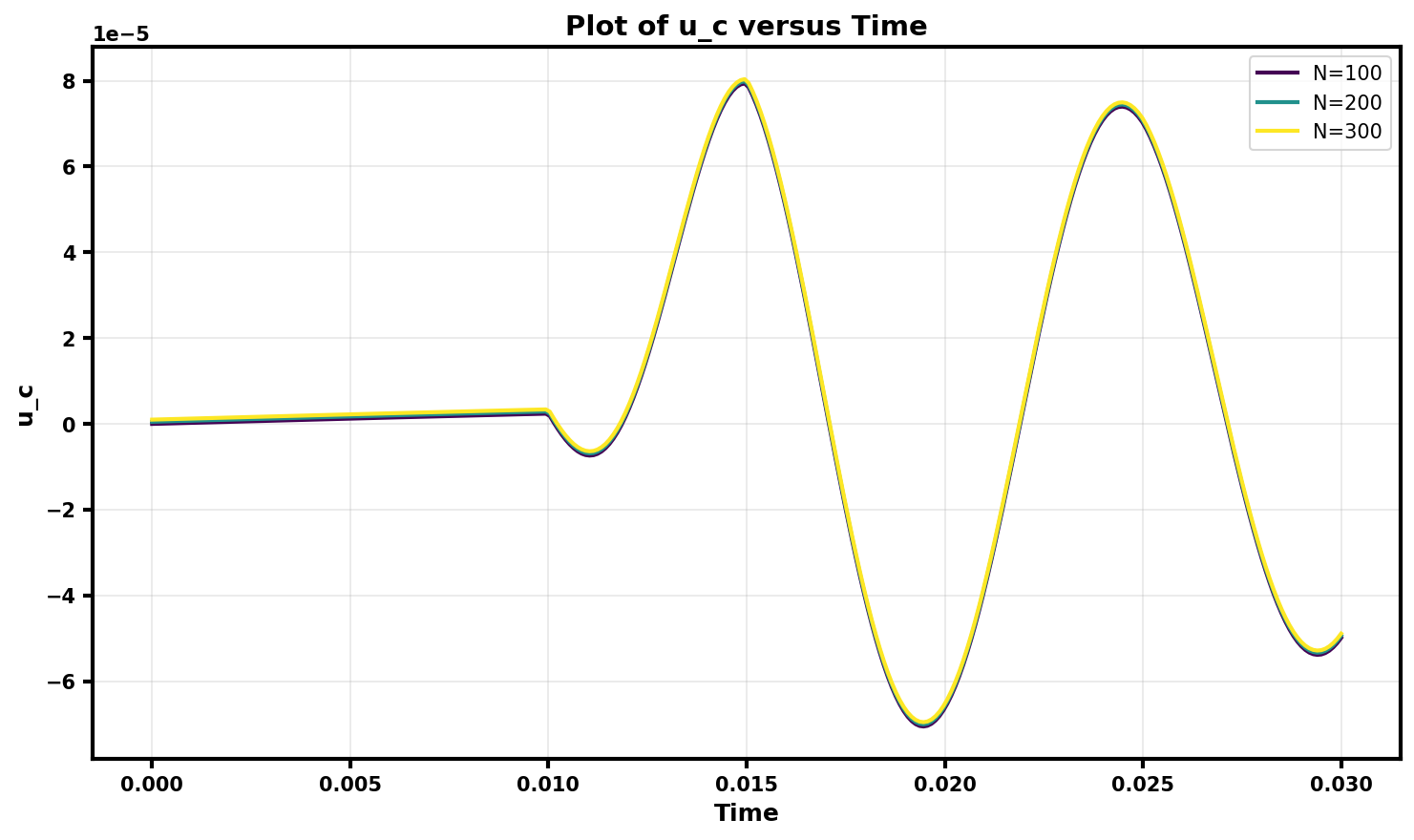}
 %  \hfill
%\begin{subfigure}{0.40\textwidth}	
% \includegraphics[width=8.00cm, height=7.80cm]{uc_plot_new_100.png}
 % 		\caption{}
%\end{subfigure}
\caption{(Example~\ref{exm2_sec}). $u_{c}(t)$ vs $t$ at $50\times 50$ grid for HHO scheme with $k=0$.}
\label{Exm2_fig2}
\end{figure}
\end{example}

\medskip \noindent
{\bf \textit{Data availability}:} Data available on the request from the authors.

\section*{Acknowledgments}
Raman Kumar acknowledges funding by the European Union (ERC Synergy, NEMESIS, project number 101115663). Views and opinions expressed are, however, those of the authors only and do not necessarily reflect those of the European Union or the European Research Council Executive Agency. Neither the European Union nor the granting authority can be held responsible for them. Neela Nataraj and Aamir Yousuf acknowledges the support of J.C. Bose grant ANRF/JBG/2025/000209/HAA.
\bibliographystyle{siam}
\bibliography{Hhowave}
\end{document}